\documentclass[10pt,reqno]{amsart}

\usepackage[utf8]{inputenc}
\usepackage[english]{babel}
\usepackage{microtype}
\usepackage{mathrsfs}
\usepackage{amsthm, thmtools, mathtools}
\usepackage{amsmath, centernot}
\usepackage{amssymb}
\usepackage{indentfirst,csquotes}
\usepackage{amscd}
\usepackage{mathtools}
\usepackage{graphicx} 
\usepackage{wasysym}
\usepackage{braket}
\usepackage{hyperref}
\usepackage{appendix}
\usepackage{dsfont}
\usepackage{enumitem}
\usepackage{framed}
\usepackage{caption}
\usepackage{subfig}
\usepackage[all]{xy}
\usepackage{xfrac}
\usepackage{hyperref}
\usepackage{color}
\usepackage{bbm}
\usepackage{stmaryrd}
\usepackage{fancyhdr}
\usepackage{tikz-cd}
\usepackage{graphicx}
\usepackage{nomencl}
\usepackage{ esint }
\usepackage{tikz}
\usepackage{multicol}
\usepackage{eucal}
\usepackage{xcolor}
\usepackage[normalem]{ulem}

\numberwithin{equation}{section}

\allowdisplaybreaks

\newtheorem{teorema}{Theorem}[section]
\newtheorem{prop}[teorema]{Proposition}
\newtheorem{co}[teorema]{Corollary}
\newtheorem{lemma}[teorema]{Lemma}

\newtheorem{df}[teorema]{Definition}

\theoremstyle{definition}

\newtheorem{oss}[teorema]{Remark}

\newtheorem{es}[teorema]{Example}

\newcommand{\xx}{\mathrm{x}}

\newcommand{\R}{\mathbb{R}}

\newcommand{\N}{\mathbb{N}}

\newcommand{\PP}{\mathcal{P}}

\newcommand{\CE}{\operatorname{CE}}
\newcommand{\HJ}{\operatorname{HJ}}

\newcommand{\Diss}{{\psi}}
\newcommand{\Dpartial}{{\partial^*}}
\newcommand{\calJ}{{\mathcal J}}
\newcommand{\ucalJ}{{\mathcal J}}
\newcommand{\rS}{{S^-}}
\newcommand{\rSR}{{S^-_R}}
\newcommand{\rSk}{{S^-_k}}

\title[A direct method for doubly nonlinear equations]{A direct method for doubly nonlinear equations via convexification in spaces of measures and duality}
\author[A. Pinzi, F. Riva, G. Savar\'e]{Alessandro Pinzi, Filippo Riva and Giuseppe Savar\'e}
\date{}

\begin{document}

\begin{abstract}
    Existence of solutions to doubly nonlinear equations in reflexive Banach spaces is established by resorting to a global-in-time variational approach inspired by De Giorgi's principle, which characterizes the associated flows as null-minimizers of a suitable energy-dissipation functional defined on trajectories. In contrast to the celebrated minimizing movements scheme, the proposed strategy does not rely on any time-discretization or iterative constructions. Instead, it provides a direct method based on the relaxation of the problem in spaces of measures, constrained by the continuity equation: in this procedure, no gap is introduced due to the Ambrosio's superposition principle.
    
    Within this weak convex framework, the validity of the null-minimization property is recovered through two further steps. First, a careful application of the Von Neumann minimax theorem yields an identification of the dual problem as a supremum over the set of smooth and bounded cylinder functions, solving an Hamilton--Jacobi-type inequality. Secondly, a suitable \lq\lq backward boundedness\rq\rq  property of solutions to such Hamilton-Jacobi system gives a proper bound of the dual problem, ensuring that the minimum value of the original functional is actually zero. 
    
    The proposed strategy naturally extends to non-autonomous equations, encompassing time- and space-dependent dissipation potentials and time-dependent potential energies.
\end{abstract}

\maketitle

	{\small
		\keywords{\noindent {\bf Keywords:}  doubly nonlinear equations, gradient flows, minimax theorem, duality, Hamilton--Jacobi equation, Banach-valued measures.
		}
		\par
		\subjclass{\noindent {\bf 2020 MSC:}
        34G20,  
        47J30,  
        49N15,  
        58E30.  
			
		}
	}

	\pagenumbering{arabic}

\medskip

\tableofcontents

\section*{Introduction}
We present a novel duality approach to show existence of absolutely continuous curves solving the \emph{doubly nonlinear differential inclusion}
\begin{equation}\label{eq:DNEintro}
    \partial \Diss(\dot {\xx}(t))+\partial \varphi(\xx(t))\ni 0,\quad\text{for a.e. }t\in(0,T),
\end{equation}
to be meant in the dual of a (separable and reflexive) Banach space $X$. Here, the dot stands for time differentiation, while the symbol $\partial$ denotes the Frech\'et subdifferential. As costumary in the variational community, we refer to the function $\Diss\colon X\to (-\infty,+\infty]$ as (primal) \emph{dissipation potential} and to $\varphi\colon X\to (-\infty,+\infty]$ as (potential) \emph{energy}. We require $\Diss$ to be convex and lower semicontinuous, and $\varphi$ to be lower semicontinuous and coercive, but no kind of convexity assumptions are asked for $\varphi$ (even if the case of $\lambda$-convex functionals 
covers an important class of examples). Observe that, in a Hilbertian setting, the choice $\Diss(v)=\frac 12 |v|^2$ leads to the theory of \emph{gradient flows}.

Doubly nonlinear equations arise in models of rate-dependent dissipative systems, including plasticity, damage mechanics, viscous flows, and phase transitions; see, e.g., \cite{ColliVisintin1990,MRSNonsmooth,MielkRoubbook,Visintin1996}. The nonlinearity in both the velocity (through the dissipation potential $\Diss$) and the state (through the energy $\varphi$) captures complex dissipation mechanisms beyond the quadratic setting of gradient flows. 
One of the most successful 
strategies to solve 
\eqref{eq:DNEintro} 
relies on the variational Minimizing Movement scheme \cite{DeGiorgi}:
solutions originate as limits
of a discretization method,
which involves a family of recursive minimizations 
of the energy $\varphi$ singularly perturbed by the time-step rescaled dissipation potential $\Diss$.
When the dissipation additionally depends on the current state---as occurs in models with position-dependent friction or material hardening---standard time-discretization methods face significant complications, since each implicit step becomes a nonlinear fixed-point problem coupling state and velocity.

The proposed strategy, which---as we show at the end of the paper---naturally encompasses also state-dependent dissipations, stems from a variational principle due to De Giorgi \cite{DeGiorgiMarinoTosques}, related to the theory of curves of maximal slopes in metric spaces. Under suitable regularity conditions, it states that solving \eqref{eq:DNEintro} is equivalent to be a \emph{null-minimizer}, i.e. the value of the minimum is zero, of the energy-dissipation functional
\begin{equation}\label{eq:DeGiorgi} 
    \mathcal J(T,\xx)=\varphi(\xx(T))+\int_0^T \Big(\Diss(\dot{\xx}(\tau))+S(\xx(\tau))\Big)\,d\tau-\varphi(x_0),
\end{equation}
among all absolutely continuous curves $\xx\colon [0,T]\to X$ sharing the same initial datum $\xx(0)=x_0\in X$. In \eqref{eq:DeGiorgi}, the symbol $S$ denotes the \emph{slope} functional 
\begin{equation}\label{eq:slope}
    S(x):=\inf\limits_{z\in\partial\varphi(x)}\Diss^*(-z),
\end{equation}
where $\Diss^*$ is the Fenchel (or convex) conjugate of $\Diss$, also called dual dissipation potential. The slope $S(x)$ measures the maximal instantaneous rate of energy dissipation available at state $x$, expressed in terms of the conjugate dissipation $\Diss^*$. In the gradient flow case $\Diss(v) = \frac{1}{2}|v|^2$, one has $\Diss^*(z) = \frac{1}{2}|z|^2$, and the slope reduces to
\[
S(x) = \frac{1}{2}|\mathrm D\varphi(x)|^2,
\]
where $\mathrm D\varphi(x) \in \partial\varphi(x)$ denotes the element of minimal norm in the subdifferential; cf.\ \cite[Chapter~1]{ambrosio2005gradient}.

De Giorgi's principle is based on the following observation: given an absolutely continuous curve $\xx\colon [0,T]\to X$ and a measurable selection $\mathrm{z}$ of $\partial\varphi(\xx(\cdot))$, Fenchel's inequality (see \eqref{Fenchelprop}) yields
\begin{equation}\label{eq:cr}
    \Diss(\dot {\xx}(t))+\Diss^*(-\mathrm{z}(t))\ge -\langle \mathrm{z}(t),\dot {\xx}(t)\rangle=-\frac{d}{dt}\varphi(\xx(t)),\quad\text{for a.e. }t\in(0,T),
\end{equation}
with equality if and only if $-\mathrm{z}(t)\in \partial \Diss(\dot {\xx}(t))$, namely if \eqref{eq:DNEintro} is satisfied. The derivation above relies on a chain rule for the map $t \mapsto \varphi(\xx(t))$, which is nontrivial for highly nonconvex  energies. We make this precise in assumption \ref{CR} of Proposition~\ref{prop:DeGiorgi}; sufficient conditions are discussed in \cite[Section~2.2]{MRSNonsmooth}.

Although showing that minimizers of $\mathcal J(T,\cdot)$ exist is not difficult under suitable general assumptions, proving that the minimum is exactly zero poses real challenges (of course, without knowing a priori the existence of a solution). The same issue appears in the similar Brezis-Ekeland principle \cite{BrezEk2,BrezEk}, which holds for Hilbertian gradient flows with convex energies $\varphi$. See also \cite{Stef,Visintin} for an extension to \emph{convex} doubly nonlinear equations and equations with monotone operators. According to such principle, an absolutely continuous curve $\bar {\xx}\colon [0,T]\to X$ exiting from $x_0$ is a gradient flow if and only if it is a \emph{null-minimizer} of the functional
\begin{equation*}
    \mathcal{B}\!\mathcal{E}(T,\xx)=\frac 12|\xx(T)|^2+\int_0^T\Big(\varphi(\xx(\tau))+\varphi^*(-\dot {\xx}(\tau))\Big) \,d\tau-\frac 12|x_0|^2.
\end{equation*}
Again, the observation leading to Brezis-Ekeland principle is the inequality
\begin{equation*}
    \varphi(\xx(t))+\varphi^*(-\dot {\xx}(t))\ge -(\xx(t),\dot {\xx}(t))=-\frac{d}{dt}\frac 12|\xx(t)|^2, \quad\text{for a.e. }t\in(0,T),
\end{equation*}
which saturates if and only if $-\dot {\xx}(t)\in \partial\varphi(\xx(t))$, i.e. if $\xx$ is a gradient flow.

Within this framework the task of proving that the minimum of $\mathcal{B}\!\mathcal{E}(T,\cdot)$ is in fact zero has been fully accomplished in \cite{GhoussTzou} by considering a suitable dual formulation of the problem and exploiting its \lq\lq self-dual\rq\rq structure. We stress that convexity of the energy $\varphi$ and the quadratic character of the dissipation functional play a crucial role in their argument, specifically in the application of the tool of the so-called self-dual Lagrangians (besides being necessary for the use of the Brezis-Ekeland functional $\mathcal{B}\!\mathcal{E}(T,\cdot)$ itself). The approach has been then extended to gradient flows of more general $\lambda$-convex energies in \cite{GhoussMcCann}. Still in the convex framework, we also mention the prior results of \cite{Auch,Roub} where the existence of null minimizers have been proved under additional controlled-growth assumptions. 

Our approach differs fundamentally in scope, since  the De~Giorgi's functional \eqref{eq:DeGiorgi} imposes no convexity on $\varphi$ and 
the presence of the slope $S$ prevents any direct convex or self-dual structure. The aim of the paper is to develop a new duality argument for the De Giorgi's functional \eqref{eq:DeGiorgi}, leading to existence of \emph{null-minimizers} of $\mathcal J(T,\cdot)$ (and so to solutions to the doubly nonlinear equation \eqref{eq:DNEintro}) without any kind of convexity assumption on $\varphi$. We will recover the null-minimization property by relaxing the problem to spaces of measures and invoking Hamilton-Jacobi theory.

We also stress that this argument provides a direct method to solve \eqref{eq:DNEintro}, in contrast to the existing iterative procedures based on time-discretization such as the aforementioned Minimizing Movements scheme. Our approach produces solutions directly as minimizers of a global-in-time functional and  yields a variational characterization potentially useful for qualitative analysis. More significantly, the method extends naturally to non-autonomous problems with state-dependent dissipation $\Diss(t, x, v)$ and time-dependent energy $\varphi(t,x)$---a setting where time-discretization schemes encounter technical difficulties, as each implicit step couples the unknown state and velocity in a nonlinear fashion. We refer to Remark~\ref{rmk:nonautonomous} for a simple example and to Section~\ref{sec:nonautonomous} for the precise framework. 

In order to keep the presentation light and clear, we now proceed explaining our method in a smooth finite-dimensional setting, postponing the discussion regarding the main technical difficulties we have to face in general Banach spaces. It essentially relies on three ingredients: the application of a superposition principle in order to convexify and relax the De Giorgi's functional to spaces of measures, the use of a minimax theorem allowing to pass to a dual formulation, and the validation of a comparison principle for the Hamilton-Jacobi inequality arising as a constraint in the dual problem. We believe that this original methodology represents a robust and versatile approach, sufficiently flexible to be adapted to a wide range of other problems and frameworks, as gradient flows in a metric space. We plan to address this case in a forthcoming paper.

\subsection*{Outline of the strategy in Euclidean spaces}
Let us consider a convex, smooth, superlinear dissipation potential $\Diss\colon \R^n\to [0,+\infty)$ satisfying $\Diss(0)=0$ and a smooth energy $\varphi\colon \R^n\to [0,+\infty)$ such that 
\begin{equation}\label{eq:coerc}
    \lim\limits_{|x|\to +\infty}\varphi(x)=\lim\limits_{|x|\to +\infty}|\mathrm D\varphi(x)|=+\infty.
\end{equation}
In this setting, the Cauchy problem under investigation is the ordinary differential equation
\begin{equation}\label{eq:finitedim}
     \begin{cases}
        \mathrm D \Diss(\dot {\xx}(t))+\mathrm D\varphi(\xx(t))= 0,\\
        \xx(0)=x_0\in \R^n,
    \end{cases}
\end{equation}
and the corresponding De Giorgi's functional \eqref{eq:DeGiorgi} takes the form
\begin{equation*}
    \mathcal J(T,\xx)=\varphi(\xx(T))+\int_0^T \Big(\Diss(\dot {\xx}(\tau))+\Diss^*(-\mathrm D\varphi(\xx(\tau)))\Big)\,d\tau-\varphi(x_0).
\end{equation*}

Recalling De Giorgi's principle, the result we obtain can be stated as follows.

\begin{teorema}\label{thm:intro}
    Under the previous assumptions, for all initial datum $x_0\in \R^n$ there holds
    \begin{equation*}
          \min\left\{{\mathcal{J}}(T,\xx):\, \xx\in AC([0,T];\R^n),\, \xx(0)=x_0\right\}= 0.
      \end{equation*}
      In particular, any minimizer solves \eqref{eq:finitedim}.
\end{teorema}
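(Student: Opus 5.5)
Since $\mathcal J(T,\xx)\ge 0$ for every admissible curve by the chain rule and Fenchel's inequality \eqref{eq:cr}, everything reduces to two facts: the minimum is attained, and its value is $\le 0$. Attainment is the routine part. Superlinearity of $\Diss$ gives equi-integrability of $\dot\xx$ along a minimizing sequence. Nonnegativity of $\varphi$ and of the slope term then yields compactness in $AC$ with weak $L^1$ convergence of derivatives. Lower semicontinuity follows from convexity of $\Diss$ together with continuity of $\varphi$ and of $S=\Diss^*(-\mathrm D\varphi)$. Once the minimum is known to be zero, any minimizer saturates \eqref{eq:cr} a.e., hence solves \eqref{eq:finitedim}.

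For the upper bound I will not work with curves directly. Instead I relax to measures and use duality.

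\textbf{Step 1 (relaxation).} I consider pairs $(\mu_t,\nu)$, where $\mu_t$ is a curve of probability measures with $\mu_0=\delta_{x_0}$, $\nu$ is a vector measure on $[0,T]\times\R^n$, and $\partial_t\mu+\nabla\cdot\nu=0$. On these pairs I define the convex functional
\[
\hat{\mathcal J}(\mu,\nu)=\int\varphi\,d\mu_T+\int_0^T\!\!\int\Big(\Diss\big(\tfrac{d\nu}{d\mu}\big)+S\Big)\,d\mu_t\,dt-\varphi(x_0).
\]
Ambrosio's superposition principle represents any finite-energy pair as a probability measure on curves starting at $x_0$, with $\hat{\mathcal J}$ bounded below by the average of $\mathcal J$, using Jensen's inequality on $\Diss$. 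Hence $\inf\mathcal J=\inf\hat{\mathcal J}$, and it suffices to show $\inf\hat{\mathcal J}\le 0$.

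\textbf{Step 2 (minimax).} I write the continuity-equation constraint through a Lagrange multiplier $u\in C^1_b$. Integrating by parts gives
\[
\inf\hat{\mathcal J}=\inf_{\mu,\nu}\sup_u\Big[\int(\varphi-u_T)\,d\mu_T+u_0(x_0)+\iint\big(\partial_tu+\langle\nabla u,\tfrac{d\nu}{d\mu}\rangle+\Diss(\cdot)+S\big)\,d\mu\,dt\Big]-\varphi(x_0).
\]
The expression is convex in $(\mu,\nu)$ and linear in $u$. Using von Neumann's minimax theorem, I swap $\inf$ and $\sup$; the required compactness comes from the coercivity \eqref{eq:coerc}, which gives tightness of sublevels. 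Minimizing pointwise in the velocity produces $-\Diss^*(-\nabla u)$. The dual problem then becomes
\[
\sup\Big\{u_0(x_0)-\varphi(x_0):\ \partial_tu-\Diss^*(-\nabla u)+S\ge 0,\ u_T\le\varphi\Big\}.
\]
A plausible sign convention is that the inner infimum is $-\infty$ unless these constraints hold.

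\textbf{Step 3 (backward bound) — the main obstacle.} I must show that every subsolution $u$ of this Hamilton--Jacobi inequality with $u_T\le\varphi$ satisfies $u_0\le\varphi$. The time-independent function $\varphi$ is itself a solution: with $v=-\mathrm D\Diss^*(-\mathrm D\varphi)$ one has $\Diss^*(-\mathrm D\varphi)=S$, so the inequality holds with equality. The claim is therefore a comparison principle run backward from $t=T$.

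My first attempt is a direct argument. Set $w=u-\varphi$, which satisfies $\partial_tw\ge\Diss^*(-\nabla u)-\Diss^*(-\mathrm D\varphi)$. By convexity of $\Diss^*$ the right side is bounded below by $\langle \mathrm D\Diss^*(-\mathrm D\varphi),-\nabla w\rangle$, a linear transport term with smooth field $b=-\mathrm D\Diss^*(-\mathrm D\varphi)$. Consequently $w$ is a supersolution of $\partial_tw+\langle b,\nabla w\rangle\ge0$, so $w$ is nondecreasing along backward characteristics of $b$. This gives $w(0,x_0)\le w(T,\mathrm X_T)\le0$, where $\mathrm X$ solves $\dot{\mathrm X}=b(\mathrm X)$ from $x_0$.

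The delicate point is global existence of that flow for merely smooth $\varphi$. I would handle it by truncation, using \eqref{eq:coerc}, or by approximating $\varphi$ on sublevels. Finding such a flow is essentially the solution we are seeking, so this step really must be done at the level of the bounded cylinder subsolutions, with a careful localization argument. Granting it, the dual value is $\le0$, so $\min\mathcal J=0$.
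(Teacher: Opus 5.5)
Your Steps 1 and 2 match the paper: relaxation via the superposition principle, von Neumann's theorem with compactness from \eqref{eq:coerc}, and the dual problem over $C^1_b$ subsolutions of \eqref{eq:HJintro}. The gap is Step 3, the heart of the theorem, which you leave as ``granting it''; the route you sketch cannot supply it. Redo your linearization with the correct sign: $\partial_t w\ge\Diss^*(-\nabla u)-\Diss^*(-\mathrm D\varphi)\ge-\langle\mathrm D\Diss^*(-\mathrm D\varphi),\nabla w\rangle$, so your field $b$ should be $+\mathrm D\Diss^*(-\mathrm D\varphi)$. Then $w$ is nondecreasing along $\dot{\mathrm X}=\mathrm D\Diss^*(-\mathrm D\varphi(\mathrm X))$, $\mathrm X(0)=x_0$. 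This is exactly the Cauchy problem \eqref{eq:finitedim}, so your comparison presupposes a solution of the very equation the duality argument is meant to produce. In this smooth Euclidean setting you could build such a curve by Peano's theorem, using $\varphi$ as a Lyapunov function. But then that curve is already a null-minimizer by the equality case of \eqref{eq:cr}, and Steps 1--2 become idle. Moreover, $\mathrm D\Diss^*$ need not exist, since $\Diss$ is not assumed strictly convex, and none of this survives in the nonsmooth infinite-dimensional setting the method targets. The ``careful localization at the level of cylinder subsolutions'' is never specified, so as written the upper bound is not proved.

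The missing idea is that no linearization or flow is needed, because the two $\Diss^*$-terms cancel exactly at a contact point. Set $u_\varepsilon:=u+\varepsilon(t-T-1)$, so that $-\partial_tu_\varepsilon+\Diss^*(-\nabla u_\varepsilon)\le\Diss^*(-\mathrm D\varphi)-\varepsilon$ and $u_\varepsilon(T,\cdot)\le\varphi-\varepsilon$. Since $u$ is bounded and $\varphi$ is continuous and coercive, $K:=\{(t,x):u_\varepsilon(t,x)\ge\varphi(x)\}$ is compact. If $K$ is nonempty, let $\bar t$ be its largest time; then $\bar t<T$. Maximality and continuity in $t$ force $u_\varepsilon(\bar t,\cdot)\le\varphi$, with equality at some $\bar x$. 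Hence $\nabla u_\varepsilon(\bar t,\bar x)=\mathrm D\varphi(\bar x)$. Also $u_\varepsilon(t,\bar x)<\varphi(\bar x)$ for $t>\bar t$, which gives $\partial_tu_\varepsilon(\bar t,\bar x)\le0$. At $(\bar t,\bar x)$ this yields $0\le-\partial_tu_\varepsilon=-\partial_tu_\varepsilon+\Diss^*(-\nabla u_\varepsilon)-\Diss^*(-\mathrm D\varphi)\le-\varepsilon$, a contradiction. So $K=\emptyset$, and letting $\varepsilon\to0$ gives $u\le\varphi$ on $[0,T]\times\R^n$, in particular $u(0,x_0)\le\varphi(x_0)$. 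This argument uses neither differentiability of $\Diss^*$ nor characteristics. It is also what extends to the general case. There the contact point only gives $e^{a\bar t}\mathrm D_x\xi_\varepsilon(\bar t,\bar x)\in\partial\varphi(\bar x)$, and one concludes using $\rS(\bar x)\le\Diss^*(-z)$ for $z\in\partial\varphi(\bar x)$.
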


The existence of the minimum is ensured by standard arguments, while the chain-rule in \eqref{eq:cr}, which clearly holds in the current smooth context, implies that it is nonnegative. Thus, the only nontrivial fact and the core of our argument is the validity of the opposite inequality
\begin{equation}\label{eq:minintro}
          \min\left\{{\mathcal{J}}(T,\xx):\, \xx\in AC([0,T];\R^n),\, \xx(0)=x_0\right\}\le 0.
      \end{equation}
Its proof follows three main steps.\medskip

\textbf{Step 1.} \textbf{Convexification and relaxation of the functional in spaces of measures.}
    In order to make the problem convex, we relax $\mathcal J(T,\cdot)$ by passing from curves to measures. We associate to the curve $\xx$ a nonnegative measure $\mu$ over the space-time $X_T:=[0,T]\times \R^n$, to the final position $\xx(T)$ a nonnegative measure $m$ over $\R^n$, and to the velocity $\dot {\xx}$ a vector-valued measure $\nu$ over $X_T$. The link between the three measures $m$, $\mu$, $\nu$ and the initial datum is encoded in the \emph{continuity equation}
\begin{equation}\label{eq:conteqintro}
        \begin{cases}    \partial_t\mu+\operatorname{div}\nu=0,\qquad \text{in }X_T,\\
            \mu(T,\cdot)=m,
            \\
            \mu(0,\cdot) = \delta_{x_0},
        \end{cases}
    \end{equation}
    meant in duality with the class of smooth and bounded functions $C_b^1(X_T)$. The choice of such class is useful for future considerations. Anyway, we point out that this is equivalent to the usual interpretation in the sense of distributions, whenever $\nu$ is absolutely continuous with respect to $\mu$ (as it will be).

    The relaxed problem consists in minimizing the functional
    \begin{equation}\label{eq:E}
        \mathcal E(m,\mu,\nu):=\int_{\R^n}\varphi(x)\, dm(x)+\int_{X_T}\left(\Diss\left(\frac{d\nu}{d\mu}(t,x)\right)+\Diss^*(-\mathrm D\varphi(x))\right)\, d\mu(t,x)-\varphi(x_0),
    \end{equation}
    among all the triplets $(m,\mu,\nu)$ subject to the constraint \eqref{eq:conteqintro}. In \eqref{eq:E}, we are tacitly imposing that the functional is $+\infty$ if $\nu$ is not absolutely continuous with respect to $\mu$, i.e. when the density $\frac{d\nu}{d\mu}$ is not defined. This problem is now convex: indeed, the constraint is convex and the functional $\mathcal{E}$ is affine, except for the term $\int_{X_T}\Diss\left( \frac{d\nu}{d\mu} \right) d\mu$, usually called \emph{action}, which however turns out to be convex as shown in \cite[Chapter 2]{ambrosio2000functions}.
    
    In order to prove that the original and the relaxed problem are equivalent, so that their minimum values coincide, we argue as follows. It is immediate to check that, given an absolutely continuous curve $\xx$ with $\xx(0)=x_0$, the triplet given by 
    \begin{equation*}
    m=\delta_{\xx(T)},\quad \mu= \mathcal{L}^1\otimes\delta_{\xx(t)}, \quad \nu=\dot {\xx}(t)\mathcal{L}^1\otimes\delta_{\xx(t)}
    \end{equation*}
    satisfies \eqref{eq:conteqintro} and $\mathcal J(T,\xx)=\mathcal E(m,\mu,\nu)$, whence the original problem is larger than the relaxed one. The opposite inequality, which is the key one pointing in the same direction of \eqref{eq:minintro}, instead is more involved, and requires the use of the so-called \emph{superposition principle}, see \cite[Theorem 8.2.1]{ambrosio2005gradient}, which reconstructs absolutely continuous curves from measure-valued solutions to the continuity equation. Consider then a triplet $(m,\mu,\nu)$ solving \eqref{eq:conteqintro}: it is well-known, see \cite[Lemma 8.1.2]{ambrosio2005gradient} that this forces the first marginal of $\mu$ to be the Lebesgue measure on $[0,T]$, providing a continuous curve of probability measures $t\mapsto \mu_t \in \PP(\R^n)$, with $\mu_0 = \delta_{x_0}$ and $\mu_T = m$. Moreover, if $\nu$ is absolutely continuous with respect to $\mu$, the system \eqref{eq:conteqintro} can be rewritten in the more common way
    \[ \partial_t \mu_t + \operatorname{div}(v_t \mu_t) = 0 \qquad \text{where} \qquad v(t,x) = \frac{d\nu}{d\mu}(t,x).\] The superposition principle now ensures the existence of a probability measure on the space of continuous curves, $ \lambda \in \PP(C([0,T],\R^n))$, with assigned time-marginals $(\mu_t)_{t\in[0,T]}$ and concentrated on absolutely continuous curves satisfying $\dot{\xx}(t) = v(t,\xx(t))$. This finally yields 
    \[\mathcal{E}(m,\mu,\nu) = \int_{AC([0,T],\R^n)} \mathcal{J}(T,\xx)\, d\lambda(\xx) \geq \min\{ \mathcal{J}(T,\xx):\, \xx\in AC([0,T];\R^n),\, \xx(0)=x_0\}.\]
    Thus, in order to prove \eqref{eq:minintro}, it is enough to show
    \[\min\left\{ \mathcal{E}(m,\mu,\nu) \ : \ \eqref{eq:conteqintro} \text{ is satisfied} \right\} \leq 0.\]

    \textbf{Step 2.} \textbf{Use of a minimax theorem and the dual problem.}
    We now observe that the constraint introduced by the continuity equation \eqref{eq:conteqintro} can be explicitly formulated as
    \begin{equation}\label{eq:constraintintro}
    \mathcal C(\xi, m,\mu,\nu)=0,\qquad \text{for all $\xi\in C^1_b(X_T)$},
\end{equation}   
where
\begin{equation*}
    \mathcal C(\xi, m,\mu,\nu):= \int_{X_T} \partial_t \xi\, d\mu + \int_{X_T}   \mathrm D_x\xi\cdot d\nu  - \int_{\R^n}  \xi(T)\, d m +  \xi(0,x_0).
\end{equation*} 
    Thus, introducing the Lagrangian $\mathcal L(\xi,m,\mu,\nu)=\mathcal E(m,\mu,\nu)+\mathcal C(\xi,m,\mu,\nu)$, the linearity of $\mathcal C$ with respect to $\xi$ allows to rewrite the relaxed (constrained) optimization problem as a saddle point problem
    \begin{equation*}
        \min\limits_{(m,\mu,\nu)} \sup\limits_\xi \mathcal L(\xi,m,\mu,\nu).
    \end{equation*}
    As previously mentioned, the functional $\mathcal L$ is convex in the variables $(m,\mu,\nu)$ and clearly concave in $\xi$. This structure permits to interchange the order of $\inf$ and $\sup$, as stated by the celebrated Von Neumann minimax theorem \cite[Theorem 3.1]{Sim}. The following recent general version, essential for our purposes, is taken from \cite{OrPorSav}.
    \begin{teorema}[Von Neumann]\label{thm:VonNeumann}
    Let $A$ and $B$ be convex subspaces of some vector spaces, and let $B$ be endowed with some Hausdorff topology. Assume that $\mathcal L\colon A\times B\to \R$ satisfies
    \begin{itemize}
        \item[$\bullet$] $a\mapsto \mathcal L(a,b)$ is concave in $A$ for all $b\in B$;
        \item[$\bullet$] $b\mapsto \mathcal L(a,b)$ is convex and lower semicontinuous in $B$ for all $a\in A$.
    \end{itemize}
    If there exist $\overline C>\sup\limits_{a\in A}\inf\limits_{b\in B} \mathcal L(a,b)$ and $\overline a\in A$ such that
    \begin{equation*}
        \{b\in B:\, \mathcal L(\overline a,b)\le \overline C\}\quad \text{ is nonempty and compact},
    \end{equation*}
    then one has
    \begin{equation*}
        \min\limits_{b\in B} \sup\limits_{a\in A}\mathcal L(a,b)=\sup\limits_{a\in A}\inf\limits_{b\in B} \mathcal L(a,b).
    \end{equation*}
\end{teorema}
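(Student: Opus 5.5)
The inequality $\inf_{b}\sup_{a}\mathcal L(a,b)\ge\sup_{a}\inf_{b}\mathcal L(a,b)$ holds in general, so the content of Theorem~\ref{thm:VonNeumann} is the reverse inequality together with the fact that the infimum on the left is attained. Set $s:=\sup_{a}\inf_{b}\mathcal L(a,b)$, which is finite because $s<\overline C$ and $\mathcal L$ is real valued. Let $K:=\{b\in B:\mathcal L(\overline a,b)\le\overline C\}$, which is compact and nonempty by assumption. For $c\in(s,\overline C)$ and $a\in A$ define
\begin{equation*}
K_c(a):=\{b\in K:\ \mathcal L(a,b)\le c\}.
\end{equation*}
Each $K_c(a)$ is a closed subset of the compact set $K$, by lower semicontinuity of $\mathcal L(a,\cdot)$. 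The plan is to show that the family $\{K_c(a)\}_{a\in A}$ has the finite intersection property. Compactness of $K$ then gives a point $b_c\in\bigcap_{a}K_c(a)$, that is, $\sup_a\mathcal L(a,b_c)\le c$.

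Next let $c\downarrow s$. The sets $M_c:=\bigcap_a K_c(a)$ are nonempty, compact and decreasing in $c$, so $\bigcap_{c>s}M_c\neq\emptyset$. Any $\bar b$ in this intersection satisfies $\sup_a\mathcal L(a,\bar b)\le s$. Combined with the trivial inequality, this shows that $\bar b$ attains $\min_b\sup_a\mathcal L(a,b)=s$. Note that $b\mapsto\sup_a\mathcal L(a,b)$ is lower semicontinuous as a supremum of lower semicontinuous functions, although attainment already follows directly from this construction.

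The key step, and the only real obstacle, is the finite intersection property. It is a finite-dimensional separation argument. Fix $a_1,\dots,a_n\in A$, put $a_0:=\overline a$, and consider thresholds $t=(t_0,\dots,t_n)$ with $t_0=\overline C$ and $t_i=c$ for $i\ge1$. We must find $b\in B$ with $\mathcal L(a_i,b)\le t_i$ for every $i=0,\dots,n$. Let $F(b):=(\mathcal L(a_i,b))_{i=0}^n\in\R^{n+1}$ and
\begin{equation*}
E:=\{y\in\R^{n+1}:\ y\ge F(b)\text{ componentwise for some }b\in B\}.
\end{equation*}
The set $E$ is nonempty, upward closed, and convex, the last because each $\mathcal L(a_i,\cdot)$ is convex and $B$ is convex.

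Suppose, for contradiction, that $t\notin E$. Finite-dimensional separation gives a nonzero $\lambda\in\R^{n+1}$ with $\sum_i\lambda_iy_i\ge\sum_i\lambda_it_i$ for all $y\in E$. Upward closedness of $E$ forces $\lambda\ge0$, so we may normalize $\lambda$ to lie in the simplex. Put $a_\lambda:=\sum_i\lambda_ia_i\in A$, which is allowed since $A$ is convex. Concavity of $\mathcal L(\cdot,b)$ then yields, for every $b\in B$,
\begin{equation*}
\mathcal L(a_\lambda,b)\ge\sum_i\lambda_i\mathcal L(a_i,b)\ge\sum_i\lambda_it_i\ge c,
\end{equation*}
where the last inequality uses $\overline C>c$. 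Hence $\inf_b\mathcal L(a_\lambda,b)\ge c>s$, contradicting the definition of $s$. Therefore $t\in E$, and the resulting $b$ lies in $K\cap K_c(a_1)\cap\dots\cap K_c(a_n)$.

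The role of the extra index $a_0=\overline a$ with the larger threshold $\overline C$ is exactly to ensure that these finitely many constraints select points inside the compact set $K$. This is where the hypothesis on the sublevel set enters.
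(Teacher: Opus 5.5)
Your argument is correct and complete. Note that the paper does not prove this statement. It quotes it from \cite{OrPorSav} (going back to \cite[Theorem 3.1]{Sim}) and only checks its hypotheses in Proposition~\ref{prop: min sup = sup inf}, so there is no internal proof to compare against.

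What you give is the classical self-contained proof of Kneser--Fan type. You obtain the finite intersection property of the relatively closed sets $K_c(a)$ inside the compact $K$ from a separation argument in $\R^{n+1}$ applied to the convex, upward closed set $E$. The extra constraint indexed by $a_0=\overline a$, with threshold $\overline C>c$, forces the finitely many constraints to be realized inside $K$. Compared with invoking the literature, your route:
\begin{itemize}
\item is self-contained;
\item produces the minimizer $\bar b$ directly;
\item never uses the Hausdorff assumption on $B$. You only need that sublevels of $\mathcal L(a,\cdot)$ are closed, and that a family of closed subsets of a compact space with the finite intersection property has nonempty intersection.
\end{itemize}

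One small imprecision: real-valuedness of $\mathcal L$ alone does not give $s>-\infty$, since $\inf_b\mathcal L(a,b)$ could be $-\infty$ for every $a$. The correct reason is $s\ge\inf_B\mathcal L(\overline a,\cdot)=\min_K\mathcal L(\overline a,\cdot)>-\infty$. This holds because $\mathcal L(\overline a,\cdot)$ is lower semicontinuous and real valued on the nonempty compact $K$, and exceeds $\overline C$ outside $K$. Alternatively, your construction itself excludes $s=-\infty$: it would produce $\bar b$ with $\mathcal L(a,\bar b)\le c$ for every real $c$. This does not affect the rest of the proof.
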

In our setting the obvious option is $A=C^1_b(\R^n)$ and $B$ corresponding to the set of triplets $(m,\mu,\nu)$ such that the energy \eqref{eq:E} is finite. We also stress how the choice of a suitable topology on $B$, granting both lower semicontinuity and compactness, is pivotal for a correct application of the above minimax result. Due the expression of $\mathcal E$, the natural topology to consider is the narrow topology in each component. 

Let us briefly check that we fall in the Theorem's assumptions. For a given $\xi\in C^1_b(X_T)$ the functional $\mathcal C(\xi,\cdot,\cdot,\cdot)$ is even continuous, while the only term of $\mathcal E$ for which lower semicontinuity is nontrivial is the action $$\int_{X_T}\Diss\left(\frac{d\nu}{d\mu}(t,x)\right) \, d\mu(t,x),$$
which turns out to be lower semicontinuous by \cite[Chapter 2]{ambrosio2000functions}. Now, setting $\overline{\xi}(t,x):= (t-T-1)$, and considering the set
$${\Sigma}_{\overline C} := \{(m,\mu,\nu) \ : \ \mathcal{L}(\overline{\xi},m,\mu,\nu) \leq\overline{C}\},$$
the coercivity of $\varphi$ and $|\mathrm D \varphi|$ assumed in \eqref{eq:coerc} yield, respectively, compactness of $m$ and $\mu$ by Prokhorov theorem \cite{dellacherie1978probabilities} (note that sublevels of $\Diss^*$ are bounded by the continuity of $\Diss$ in $0$, thus $\Diss^*(-\mathrm D\varphi)$ is coercive as well). Regarding $\nu$, both equi-boundedness and equi-tightness are consequences of the superlinearity of $\Diss$ together with the previously obtained compactness of $\mu$.

    After switching $\inf$ and $\sup$ by applying Von Neumann's Theorem, simple computations yield
    \begin{align*}
        &\sup\limits_\xi \inf\limits_{(m,\mu,\nu)}\mathcal L(\xi,m,\mu,\nu)\\
        =& \sup\limits_\xi\Bigg( \xi(0,x_0)-\varphi(x_0)+\inf\limits_m\int_{\R^n}(\varphi-\xi(T))\, d m+\inf\limits_\mu\Bigg(\int_{X_T}\partial_t\xi+\Diss^*(-\mathrm D\varphi)\,d\mu\\
        &\qquad+\inf\limits_{|\nu|\ll\mu}\int_{X_T}\left(\Diss\left(\frac{d\nu}{d\mu} \right)+\mathrm D_x\xi\cdot \frac{d\nu}{d\mu}\right) \, d \mu\Bigg)\Bigg)\\
        =& \sup\limits_\xi\Bigg( \xi(0,x_0)-\varphi(x_0)+\inf\limits_m\int_{\R^n}(\varphi-\xi(T))\, d m+\inf\limits_\mu\int_{X_T}(\partial_t\xi+\Diss^*(-\mathrm D\varphi)-\Diss^*(-\mathrm D_x\xi))\, d \mu\Bigg).
    \end{align*}
    Using the general fact that 
    \begin{equation}\label{eq:generalintro}
        \inf\limits_{\rho\in \mathcal M_+(Y)}\int_Y f d\rho=\begin{cases}
            0,&\text{if }f(y)\ge 0\text{ for all }y\in Y,\\
            -\infty, &\text{otherwise,}
        \end{cases}
    \end{equation}
    we finally infer
    \[\min\left\{ \mathcal{E}(m,\mu,\nu) \ : \ \eqref{eq:conteqintro} \text{ is satisfied} \right\} =\sup\limits_\xi\left\{\xi(0,x_0)-\varphi(x_0):\, \xi\in \HJ\right\},\]    
    where the dual constraint $\xi\in \HJ$ means that the function $\xi$ solves the following Hamilton-Jacobi-type inequality for all $(t,x)\in X_T$: 
    \begin{equation}\label{eq:HJintro}
        \begin{cases}
            - \partial_t\xi(t,x) + \Diss^*(-\mathrm D_x\xi(t,x)) \le \Diss^*(-\mathrm D\varphi(x)),\\
            \xi(T,x)\le \varphi(x).
        \end{cases}
    \end{equation}
    
    \textbf{Step 3.} \textbf{Bounding of the dual problem by zero.}
    We conclude the argument by proving that \eqref{eq:HJintro} possesses a \lq\lq backward boundedness\rq\rq property. Such property admits a natural interpretation as a \emph{comparison principle}: since formally the function $\varphi$ itself solves the associated equation, if a subsolution $\xi$ satisfies the terminal bound $\xi(T, x) \leq \varphi(x)$, then the Hamilton-Jacobi dynamics propagates this bound to all earlier times $t \in [0, T]$. Hence, we obtain $\xi(0,x)\le\varphi(x)$ for all $x\in \R^n$ and in particular we infer that
    \[\sup\limits_\xi\left\{\xi(0,x_0)-\varphi(x_0):\, \xi\in \HJ\right\}\le 0.\] 
    Collecting all the previous considerations we finally obtain \eqref{eq:minintro} and so Theorem \ref{thm:intro} is proved.
 
    The proof of the backward boundedness property proceeds by contradiction, analyzing the first time at which the bound is violated. For simplicity, let us assume that $\xi$ fulfils \eqref{eq:HJintro} with strict inequalities; this can be made general by perturbing the function $\xi$. Consider
    \begin{equation*}
        \overline{t}:=\max\{t\in [0,T):\, \text{there exists } x\in \R^n \text{ for which }\xi(t,x) \ge \varphi (x)\}.
    \end{equation*}
    From the definition of $\bar t$, exploiting smoothness of $\xi$ and $\varphi$, it immediately follows that there exists $\bar x$ such that $\xi(\bar t,\bar x)=\varphi(\bar x)$, and so necessarily $\partial_t\xi(\bar t,\bar x)\le 0$ and $\mathrm D_x \xi(\bar t,\bar x)=\mathrm D\varphi(\bar x)$.

    By plugging $(\overline t,\overline x)$ into the first inequality in \eqref{eq:HJintro}, we finally reach a contradiction:
    \begin{equation*}
        0\le -\partial_t\xi(\bar t,\bar x)=-\partial_t\xi(\bar t,\bar x)+   \Diss^*(-D_x\xi(\bar t,\bar x))- \Diss^*(-\mathrm D\varphi(\bar x))<0.
    \end{equation*} 
    
    \begin{oss}[\textbf{Time- and space-dependent dissipation}]\label{rmk:nonautonomous}
    Here we briefly discuss how our approach naturally handles time and space dependencies of the dissipation potential by illustrating the simple quadratic example
\[
\Diss(t,x, v) = \frac{1}{2} a(t,x) |v|^2,
\]
where $a \colon [0,T]\times\R^n \to (0, +\infty)$ represents a varying friction coefficient. The doubly nonlinear equation becomes
\[
a(t,\xx(t)) \dot{\xx} + \mathrm D\varphi(\xx) = 0,
\]
i.e. a gradient flow with time- and state-dependent metric, while the De Giorgi's functional reads
\begin{equation*}
    \mathcal J(T,\xx)=\varphi(\xx(T))+\int_0^T \Big(\frac 12 a(\tau,\xx(\tau))|\dot {\xx}(\tau)|^2+\frac{1}{2 a(\tau,\xx(\tau))}|\mathrm D\varphi(\xx(\tau))|^2\Big)\,d\tau-\varphi(x_0).
\end{equation*}  
Our method applies directly, indeed the three-step proof carries over with no modifications. The relaxation procedure is unchanged, since continuity equation and superposition principle are not affected by $a$. In the relaxed formulation the action functional
\[
(\mu, \nu)\mapsto
 \int_{X_T} \frac 12 a(t,x) \left| \frac{d\nu}{d\mu}(t,x) \right|^2 d\mu(t,x)
\]
remains convex despite the $(t,x)$-dependence of $a$, so that Von Neumann result still applies. The Hamilton-Jacobi inequality takes now the form
\[
-\partial_t \xi(t,x) + \frac1{2a(t,x)}
|\mathrm D \xi(t,x)|^2\leq \frac1{2a(t,x)}
|\mathrm D \varphi(x)|^2,
\]
and the backward boundedness property can be checked exactly as before.

Considering a time-dependent energy $\varphi(t,x)$ is also straightforward. This setting only introduces an additional term $-\int_0^T\partial_t\varphi(\tau, \xx(\tau))\, d\tau$ in the De~Giorgi's functional, accounting for the explicit variation of the energy landscape. Such term can be easily tackled under a natural controlled growth condition (see \eqref{eq:powerbound}), without affecting the overall analysis. 

We refer to Section~\ref{sec:nonautonomous} for the precise assumptions.
    \end{oss}

\begin{oss}[\textbf{A Dynamic Programming viewpoint}]
A connection between the De Giorgi's functional \eqref{eq:DeGiorgi} and the emergence of the Hamilton-Jacobi system \eqref{eq:HJintro} could be recovered within Control Theory and Dynamic Programming Principle. Indeed, the minimum of the energy-dissipation functional $\mathcal J(T,\cdot)$ is naturally related to a particular \emph{value function} defined as
\begin{equation*}
    V(t,x):=\inf\limits_{\mathrm{v}\in L^1(t,T;\R^n)}\left\{\int_t^T L(\xx(\tau),\mathrm{v}(\tau))\, d\tau+\varphi(\xx(T))\,:\, \xx\text{ solves }\dot {\xx}=\mathrm{v},\text{ and }\xx(t)=x\right\},
\end{equation*}
where the Lagrangian is given by
\begin{equation*}
    L(x,v):=\Diss(v)+\Diss^*(-\mathrm D\varphi(x)).
\end{equation*}
Introducing the associated Hamiltonian function
\begin{align}\label{eq:Hamiltonian}
     H(x,p):=& \inf\limits_{v\in \R^n}\left\{p\cdot v+L(x,v)\right\}\nonumber\\
    =& -\Diss^*(-p)+\Diss^*(-\mathrm D\varphi(x)),
\end{align}
it is well known---see e.g. \cite[Chapter 10]{Evans} or \cite[Chapter III.3]{BardiCapuzzoDolcetta}---that, under suitable smoothness and compactness assumptions, the value function $V$ is the unique (viscosity) solution of the Hamilton--Jacobi equation
\begin{equation}\label{eq:HJV}
    \begin{cases}
        -\partial_t V(t,x)-H(x,\mathrm D_x V(t,x))=0,\\
        V(T,x)=\varphi(x).
    \end{cases}
\end{equation}
Exploiting the expression \eqref{eq:Hamiltonian}, the above system directly corresponds to \eqref{eq:HJintro}. As previously observed, $\varphi$ trivially solves \eqref{eq:HJV} and so uniqueness yields $V(t,x)\equiv \varphi(x)$. In particular there holds
\begin{equation*}
    \min\left\{{\mathcal{J}}(T,\xx):\, \xx\in AC([0,T];\R^n),\, \xx(0)=x_0\right\}=V(0,x_0)-\varphi(x_0)=0,
\end{equation*}
formally suggesting the validity of Theorem \ref{thm:intro}.

The previous argument is not rigorous since in general the value function $V$ is not differentiable, and in fact this lack of regularity is expected if $\varphi$ is not smooth. Our approach bypasses this issue, thus providing a successful theory even in an infinite-dimensional nonsmooth setting, identifying in a proper way the dual problem. By passing to the weaker framework of measures, one indeed gains regularity of the dual \lq\lq variables\rq\rq, which turn out to be smooth subsolutions of the Hamilton-Jacobi equation, for which now a comparison principle (essentially replacing the role played by uniqueness) is easy to prove.
\end{oss}

\subsection*{Issues arising in the infinite-dimensional setting}
The paper presents in a rigorous and complete way the previously described idea extending it to a genuine abstract infinite-dimensional framework, which of course brings additional difficulties in the analysis. First, besides the highly nonsmoothness of the involved energies and potentials, which requires the tool of Fr\'echet subdifferentials in place of standard derivatives, assuming coercivity of the slope \eqref{eq:coerc} is too restrictive, since we want to cover cases where $\varphi$ is not $\lambda$-convex. In order to tackle this problem we will introduce an exponentially weighted version of the De Giorgi's functional, see \eqref{eq:Ja}. The recovery of null-minimizers for this new functional still turns out to be equivalent to finding solutions to the doubly nonlinear equation \eqref{eq:DNEintro}, under the validity of a proper chain-rule formula. We refer to Proposition \ref{prop:DeGiorgi} for details.

Moreover, in order to introduce the relaxed functional $\mathcal E$ in the spaces of measures, it is important to give a proper meaning to the continuity equation \eqref{eq:conteqintro} in general Banach spaces. In particular, the choice of an appropriate subclass of bounded smooth functions (the so-called cylinder functions, Definition\ref{def:cylinder}) is crucial to identify the appropriate form of the dual problem. Such test functions are rich enough to recover the validity of the superposition principle in Banach spaces, as noticed in \cite[Theorem~5.2]{ambrosio2021spatially}.

Finally, the last subtle issue we have to face in infinite dimension is to determine a good topology for the triplet $(m,\mu,\nu)$ in order to apply Von Neumann's theorem. Indeed, although $m$ and $\mu$ are still nonnegative measures, and the classical narrow topology remains a natural candidate, the measure $\nu$ now takes values in the ambient Banach space, and for such measures the choice of the proper narrow topology is delicate: it must be weak enough for compactness yet strong enough for lower semicontinuity. To this aim, inspired by the theory of vector-valued distributions \cite{schwartz1,schwartz2}, in Section~\ref{subsec: narrow topology} we adopt the \emph{narrow topology} for Banach-valued measures in duality with \emph{finite} sums of simple elements $\zeta(t,x)z$, with $\zeta$ continuous and bounded, and $z$ living in the dual space. We also prove local metrizability of such topology (Proposition~\ref{prop:localmetrnarrow}), a general compactness criterion (Theorem~\ref{thm: compactness for nu}), and a lower-semicontinuity result (Proposition~\ref{repr formula proposition}) tailored to this setting. Apart from the application to the present work, we strongly believe that these results may have an independent interest and may be useful for different purposes. 

\bigskip
    
\textbf{Plan of the paper.} The paper is organized as follows. Section \ref{sec:Notation} fixes the notations and lists some known notions we will use throughout the paper. Section \ref{sec: setting} is devoted to a rigorous discussion on the De Giorgi's principle for doubly nonlinear equations and to the statement of our main result, regarding the existence of null-minimizers of (a suitable variant of) \eqref{eq:DeGiorgi}. In Section \ref{subsec: narrow topology} we introduce useful tools we will employ for the proof of the main theorem. We first describe the narrow topology for Banach-valued measures, providing local metrizability and a compactness result; we then present the \emph{action functional} $\displaystyle (\mu,\nu)\mapsto\int_{X_T} \Diss\left(\frac{d\nu}{d\mu}\right)\, d\mu$ in an abstract and general setting, discussing its lower semicontinuity and convexity properties and presenting some duality formulas. The content of Section \ref{sec: convexification} is the convexification and relaxation procedure of the De Giorgi's functional to spaces of measures, exploiting the superposition principle. In Section \ref{sec: proof} we then apply the Von Neumann minimax theorem in order to pass to the dual problem, in which the Hamilton-Jacobi inequality appears as a constraint. We also show how to correctly  bound the dual problem, proving the backward boundedness property, thus concluding the argument. Finally, Section \ref{sec:nonautonomous} contains an extension of the result to the case of time- and space-dependent dissipation potentials and nonautonomous energies. At the end of the paper we attach three appendices in which, respectively, we prove the stated properties on narrow topology, we discuss the technical relations between our notion of continuity equation with the more classic one described by a vector field and a curve of probability measures, together with the superposition principle, and we recall suitable measurability properties of Bochner integration.

\section{Notation and preliminaries}\label{sec:Notation}
		
	For any normed space $(X,\|\cdot\|)$ we denote by $(X^*,\|\cdot\|_{*})$ its topological dual and by $\langle z, x\rangle$ the duality product between $z\in X^*$ and $x\in X$. The symbols $B_R$ and $B_R^*$ indicate the closed ball of radius $R>0$ centered at the origin, respectively in in $X$ and $X^*$. We use standard notations for Lebesgue, Sobolev and Bochner spaces. By $AC([a,b];X)$ we mean the set of absolutely continuous functions from $[a,b]$ to $X$.
    
    \subsection{Finite measures and narrow topology}
    Given a Polish space $(Y,\tau)$ and a separable Banach space $X$, we denote by $\mathcal M_+(Y)$ and $\mathcal M(Y;X)$, respectively,
    the sets of nonnegative finite measures and $X$-valued measures $\nu$ with finite total variation over $Y$,
     that is $|\nu|\in \mathcal{M}_+(Y)$, where
     \[|\nu|(A) := \sup\left\{ \sum_{n=1}^{+\infty} \|\nu(E_n)\| \ : \ \bigcup_n E_n = A, \ E_i \cap E_j = \emptyset \text{ as } i\neq j \right\}.\]
    By definition, a vector-valued measure $\nu\in \mathcal M(Y;X)$ is absolutely continuous with respect to $\mu\in \mathcal M_+(Y)$ if its total variation $|\nu|$ is absolutely continuous with respect to $\mu$ in the standard sense; in this case, we write $|\nu|\ll \mu$. 
    
    We recall that the \emph{narrow topology} on $\mathcal{M}_+(Y)$ is the coarsest
    topology for which the functionals $\displaystyle \mathcal{M}_+(Y) \ni \mu\mapsto \int_Y \zeta \,d\mu$ are continuous for all bounded and continuous functions $\zeta\in C_b(Y)$. Compactness in $\mathcal{M}_+(Y)$ with respect to this topology is established in the well-known Prokhorov theorem.
\begin{teorema}[\textbf{Prokhorov} \cite{dellacherie1978probabilities}]\label{thm: prohorov}
    Let $(Z,d)$ be a metric space. Let $\mathfrak{F}\subseteq \mathcal{M}_+(Z)$ be a family of non-negative measures with equi-bounded mass, that is
    \begin{equation}\label{eq: mass bound prohorov}
        \sup_{\mu\in \mathfrak{F}} \mu(Z) <+\infty.
    \end{equation}
        If $\mathfrak{F}$ is uniformly tight, namely
        \begin{equation}\label{eq: tightness}
            \text{for all } \varepsilon>0 \text{ there exists a compact set } K_{\varepsilon}\subset Z \text{ such that } \sup_{\mu\in \mathfrak{F}}\mu(Z\setminus K_\varepsilon) <\varepsilon,
        \end{equation}
        then $\mathfrak{F}$ is relatively compact in $\mathcal{M}_+(Z)$ with respect to the narrow topology.

        Conversely, if $(Z,d)$ is complete and separable, the opposite holds as well, namely relatively compact subsets of $\mathcal{M}_+(Z)$ satisfy \eqref{eq: mass bound prohorov} and \eqref{eq: tightness}.
    
\end{teorema}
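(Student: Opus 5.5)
The plan is to prove the direct part by a diagonal extraction combined with the Riesz (or Kantorovich) representation of narrow limits. The converse follows from Ulam's theorem (every finite measure on a Polish space is tight) and a covering argument. We reduce to the case of probability-normalized measures only when convenient: if $\sup_{\mu\in\mathfrak F}\mu(Z)=M$, then masses lie in $[0,M]$. After passing to a subnet, the masses converge, and the limit of zero masses is trivial, so it suffices to treat nets (or sequences) with masses bounded away from zero and renormalize.

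For the direct implication, I would embed $Z$ into a compact space through tightness, avoiding any compactification of $Z$ itself. Fix compact sets $K_{1/k}$ from \eqref{eq: tightness}, increasing in $k$. For a net $(\mu_\alpha)$ in $\mathfrak F$, consider the restrictions to each compact metric space $K_{1/k}$. By Riesz–Markov and Banach–Alaoglu in $C(K_{1/k})^*$, these restrictions have weak$^*$ cluster points with mass at most $M$; a diagonal/subnet choice gives a common subnet converging on every $K_{1/k}$ to measures $\lambda_k$, which are increasing in $k$ as measures on $Z$. The candidate limit is set to $\mu:=\sup_k\lambda_k$, a finite measure of mass at most $M$. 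Given $\zeta\in C_b(Z)$, split $\int\zeta\,d\mu_\alpha=\int_{K_{1/k}}\zeta\,d\mu_\alpha+\int_{Z\setminus K_{1/k}}\zeta\,d\mu_\alpha$. The second term is at most $\|\zeta\|_\infty/k$ uniformly in $\alpha$. For the first term, restriction to a closed set is not narrowly continuous, which is the main obstacle, so I would instead argue with the upper/lower portmanteau inequalities. The cleanest route is to avoid restrictions altogether: use the uniform tightness to define a positive linear functional $\Lambda(\zeta):=\lim_\alpha\int\zeta\,d\mu_\alpha$ on $C_b(Z)$ along a subnet (via Tychonoff on $\prod_\zeta[-M\|\zeta\|,M\|\zeta\|]$). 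Then show that $\Lambda$ is tight in the Daniell sense, i.e.\ if $\zeta_n\downarrow0$ pointwise with $\zeta_n\in C_b$, then $\Lambda(\zeta_n)\to0$. This follows since $\zeta_n\to0$ uniformly on $K_\varepsilon$ by Dini and $|\Lambda(\zeta_n)|\le\sup_{K_\varepsilon}\zeta_n\cdot M+\|\zeta_1\|_\infty\varepsilon$. Daniell–Stone then gives a measure $\mu$ with $\Lambda(\zeta)=\int\zeta\,d\mu$, which yields relative compactness (in sequential form, metrizability on bounded tight sets would allow sequences, but nets suffice).

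For the converse with $Z$ complete and separable, the mass bound follows from continuity of $\mu\mapsto\mu(Z)=\int 1\,d\mu$ on the compact closure. For tightness, fix $\varepsilon>0$ and a countable dense set $\{z_i\}$. For each $j$, I claim there is $n_j$ such that $\sup_{\mathfrak F}\mu\big(Z\setminus\bigcup_{i\le n_j}B(z_i,1/j)\big)<\varepsilon2^{-j}$. If not, there are $\mu_n\in\mathfrak F$ violating it with $n$ balls; extract a narrow cluster point $\mu$ and use portmanteau on open sets, $\mu(U_n)\le\liminf\mu_m(U_n)\le\liminf\mu_m(U_m)$, with $U_n=\bigcup_{i\le n}B(z_i,1/j)$ increasing to $Z$. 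Portmanteau here is applied in its form for finite measures with converging total masses. This gives $\mu(Z)\le\liminf\mu_m(Z)-\varepsilon2^{-j}$, contradicting the mass convergence $\mu_m(Z)\to\mu(Z)$. Then $K_\varepsilon:=\bigcap_j\overline{\bigcup_{i\le n_j}B(z_i,1/j)}$ is closed and totally bounded, hence compact by completeness, and carries all but $\varepsilon$ of the mass of every $\mu\in\mathfrak F$. The delicate step overall is the identification of the limit functional as a countably additive measure, which is exactly where tightness enters through Dini's theorem.
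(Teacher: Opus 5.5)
The paper gives no proof of this statement. It quotes it from Dellacherie--Meyer and uses it as a black box, most notably in Step~2 of the proof of Theorem~\ref{thm: compactness for nu} with $Z=Y\times X_{\overline\omega}$, which is a possibly non-complete metric space. So there is nothing internal to compare against. Judged on its own, your argument is correct.

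Your direct half builds a limit functional $\Lambda$ on $C_b(Z)$ via Tychonoff. You get Daniell continuity from Dini's theorem on $K_\varepsilon$ together with the uniform tail bound. You then apply Daniell--Stone, and its $\sigma$-algebra $\sigma(C_b(Z))$ is the Borel one because $Z$ is metric. This uses neither completeness nor separability of $Z$, which is exactly the generality the paper needs, and it gives compactness for nets rather than only sequences.

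Your converse is the classical covering argument and works as written. Two small remarks on it:
\begin{itemize}
\item It does not actually use Ulam's theorem, which is just its one-measure case.
\item A cluster point of $(\mu_n)$ only provides a convergent subnet. Your chain of inequalities still holds along it, because eventually $m_\beta\ge n$.
\end{itemize}

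Some tidying is needed.
\begin{itemize}
\item From ``every net in $\mathfrak{F}$ has a narrowly convergent subnet'' you should justify that $\overline{\mathfrak{F}}$ is compact. One way is to use that the narrow topology is completely regular. Another is to note that $\nu\mapsto\nu(Z)$ is narrowly continuous and $\nu\mapsto\nu(Z\setminus K_\varepsilon)$ is narrowly lower semicontinuous, since $Z\setminus K_\varepsilon$ is open. Then $\overline{\mathfrak{F}}$ is again bounded and tight, and your argument applies to nets in $\overline{\mathfrak{F}}$.
\item The normalization to probability measures in your first paragraph is never used and can be dropped.
\item The restriction route you abandon is not actually obstructed. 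Along your common subnet, $\int_{K_{1/k}}\zeta\,d\mu_\alpha\to\int\zeta\,d\lambda_k$ holds by construction, because the restriction is tested in $C(K_{1/k})^*$, not narrowly. Moreover, for $k'>k$ you have $\lambda_{k'}(Z)-\lambda_k(Z)=\lim_\alpha\mu_\alpha(K_{1/k'}\setminus K_{1/k})\le 1/k$. Hence $\limsup_\alpha\bigl|\int\zeta\,d\mu_\alpha-\int\zeta\,d\mu\bigr|\le 2\|\zeta\|_\infty/k$.
\end{itemize}
Either route is fine, but present only one of them.
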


\subsection{Radon-Nikod\'ym spaces}
A Banach space $X$ is said to be a \emph{Radon-Nikod\'ym space}, or to possess the Radon-Nikod\'ym property, if for any finite measure space $(\Omega,\mathcal{A},\mu)$ and any $X$-valued measure $\nu:\mathcal{A}\to X$ which is absolutely continuous with respect to $\mu$, the measure $\nu$ admits density $\displaystyle \frac{d \nu}{d\mu}\in L^1_\mu(\Omega;X)$ with respect to $\mu$. Standard known examples of Radon-Nikod\'ym spaces are reflexive spaces or separable dual spaces (see \cite[pp. 79]{DiestelUhl}).

An important property of Radon-Nikod\'ym spaces is that any curve $\xx\in AC([a,b];X)$ admits a summable derivative $\dot {\xx}\in L^1(a,b;X)$ and satisfies the identity 
\begin{equation*}
    \xx(t)-\xx(s)=\int_s^t \dot {\xx}(\tau)\,d\tau,\qquad\text{for all }s,t\in [a,b].
\end{equation*} 
 
\subsection{Fréchet and limiting subdifferentials}
	We briefly recall some basic definitions in convex  and nonsmooth  analysis (see for instance \cite{Borwein-Zhu,Mordukhovich18,Rocka,Rockafellar-Wets98}). Given a function $\Phi\colon X\to(-\infty,+\infty]$, its (viscosity) \emph{Fréchet subdifferential} 
    $\partial\Phi\colon X\rightrightarrows X^*$ at a point $x\in 
    \rm dom (\Phi):=\{\Phi<+\infty\}$ 
    is defined as the set of $z\in X^*$
    such that 
    there exists a $C^1$ function
    $g:X\to \R$ such that 
    $\mathrm D g(x)=z$ and 
    $\Phi-g$ has a local minimum at $x$.
    It is also not restrictive to assume that $g$ is concave.
    When $X$ is reflexive 
    we have the equivalent condition
	\begin{equation*}
		z\in \partial \Phi(x)
        \quad\Leftrightarrow\quad
        \liminf\limits_{y\to x} \frac{\Phi(y)-\Phi(x)-\langle z,y-x\rangle}{\|y-x\|}\ge 0.
	\end{equation*}
     If $x\notin{\rm dom}\Phi$, one conveys that $\partial \Phi(x)=\emptyset$.   It may happen that 
    the graph of $\partial\Phi$
    is not strongly-weakly$^*$ sequentially closed,
    so it is natural to introduced
    suitable notions of
    limiting subdifferentials
    arising from $\partial\Phi$
    \cite{mordukovich1980nonsmooth,Mordukhovich18}.
    We adopt the following natural one
    \cite{MRSNonsmooth,rossi2006gradient}:
    \begin{df}
        \label{def:limiting-subdifferential}
        Let $x\in \overline{\operatorname{dom}(\partial\Phi)}$. We say that 
        $z\in X^*$
        belongs to the 
        \emph{limiting subdifferential}
        $\partial_\ell\Phi(x)$
        of $\Phi$
        at $x$ if there
        exist sequences
        $(x_n)_n$ in ${\operatorname{dom}(\partial\Phi)}$
        and $(z_n)_n$ in $X^*$ such that 
        \begin{equation*}
            z_n\in \partial\Phi(x_n),
            \quad 
            x_n\to x,\quad
            z_n\stackrel{*}{\rightharpoonup}z,\quad
            \text{as }n\to\infty.
        \end{equation*}
        We say that $\partial\Phi$
        is strongly-weakly$^*$ sequentially closed if
        $\partial_\ell\Phi=\partial\Phi$, i.e.~
        \begin{equation*}
         z_n\in \partial\Phi(x_n),
            \quad 
            x_n\to x,\quad
            z_n\stackrel{*}{\rightharpoonup}z,\quad
            \text{as }n\to\infty
            \qquad\Rightarrow
            \qquad
            z\in \partial\Phi(x).
        \end{equation*}
    \end{df}
    If $\Phi$ is convex (and lower semicontinuous)
    then the Fréchet subdifferential coincides with the 
    usual  \emph{subdifferential of convex analysis}, namely
	\begin{equation*}
	\partial\Phi(x)=\left\{z\in X^*:\,\Phi(y)\ge \Phi(x)+\langle z,y-x\rangle,  \quad\text{for every $y\in X$}\right\},
\end{equation*}
and $\partial\Phi=\partial_\ell\Phi.$

	\subsection{Fenchel conjugate}
	We recall
    the \emph{Fenchel conjugate} of the (proper) function $\Phi\colon X\to (-\infty,+\infty]$, namely the convex, weak$^*$ lower semicontinuous function
	\begin{equation*}
		\Phi^*\colon X^*\to (-\infty,+\infty],\quad \text{defined as }\quad \Phi^*(z):=\sup\limits_{x\in X} \{\langle z,x\rangle-\Phi(x)\}.
	\end{equation*}
	If $\Phi$ is convex, the very definition yields that for every $ z\in X^*$ and $x\in X$ the Fenchel conjugate $\Phi^*$ satisfies
	\begin{equation}\label{Fenchelprop}
		\Phi^*(z)+\Phi(x)\ge \langle{z},{x}\rangle,\quad\text{ with equality if and only if }\quad z\in \partial \Phi(x).
	\end{equation}

 The following elementary properties can be directly proved from the definition of $\Phi^*$.
 \begin{lemma}\label{lemma:propFenchel}
    If $\Phi$ is convex and has superlinear growth at infinity, then $\Phi^*$ is locally bounded. In particular, $\operatorname{dom}\Phi^*=X^*$ and $\Phi^*$ is continuous on $X^*$.
   
    If $0\in \operatorname{dom}\Phi$ and $\Phi$ is convex and continuous at $0$, then $\Phi^*$ has bounded sublevels.
\end{lemma}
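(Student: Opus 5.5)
The plan is to prove the two assertions of Lemma~\ref{lemma:propFenchel} separately, working only from the definition $\Phi^*(z)=\sup_{x\in X}\{\langle z,x\rangle-\Phi(x)\}$.

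\emph{First assertion.} Superlinear growth gives, for every $M>0$, a radius $R_M$ with $\Phi(x)\ge M\|x\|$ whenever $\|x\|\ge R_M$. I would fix $r>0$, take $z\in B_r^*$, and choose $M=r+1$, then split the supremum in two.
\begin{itemize}
\item On $\{\|x\|\ge R_M\}$ we have $\langle z,x\rangle-\Phi(x)\le (r-M)\|x\|<0$.
\item On $B_{R_M}$ we have $\langle z,x\rangle-\Phi(x)\le rR_M-\inf_{B_{R_M}}\Phi$.
\end{itemize}
The delicate point is that $\inf_{B_{R}}\Phi>-\infty$. I would obtain this from the standing lower semicontinuity of $\Phi$: a proper convex lsc function is bounded below by a continuous affine minorant $\langle w,\cdot\rangle+c$, and this minorant is bounded below on bounded sets.

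This gives $\sup_{B_r^*}\Phi^*<+\infty$. Moreover $\Phi^*>-\infty$ because $\Phi$ is proper. Hence $\Phi^*$ is finite everywhere, so $\operatorname{dom}\Phi^*=X^*$, and it is locally bounded above. A convex function that is locally bounded above on an open set is continuous there, which yields continuity on $X^*$. If a two-sided bound is wanted, the lower bound is immediate: $\Phi^*(z)\ge\langle z,x_1\rangle-\Phi(x_1)\ge -r\|x_1\|-\Phi(x_1)$ for any fixed $x_1\in\operatorname{dom}\Phi$.

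\emph{Second assertion.} Continuity of $\Phi$ at $0$ gives $\delta>0$ with $\Phi\le\Phi(0)+1$ on $B_\delta$. Then for every $z\in X^*$,
\[
\Phi^*(z)\ge\sup_{\|x\|\le\delta}\langle z,x\rangle-\Phi(0)-1=\delta\|z\|_*-\Phi(0)-1 .
\]
Consequently $\{\Phi^*\le c\}\subset B^*_{(c+\Phi(0)+1)/\delta}$, so every sublevel of $\Phi^*$ is bounded.

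The only genuine subtlety is the lower bound of $\Phi$ on balls in the first part. It requires properness and lower semicontinuity, which hold throughout the paper; without lower semicontinuity one would fall back on a Hahn--Banach affine minorant, which also exists in the situations of interest here.
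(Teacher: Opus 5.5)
Your proof is correct and follows the paper's route. Superlinearity gives an upper bound for $\Phi^*$ on $B_r^*$ (the paper packages this as $\Phi\ge R\|\cdot\|-c_R$), a point of $\operatorname{dom}\Phi$ gives a lower bound, convexity plus local boundedness gives continuity, and Fenchel's inequality tested on a small ball where $\Phi$ is bounded above gives bounded sublevels.

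Your explicit use of lower semicontinuity to get $\inf_{B_R}\Phi>-\infty$ fills a step the paper leaves tacit, and that hypothesis is genuinely needed: for a discontinuous linear functional $\ell$ on $X$, the function $\Phi=\ell+\mathrm I_{B_1}$ is convex, proper and superlinear, yet $\Phi^*\equiv+\infty$. So your closing Hahn--Banach fallback should not be relied on in general, only in the lsc or bounded-below settings where the lemma is actually applied.
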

\begin{proof}
    Let $\bar x\in \operatorname{dom}\Phi$, then $\Phi^*(z)\ge \langle z,\bar x\rangle-\Phi(\bar x)$ for all $z\in X^*$, namely $\Phi^*$ is locally bounded from below. By superlinearity we also know that for all $R>0$ there exists $c_R>0$ such that $\Phi(x)\ge R\|x\|-c_R$ for all $x\in X$, whence for all $z\in B_R^{*}$ we deduce $$\Phi^*(z)\le \sup\limits_{x\in X} \{R\|x\|-\Phi(x)\}\le c_R.$$
    Thus $\Phi^*$ is locally bounded, and since it is convex we infer it is continuous in the whole of $X^*$.

    Assume now that $0\in \operatorname{dom}\Phi$ and $\Phi$ is continuous at $0$. By convexity, this implies boundedness of $\Phi$ in a neighborhood of the origin, say $|\Phi(x)|\le M$ for $x\in B_r$. Consider $z\in \{\Phi^*\le C\}$, then by \eqref{Fenchelprop} we deduce
    \begin{equation*}
        \|z\|_*=\sup\limits_{x\in B_1}\langle z,x\rangle \le \sup\limits_{x\in B_1}\frac{\Phi^*(z)+\Phi(rx)}{r}\le \frac{C+M}{r},
    \end{equation*}
    and we conclude.
\end{proof}

\subsection{Continuity equation and cylinder functions}
    Let $X$ be a Radon-Nikod\'ym space.
    For a horizon time $T>0$ we introduce the notation $X_T:=[0,T]\times X$ for the space-time domain. Given a vector-field $v\colon X_T\to X$, we consider the \emph{continuity equation}
    \begin{equation}\label{eq:conteq}
        \partial_t\mu + \operatorname{div}_x(v\mu) = 0, \qquad\text{in }X_T.
    \end{equation}
    Since $X$ is infinite-dimensional, a good set of test functions to give a meaning to the above equation is provided by the so-called \emph{cylinder functions}.
\begin{df}[\textbf{Cylinder functions}]\label{def:cylinder} 
A function $\xi:X_T\to\R$ is called a cylinder function 
if there exist 
    $k\in \N$, $\boldsymbol{z}=(z_1,\dots,z_k) \in (X^*)^k$ and $\zeta \in C^1_c([0,T]\times\R^k)$  such that 
    \begin{equation*}
        \xi(t,x) = \zeta\left(t,\langle \boldsymbol{z},x \rangle_k\right),\qquad \text{where }\langle\boldsymbol{z}, x\rangle_k := \left(\langle z_1,x \rangle, \dots , \langle z_k,x \rangle\right).
    \end{equation*}
    In such case, we write $\xi \in \operatorname{Cyl}_c(X_T)$. If the function $\zeta$ belongs to $ C_b^1([0,T]\times\R^k)$, namely it is bounded with bounded derivatives, we instead write $\xi \in \operatorname{Cyl}_b(X_T)$.
\end{df}
Observe that $\operatorname{Cyl}_c (X_T) \subset \operatorname{Cyl}_b(X_T)$, and that any cylinder function $\xi \in \operatorname{Cyl}_b(X_T)$ is Fr\`echet differentiable for all $(t,x)\in X_T$ with
\begin{equation*}
    \mathrm D\xi(t,x) =(\partial_t \xi(t,x), \mathrm D_x \xi(t,x))=\left(\partial_t \zeta(t,\langle \boldsymbol{z},x\rangle_k ), \sum_{i=1}^k \mathrm D_i \zeta(t,\langle \boldsymbol{z},x\rangle_k ) z_i\right)\in \R\times X^*.
\end{equation*}

\begin{df}\label{continuity equation over X def}
    Let $\mu \in \mathcal{M}_+(X_T)$ and $v\in L^1_\mu(X_T;X)$. We say that the pair $(\mu,v)$ solves the continuity equation \eqref{eq:conteq} with starting measure $m_0\in \mathcal{M}_+(X)$ and ending measure $m\in \mathcal{M}_+(X)$ if for all $\xi\in \operatorname{Cyl}_c(X_T)$ it holds
    \begin{equation}\label{eq: CE test}
     \int_{X_T} \partial_t \xi d\mu + \int_{X_T} \langle \mathrm D_x\xi, v\rangle d\mu = \int_X \xi(T) dm - \int_X \xi(0) dm_0.
    \end{equation}
\end{df}
A simple approximation procedure shows that \eqref{eq: CE test} is still valid if we consider a more general $\xi \in \operatorname{Cyl}_b(X_T)$ as a test function.
\begin{lemma}
    Let $\mu \in \mathcal{M}_+(X_T)$ and let $v\in L^1_\mu(X_T;X)$. Then $(\mu,v)$ solves the continuity equation \eqref{eq:conteq} starting from $m_0$ and ending at $m$ if and only if \eqref{eq: CE test} is satisfied for all $\xi \in \operatorname{Cyl}_b(X_T)$.
\end{lemma}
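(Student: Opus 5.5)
One direction is immediate: $\operatorname{Cyl}_c(X_T)\subset\operatorname{Cyl}_b(X_T)$, so validity of \eqref{eq: CE test} for every $\xi\in\operatorname{Cyl}_b(X_T)$ gives it in particular for compactly supported profiles, which is Definition~\ref{continuity equation over X def}. For the converse, fix $\xi(t,x)=\zeta(t,\langle\boldsymbol z,x\rangle_k)$ with $\zeta\in C^1_b([0,T]\times\R^k)$. We approximate $\zeta$ by compactly supported profiles through a cut-off in the $\R^k$ variable. Choose $\chi\in C^1_c(\R^k)$ with $0\le\chi\le1$, $\chi\equiv1$ on the unit ball, $\operatorname{supp}\chi\subset B_2$ and $|\mathrm D\chi|\le 2$. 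Set $\chi_n(y):=\chi(y/n)$ and $\zeta_n(t,y):=\chi_n(y)\zeta(t,y)$. Then $\zeta_n\in C^1_c([0,T]\times\R^k)$, so $\xi_n(t,x):=\zeta_n(t,\langle\boldsymbol z,x\rangle_k)$ belongs to $\operatorname{Cyl}_c(X_T)$, and by hypothesis \eqref{eq: CE test} holds for each $\xi_n$.

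We then pass to the limit term by term. We have $\partial_t\xi_n=\chi_n(\langle\boldsymbol z,x\rangle_k)\,\partial_t\zeta(t,\langle\boldsymbol z,x\rangle_k)$, which converges pointwise to $\partial_t\xi$ and is bounded by $\|\partial_t\zeta\|_\infty$. The spatial differential is
\[\mathrm D_x\xi_n=\sum_{i=1}^k\Big(\chi_n\,\mathrm D_i\zeta+\zeta\,\mathrm D_i\chi_n\Big)(t,\langle\boldsymbol z,x\rangle_k)\,z_i .\]
Since $|\mathrm D\chi_n|\le 2/n$, this converges pointwise to $\mathrm D_x\xi$ in $X^*$ and satisfies the uniform bound $\|\mathrm D_x\xi_n\|_*\le C\sum_i\|z_i\|_*$, with $C$ depending only on $\|\zeta\|_{C^1_b}$. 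Hence $|\langle\mathrm D_x\xi_n,v\rangle|\le C'\|v\|$, and $\|v\|\in L^1_\mu$ because $v\in L^1_\mu(X_T;X)$. The boundary terms satisfy $\xi_n(T)\to\xi(T)$ and $\xi_n(0)\to\xi(0)$ pointwise, with bound $\|\zeta\|_\infty$. They are integrable because $m$ and $m_0$ are finite measures, and $\mu$ is finite as well.

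Dominated convergence applied to each of the four integrals in \eqref{eq: CE test} then yields the identity for $\xi$. The only point needing care is the uniform bound on $\mathrm D_x\xi_n$, and specifically on the term coming from the derivative of the cut-off. The $1/n$ scaling of $\chi_n$ is what makes it vanish pointwise and stay bounded. Integrability of the limit function $\langle\mathrm D_x\xi,v\rangle$ follows from the same bound. No real obstacle is expected.
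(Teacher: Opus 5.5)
Your proposal is correct and is precisely the ``simple approximation procedure'' the paper invokes without writing out. You cut off the profile $\zeta$ in the $\R^k$-variable by $\chi(\cdot/n)$, keep $\mathrm D_x\xi_n$ uniformly bounded thanks to the $1/n$ decay of the cut-off gradient, and conclude by dominated convergence with $\|v\|\in L^1_\mu$ and the finiteness of $\mu$, $m$, $m_0$.
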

    
    Since $X$ has the Radon-Nikod\'ym property, given $\mu\in \mathcal{M}_+(X_T)$, there is a one-to-one correspondence between vector fields $v\in L^1_\mu(X_T;X)$ and measures $\nu \in \mathcal{M}(X_T ; X)$ that are absolutely continuous with respect to $\mu$. Indeed, to any $v\in L^1_\mu(X_T;X)$ one can associate the measure $\nu:=v\mu\in \mathcal{M}(X_T ; X)$; on the other hand, given $\nu \in \mathcal{M}(X_T ; X)$ such that $|\nu| \ll \mu$, one can consider its Radon-Nikod\'ym derivative $\displaystyle v:=\frac{d \nu}{d \mu}\in L^1_\mu(X_T;X)$. This observation leads to the following (equivalent but useful) definition.

    \begin{df}\label{def:CEsol} We say that a pair of measures $(\mu,\nu)\in \mathcal{M}_+(X_T) \times \mathcal{M}(X_T;X)$ solves the continuity equation \eqref{eq:conteq} starting from $m_0\in \mathcal M_+(X)$ and ending at $m\in \mathcal M_+(X)$ if 
    \begin{equation}\label{eq:CE}
        \int_{X_T} \partial_t \xi d\mu + \int_{X_T} \langle \mathrm D_x\xi, d\nu\rangle  = \int_X \xi(T) dm - \int_X \xi(0) dm_0,\quad\text{for all $\xi \in \operatorname{Cyl}_b(X_T)$.}
    \end{equation}
    In this case, we write $$(\mu,\nu)\in \CE(m_0,m).$$
\end{df}

We refer to Appendix \ref{app: CE} to relate these definitions to other standard approaches to the continuity equation.

\section{Differential inclusions, De Giorgi's principle, and 
main result}\label{sec: setting}

In all this section 
we will assume that 
$(X,\|\cdot\|)$ is a reflexive and separable Banach space. In particular, we recall that $X$ satisfies the Radon-Nikod\'ym property. 
\subsection{Doubly nonlinear differential inclusions
and De Giorgi's principle}
We are interested in the 
absolute continuous curves 
$\xx:[0,T]\to X$ 
solving the 
Cauchy problem for the relaxed doubly nonlinear differential inclusion 
\begin{equation}
\label{eq:DNE}
    \begin{cases}
        \partial \Diss(\dot {\xx}(t))+
         \partial_\ell 
         \varphi(\xx(t))\ni 0\quad\text{in }X^*,\quad\text{for a.e. }t\in(0,T),\\
        x(0)=x_0\in X,
    \end{cases}
\end{equation}
 where $\partial_\ell\varphi$
denotes the limiting subdifferential, introduced in Definition
\ref{def:limiting-subdifferential}.

We will assume 
that the proper \emph{energy} $\varphi\colon X\to (-\infty,+\infty]$ and the \emph{dissipation potential} $\Diss\colon X\to (-\infty,+\infty]$ satisfy the following basic 
structural properties:

\begin{enumerate}[label=\textup{($\varphi$\arabic*)}]
  \item \label{hyp:phi1} sublevels of $\varphi$ are boundedly compact, namely
  \begin{equation*}
      \text{for all $C\in\mathbb{R}$ and $R>0$ the set $\{\varphi\le C\}\cap B_R$ is compact in $X$;}
  \end{equation*} 
	\item \label{hyp:phi4} $\varphi$ 
    is linearly bounded from below, namely
    there exist constants
    $c_1,c_2\ge0$ such that 
    \begin{equation*}
        \varphi(x)\ge -c_1-c_2\|x\|\quad\text{for all }x\in X.
    \end{equation*}
\end{enumerate}
Notice that \ref{hyp:phi1} implies that $\varphi$ is lower semicontinuous. As regards the dissipation potential, we require:
\begin{enumerate}[label=\textup{($\Diss$\arabic*)}]
    \item \label{hyp:F1} $\Diss$ is convex and lower semicontinuous;
  \item \label{hyp:F2} $\Diss$ has superlinear growth at infinity, i.e. $\lim\limits_{\|v\|\to \infty}\frac{\Diss(v)}{\|v\|}=+\infty$;
  \item \label{hyp:F3} $\Diss(0)<+\infty$.
\end{enumerate}
We observe that this latter set of assumptions ensures that 
\begin{equation}\label{eq:Fboundbelow}
    \text{both $\Diss$ and $\Diss^*$ are bounded below, with $\Diss^*(z)\ge -\Diss(0)$ for all $z\in X^*$.}
\end{equation}
We also point out that condition \ref{hyp:F2} is natural if one looks for absolutely continuous solutions to \eqref{eq:DNE}. 
Indeed, the linear growth regime leads to the framework of \emph{rate-independent systems} \cite{MielkRoubbook}, where discontinuous solutions are expected to appear. 
\begin{oss}
     The growth conditions \ref{hyp:phi4} and \ref{hyp:F2} are strictly related. Indeed, our results still hold true strenghtening \ref{hyp:F2} while weakening \ref{hyp:phi4}:
    \begin{itemize}
        \item [$(\Diss2')$] $\Diss$ has super $p$-growth at infinity, i.e. $\lim\limits_{\|v\|\to \infty}\displaystyle\frac{\Diss(v)}{\|v\|^p}=+\infty$, for some $p\ge 1$;
        \item[$(\varphi 2')$] $\varphi(x)\ge -c_1-c_2\|x\|^p$ for all $x\in X$. 
    \end{itemize}
\end{oss}
In order to state De Giorgi's principle, for $a\ge 0$ we consider the energy-dissipation functional $\calJ^a\colon [0,T]\times AC([0,T];X)\to (-\infty,+\infty]$ defined as
\begin{equation}\label{eq:Ja}
    \calJ^a(t,\xx):=e^{-at}\varphi(\xx(t))+\int_0^t e^{-a\tau}\Big(\Diss(\dot{\xx}(\tau)) + \rS(\xx(\tau))+a\varphi(\xx(\tau))\Big)\, d\tau-\varphi(x_0),
\end{equation}
where we introduced the 
{\em relaxed $\Diss^*$-slope function}
$\rS\colon X\to (-\infty,+\infty]$, namely
\begin{equation*}
      \rS(x):=      
       \inf\Big \{
        \liminf_{n\to\infty}
        \Diss^*(-z_n):\, z_n\in\partial\varphi(x_n),
        \
        x_n\to x\text{ in }X
        \Big\}
        \ \text{if }x\in 
        \overline{\operatorname{dom}\varphi},
  \end{equation*}
with the usual convention 
$\rS(x):=+\infty$ if $x$ does not belong to the closure of the proper domain of $\varphi$.
The relaxed slope $\rS$ is the largest lower semicontinuous function satisfying
\begin{equation}
    \label{eq:obvious-relaxation}
    \rS(x)\le \Diss^*(-z)
    \quad
    \text{for every }z\in \partial\varphi(x).
\end{equation}
Notice that \eqref{eq:Ja} coincides with \eqref{eq:DeGiorgi} for $a=0$, up to relaxing the slope \eqref{eq:slope}. Such a relaxation 
plays an important role,
since in general 
the slope \eqref{eq:slope}
may lack lower semicontinuity (but see Proposition \ref{prop:F*nabla}). 
One of the roles of the parameter $a$, as we will see, consists in gaining coercivity of the $x$-dependent terms inside the integral in \eqref{eq:Ja},
a crucial property for 
any variational argument. 

It will be useful to introduce the set 
\begin{equation}\label{eq:nablaF}
      \Dpartial \varphi(x):=
      \Big\{z\in {\partial_\ell}\varphi(x):
      \Diss^*(-z)\le \rS(x)\Big\} \subseteq {\partial_\ell}\varphi(x).
  \end{equation}
Notice that $\Dpartial\varphi(x)$ may be empty even at points where $S^-(x)<\infty$ 
since the sublevels of $\Diss^*$ 
could be unbounded.
When $\psi$ is continuous at $0$
the proper domain of $\Dpartial\varphi$ coincides 
with the proper domain of $\partial_\ell\varphi$
(see Proposition \ref{prop:F*nabla}). 

Next proposition finally states the De Giorgi's principle for the \textit{relaxed doubly non linear equation}
\eqref{eq:DNE} 
relating its solutions to null-minimizers of \eqref{eq:Ja}.

\begin{prop}[\textbf{De Giorgi's principle}]\label{prop:DeGiorgi} 
    Assume the validity of the following two properties:
    \begin{enumerate}[label=\textup{(S)}]
    \item\label{M} 
    $\Dpartial\varphi(x)$ is nonempty for every 
    $x\in \operatorname{dom}\rS$;
    \end{enumerate}
    \begin{enumerate}[label=\textup{(CR)}]
    \item\label{CR} the following \emph{chain-rule} holds: for all $\xx\in AC([0,T];X)$ such that $$\int_0^T 
    \Big(\Diss(\dot{\xx}(\tau)) + \rS(\xx(\tau))
     \Big)\, d\tau<+\infty,$$ the map $t\mapsto \varphi(\xx(t))$ is absolutely continuous and 
    \begin{equation}\label{eq:chainrule}
        \frac{d}{dt}\varphi(\xx(t))=\langle \mathrm z(t), \dot \xx(t)\rangle,\quad\text{for a.e. }t\in(0,T),
    \end{equation}
    for any measurable selection $\mathrm z\colon [0,T]\to X^*$ satisfying $\mathrm z(t)\in {\Dpartial}\varphi(\xx(t))$
    a.e.~in $[0,T]$.
     \end{enumerate}
    Then, for every initial datum $x_0\in \operatorname{dom}\varphi$ and trajectory $\bar {\xx}\in AC([0,T];X)$ with $\bar {\xx}(0)=x_0$ we have
    \begin{equation}
    \label{eq:DGpositive}
        \calJ^a(t,\bar{\xx})\ge 0, \quad\text{for all }t\in [0,T] \text{ and }a\ge 0.
    \end{equation}
    Moreover, the following 
    equivalent conditions are equivalent:
    \begin{itemize}
        \item [(a)] $\bar {\xx}$ solves the 
        \textit{relaxed doubly nonlinear equation} \eqref{eq:DNE},
        in the sense that there exists a measurable map $\bar{\mathrm z}\colon [0,T]\to X^*$ 
        such that
        \begin{equation}\label{eq:sol}
            -\bar {\mathrm z}(t)\in \partial \Diss(\dot{\bar {\xx}}(t)),\quad 
            \bar{\mathrm z}(t)\in 
             {\partial_\ell}\varphi(\bar{\xx}(t))
            \qquad\text{for a.e. }t\in (0,T),
        \end{equation}
        and 
        \begin{equation}
        \label{eq:sol2} \displaystyle 
        \psi^*(-\bar{\mathrm z}(t))=S^-(\bar{\xx}(t))
        \quad\text{
        a.e.~in }(0,T),
        \quad \int_0^T \Diss(\dot{\bar {\xx}}(\tau)) + \rS(\bar {\xx}(\tau))\, d\tau<+\infty;
        \end{equation}
        \item[(b)] $\calJ^a(t,\bar {\xx})\le 0$ for all $t\in [0,T]$ and for some $a\ge 0$;
        \item[(c)] $\calJ^a(t,\bar {\xx})= 0$ for all $t\in [0,T]$ and for some $a\ge 0$;
        \item[(d)] $\calJ^a(T,\bar {\xx})
        \le 0$ for some $a\ge 0$.
    \end{itemize}
    Moreover, if one of the above is in force, then $(b)$, $(c)$ and $(d)$ hold true for all $a\ge 0$, and $\bar{\xx}$ 
    attains the minimum of $\mathcal J^a(T,\cdot)$, that is
    $\mathcal J^a(T,\bar{\xx})=
    \min\left\{\calJ^a(T, \xx):\, \xx\in AC([0,T];X),\, \xx(0)=x_0 \right\}$.
\end{prop}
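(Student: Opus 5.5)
The plan is to reduce everything to a pointwise-in-time inequality obtained from Fenchel's inequality \eqref{Fenchelprop} combined with the chain rule \ref{CR}, and then to integrate it against the weight $e^{-a\tau}$.

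\emph{Nonnegativity \eqref{eq:DGpositive}.} Fix $\bar\xx$ with $\bar\xx(0)=x_0$. If $\int_0^t(\Diss(\dot{\bar\xx})+\rS(\bar\xx))\,d\tau=+\infty$, then $\calJ^a(t,\bar\xx)=+\infty$. To see this we need the term $e^{-a\tau}a\varphi(\bar\xx(\tau))$ and the final term to be bounded below. This follows from \ref{hyp:phi4}, since $\bar\xx$ is continuous and hence bounded on $[0,T]$, together with \eqref{eq:Fboundbelow}. Strictly, the interval is $[0,t]$, but \ref{CR} is stated on $[0,T]$; I would apply it on subintervals, or simply note that the argument works verbatim on $[0,t]$.

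Otherwise, \ref{M} provides $\Dpartial\varphi(\bar\xx(\tau))\neq\emptyset$ for a.e. $\tau$, because $\rS(\bar\xx(\tau))<\infty$ a.e. We then choose a measurable selection $\mathrm z(\tau)\in\Dpartial\varphi(\bar\xx(\tau))$. This is a point needing care: I would obtain it from a measurable selection theorem (Kuratowski--Ryll-Nardzewski, or Aumann) applied to the multifunction $\tau\mapsto\Dpartial\varphi(\bar\xx(\tau))$. Its graph is measurable because $\partial_\ell\varphi$ has closed graph in the strong$\times$weak$^*$ sense, $\Diss^*$ is weak$^*$ lsc, and $\rS$ is lsc. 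Since $\Diss^*(-\mathrm z)\le\rS(\bar\xx)$ by the definition of $\Dpartial\varphi$, and $\rS(\bar\xx)\le\Diss^*(-\mathrm z)$ by lower semicontinuity and the definition of the limiting subdifferential, we have equality.

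Fenchel's inequality and \eqref{eq:chainrule} then give
\[
\Diss(\dot{\bar\xx})+\rS(\bar\xx)=\Diss(\dot{\bar\xx})+\Diss^*(-\mathrm z)\ge-\langle\mathrm z,\dot{\bar\xx}\rangle=-\tfrac{d}{d\tau}\varphi(\bar\xx(\tau)).
\]
Multiplying by $e^{-a\tau}$ and using $\frac{d}{d\tau}\bigl(e^{-a\tau}\varphi(\bar\xx(\tau))\bigr)=e^{-a\tau}\bigl(\tfrac{d}{d\tau}\varphi(\bar\xx)-a\varphi(\bar\xx)\bigr)$, which is legitimate because $\varphi\circ\bar\xx$ is absolutely continuous by \ref{CR}, we get the pointwise inequality
\[
e^{-a\tau}\bigl(\Diss(\dot{\bar\xx})+\rS(\bar\xx)+a\varphi(\bar\xx)\bigr)+\tfrac{d}{d\tau}\bigl(e^{-a\tau}\varphi(\bar\xx)\bigr)\ge 0.
\]
Integrating over $(0,t)$ yields exactly $\calJ^a(t,\bar\xx)\ge0$.

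\emph{Equivalences.} The implications (c)$\Rightarrow$(b)$\Rightarrow$(d) are trivial, and (b)$\Rightarrow$(c) follows from the first part. For (d)$\Rightarrow$(a): by \eqref{eq:DGpositive}, $\calJ^a(T,\bar\xx)=0$, so the integral is finite, and the integral of the nonnegative pointwise quantity above vanishes. Hence Fenchel's inequality is an equality a.e., which by \eqref{Fenchelprop} means $-\mathrm z(\tau)\in\partial\Diss(\dot{\bar\xx}(\tau))$. Taking $\bar{\mathrm z}=\mathrm z$ gives \eqref{eq:sol}, and \eqref{eq:sol2} holds by construction.

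For (a)$\Rightarrow$(c) for every $a\ge0$, I first check that $\bar{\mathrm z}(\tau)\in\Dpartial\varphi(\bar\xx(\tau))$, which holds by \eqref{eq:sol2}. Then \ref{CR} applies to $\bar{\mathrm z}$, and the Fenchel equality turns the pointwise inequality into an equality for every $a$. Integrating gives $\calJ^a(t,\bar\xx)=0$ for all $t$. Minimality of $\bar\xx$ for $\calJ^a(T,\cdot)$ then follows immediately from \eqref{eq:DGpositive} applied to arbitrary competitors.

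The main obstacle I expect is the measurable selection of $\Dpartial\varphi(\bar\xx(\cdot))$ in the nonnegativity step, since it requires verifying the measurability of this set-valued map in the dual of a separable reflexive space. I would handle it through the closedness properties just mentioned: $X^*$ is separable, and weak$^*$-closed bounded sets are Polish, so the selection theorem applies.
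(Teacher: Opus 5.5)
Your architecture matches the paper's proof. You combine Fenchel's inequality with \ref{CR} and integrate against $e^{-a\tau}$. You obtain a measurable selection of $\Dpartial\varphi(\bar{\xx}(\cdot))$ from \ref{M} and the Borel measurability of $\partial_\ell\varphi$. The inequalities collapse for (d)$\Rightarrow$(a), and Fenchel's equality gives (a)$\Rightarrow$(c).

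One step, however, is false. You claim that every selection $\mathrm z(\tau)\in\Dpartial\varphi(\bar{\xx}(\tau))$ satisfies $\rS(\bar{\xx})\le\Diss^*(-\mathrm z)$, so that $\Diss^*(-\mathrm z)=\rS(\bar{\xx})$. Suppose $z\in\partial_\ell\varphi(x)$ is the weak$^*$ limit of $z_n\in\partial\varphi(x_n)$ with $x_n\to x$. The definition of $\rS$ gives $\rS(x)\le\liminf_n\Diss^*(-z_n)$. Weak$^*$ lower semicontinuity gives $\Diss^*(-z)\le\liminf_n\Diss^*(-z_n)$. Both are upper bounds by the same quantity, so they give no comparison between $\rS(x)$ and $\Diss^*(-z)$.

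What you would need is upper semicontinuity of $\Diss^*$ along weak$^*$ convergent sequences. That holds in finite dimensions but not in general, and \eqref{eq:obvious-relaxation} is only available for Fr\'echet subgradients. For instance, take $X=\ell^2$, $\Diss=\frac12\|\cdot\|^2$ and $\varphi=-\|\cdot\|$. Then $-e_n\in\partial\varphi(e_n/n)$, so $0\in\partial_\ell\varphi(0)$. But $\partial\varphi(0)=\emptyset$ and every other Fr\'echet subgradient has norm $1$. Hence $0\in\Dpartial\varphi(0)$ while $\Diss^*(0)=0<\frac12=\rS(0)$. This is why $\Dpartial\varphi$ is defined by a one-sided inequality, and why $\Diss^*(-\bar{\mathrm z})=\rS(\bar{\xx})$ is a separate requirement in \eqref{eq:sol2}.

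In the nonnegativity argument the error is harmless. There you only use $\rS(\bar{\xx})\ge\Diss^*(-\mathrm z)$, which is the definition of $\Dpartial\varphi$. In (d)$\Rightarrow$(a), though, two of your steps rest on the false identity: passing from the vanishing of the pointwise quantity to the Fenchel equality, and asserting that \eqref{eq:sol2} ``holds by construction''.

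The repair is what the paper does. By \ref{CR}, your nonnegative pointwise quantity equals
\[
e^{-a\tau}\bigl(\rS(\bar{\xx})-\Diss^*(-\mathrm z)\bigr)+e^{-a\tau}\bigl(\Diss(\dot{\bar{\xx}})+\Diss^*(-\mathrm z)+\langle\mathrm z,\dot{\bar{\xx}}\rangle\bigr).
\]
This is a sum of two a.e.\ nonnegative terms, and its integral over $(0,T)$ is $\calJ^a(T,\bar{\xx})=0$. So both terms vanish a.e., which yields at once $\Diss^*(-\mathrm z)=\rS(\bar{\xx})$ and $-\mathrm z\in\partial\Diss(\dot{\bar{\xx}})$. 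Your (a)$\Rightarrow$(c) step is correct as written, since there \eqref{eq:sol2} is a hypothesis.

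You also raise the $[0,t]$ versus $[0,T]$ issue, which the paper passes over too. It is settled by applying \ref{CR} to the rescaled curve $s\mapsto\bar{\xx}(st/T)$. By convexity, its dissipation integral is at most $\int_0^t\Diss(\dot{\bar{\xx}})\,d\tau+(T-t)\Diss(0)$. Its slope integral is $\frac Tt\int_0^t\rS(\bar{\xx})\,d\tau$. The chain rule then transfers back to $\bar{\xx}$ on $[0,t]$ by the change of variables.
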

\begin{proof}
    Let us first prove \eqref{eq:DGpositive}.
    Indeed, without loss of generality assume $\calJ^a(t,\bar \xx)<+\infty$, whence also the integral $\int_0^t \Diss(\dot{\bar\xx}(\tau)) + \rS(\bar \xx(\tau))\, d\tau$ is finite. So, by exploiting Fenchel's inequality \eqref{Fenchelprop} and the chain-rule \eqref{eq:chainrule}, for any Lebesgue-measurable function 
    $\mathrm z\colon [0,T]\to X^*$ satisfying $\mathrm z(t)\in \Dpartial\varphi(\bar \xx(t))$ almost everywhere we deduce
    \begin{align*}
        \calJ^a(t,\bar \xx)&\ge e^{-at}\varphi(\bar \xx(t))+\int_0^t e^{-a\tau}(-\langle \mathrm z(\tau),\dot {\bar{\xx}}(\tau)\rangle+a\varphi(\bar\xx(\tau)) \, d\tau-\varphi(x_0)\\
        &=e^{-at}\varphi(\bar \xx(t))-\int_0^t \frac{d}{d\tau}e^{-a\tau}\varphi(\bar \xx(\tau) \, d\tau-\varphi(x_0)=0.
    \end{align*}    
    We recall that at least one measurable selection ${\mathrm{z}}(t) \in \Dpartial\varphi(\bar \xx(t))$ exists. Indeed, due to Proposition \ref{prop: meas lim}, the set $\Dpartial \varphi$ is Borel measurable, and thanks to \ref{M} a classical measurable selection argument can be performed (see e.g. \cite[Theorem 6.9.1]{Bogachev07})

    Let us now prove that $(d)\implies (a)$. Since $\calJ^a(T,\bar {\xx})\le 0$, observe that $\int_0^T \Diss(\dot{\bar{\xx}}(\tau)) + \rS(\bar {\xx}(\tau))\, d\tau$ is finite, so the chain rule may be applied. We fix any measurable function $\mathrm z\colon [0,T]\to X^*$ satisfying $\mathrm z(t)\in \Dpartial\varphi(\bar {\xx}(t))$ almost everywhere, and arguing as before we obtain
    \begin{align*}
        0&\ge 
        \calJ^a(T,\bar {\xx}) 
        =  e^{-aT}\varphi(\bar {\xx}(T))+\int_0^T e^{-a\tau}(\Diss(\dot{\bar {\xx}}(\tau)) + S^-(\bar \xx(\tau))+a\varphi(\bar {\xx}(\tau)) \, d\tau-\varphi(x_0) 
        \\&
        \ge  e^{-aT}\varphi(\bar {\xx}(T))+\int_0^T e^{-a\tau}(\Diss(\dot{\bar {\xx}}(\tau)) + \Diss^*(-\mathrm z(\tau))+a\varphi(\bar {\xx}(\tau)) \, d\tau-\varphi(x_0) \\
        &\ge e^{-aT}\varphi(\bar {\xx}(T))+\int_0^T e^{-a\tau}(-\langle \mathrm z(\tau),\dot{\bar {\xx}}(\tau)\rangle+a\varphi(\bar {\xx}(\tau)) \, d\tau-\varphi(x_0)=0.
    \end{align*}
    This implies that $\displaystyle \int_0^T e^{-a\tau}(\Diss(\dot{\bar {\xx}}(\tau)) + \Diss^*(-\mathrm z(\tau))+\langle \mathrm z(\tau),\dot{\bar {\xx}}(\tau)\rangle)\, d\tau=0$, whence (recalling \eqref{Fenchelprop}) $-\mathrm z(t)\in \partial \Diss(\dot{\bar {\xx}}(t))$ for a.e. $t\in (0,T)$, namely $\bar {\xx}$ satisfies \eqref{eq:sol}
    and \eqref{eq:sol2}.

    Finally, we show that $(a)\implies (c)$ (which clearly implies $(b)$ and $(d)$). Pick any $a\ge 0$ and fix $t\in [0,T]$. From \eqref{eq:sol} and \eqref{eq:sol2}, by using Fenchel's equality and exploiting again the chain-rule we infer
    \begin{align*}
        \calJ^a(t,\bar {\xx})= e^{-at}\varphi(\bar {\xx}(t))+\int_0^t e^{-a\tau}(-\langle \bar {\mathrm z}(\tau),\dot{\bar {\xx}}(\tau)\rangle+a\varphi(\bar {\xx}(\tau)) \, d\tau-\varphi(x_0)=0,
    \end{align*}
    and we conclude.
\end{proof}
In the literature, conditions $(b)$ and $(c)$ have been adopted as a definition for the so-called EDI (energy-dissipation inequality) and EDE (energy-dissipation equality) solutions, widely employed even in metric setting \cite{ambrosio2005gradient}. In this paper we are instead interested in condition $(d)$.

We stress that the chain-rule assumption \ref{CR} is crucial to have equivalence between the four previous conditions. Without it, the minimal request $(d)$ presents some pathological behaviour, as explained even by the following one-dimensional example.
\begin{es}
    For $X=\R$, consider the quadratic dissipation potential $\Diss(v)=\frac 12 v^2$ and the energy
\begin{equation}\label{eq:weird}
    \varphi(x)=\begin{cases}
        0,&\text{if }x\neq 1,\\
        -\lambda,&\text{if }x= 1,
    \end{cases}
    \qquad\text{for some }\lambda>0.
\end{equation}
In this setting, it is immediate to check that the slope is identically zero (also in the \lq\lq problematic\rq\rq point $x=1$) and that $\partial^*\varphi(x)=\{0\}$ for all $x\in \R$. However, clearly $\varphi$ does not comply with the chain-rule due to the discontinuity at $x=1$.

Let now $x_0=0$ and, for simplicity, we fix $a=0$ so that
\begin{equation*}
    \mathcal J^0(t,\xx)=\varphi(\xx(t))+\int_0^t\frac 12 \dot {\xx}(\tau)^2\, d\tau.
\end{equation*}

Then, the unique solution of the gradient flow in the sense of $(a)$, $(b)$ and $(c)$ is the constant curve $\xx(t)\equiv0$. However, if $2\lambda T>1$, the minimum of $\mathcal J^0(T,\cdot)$ is attained at the (unreasonable) curve $\bar {\xx}(t)=\frac tT$ with value
\begin{equation*}
    \mathcal J^0(T,\bar {\xx})=-\lambda+\frac{1}{2T}<0.
\end{equation*}
The example can be easily extended to all values $a>0$.
\end{es} 

Sufficient conditions for an energy $\varphi$ to comply with the chain-rule have been thoroughly discussed in \cite[Section 2.2]{MRSNonsmooth}, to which we refer for details. We mention that they are related to \emph{uniform subdifferentiability} properties of $\varphi$, which \eqref{eq:weird} clearly lacks at $x=1$.

On the other hand, assumption \ref{M} is needed to select a descent direction, see Proposition \ref{prop:F*nabla} for a sufficient condition, namely to have solutions to \eqref{eq:DNE} in the sense of (a). Indeed, consider the following example, again in dimension one.
\begin{es}
     For $X=\R$, consider the convex energy $\varphi(x)=
     \mathrm I_{[-1,1]}(x)$, where $\mathrm I_{[-1,1]}$ denotes the indicator function of the interval $[-1,1]$; the subdifferential of $\mathrm I_{[-1,1]}$ is the maximal monotone graph
     \begin{equation*}
         \partial\mathrm I_{[-1,1]}(x)=
         \begin{cases}
             (-\infty,0]&\text{if }x=-1,\\
             \{0\}&\text{if }x\in (-1,1),\\
             [0,+\infty)&\text{if }x=1,\\
             \emptyset&\text{otherwise}.
         \end{cases}
     \end{equation*}
     We also consider the dissipation potential
\begin{equation*}
    \Diss(v)=\begin{cases}
    v(\log v-1),&\text{if }v>0,\\
        0,&\text{if }v=0,\\
        +\infty,&\text{if }v<0.
    \end{cases}
\end{equation*}
Notice that \ref{psi4} is not fulfilled. For $x_0\in [-1,1]$, simple computations show that the corresponding equation reads as
\begin{equation}\label{eq:esM}
    \begin{cases}
        -\log(\dot {\xx}(t))\in \partial\mathrm I_{[-1,1]}(\xx(t)),&\text{for a.e. }t\in (0,T),\\
        \dot {\xx}(t)>0,&\text{for a.e. }t\in (0,T),\\
        \xx(0)=x_0.
    \end{cases}
\end{equation}
Moreover, observing that $\Diss^*(z)=e^z$, the relaxed 
slope and $\Dpartial\varphi$ can be explicitely computed
\begin{equation*}
    \rS(x)=
    \begin{cases}
        1,&\text{if }x\in[-1,1),\\
        0,&\text{if }x=1,\\
        +\infty,&\text{otherwise,}
    \end{cases}
    \qquad
    \Dpartial\mathrm I_{[-1,1]}(x)=
         \begin{cases}
             \{0\}&\text{if }x\in [-1,1),\\
         \emptyset&\text{otherwise}.
         \end{cases}
\end{equation*}
In particular, $\Dpartial\mathrm I_{[-1,1]}(1)$ is empty even if $\partial\mathrm I_{[-1,1]}(1)$ is not, and \ref{M} fails at $x=1$.

Notice that in this setting the functional $\mathcal J^a$, at least when restricted to curves lying in the relevant set $[-1,1]$, reads as
\begin{equation*}
    \mathcal J^a(t,\xx)=\int_0^te^{-a\tau} (\Diss(\dot {\xx}(\tau))+\rS(\xx(\tau))) \,d\tau.
\end{equation*}
It is then immediate to check that the (unique) solution in the sense of $(b)$, $(c)$ and $(d)$ is given by
\begin{equation*}
    \bar {\xx}(t)=\begin{cases}
        x_0+t,&\text{if }t\in [0,1-x_0),\\
        1,&\text{if }t\in [1-x_0,T],
    \end{cases}
\end{equation*}
which solves the equation \eqref{eq:esM} (i.e. fulfils condition $(a)$) just in $(0,1-x_0)$, namely before it reaches the position $x=1$, where a singularity occurs.
\end{es}
\subsection{Main result.}
In this paper we would like to find a direct method which shows that 
every minimizer of $\ucalJ^a(T,\cdot)$ is in fact a null-minimizer, thus verifying condition $(d)$ in the De Giorgi's principle.

Our main results can be stated as follows. The proof 
of the first statement 
will be the content of Sections \ref{sec: convexification} and \ref{sec: proof}.
  \begin{teorema}\label{thm:main}
 Let us assume
  \ref{hyp:phi1}--\ref{hyp:phi4} and \ref{hyp:F1}--\ref{hyp:F3} 
  and let $a\ge0$,
  $T>0$, and $x_0\in \operatorname{dom}\varphi$ be given.
  
  If $a>0$, 
      or if $a=0$ and $\rS$ has boundedly
      compact sublevels, 
      then the functional $\ucalJ^a(T,\cdot)$ 
      attains the minimum 
      in the class
      of absolutely continuous curves starting from $x_0$ and 
      \begin{equation}\label{eq:prob}
          \min\Big\{\ucalJ^a(T,\xx):\, \xx\in AC([0,T];X),\, \xx(0)=x_0\Big\}\le 0.
      \end{equation} 
  \end{teorema}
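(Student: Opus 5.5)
The proof follows the three-step strategy of the introduction, adapted to the weighted functional $\calJ^a$ and to the Banach setting.

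\textbf{Existence of a minimizer.} We use the direct method. Along a minimizing sequence, \ref{hyp:phi4} and \ref{hyp:F2} give a bound on $\int_0^T\Diss(\dot\xx)$, hence equi-integrability of $\dot\xx$ and a uniform bound on $\sup_t\|\xx(t)\|$. The linear lower bound on $\varphi$ is absorbed by the superlinear $\Diss$ via a Gronwall-type estimate. For $a>0$ the term $a\int e^{-a\tau}\varphi(\xx)$ is controlled, so $\varphi(\xx(\tau))$ stays bounded in a time-integral sense. Together with \ref{hyp:phi1}, this lets an Ascoli-type argument extract a pointwise-converging subsequence with $\dot\xx_n\rightharpoonup\dot\xx$ weakly in $L^1$. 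For $a=0$ the compact sublevels of $\rS$ play the same role. Lower semicontinuity then follows from the convexity of $\Diss$, the lower semicontinuity of $\varphi$ and $\rS$, and Fatou's lemma.

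\textbf{Relaxation.} For the inequality \eqref{eq:prob}, we introduce triplets $(m,\mu,\nu)$ with $(\mu,\nu)\in\CE(\delta_{x_0},m)$ in the sense of Definition \ref{def:CEsol}, with $\mu$ carrying the time weight $e^{-at}\mathcal L^1$. We relax $\calJ^a(T,\cdot)$ to
\[
\mathcal E(m,\mu,\nu)=e^{-aT}\!\int\varphi\,dm+\int_{X_T}e^{-at}\Big(\Diss\big(\tfrac{d\nu}{d\mu}\big)+\rS+a\varphi\Big)d\mu-\varphi(x_0).
\]
By the superposition principle in Banach spaces (Appendix, following \cite[Theorem~5.2]{ambrosio2021spatially}), $\mathcal E$ is an average of $\calJ^a(T,\xx)$ over AC curves. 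Hence $\inf\calJ^a(T,\cdot)\le\inf\mathcal E$, and it suffices to show $\inf\mathcal E\le0$.

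\textbf{Duality.} We introduce the Lagrangian with multipliers $\xi\in\operatorname{Cyl}_b(X_T)$ and apply Theorem \ref{thm:VonNeumann}. Here $B$ is the set of triplets, carrying the narrow topology of Section \ref{subsec: narrow topology} (for $\nu$ the Banach-valued version). Lower semicontinuity of the action comes from Proposition \ref{repr formula proposition}, and that of $\int\rS\,d\mu$ and $\int\varphi\,dm$ from lower semicontinuity and the lower bounds. The required compact sublevel is obtained with a linear-in-time choice of $\bar\xi$, chosen so that the $t$-derivative provides coercive mass control. Compactness of $m$ and $\mu$ then follows from Prokhorov's theorem together with the coercivity coming from $a\varphi$, or from $\rS$ when $a=0$, combined with \ref{hyp:phi1}. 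Compactness of $\nu$ follows from Theorem \ref{thm: compactness for nu} and superlinearity. Computing the inner infimum via the duality formulas for the action and the general fact \eqref{eq:generalintro} identifies the dual problem as $\sup\{\xi(0,x_0)-\varphi(x_0)\}$ over cylinder $\xi$ satisfying
\[
-\partial_t\xi+e^{-at}\Diss^*\big(-e^{at}\mathrm D_x\xi\big)\le e^{-at}\big(\rS+a\varphi\big),\qquad \xi(T,\cdot)\le e^{-aT}\varphi .
\]
Equivalently, $\eta:=e^{at}\xi$ satisfies $-\partial_t\eta+a\eta+\Diss^*(-\mathrm D_x\eta)\le\rS+a\varphi$ with $\eta(T)\le\varphi$.

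\textbf{Backward boundedness (main obstacle).} It remains to show $\eta(0,x_0)\le\varphi(x_0)$. We will perturb $\eta$ to obtain strict inequalities, replacing it by $\eta-\varepsilon(1+T-t)$. We then argue at the last time $\bar t$ at which $\eta(t,\cdot)-\varphi$ reaches $0$. Two difficulties arise, since $\varphi$ is nonsmooth and the domain is infinite-dimensional.

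First, the supremum of $\eta(t,\cdot)-\varphi$ must actually be attained. For this we use that $\eta$ is bounded and depends on finitely many coordinates, and that $\varphi$ has compact sublevels in balls. To handle unbounded sets we add a small penalization such as $\delta\sqrt{1+\|x\|^2}$, smoothed via a norm that is $C^1$ on reflexive spaces after renorming. If attainment still fails, we would use Borwein--Preiss or Ekeland's principle instead.

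Second, at a maximizer $\bar x$ the function $\varphi-\eta(\bar t,\cdot)$ has a local minimum. By the viscosity definition of the Fréchet subdifferential, $z:=\mathrm D_x\eta(\bar t,\bar x)+{}$(penalty gradient) lies in $\partial\varphi(\bar x)$, so $\rS(\bar x)\le\Diss^*(-z)$ by \eqref{eq:obvious-relaxation}. We also have $\partial_t\eta\le0$ and $\eta=\varphi$ at $(\bar t,\bar x)$. Plugging these into the inequality for $\eta$, the $a\eta$ and $a\varphi$ terms cancel, and we get a contradiction with strictness once $\delta\to0$, using continuity of $\Diss^*$ from Lemma \ref{lemma:propFenchel}.

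This final step is the main difficulty: attainment in infinite dimensions, the penalization, and the fact that $\varphi$ may take the value $+\infty$ must all be handled, the latter being harmless since the maximum then lies in the domain of $\varphi$.
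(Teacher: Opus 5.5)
You have a genuine gap in the duality step, at the compactness hypothesis of Theorem~\ref{thm:VonNeumann}. That hypothesis concerns the sublevel $\{\mathcal L^a(\bar\xi,\cdot)\le \bar C\}$ over \emph{all} triples in $B$, not only those solving the continuity equation. So the continuity equation cannot be used to confine $m$ and $\mu$. Moreover, $\bar\xi\in\operatorname{Cyl}_b(X_T)$ is bounded with bounded derivatives, so it only controls total masses. Spatial tightness must therefore come from $\int\varphi\,dm$ and $\int e^{-at}(a\varphi+\rS)\,d\mu$ alone, and under \ref{hyp:phi1}--\ref{hyp:phi4} this fails. Indeed, $\varphi$ has only \emph{boundedly} compact sublevels and may even be unbounded below, since only $\varphi\ge -c_1-c_2\|x\|$ is assumed. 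In that case $\mathcal L^a(\bar\xi,\cdot)$ is unbounded below on $B$, and $m\mapsto\int\varphi\,dm$ is not even narrowly lower semicontinuous.

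This is not a technicality that a cleverer $\bar\xi$ could fix. Take $X=\R$, $\varphi(x)=-x$, $\Diss(v)=\tfrac12 v^2$, $a=0$; all hypotheses hold, with $\rS\equiv\tfrac12$. Then $\calJ^0(T,\xx)=\int_0^T\tfrac12(\dot\xx-1)^2\,d\tau$, so the primal value is $0$. On the other hand, for every bounded $\xi$ the triple $(m,\mu,\nu)=(k\delta_x,0,0)$ with $x$ large and $k\to\infty$ gives $\inf_B\mathcal L^0(\xi,\cdot)=-\infty$. Thus $\HJ^0$ is empty, the dual value is $-\infty$, strong duality fails, and no choice of $\bar\xi$ can produce compact sublevels.

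The missing idea is the localization of Section~\ref{sec:reduction}. The a priori estimate you already use for existence shows that every curve with $\calJ^a(T,\xx)\le 0$ stays in a fixed ball $B_C$. The same holds for the energy $\varphi_R:=\varphi+\mathrm I_{B_R}$. For $R>C$ the relaxed slopes of $\varphi$ and $\varphi_R$ coincide on the open ball, so the two functionals agree on all curves with nonpositive energy, and it suffices to prove $\min\le 0$ for $\varphi_R$.

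Since $\varphi_R$ has compact sublevels and is bounded below, after vertical shifts you may take $\varphi,\Diss\ge 0$. Then:
\begin{itemize}
\item $\int\varphi\,dm$ and $\int(a\varphi+\rS)\,d\mu$ give tightness of $m$ and $\mu$ (for $a=0$ through the compact sublevels of $\rS$);
\item Theorem~\ref{thm: compactness for nu} handles $\nu$;
\item in the backward boundedness step, the last time $\bar t$ and the contact point $\bar x$ are attained directly by compactness.
\end{itemize}
This makes your penalization/Ekeland detour unnecessary. As stated it would not work anyway: with only a linear lower bound on $\varphi$, a penalty $\delta\sqrt{1+\|x\|^2}$ with $\delta<c_2$ does not make $\eta-\varphi-\delta\sqrt{1+\|x\|^2}$ coercive.

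The rest of your plan is sound. The direct method on curves is a legitimate alternative to the paper's route to attainment, which goes through the relaxed problem and the superposition principle. The relaxation step is also fine, provided the weight $e^{-at}$ stays in the integrand, since the continuity equation forces the time marginal of $\mu$ to be $\mathcal L^1$.
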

    \begin{co}
      Under the same assumptions
      of Theorem \ref{thm:main},
    suppose      
      in addition 
      that conditions \ref{M} and \ref{CR} of Proposition \ref{prop:DeGiorgi} are in force (see Proposition \ref{prop:F*nabla} for a sufficient condition
      ensuring \ref{M}).
      
      Then 
      the functional $\ucalJ^a(T,\cdot)$ 
      attains the minimum 
      in the class
      of absolutely continuous curves starting from $x_0$,
      the 
      set of minimizers 
      does not depend on $a$,
      and its elements are 
      solutions of the relaxed doubly nonlinear equation \eqref{eq:DNE}. 
      
      If in particular the 
      Fr\'echet subdifferential
      of $\varphi$ 
      is strongly-weakly sequentially closed (so that $\partial\varphi$ concides with
      $\partial_\ell\varphi$), 
      the minimizers are actually
      solutions of 
      \begin{equation*}
    \begin{cases}
        \partial \Diss(\dot {\xx}(t))+ \partial \varphi(\xx(t))\ni 0\quad\text{in }X^*,\quad\text{for a.e. }t\in(0,T),\\
        x(0)=x_0\in X,
    \end{cases}
\end{equation*}
  \end{co}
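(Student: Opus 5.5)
The corollary follows by combining Theorem~\ref{thm:main} with the De Giorgi principle of Proposition~\ref{prop:DeGiorgi}. I would proceed in three steps.

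\textbf{Step 1: null-minimizers.} Fix $a\ge0$ satisfying the hypotheses of Theorem~\ref{thm:main}. That theorem gives a minimizer $\bar\xx$ of $\calJ^a(T,\cdot)$ among curves in $AC([0,T];X)$ with $\bar\xx(0)=x_0$, with minimum value $\le 0$. Since \ref{CR} and \ref{M} hold, inequality \eqref{eq:DGpositive} shows that $\calJ^a(T,\xx)\ge0$ for every admissible $\xx$. Hence the minimum is exactly $0$. It follows that an admissible curve is a minimizer if and only if $\calJ^a(T,\xx)\le 0$, i.e.\ if and only if condition $(d)$ of Proposition~\ref{prop:DeGiorgi} holds for this $a$.

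\textbf{Step 2: independence of $a$.} Proposition~\ref{prop:DeGiorgi} states that $(a)$, $(b)$, $(c)$, $(d)$ are equivalent. It also states that once one of them holds for some $a$, then $(d)$ holds for every $a'\ge0$. So the set of minimizers of $\calJ^a(T,\cdot)$ is exactly the set of solutions in the sense of $(a)$, and this set is independent of $a$. There is one subtlety. For $a=0$, Theorem~\ref{thm:main} needs boundedly compact sublevels of $\rS$, so existence of a minimizer for $a=0$ is not directly guaranteed. It is not needed: a minimizer for some $a'>0$ satisfies $(d)$ for $a=0$ as well, so it is a null-minimizer of $\calJ^0(T,\cdot)$. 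The $a=0$ minimum is therefore attained and its value is $0$. I would make this observation explicit.

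\textbf{Step 3: the equation.} By $(a)$, every minimizer solves \eqref{eq:DNE}. That is, there is a measurable $\bar{\mathrm z}(t)\in\partial_\ell\varphi(\bar\xx(t))$ with $-\bar{\mathrm z}(t)\in\partial\Diss(\dot{\bar\xx}(t))$ for a.e.\ $t$. If $\partial\varphi$ is strongly–weakly$^*$ sequentially closed, Definition~\ref{def:limiting-subdifferential} gives $\partial_\ell\varphi=\partial\varphi$. Then $\bar{\mathrm z}(t)\in\partial\varphi(\bar\xx(t))$, and we obtain the stated inclusion $\partial\Diss(\dot\xx)+\partial\varphi(\xx)\ni0$.

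No step is a real obstacle: all the analytic work sits in Theorem~\ref{thm:main} and Proposition~\ref{prop:DeGiorgi}. The only point requiring care is the $a$-independence bookkeeping in Step 2 when $a=0$ is not covered by the existence part of the theorem.
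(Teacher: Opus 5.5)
Your proposal is correct and follows essentially the same route as the paper: you combine Theorem~\ref{thm:main} with the nonnegativity \eqref{eq:DGpositive} and the equivalences of Proposition~\ref{prop:DeGiorgi}, and you handle the case $a=0$ without boundedly compact sublevels of $S^-$ by transferring a null-minimizer obtained for some $a'>0$ to $a=0$, then applying Proposition~\ref{prop:DeGiorgi} once more. The final identification $\partial_\ell\varphi=\partial\varphi$ under strong--weak sequential closedness is, as you note, immediate from Definition~\ref{def:limiting-subdifferential}.
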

  \begin{proof}
      If $a>0$ 
      (or $a=0$ and $\rS$ 
      has boundedly compact sublevels), 
      the statement is an immediate consequence of 
      Theorem \ref{thm:main} 
      and Proposition 
      \ref{prop:DeGiorgi}.

      In the general case when 
      $a=0$ 
      we already know 
      (by the previous discussion of the case $a>0$ and 
      Proposition 
      \ref{prop:DeGiorgi})
      that the minimum of $\ucalJ^0$ 
      is attained and is null.
      A further application of Proposition 
      \ref{prop:DeGiorgi}
      yields that 
      every minimizer of 
      \eqref{eq:prob}
      for $a=0$
      is a solution to 
      \eqref{eq:DNE}
      so that it 
      also a minimizer
      of \eqref{eq:prob} for every $a>0$.      
  \end{proof}

We conclude this section by providing sufficient conditions on the energy $\varphi$ and the dissipation potential $\Diss$ for the validity of condition \ref{M}.
\begin{prop}\label{prop:F*nabla}
      In addition to \ref{hyp:phi1}-\ref{hyp:phi4} and \ref{hyp:F1}-\ref{hyp:F3}, assume that 
      \begin{enumerate}[label=\textup{($\Diss4$)}]
  \item\label{psi4} $\Diss$ is continuous at $0$.
\end{enumerate} 
Then for every $x\in \operatorname{dom}\rS$, the set $\Dpartial\varphi(x)$ is nonempty 
      so that \ref{M}
      holds.

If moreover 
$\partial\varphi$
is strongly-weakly sequentially closed,
then
      \begin{equation}
      \label{eq:rS-representation}
          \rS(x)=
          \min\Big\{\Diss^*(-z):
          z\in \partial\varphi(x)\Big\}.
      \end{equation}
  \end{prop}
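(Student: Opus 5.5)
The argument is a direct one, based on the definition of $\rS$ as a relaxation and on the weak$^*$ compactness of the sublevels of $\Diss^*$.

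\textbf{Step 1: bounded sublevels of $\Diss^*$.} Under \ref{hyp:F1}--\ref{hyp:F3} and \ref{psi4}, the second part of Lemma \ref{lemma:propFenchel} says that $\Diss^*$ has bounded sublevels. Since $\Diss^*$ is convex and weak$^*$ lower semicontinuous, every sublevel $\{\Diss^*\le C\}$ is weak$^*$ closed and bounded in $X^*$. Because $X$ is reflexive and separable, $X^*$ is reflexive and separable too. Hence each such sublevel is weakly (equivalently weak$^*$) sequentially compact.

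\textbf{Step 2: nonemptiness of $\Dpartial\varphi(x)$.} Fix $x\in\operatorname{dom}\rS$, so that $\rS(x)<+\infty$ and $x\in\overline{\operatorname{dom}\varphi}$. By definition of $\rS$, together with a diagonal argument, there are sequences $x_n\to x$ and $z_n\in\partial\varphi(x_n)$ with $\Diss^*(-z_n)\to\rS(x)$. In particular $\Diss^*(-z_n)\le \rS(x)+1$ for $n$ large, so Step 1 gives $\sup_n\|z_n\|_*<\infty$. Up to a subsequence, $z_n\stackrel{*}{\rightharpoonup} z$.

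\begin{itemize}
\item Since $x_n\in\operatorname{dom}(\partial\varphi)$, Definition \ref{def:limiting-subdifferential} gives $z\in\partial_\ell\varphi(x)$; note that $x\in\overline{\operatorname{dom}(\partial\varphi)}$ automatically.
\item Weak$^*$ lower semicontinuity of $\Diss^*$ gives $\Diss^*(-z)\le\liminf_n\Diss^*(-z_n)=\rS(x)$.
\end{itemize}

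Therefore $z\in\Dpartial\varphi(x)$, and \ref{M} holds.

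\textbf{Step 3: the representation \eqref{eq:rS-representation}.} Assume further that $\partial\varphi=\partial_\ell\varphi$.

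\begin{itemize}
\item For $x\in\operatorname{dom}\rS$, the element $z$ built in Step 2 lies in $\partial\varphi(x)$ and satisfies $\Diss^*(-z)\le\rS(x)$.
\item The reverse inequality $\rS(x)\le\Diss^*(-z')$ for every $z'\in\partial\varphi(x)$ is exactly \eqref{eq:obvious-relaxation}.
\end{itemize}

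So the infimum $\inf\{\Diss^*(-z'):z'\in\partial\varphi(x)\}$ equals $\rS(x)$ and is attained at $z$, i.e.\ it is a minimum.

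It remains to treat $x\notin\operatorname{dom}\rS$. There, \eqref{eq:obvious-relaxation} forces $\Diss^*(-z')=+\infty$ for every $z'\in\partial\varphi(x)$. The right-hand side is then $+\infty$, understood as the minimum over a set on which the function is identically $+\infty$, or as $\min\emptyset=+\infty$ when $\partial\varphi(x)=\emptyset$. This matches $\rS(x)=+\infty$.

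\textbf{Main obstacle.} The only step needing care is the compactness in Step 1. Continuity of $\Diss$ at $0$ is precisely what makes the sublevels of $\Diss^*$ bounded, as the example with $\Diss(v)=v(\log v-1)$ shows when it fails. Reflexivity is what lets bounded sets in $X^*$ be used sequentially.
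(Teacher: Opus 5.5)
Your proposal is correct and follows the paper's own argument: bounded sublevels of $\Diss^*$ from Lemma~\ref{lemma:propFenchel}, a recovery sequence $z_n\in\partial\varphi(x_n)$, weak$^*$ compactness, the definition of $\partial_\ell\varphi$, and weak$^*$ lower semicontinuity of $\Diss^*$, with \eqref{eq:obvious-relaxation} giving the representation. You also spell out the diagonal extraction and the case $x\notin\operatorname{dom}\rS$, which the paper leaves implicit.
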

 \begin{proof}
      Let us first observe that 
  $\Diss^*$ has bounded sublevels by $(\Diss4)$
  (see Lemma \ref{lemma:propFenchel}). If $S^-(x)<\infty$
  then we can find 
  a sequence 
  $x_n\in \operatorname{dom}(\partial\varphi)$
  and $z_n\in \partial\varphi(x_n)$
  such that 
  $$x_n\to x,\quad
  \psi^*(-z_n)\to S^-(x).$$
  Since $z_n$ is bounded in $X^*$,
  up to extracting a further (not relabeled) subsequence it is not restrictive to assume that 
  $z_n \rightharpoonup z$
  for some $z\in X^*.$
  By the very definition of limiting subdifferential there holds
  $z\in \partial_\ell\varphi(x)$,
  and by the weak lower semicontinuity of $\psi^*$, we deduce
  $\psi^*(-z)\le S^-(x),$
  so that $z\in \Dpartial\varphi(x).$

    When $\partial\varphi$
    is strongly-weakly sequentially closed,
    we immediately deduce 
    \eqref{eq:rS-representation}.
 \end{proof}

\subsection{ 
Reduction to a coercive and simpler setting}\label{sec:reduction} 

We first show that the proof of Theorem \ref{thm:main} 
can be carried out under 
three simplifying assumptions:
\begin{enumerate}
    \item the sublevels of 
    $\varphi$ are
    compact and
    the sublevels of $\rS$ are bounded; 
    \item $\varphi$ is nonnegative;
    \item $\Diss$ is nonnegative. 
\end{enumerate}
Let us first address the first
claim. 
For $R>\|x_0\|$ consider the perturbed energy $\varphi_R(x):=\varphi(x)+\mathrm I_{R}(x)$, where $\mathrm I_{R}$ denotes the indicator function of the \emph{closed} ball of radius $R$ centered at the origin. Clearly $\varphi_R$ still fulfils \ref{hyp:phi1}. 
Moreover, it is easy to see that sublevels of $\varphi_R$ are compact, since $\{\varphi_R\le C\}\subseteq \{\varphi\le C\}\cap \overline{B_{R}}$.
We denote by $\rSR$
    the relaxed slope associated with 
    $\varphi_R$. 
Clearly $\rSR(x)=+\infty$
if $\|x\|>R$ so that 
the sublevels of $\rSR$ are bounded.

 The next Lemma shows that 
it is not restrictive to prove Theorem 
\ref{thm:main} 
for $\varphi_R$.
\begin{lemma}
    There exists a constant $C>0$
    such that 
    every function $\xx\in AC([0,T];X)$ with $\xx(0)=x_0$
    and $\min\{\ucalJ^a_R(T,\xx), \ucalJ^a(T,\xx)\}\le 0$ 
    satisfies $
    \max_{t\in [0,T]}\|\xx(t)\|\le C$. 
    
    In particular, 
    for every $R>C$ 
    and every
    $\xx\in AC([0,T];X)$ with $\xx(0)=x_0$
    \begin{align}
    \label{eq:coincide}
        \min\Big\{\ucalJ^a_R(T,\xx),
        \ucalJ^a(T,\xx)\Big\}\le 0
        \quad&
        \Rightarrow\quad
        \ucalJ^a(T,\xx)=
        \ucalJ^a_R(T,\xx)\le 0
    \end{align}
    so that 
    if
    $\min\Big\{\ucalJ^a_R(T,\xx):
    \xx\in AC([0,T];X),\ 
    \xx(0)=x_0\Big\}\le 0$
    then 
    the sets of minimizers
    of $\ucalJ^a$
    and $\ucalJ^a_R$ coincide and 
    $\min\Big\{\ucalJ^a(T,\xx):
    \xx\in AC([0,T];X),\ 
    \xx(0)=x_0\Big\}\le 0$.
\end{lemma}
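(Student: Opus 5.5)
Since $\varphi_R=\varphi+\mathrm I_R\ge\varphi$, any curve with $\ucalJ^a_R(T,\xx)\le 0$ stays in $\overline{B_R}$ (otherwise $\varphi_R(\xx(t))=+\infty$ on a set of positive measure, or at $t=T$). Moreover $\rSR\ge\rS$ wherever both are finite, so $\ucalJ^a_R(T,\xx)\ge\ucalJ^a(T,\xx)$. It therefore suffices to prove the a priori bound for curves with $\ucalJ^a(T,\xx)\le 0$. To see $\rSR\ge\rS$: at interior points $\|y\|<R$ the Fréchet subdifferentials of $\varphi_R$ and $\varphi$ coincide. At points with $\|y\|=R$ one has $\partial\varphi_R(y)\supseteq\partial\varphi(y)$, and the inclusion may be strict, so the inequality can fail on the sphere. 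This is why the bound is first derived for $\ucalJ^a$ via a curve-dependent argument, and the sphere is then avoided by taking $R>C$. The key estimate is the following.

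\emph{Step 1: a priori bound.} Let $\ucalJ^a(T,\xx)\le 0$. By \eqref{eq:Fboundbelow} and \ref{hyp:phi4}, the chain rule is not needed: I bound $\rS\ge-\Diss(0)$ directly. Indeed $\Diss^*(z)\ge-\Diss(0)$, and this lower bound persists under the liminf defining $\rS$. From \ref{hyp:F2}, $\Diss(v)\ge\|v\|-c_3$. Write $M(t):=\max_{s\le t}\|\xx(s)\|$, so that $M(t)\le\|x_0\|+\int_0^t\|\dot\xx\|$. Splitting $\ucalJ^a(T,\xx)\le 0$ into nonnegative and controlled parts, using $e^{-a\tau}\ge e^{-aT}$ and \ref{hyp:phi4}, gives
\[
e^{-aT}\int_0^T\|\dot\xx\|\,d\tau\le \varphi(x_0)+C_1+c_2\|\xx(T)\|+a c_2\int_0^T\|\xx(\tau)\|\,d\tau .
\]
The trouble is the linear term $c_2\|\xx(T)\|$, since the coefficient $e^{-aT}$ may be smaller than $c_2$. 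I would fix this by using superlinearity with a large slope $L$, namely $\Diss(v)\ge L\|v\|-c_L$ with $Le^{-aT}>2c_2(1+aT)$. This yields
\[
L e^{-aT}\big(M(T)-\|x_0\|\big)\le C_2+c_2(1+aT)M(T),
\]
hence $M(T)\le C$ with $C$ depending only on the data and $x_0$. This is the main technical point; everything else is bookkeeping.

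\emph{Step 2: for $R>C$, equality of the functionals.} Any curve with $\ucalJ^a(T,\xx)\le 0$ satisfies $\|\xx(t)\|\le C<R$, so it stays in the open ball. There $\varphi_R=\varphi$, and the slopes agree: $\rSR=\rS$ on the open ball $B_R^\circ$, because relaxation is local and approximating sequences $x_n\to x$ eventually lie inside the ball, where the Fréchet subdifferentials agree. Hence $\ucalJ^a_R(T,\xx)=\ucalJ^a(T,\xx)$. Conversely, suppose $\ucalJ^a_R(T,\xx)\le 0$. Then $\xx$ stays in $\overline{B_R}$, and I redo the estimate of Step 1 for $\varphi_R$. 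Hypothesis \ref{hyp:phi4} and the bound $\rSR\ge-\Diss(0)$ still hold with the same constants, so $C$ is uniform in $R$, and again $\|\xx(t)\|\le C<R$, giving equality. This proves \eqref{eq:coincide}.

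\emph{Step 3: minimizers.} If $\min\ucalJ^a_R\le 0$, then by \eqref{eq:coincide} the minimizers of $\ucalJ^a_R$ have $\ucalJ^a=\ucalJ^a_R\le 0$. Any curve with $\ucalJ^a(T,\cdot)<\min\ucalJ^a_R\le 0$ would also satisfy $\ucalJ^a_R=\ucalJ^a$, which contradicts minimality. Hence the two minima and the two sets of minimizers coincide, and the common minimum is $\le 0$.
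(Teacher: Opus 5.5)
Your proof is correct and follows essentially the paper's route: you derive a lower bound valid simultaneously for $\ucalJ^a$ and $\ucalJ^a_R$ from $\varphi_R\ge\varphi\ge -c_1-c_2\|x\|$, $\rS,\rSR\ge-\Diss(0)$ and $\Diss(v)\ge L\|v\|-c_L$ (with $L$ large enough to absorb the linear terms), and then use that the Fr\'echet subdifferentials, hence the relaxed slopes, coincide on the open ball $\{\|x\|<R\}$. Your opening reduction via $\ucalJ^a_R\ge\ucalJ^a$ is not justified, since, as you yourself note, $\rSR\ge\rS$ may fail on the sphere; it is never actually used, though, because in Step 2 you redo the estimate directly for $\varphi_R$ with the same constants, exactly as the paper does.
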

\begin{proof}
    By \ref{hyp:F2} we know that for all $M>0$ there exists $c_M>0$ such that $\Diss(v)\ge M\|v\|-c_M$; also using the inequalities $\varphi_R\ge \varphi$ and \eqref{eq:Fboundbelow} (whence $\rS\ge -\Diss(0)$) and recalling \ref{hyp:phi4} we thus deduce 
    \begin{align*}
        & \ucalJ^a_R(T,\xx)\ge e^{-aT}\varphi(\xx(T))+\int_0^T e^{-a\tau}(M\|\dot {\xx}(\tau)\|-c_M-\Diss(0)+a\varphi(\xx(\tau))) \, d\tau-\varphi(x_0)\\
        \ge& M\int_0^T e^{-a\tau}\|\dot {\xx}(\tau)\| \, d\tau -\varphi(x_0) \\&-\left[e^{-aT}(c_2\|\xx(T)\|+c_1)+(c_M+\Diss(0)+ac_1)\int_0^T e^{-a\tau} \, d\tau+ac_2\int_0^T e^{-a\tau}\| \xx(\tau)\| \, d\tau\right].
    \end{align*}
    The very same estimate can be obtained starting from $\ucalJ^a(T,\xx)$. So, in both cases we infer
    \begin{align*}
       &Me^{-aT}\int_0^T \|\dot {\xx}(\tau)\| \, d\tau \\
       \le& \varphi(x_0){+}(c_M{+}|\Diss(0)|{+}ac_1){+}c_1{+}c_2\left(\|x_0\|{+}\int_0^T \|\dot {\xx}(\tau)\| \, d\tau\right)+ac_2T\left(\|x_0\|{+}\int_0^T \|\dot {\xx}(\tau)\| \, d\tau\right)\\
       =&C_M+c_2(1+aT)\int_0^T \|\dot{\xx}(\tau)\| \, d\tau.
    \end{align*}
    By choosing $M:=
     [1+c_2(1+aT)]e^{aT}$  we obtain that $\int_0^T \|\dot {\xx}(\tau)\| \, d\tau\le  C_M$ 
    so that 
    $$\sup_{t\in [0,T]}\|\xx(t)\|\le 
    C
    \quad\text{with }C:=C_M+\|x_0\|.$$

    Let us now choose $R>C$.
    Since for every 
    $x\in X$ with norm $\|x\|<R$
    the Fr\'echet subdifferential of
    $\varphi$ 
coincides with
the Fr\'echet subdifferential of $\varphi_R$, it is easy to check that 
\begin{equation*}
    \rSR(x)=\rS(x)\quad\text{for every }x\in X,\ \|x\|<R.
\end{equation*}
It follows that 
$\ucalJ^a_R(\xx)$ and $\ucalJ^a(\xx)$ 
coincide along every absolutely continuous curve $\xx$ starting from
$x_0$ and satisfying
$\sup_{t\in [0,T]}\|\xx(t)\|\le C<R.$
In particular, using the previous estimate, we deduce 
\eqref{eq:coincide}.
\end{proof}

\begin{oss}
    The very same proof shows that the result is still valid under the $p$-growth conditions $(\Diss2')$ and $(\varphi2')$.
\end{oss}

 Let us now 
show that it is not restrictive to assume $\varphi$ nonnegative. 

It is sufficient to observe that the functionals $\ucalJ^a$ 
are invariant under vertical shifts of 
$\varphi$ by constants $\varphi\mapsto \varphi+k$, for $k\in \R$.
Since by the previous argument
we can assume that $\varphi$ has compact sublevels, $\varphi$ 
is bounded from below and therefore, without loss of generality we may also assume that
    \begin{equation}\label{eq:nonneg1}
          \varphi(x)\ge 0\qquad\text{for all }x\in X.
      \end{equation}
We conclude this section by addressing the last claim and showing that 
it is not restrictive to assume that 
$\Diss$ is nonnegative as well.

We first observe that 
a vertical shift of $\Diss$ yields
\begin{equation*}
    (\Diss+k)^*(z)=\Diss^*(z)-k
\end{equation*}
so that 
denoting by $\rSk$ the relaxed slope
of $\varphi$ induced by the shifted
dissipation $\Diss+k$
we have
$$(\Diss+k)(v)+\rSk(x)=
\Diss (v)+\rS(x)\quad\text{for every }
x,v\in X$$
We deduce that 
$\ucalJ^a$ is invariant
by vertical shifts of $\Diss$; 
since $\Diss$ is bounded from below, 
without loss of generality we may also assume that      \begin{equation}\label{eq:nonneg2}
          \Diss(v)\ge 0,\qquad\text{for all }v\in X.
      \end{equation}
  In the proof of Theorem \ref{thm:main}, which we develop in Sections \ref{sec: convexification} and \ref{sec: proof}, we will thus 
      assume 
\eqref{eq:nonneg1}, \eqref{eq:nonneg2},
and that $\varphi$ has compact sublevels.

\section{Narrow topology for Banach-valued measures and action functional}\label{subsec: narrow topology}

In the first part of this section we introduce the proper notion of \emph{narrow topology} for vector-valued measures $\mathcal{M}(Y;X)$, where $(Y,\tau)$ and $(X,\|\cdot\|)$ are, respectively, a Polish space and a reflexive and separable Banach space, and we state important (local) metrizability and compactness results. Then, we consider the so-called \emph{action functional}, namely the lifting of the dissipation integral $\displaystyle \int_0^T \Diss(\dot {\xx}(\tau))\, d\tau$ to spaces of measures, and we discuss its main convexity and lower semicontinuity properties.

\subsection{Narrow topology and compactness result}

\begin{df}\label{def: narrow topology}
    The narrow topology on $\mathcal{M}(Y;X)$ is the smallest topology for which the functionals 
    \begin{equation}\label{eq: narrow cont functions}
    \mathcal{M}(Y;X) \ni \nu\mapsto \int_Y \langle \boldsymbol{\zeta}, d\nu\rangle:= \sum_{i=1}^N \langle z_i, \int_Y  \zeta_i d\nu\rangle
    \end{equation}
    are continuous for all functions $\boldsymbol{\zeta}\colon Y\to X^*$ of the form 
    \begin{equation}\label{eq:psiform}
        \boldsymbol{\zeta}(y) = \sum_{i=1}^N \zeta_i(y)z_i,
    \end{equation}
    for some $N\in \N$, $z_i \in X^*$ and $\zeta_i \in C_b(Y)$, as $i=1,\dots, N$.
    
    The collection of all the functions $\boldsymbol{\zeta}$ satisfying \eqref{eq:psiform} will be denoted by the symbol $\mathfrak{Z}_b(Y;X^*)$.
\end{df}

A first property of such topology, as in the finite-dimensional case, is local metrizability. We postpone the proof of the following proposition to Appendix \ref{sect app: narrow}, see in particular Proposition \ref{prop: bounded narrow metric}.

\begin{prop}\label{prop:localmetrnarrow}
    The narrow topology over $\mathcal{M}(Y;X)$ is metrizable on bounded sets (with respect to the total variation norm). In particular, compact sets coincide with sequentially compact sets.
\end{prop}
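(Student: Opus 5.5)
The plan is to build an explicit metric out of a countable family of test functions, and then show that on total-variation bounded sets this metric induces exactly the narrow topology of Definition \ref{def: narrow topology}.

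\textbf{Step 1: a countable family of test functions.} Since $X$ is separable and reflexive, $X^*$ is separable. Fix a countable set $\{z_j\}_j$ that is dense in the unit ball $B_1^*$. Since $Y$ is Polish, there is a countable family $\{\zeta_i\}_i\subset C_b(Y)$, with $\|\zeta_i\|_\infty\le 1$, that determines the narrow topology of $\mathcal M_+(Y)$ on mass-bounded sets. For instance, take bounded Lipschitz functions forming a countable family dense in the unit ball of $\mathrm{BL}(Y)$ on compacts, and use that narrow convergence on bounded sets is metrized by the bounded-Lipschitz distance. Then define
\[
d(\nu_1,\nu_2):=\sum_{i,j}2^{-i-j}\Big|\Big\langle z_j,\int_Y\zeta_i\,d(\nu_1-\nu_2)\Big\rangle\Big|.
\]
Each summand is a narrow-continuous seminorm evaluated at $\nu_1-\nu_2$, and the terms are bounded by $\|z_j\|_*\|\zeta_i\|_\infty(|\nu_1|+|\nu_2|)(Y)$. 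Hence the series converges uniformly on bounded sets. It follows that $d$ is narrow continuous there, so the metric topology is coarser than the narrow one on bounded sets.

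\textbf{Step 2: the converse inclusion, which is the main obstacle.} We must show that if $\nu_n,\nu$ lie in $\{|\cdot|(Y)\le M\}$ and $d(\nu_n,\nu)\to0$, then $\int_Y\zeta\,d\langle z,\nu_n\rangle\to\int_Y\zeta\,d\langle z,\nu\rangle$ for every $\zeta\in C_b(Y)$ and $z\in X^*$. Since the topologies are generated by nets, the argument is phrased with nets (or with basic neighbourhoods) rather than sequences. The density of $\{z_j\}$ together with the uniform bound $|\langle z-z_j,\int\zeta\,d\nu\rangle|\le\|z-z_j\|_*\|\zeta\|_\infty M$ reduces the problem to $z=z_j$. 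For fixed $z_j$, the scalar measures $\langle z_j,\nu_n\rangle$ are signed measures with total variation at most $\|z_j\|_*M$. The difficulty is that, for signed measures, testing against a countable family $\{\zeta_i\}$ does not in general control all of $C_b(Y)$ unless tightness is available. Tightness is exactly what fails for non-compact $Y$.

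To overcome this I would pick $\{\zeta_i\}$ more cleverly. Following the standard construction for signed measures with bounded variation, I would include products of a countable dense family in $C_b$ of a compactification, for example functions of the form $f\circ\iota$ where $\iota$ embeds $Y$ into the Hilbert cube $Q$ and $f$ ranges over a countable dense subset of $C(Q)$. With this choice, convergence against all $\zeta_i$ is equivalent, on bounded sets, to weak$^*$ convergence in $C(Q)^*$, which is metrizable on bounded sets. Alternatively, if the paper's Appendix works with the topology generated only by such a family, I would identify the narrow topology restricted to bounded sets with the initial topology of the countable family $\{\langle z_j,\int\zeta_i\,d\cdot\rangle\}$. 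That identification is what I expect to consume most of the work.

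\textbf{Step 3: conclusion.} On a bounded set, the narrow topology is then the initial topology of countably many real functions, and this initial topology is metrized by $d$. Compactness and sequential compactness coincide for metrizable spaces. A narrow compact set might not be bounded a priori, but in that case I would restrict to bounded compact sets, which is the relevant setting for the results used later.
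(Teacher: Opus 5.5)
The step you flag as the main obstacle is a genuine gap, and the Hilbert-cube fix does not close it. Testing against $\{f\circ\iota: f\in C(Q)\}$ characterizes narrow convergence only for \emph{nonnegative} measures. The classical argument passes through the portmanteau theorem, and hence through positivity. For the signed measures $\langle z_j,\nu\rangle$, mass can cancel. For example, take $Y=\N$, choose $n_k\uparrow\infty$ such that $\iota(n_k)$ converges in $Q$, and let $\nu_k:=\delta_{n_{2k}}-\delta_{n_{2k+1}}$. Then $\int f\circ\iota\,d\nu_k\to0$ for every $f\in C(Q)$. However, $\int\zeta\,d\nu_k=1$ for $\zeta=\mathds{1}_{\{n_{2k}:\,k\in\N\}}\in C_b(\N)$, and $\|\nu_k\|_{\mathrm{TV}}=2$.

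In fact no countable family can work, because the metrizability claim fails for non-compact $Y$. Take $X=\R$ and $Y=\N$. Then $\mathcal M(Y;X)=\ell^1$ with the total variation norm, and the narrow topology is the weak topology $\sigma(\ell^1,\ell^\infty)$. Now $0$ belongs to the weak closure of the unit sphere of $\ell^1$. So if the narrow topology were metrizable on the unit ball, there would be a sequence of norm-one elements converging weakly to $0$. Schur's theorem forbids this.

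The paper's proof has the same architecture as yours. It reduces to the scalar measures $\pi_{z_n}\nu=\langle z_n,\nu\rangle$, which is your reduction to $z=z_j$. It then simply \emph{recalls} that the narrow topology on bounded sets of scalar signed measures is metrizable. That is exactly the assertion refuted above, since for $X=\R$ the scalar and vector statements coincide. So you have located the real difficulty, but neither argument can overcome it in this generality.

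What is true is metrizability on TV-bounded sets on which $\{|\nu|\}$ is uniformly tight. This is the situation in Theorem~\ref{thm: compactness for nu}: there $|\nu|=\|\frac{d\nu}{d\mu}\|\mu$ is uniformly tight by \eqref{eq: equi-integrability} and the tightness of $\mathcal G_1$. On such sets your Step 2 does close:
\begin{itemize}
\item Given $\zeta\in C_b(Y)$, fix a compact $K$ with $|\nu|(Y\setminus K)\le\varepsilon$ for all $\nu$ in the set.
\item Approximate $\zeta$ uniformly on $K$ by a Lipschitz $g$ with $\|g\|_\infty\le\|\zeta\|_\infty$, using Stone--Weierstrass on $K$, McShane extension and truncation.
\item Absorb the contribution of $Y\setminus K$ by tightness.
\end{itemize}
For instance, $\sum_n2^{-n}\sup\{\int g\,d\pi_{z_n}(\nu_1-\nu_2):\ \|g\|_\infty+\mathrm{Lip}(g)\le1\}$ is then a metric for the narrow topology on such sets. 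The reverse comparison follows from Arzel\`a--Ascoli on $K$.
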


The main result of the section is the following compactness criterion in the space $\mathcal{M}_+(Y)\times \mathcal{M}(Y;X)$, endowed with the product narrow topology.

\begin{teorema}\label{thm: compactness for nu}
    Let $\mathcal{G}\subseteq \mathcal{M}_+(Y)\times \mathcal{M}(Y;X)$ be such that:
    \begin{enumerate}
        \item the projection over the first component, namely the family $\mathcal G_1:=\{\mu \in \mathcal{M}_+(Y) \ : \ (\mu,\nu)\in \mathcal{G} \text{ for some } \nu \in \mathcal{M}(Y;X)\}$ has equibounded mass and is uniformly tight, i.e. it satisfies \eqref{eq: mass bound prohorov} and \eqref{eq: tightness};
        \item for each pair $(\mu,\nu)\in \mathcal{G}$, the measure $\nu$ is absolutely continuous with respect to $\mu$;
        \item there exists a function $G:X\to(-\infty,+\infty]$ with superlinear growth at infinity and bounded below, such that
        \begin{equation}\label{eq: weak tightness}
            \sup_{(\mu,\nu)\in \mathcal{G}} \int_Y G\left(\frac{d\nu}{d\mu}(y)\right)d\mu(y)<+\infty. 
        \end{equation}
    \end{enumerate}
    Then, $\mathcal{G}$ is relatively compact in $\mathcal{M}_+(Y) \times \mathcal{M}(Y;X)$, endowed with the product narrow topology.
\end{teorema}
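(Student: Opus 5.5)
The plan is to handle the first component by Prokhorov's Theorem \ref{thm: prohorov} and to obtain the second by a total-variation bound combined with weak sequential compactness of densities. Proposition \ref{prop:localmetrnarrow} reduces the problem to sequences. The narrow topology on $\mathcal M(Y;X)$ is metrizable on bounded sets, so it suffices to show two things: $\{|\nu|(Y):(\mu,\nu)\in\mathcal G\}$ is bounded, and every sequence $(\mu_n,\nu_n)\subset\mathcal G$ has a subsequence converging in the product narrow topology.

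\textbf{Mass bound and tightness for $\nu$.} Write $v_n:=\frac{d\nu_n}{d\mu_n}$ and let $-b$ be a lower bound for $G$. Superlinearity gives, for each $M>0$, a constant $c_M$ with $G(v)\ge M\|v\|-c_M$. Set $C:=\sup_{\mathcal G}\int G(v)\,d\mu$ and $m:=\sup_{\mathcal G_1}\mu(Y)$. Then
\[
|\nu_n|(Y)=\int\|v_n\|\,d\mu_n\le \frac{C+c_M m}{M}<\infty .
\]
Uniform integrability follows from the same splitting. Since $G+b\ge0$, for any Borel $A$ we have
\[
|\nu_n|(A)\le R\,\mu_n(A)+\int_{\{\|v_n\|>R\}}\|v_n\|\,d\mu_n\le R\,\mu_n(A)+\sup_{\|v\|>R}\frac{\|v\|}{G(v)+b}\,(C+bm),
\]
and the supremum tends to $0$ as $R\to\infty$. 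Taking $A=Y\setminus K_\varepsilon$, the family $\{|\nu_n|\}$ is therefore uniformly tight.

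\textbf{Extraction of a limit.} Prokhorov gives $\mu_n\to\mu$ narrowly, along a subsequence. To build the limit of $\nu_n$, I would lift to the graph. Consider the measures $\gamma_n:=(\mathrm{id},v_n)_\#\mu_n$ on $Y\times X_w$, where $X$ carries the weak topology; balls of $X$ are metrizable compact in that topology because $X$ is reflexive and separable. The family $\{\gamma_n\}$ is tight, since its first marginals are tight and $\int G(x)\,d\gamma_n$ is bounded, so $\gamma_n(\{\|x\|>R\})\le (C+bm)/\inf_{\|x\|>R}(G+b)\to0$. Extract $\gamma_n\to\gamma$ narrowly and define
\[
\nu:=\pi^1_\#\bigl(x\,\gamma\bigr),\qquad\text{equivalently}\qquad \frac{d\nu}{d\mu}(y)=\int x\,d\gamma_y(x)
\]
as a Bochner/Pettis barycenter; this is well defined because $\int\|x\|\,d\gamma<\infty$ by lower semicontinuity of the norm. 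Then $|\nu|\ll\mu$.

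\textbf{Convergence of the test functionals.} For $\boldsymbol\zeta=\sum\zeta_i z_i$ we have $\int\langle\boldsymbol\zeta,d\nu_n\rangle=\int\sum_i\zeta_i(y)\langle z_i,x\rangle\,d\gamma_n$. The integrand is continuous on $Y\times X_w$ but unbounded. I would truncate it with $\langle z_i,x\rangle\wedge R\vee(-R)$ and control the error by the uniform integrability estimate above, which is uniform in $n$. This yields convergence to $\int\langle\boldsymbol\zeta,d\nu\rangle$, hence narrow convergence of $\nu_n$ to $\nu$.

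\textbf{Main obstacle.} The delicate step is the measure-theoretic setup of $\gamma_n$ on $Y\times X_w$. The space $X_w$ is not Polish, so Prokhorov must be applied on $Y\times B_R$ with the weak metric, working in a $\sigma$-compact framework. One also has to check that weak-Borel and norm-Borel sets agree, which holds since $X$ is separable. If this becomes too technical, the fallback is to identify the limit functional directly. On the span of $\mathfrak Z_b$ it is bounded, as $|\ell(\boldsymbol\zeta)|\le \limsup\int\|\boldsymbol\zeta\|_*\,d|\nu_n|$, and it is absolutely continuous with respect to $\mu$ by the uniform integrability estimate. It can then be represented as $v\mu$ with $v\in L^1_\mu(Y;X)$, using reflexivity (Radon--Nikod\'ym property), after a diagonal extraction over a countable dense set of the $z_i$ and of functions in $C_b(Y)$.
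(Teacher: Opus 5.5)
Your proposal follows essentially the same route as the paper. Both arguments derive equi-integrability from the superlinearity of $G$, reduce to sequences via local metrizability, and lift to the graph measures $(\mathrm{id},v_n)_\#\mu_n$. Both then apply Prokhorov, define the limit $\nu$ as a barycenter, and pass to the limit in the unbounded integrands $\langle\boldsymbol{\zeta}(y),x\rangle$ by truncation and uniform integrability.

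The non-metrizability of $X_w$, which you flag as the main obstacle, is exactly what the paper sidesteps. It replaces the weak topology by the auxiliary norm $\|x\|_{\overline\omega}=(\sum_n n^{-2}\langle z_n,x\rangle^2)^{1/2}$. This norm agrees with the weak topology on bounded sets and generates the same Borel sets, so the metric-space Prokhorov theorem applies directly on $Y\times X_{\overline\omega}$, with compacts $K\times B_R$.
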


To prove the above theorem, we introduce a (possibly non-complete) metric over $X$, in the spirit of \cite[Section 5.1]{ambrosio2005gradient}, whose induced topology is globally weaker than the weak topology.

\begin{df}
    Let $D= \{z_n\}_{n\in \N}\subset X^*$ be a countable subset of the unit ball such that $\operatorname{Span} D$ is dense in $X^*$. We define $X_{\overline{\omega}}$ as the normed space $(X,\|\cdot \|_{\overline{\omega}})$, where
    \begin{equation*}
        \|x\|_{\overline{\omega}}:= \left(\sum_{n=1}^{+\infty} \frac{\langle  z_n,x \rangle^2}{n^2}\right)^\frac 12.  
    \end{equation*}
\end{df}

Next lemma collects some useful properties about this new space we will exploit afterwards. Its proof follows the line of \cite[Corollary 2, pp. 101]{schwariz1973radon}, to which we refer for details.

\begin{lemma}\label{lemma: properties of X_overline omega}
    The following properties hold:
    \begin{enumerate}[label=(\roman*)]
        \item the strong topology of $X_{\overline \omega}$ is weaker than the weak topology of $X$. Moreover, their generated Borel $\sigma$-algebras coincide;
        \item the topology of $X_{\overline{\omega}}$ restricted to bounded sets (with respect to the original norm of $X$) coincides with the weak topology of $X$. In particular, closed and bounded sets are compact in $X_{\overline{\omega}}$;
        \item the map $x \mapsto \| x \|$ is lower semicontinuous in $X_{\overline \omega}$.
    \end{enumerate}
\end{lemma}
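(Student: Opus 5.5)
The approach is to compare $X_{\overline{\omega}}$ with the weak topology through the countable family of functionals $x\mapsto\langle z_n,x\rangle$, and to use separability of $X$ and reflexivity. The three items are proved in order.

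\emph{Item (i).} Each map $x\mapsto\langle z_n,x\rangle$ is weakly continuous. The series defining $\|x-y\|_{\overline{\omega}}^2$ has terms bounded by $\|x-y\|^2/n^2$, since $\|z_n\|_*\le1$. On norm-bounded sets the tails are therefore uniformly small. This gives continuity of the identity from $(X,\text{weak})$ to $X_{\overline{\omega}}$ on bounded sets.

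To get the global statement, I will show that every $\|\cdot\|_{\overline{\omega}}$-open ball is weakly open. Fix $x$ and $r$, and take $y$ with $\|y-x\|_{\overline{\omega}}<r$. A weak neighbourhood of $y$ controlling finitely many coordinates does not control the tail when its elements are unbounded in norm. So I plan to use instead that $\|\cdot\|_{\overline{\omega}}$ is weakly lower semicontinuous, being a supremum of finite partial sums of weakly continuous functions. This shows the $\overline{\omega}$-closed balls are weakly closed, and I expect this to be the main obstacle. If the full global comparison fails, I will prove only what is needed: compact sets coincide and Borel sets coincide.

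For the Borel $\sigma$-algebras, the space $X_{\overline{\omega}}$ is separable, since the norm-dense countable set of $X$ is dense in it. Hence its open sets are countable unions of open balls, and these balls are weakly Borel by the lower semicontinuity just described, so $\mathcal B(X_{\overline{\omega}})\subseteq\mathcal B(X_w)$. Conversely, $\mathcal B(X_w)=\mathcal B(X)$ is generated by the norm, because $X$ is separable and so the norm is a countable supremum of functionals. I then write $\|x\|=\sup_k\langle w_k,x\rangle$ with $w_k$ in the rational span of $D$; this is possible by density of $\operatorname{Span}D$. Each $\langle w_k,\cdot\rangle$ is $\overline{\omega}$-continuous, since $|\langle z_n,x\rangle|\le n\|x\|_{\overline{\omega}}$. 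Therefore norm balls, and hence all norm Borel sets, lie in $\mathcal B(X_{\overline{\omega}})$.

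\emph{Item (ii).} On a ball $B_R$, the identity from $(B_R,\text{weak})$ to $X_{\overline{\omega}}$ is continuous by the tail estimate above. Moreover $B_R$ is weakly compact by reflexivity, and $X_{\overline{\omega}}$ is Hausdorff. It is Hausdorff because $\operatorname{Span}D$ dense implies that the $z_n$ separate points, so $\|\cdot\|_{\overline{\omega}}$ is a genuine norm. A continuous bijection from a compact space onto a Hausdorff space is a homeomorphism, so the two topologies agree on $B_R$. Closed bounded sets are weakly closed in $B_R$ after convexification; more directly, an $\overline{\omega}$-closed bounded set is closed in $B_R$ for the common topology, hence compact.

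\emph{Item (iii).} The norm $\|x\|=\sup_k\langle w_k,x\rangle$ from item (i) is a supremum of $\overline{\omega}$-continuous linear functionals. Hence it is lower semicontinuous in $X_{\overline{\omega}}$.
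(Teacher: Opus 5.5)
Your suspicion about the first assertion of (i) is correct, but you should resolve it rather than hedge: when $\dim X=\infty$, the $\|\cdot\|_{\overline\omega}$-topology $\tau_{\overline\omega}$ is \emph{not} coarser than the weak topology $\tau_w$. Any weak neighbourhood $\{x:|\langle x_i^*,x\rangle|<\varepsilon,\ i\le k\}$ of $0$ contains the subspace $V=\bigcap_{i\le k}\ker x_i^*\neq\{0\}$. On the other hand, $\|\cdot\|_{\overline\omega}$ is a genuine norm, since, as you note, the $z_n$ separate points. So for $0\neq v\in V$ we have $\|tv\|_{\overline\omega}\to\infty$, and the open unit $\overline\omega$-ball is not weakly open. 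More generally, no norm on an infinite-dimensional space is weakly continuous.

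What does hold is the following: $\tau_{\overline\omega}$ is coarser than the norm topology; weakly convergent sequences converge in $X_{\overline\omega}$, because they are bounded; and the two topologies agree on bounded sets, which is (ii). The paper's proof, a pointer to Schwartz, does not supply the global inclusion either, and the paper's phrase ``globally weaker'' is inaccurate. The later use of the lemma, in the proof of Theorem~\ref{thm: compactness for nu}, needs only (ii), (iii) and the equality of Borel $\sigma$-algebras. Your argument establishes all three, so your fallback is the right statement to prove.

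For the parts that are true, your route differs from the paper's. The paper invokes Schwartz's result that a coarser Hausdorff topology on a Lusin (Souslin) space has the same Borel sets. The safe way to apply it is with the Polish norm topology, which is finer than both $\tau_w$ and $\tau_{\overline\omega}$. Both Borel $\sigma$-algebras then equal $\mathcal B(X)$ at once, with no comparison between $\tau_{\overline\omega}$ and $\tau_w$ needed.

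Your argument is elementary and self-contained instead:
\begin{itemize}
\item Separability of $X_{\overline\omega}$ together with weak lower semicontinuity of $\|\cdot\|_{\overline\omega}$ gives $\mathcal B(X_{\overline\omega})\subseteq\mathcal B(X_w)$.
\item A countable norming family in the rational span of $D$, made of $\overline\omega$-continuous functionals, gives $\mathcal B(X)\subseteq\mathcal B(X_{\overline\omega})$ and (iii) in one stroke. Make sure to take $\|w_k\|_*\le 1$, for instance all elements of $\operatorname{Span}_{\mathbb Q}D$ lying in the open unit ball of $X^*$.
\end{itemize}
Your compact-to-Hausdorff argument for (ii) is standard and correct. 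Drop the sentence about ``convexification''. A norm-closed bounded set such as the unit sphere is neither weakly closed nor $\overline\omega$-compact in infinite dimensions, since its $\overline\omega$-closure is the whole closed unit ball. So ``closed'' in (ii) must mean $\overline\omega$-closed, and your second sentence handles exactly that case.
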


The proof of Theorem \ref{thm: compactness for nu} is based on the following correspondence between the pair $(\mu,\nu) \in \mathcal{M}_+(Y)\times \mathcal{M}(Y;X)$ such that $|\nu|\ll\mu$, and the nonnegative measure $\theta\in\mathcal{M}_+(Y\times X)$ defined as 
\begin{equation}\label{eq:T}
    \theta = T(\mu,\nu) := \left(\operatorname{id} , \frac{d\nu}{d\mu}\right)_\# \mu,
\end{equation}
namely $\theta$ is the pushforward of $\mu$ with respect to the pair $\left(\operatorname{id} , \frac{d\nu}{d\mu}\right)$.

\begin{proof}[Proof of Theorem \ref{thm: compactness for nu}]
    \textbf{Step 1}: we claim that
    \begin{equation}\label{eq: equi-integrability}
        \text{for all }\varepsilon>0 \ \text{there exists }R>0 \text{ such that }  \sup_{(\mu,\nu)\in \mathcal{G}}\int_{\{\|\frac{d\nu}{d\mu}\|> R\}} \left\|\frac{d\nu}{d\mu}(y)\right\| d\mu(y) <\varepsilon.
    \end{equation}
    Indeed, by the superlinearity of $G$, for all $M>0$ there exists $R>0$ such that $G(v)\ge M\|v\|$ for all $v\in X$ with $\|v\|>R$. Recalling that $G$ is also bounded below we deduce that for all $(\mu,\nu)\in \mathcal{G}$ there holds
    \[\int_{\{\|\frac{d\nu}{d\mu}\|> R\}} \left\|\frac{d\nu}{d\mu}\right\| d\mu\leq \frac{1}{M}\int_{\{\|\frac{d\nu}{d\mu}\|> R\}}  G\left(\frac{d\nu}{d\mu}\right)d\mu\le \frac{1}{M}\left(\int_YG\left(\frac{d\nu}{d\mu}\right)d\mu+C\mu(Y)\right).\]
Claim \eqref{eq: equi-integrability} now follows from \eqref{eq: weak tightness} and since $\mu$ has equibounded mass.
    
    In particular, $\mathcal{G}$ is bounded in $\mathcal{M}_+(Y)\times \mathcal{M}(Y;X)$, and so by Proposition \eqref{prop:localmetrnarrow} it is enough to prove that $\mathcal{G}$ is relatively sequentially compact.
    
    \noindent \textbf{Step 2}: we define the family
    \begin{equation*}
        \Theta := \{T(\mu,\nu) \ : (\mu,\nu)\in \mathcal{G}\},
    \end{equation*} 
    where the operator $T$ has been introduced in \eqref{eq:T}. We claim that $\Theta$ is a relatively compact subset of $\mathcal{M}_+(Y\times X_{\overline{\omega}})$. We prove this fact by using Prokhorov Theorem \ref{thm: prohorov} setting $Z := Y\times X_{\overline{\omega}}$, thus we need to show equiboundedness of mass and uniform tightness:
    \begin{itemize}
        \item for any $\theta \in \Theta$ there holds $\theta(Y\times X)=\mu(Y)$, which is uniformly bounded by assumption;
        \item for all $\varepsilon>0$, consider $K\subset Y$ and $R\ge 1$ satisfying, respectively, \eqref{eq: tightness} for $\mathcal{G}_1$ and \eqref{eq: equi-integrability} with $\varepsilon/2$. Thanks to Lemma \ref{lemma: properties of X_overline omega}, the set $K\times B_R$ is compact in $Y\times X_{\overline{\omega}}$. Furthermore, for all $\theta \in \Theta$ we have
        \begin{align*}
            \theta \big( & (K\times B_R)^c \big) \leq  \theta(K^c \times X) + \theta(Y \times B_R^c) =  \int_{Y \times X} \mathds{1}_{K^c}(y) + \mathds{1}_{B_R^c}(x) \, d\theta(y,x)
            \\
            = &
            \int_Y \mathds{1}_{K^c}(y) + \mathds{1}_{B_R^c}\left( \frac{d\nu}{d\mu}(y) \right)\,            d\mu(y)
            = 
            \mu(Y\setminus K) + \mu\left( \left\{ y\in Y \ : \ \left\|\frac{d\nu}{d\mu}(y)\right\| > R \right\}\right) <\varepsilon.
        \end{align*}
    \end{itemize}
    \textbf{Step 3}: let $\{(\mu_n,\nu_n)\}_{n\in \N}\subseteq \mathcal{G}$ and define $\theta_n := T(\mu_n,\nu_n)$ for all $n\in \N$. By Step 2 there exists $\theta \in \mathcal{M}_+(Y\times X_{\overline{\omega}})$ such that $\theta_n \to \theta$ narrowly in $\mathcal{M}_+(Y\times X_{\overline\omega})$. We now prove that 
    \begin{equation}\label{eq: limit in theta}
       \lim\limits_{n\to +\infty} \int_{Y\times X} \langle \boldsymbol{\zeta}(y), x \rangle d\theta_n(y,x) = \int_{Y \times X} \langle \boldsymbol{\zeta}(y), x \rangle d\theta(y,x), \quad \text{for all } \boldsymbol{\zeta} \in \mathfrak{Z}_b(Y;X^*).
    \end{equation}
    Notice that the map $Y\times X_{\overline{\omega}}\ni(y,x)\mapsto f_{\boldsymbol{\zeta}}(y,x):= \langle  \boldsymbol{\zeta}(y),x \rangle$ is neither bounded nor continuous, so \eqref{eq: limit in theta} is not a direct consequence of narrow convergence. However, it still holds true due to \cite[Proposition 5.1.10]{ambrosio2005gradient}: indeed, recalling Lemma \ref{lemma: properties of X_overline omega}, for all $\varepsilon>0$ there exists $R>0$ such that the following facts hold:
    \begin{itemize}
        \item the restriction of $f_{\boldsymbol{\zeta}}$ to $Y\times B_R$ is continuous with respect to the subspace topology induced by $Y\times X_{\overline{\omega}}$;
        \item $Y\times B_R$ is a closed subset of   $Y\times X_{\overline{\omega}}$;
        \item $\theta_n(Y\times B_R) < \varepsilon$.
    \end{itemize} 
    
    \textbf{Step 4}: we finally show that there exists $(\mu,\nu) \in \mathcal{M}_+(Y)\times \mathcal{M}(Y;X)$ such that $\mu_n \to \mu$ and $\nu_n \to \nu$ narrowly.  Consider the disintegration of $\theta$ with respect to the projection $p^1$ on the first component, obtaining a family of probability measures $\{\theta_y\}_{y\in Y} \subset \PP(X)$ such that 
    \begin{equation}\label{eq:disintegration}
        d\theta(y,x) =  \int\, d\theta_y(x) \, d\mu(y),\quad \text{where } \mu:= p^1_\#\theta.
    \end{equation}
    We then define a measurable map $v\colon Y\to X$ and a measure $\nu\in \mathcal{M}(Y;X),$ as follows:
    \begin{equation*}
        v(y) := \int_X x \ d\theta_y(x), \qquad \nu := v \mu,
    \end{equation*}
    where the integral has to be meant in the sense of Bochner. They are well defined $\mu$-a.e. since
    \[\int_Y \int_X \|x\| d\theta_y(x) d\mu(y) = \int_{Y\times X} \|x\|d\theta(y,x) \leq \limsup_{n\to+\infty} \int_{Y\times X} \|x\| d\theta_n(y,x),\]
    and the latter is finite because, taking $R>0$ relative to $\varepsilon = 1$ in \eqref{eq: equi-integrability}, we have 
    \begin{align*}
    \int_{Y\times X} \|x\| d\theta_n(y,x) = & \int_Y \left\|\frac{d\nu_n}{d\mu_n}\right\|d\mu_n = \int_{\left\{\frac{d\nu_n}{d\mu_n}\in B_R^c \right\}}\left\|\frac{d\nu_n}{d\mu_n}\right\|d\mu_n + \int_{\left\{\frac{d\nu_n}{d\mu_n}\in B_R \right\}}\left\|\frac{d\nu_n}{d\mu_n}\right\|d\mu_n 
    \\
    \leq &
    1+ R\sup_{n\in \N} \mu_n(Y) <+\infty. 
    \end{align*}
    The measurability of $v$ follows from the measurability of the maps $y\mapsto \theta_y$ and $\displaystyle \theta \mapsto \int x d\theta(x)$, see Proposition \ref{lemma: bochner measurability} for details.
    Then, for all $\zeta\in C_b(Y)$ and $\boldsymbol{\zeta}\in \mathfrak{Z}_b(Y;X^*)$, exploiting \eqref{eq:disintegration} and \eqref{eq: limit in theta}, we obtain
    \begin{align*}
        \lim_{n\to +\infty} & \int_{Y}  \zeta(y) d\mu_n(y) = \lim_{n\to +\infty} \int_{Y\times X}  \zeta(y) d\theta_n(y,x)
        \\
        = &
        \int_{Y\times X}  \zeta(y) d\theta(y,x) = \int_Y \zeta(y) d\mu(y),
    \end{align*}
    and
    \begin{align*}
        \lim_{n\to +\infty} & \int_{Y}  \langle\boldsymbol{\zeta}(y), d\nu_n(y)\rangle
        = 
        \lim_{n\to +\infty} \int_{Y\times X}\langle  \boldsymbol{\zeta}(y),x\rangle d\theta_n (y,x) 
        \\
        = & \int_{Y\times X}\langle  \boldsymbol{\zeta}(y),x \rangle d\theta (y,x) 
        = 
        \int_{Y} \left( \int_X\langle \boldsymbol{\zeta}(y),x \rangle d\theta_y(x) \right)d\mu(y) 
        \\
        = &
        \int_{Y} \left\langle \boldsymbol{\zeta}(y), \int_X x \ d\theta_y(x) \right\rangle  d\mu(y) 
        = 
        \int_{Y} \langle  \boldsymbol{\zeta}(y),v(y)\rangle  d\mu(y) 
        = \int_Y \langle\boldsymbol{\zeta}(y) , d\nu(y)\rangle,
    \end{align*}
    and so we conclude.
\end{proof}

\subsection{The action functional}\label{subsec:action}
Let us consider a general functional $\Psi\colon Y\times X\to (-\infty,+\infty]$ satisfying the following assumptions: 
\begin{enumerate}[label=\textup{(${\Psi\arabic*}$)}]
    \item \label{hyp:Fy1} $\Psi(y,\cdot)$ is proper, convex, and lower semicontinuous for all $y\in Y$;
  \item \label{hyp:Fy2} $\Psi(y,v)\ge G(v)$ for all $(y,v)\in Y\times X$, for some $G\colon X\to(-\infty,+\infty]$ bounded below with superlinear growth at infinity;
  \item \label{hyp:Fy3} $\sup\limits_{y\in Y}\Psi(y,0)<+\infty$.  
\end{enumerate}

Notice that, in the particular case in which the functional $\Psi$ does not depend on the variable $y\in Y$, i.e. $\Psi(y,v) = \Diss(v)$, the above set of hypotheses coincides with \ref{hyp:F1}-\ref{hyp:F3}.

Here and henceforth, by $\Psi^*$ we mean the Fenchel conjugate of $\Psi$ with respect to the $v$-variable, namely
\begin{equation*}
    \Psi^*(y,z)=\sup\limits_{v\in X}\{\langle z,v\rangle-\Psi(y,v)\}.
\end{equation*}
Observe that both $\Psi$ and $\Psi^*$ are bounded below, respectively by \ref{hyp:Fy2} and \ref{hyp:Fy3}. Moreover, since $\Psi^*(y,z)\le G^*(z)$, we deduce that $\Psi^*(y,\cdot)$ is locally bounded in $X^*$, uniformly with respect to $y\in Y$. In particular, $\Psi^*(y,\cdot)$ is continuous in $X^*$ for all $y\in Y$. Since we will need some regularity of $\Psi^*$ also with respect to $y\in Y$, it is crucial to require the next condition, which is automatically satisfied in the case $\Psi(y,v)=\Diss(v)$:
\begin{enumerate}[label=\textup{(${\Psi4}$)}]
 \item \label{hyp:Fy4} the map $(y,z)\mapsto \Psi^*(y,z)$ is upper semicontinuous in $Y\times X^*$.
 \end{enumerate}
 Next proposition shows a sufficient property on the primal dissipation potential $\Psi$ implying assumption \ref{hyp:Fy4}.

\begin{prop}
    Assume \ref{hyp:Fy1}-\ref{hyp:Fy3}. If in addition $\operatorname{dom}\Psi=Y\times \mathsf{D}$ for some $\mathsf{D}\subseteq X$, and the map $y\mapsto \Psi(y,v)$ is lower semicontinuous in $Y$ uniformly on sublevels of $G$, namely: for all $y\in Y$, for all $\varepsilon>0$ and for all $C\in \R$ there exists a neighborhood $U$ of $y$ such that
  \begin{equation}\label{eq:lscunif}
      \sup\limits_{v\in \{G\le C\}\cap \mathsf{D}}(\Psi(y,v)-\Psi(\widetilde y,v))\le \varepsilon,\qquad\text{for all }\widetilde y\in U.
  \end{equation} 
  Then $\Psi^*$ is upper semicontinuous in $Y\times X^*$.
\end{prop}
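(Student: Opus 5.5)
Fix $(\bar y,\bar z)\in Y\times X^*$ and a sequence (or net) $(y_n,z_n)\to(\bar y,\bar z)$. The aim is to show
\[
\limsup_n \Psi^*(y_n,z_n)\le \Psi^*(\bar y,\bar z).
\]
The first step is to localize the supremum defining $\Psi^*$ on a fixed sublevel of $G$.

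Since $\|z_n\|_*\le \rho$ for $n$ large, \ref{hyp:Fy3} gives $\Psi^*(y_n,z_n)\ge -\sup_y\Psi(y,0)=:-K_0$. Hence it suffices to take the supremum over those $v$ with $\langle z_n,v\rangle-\Psi(y_n,v)\ge -K_0-1$. For such $v$, \ref{hyp:Fy2} yields
\[
G(v)\le \Psi(y_n,v)\le \rho\|v\|+K_0+1 .
\]
By superlinearity of $G$ (take $G(v)\ge 2\rho\|v\|-c$), this forces $\|v\|\le R$ and then $G(v)\le C$, with $R,C$ independent of $n$. Since $\Psi(y_n,v)<\infty$ on the relevant set, we may also restrict to $v\in\mathsf D$. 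Thus, for $n$ large,
\[
\Psi^*(y_n,z_n)=\sup\Big\{\langle z_n,v\rangle-\Psi(y_n,v):\ v\in\{G\le C\}\cap\mathsf D\cap B_R\Big\}.
\]

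The second step is the comparison. Apply \eqref{eq:lscunif} at $\bar y$ with $\varepsilon$ and this $C$: for $y_n\in U$ we have $\Psi(y_n,v)\ge\Psi(\bar y,v)-\varepsilon$ for all $v\in\{G\le C\}\cap\mathsf D$. Therefore
\[
\Psi^*(y_n,z_n)\le \sup_{v\in B_R}\big\{\langle \bar z,v\rangle-\Psi(\bar y,v)\big\}+R\|z_n-\bar z\|_*+\varepsilon\le \Psi^*(\bar y,\bar z)+R\|z_n-\bar z\|_*+\varepsilon .
\]
Taking the limsup and then letting $\varepsilon\downarrow0$ gives the claim.

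The hypothesis $\operatorname{dom}\Psi=Y\times\mathsf D$ is used to ensure that points outside $\mathsf D$ contribute $-\infty$ for every $y$, so that the uniform estimate on $\mathsf D$ alone is enough. If $Y\times X^*$ is not first countable (the strong topology on $X^*$ is metric and $Y$ is Polish, so it is), the argument would instead be run with nets; it applies verbatim.

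The main obstacle is the first step: the uniform bounds $R$ and $C$ must be chosen independently of $n$. This rests on the local uniform bound of $\|z_n\|_*$ together with the lower bound coming from \ref{hyp:Fy3}. Once the supremum is confined to a fixed bounded sublevel of $G$, the uniform lower semicontinuity \eqref{eq:lscunif} closes the argument directly.
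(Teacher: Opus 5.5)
Your proof is correct and follows the paper's route. You first confine the supremum defining $\Psi^*(y,z)$ to a fixed sublevel $\{G\le C\}$, uniformly for $\|z\|_*$ bounded, and then apply the uniform lower semicontinuity \eqref{eq:lscunif} at the limit point. The only minor difference is at the end: you control $\langle z_n-\bar z,v\rangle$ through the extra localization $\|v\|\le R$, whereas the paper passes from $\Psi^*(\bar y,z_n)$ to $\Psi^*(\bar y,\bar z)$ using the continuity of $\Psi^*(\bar y,\cdot)$ on $X^*$.
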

\begin{proof}
    We first claim that for all $R>0$ there exists $C_R>0$ such that for all $(y,z)\in Y\times B_R^{*}$ there holds
    \begin{equation}\label{claimF*}
        \Psi^*(y,z)=\sup\limits_{v\in \{G\le C_R\}}\{\langle z,v\rangle-\Psi(y,v)\}.
    \end{equation}
    Indeed, let $\{v_n\}_{n\in\N}\subseteq \mathsf{D}$ such that $\Psi^*(y,z)=\lim\limits_{n\to +\infty}(\langle z,v_n\rangle-\Psi(y,v_n))$. Since $\Psi^*$ is bounded below, recalling \ref{hyp:Fy2} we deduce
    \begin{equation}\label{eq:bd}
        -C\le\liminf\limits_{n\to +\infty}(\|z\|_*\|v_n\|-G(v_n))\le \liminf\limits_{n\to +\infty}( R\|v_n\|-G(v_n)).
    \end{equation}
    Due to the superlinear growth of $G$, we know that $G(v)\ge (1+R)\|v\|-c_R$, whence
    \begin{equation*}
        -C\le\liminf\limits_{n\to +\infty}(c_R-\|v_n\|),\qquad\text{i.e. }\qquad \limsup\limits_{n\to +\infty}\|v_n\|\le \widetilde c_R.
    \end{equation*}
    Plugging this bound into \eqref{eq:bd} we infer
    \begin{equation*}
        -C\le\liminf\limits_{n\to +\infty}(R\,\widetilde c_R-G(v_n)),\qquad\text{i.e. }\qquad \limsup\limits_{n\to +\infty}G(v_n)\le \widetilde C_R,
    \end{equation*}
    and so \eqref{claimF*} is proved.

    We now consider a sequence $(y_n,z_n)$ converging to $(y,z)$ in $Y\times X^*$ and we fix $\varepsilon>0$. Choosing $R$ large enough such that both $\|z\|_*\le R$ and $\sup\limits_{n\in\N}\|z_n\|_*\le R$, by combining \eqref{eq:lscunif} and \eqref{claimF*} we observe that for all $v\in \{G\le C_R\}$ definitively there holds
    \begin{align*}
        \Psi^*(y,z_n)\ge \langle z_n,v\rangle-\Psi(y,v)\ge \langle z_n,v\rangle-\Psi(y_n,v)-\varepsilon,
    \end{align*}
    whence we obtain
    \begin{equation*}
        \Psi^*(y,z_n)\ge\Psi^*(y_n,z_n)-\varepsilon.
    \end{equation*}
    Finally, recalling that $\Psi^*(y,\cdot)$ is continuous in $X^*$, we deduce
    \begin{align*}
        \limsup\limits_{n\to +\infty}\Psi^*(y_n,z_n)\le  \limsup\limits_{n\to +\infty}\Psi^*(y,z_n)+\varepsilon=\Psi^*(y,z)+\varepsilon,
    \end{align*}
    and we conclude by sending $\varepsilon$ to zero.
\end{proof}

We now introduce the \emph{action} with respect to $\Psi$ as the functional $\mathcal{A}_\Psi\colon \mathcal{M}_+(Y)\times \mathcal{M}(Y,X)\to (-\infty,+\infty]$ defined as

\begin{equation}\label{eq: action}
    \mathcal{A}_\Psi(\mu,\nu) := \begin{cases}\displaystyle
        \int_Y \Psi\left(y,\frac{d\nu}{d\mu}(y)\right) d\mu(y),
 & \text{ if } |\nu| \ll \mu,
 \\
 +\infty, & \text{ otherwise}.
\end{cases}
\end{equation}

The following representation formula shows that the action functional is convex and lower semicontinuous with respect to the product narrow topology.

\begin{prop}\label{repr formula proposition}
    Assume \ref{hyp:Fy1}-\ref{hyp:Fy4}. For any $(\mu,\nu)\in \mathcal{M}_+(Y)\times \mathcal{M}(Y;X)$ it holds
    \begin{equation}\label{repr formula}
        \mathcal{A}_\Psi(\mu,\nu) = \sup\left\{ \int_Y \langle \boldsymbol{\zeta}(y) , d\nu(y)\rangle - \int_{Y} \Psi^*(y,\boldsymbol{\zeta}(y)) d\mu(y) \ : \ \boldsymbol{\zeta} \in \mathfrak{Z}_b(Y;X^*) \right\}.
    \end{equation}
    In particular, $\mathcal{A}_\Psi$ is convex and lower semicontinuous with respect to the product narrow topology in $\mathcal{M}_+(Y)\times \mathcal{M}(Y;X)$.
\end{prop}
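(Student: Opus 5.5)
The representation formula \eqref{repr formula} has two inequalities, ``$\ge$'' and ``$\le$''. I will prove them separately and then read off convexity and lower semicontinuity.

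\emph{The inequality ``$\ge$''.} If $|\nu|\not\ll\mu$ the left-hand side is $+\infty$ and there is nothing to prove. Otherwise set $v=\frac{d\nu}{d\mu}$. For every $\boldsymbol{\zeta}\in\mathfrak{Z}_b(Y;X^*)$, Fenchel's inequality \eqref{Fenchelprop} applied pointwise in $y$ gives
$\langle\boldsymbol{\zeta}(y),v(y)\rangle-\Psi^*(y,\boldsymbol{\zeta}(y))\le\Psi(y,v(y))$.
I then integrate against $\mu$. All integrals are meaningful: $\Psi^*(y,\boldsymbol{\zeta}(y))$ is bounded, since $\boldsymbol{\zeta}$ is bounded and $\Psi^*$ is bounded below and above by $G^*$ on balls. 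It is also Borel, since by \ref{hyp:Fy4} it is upper semicontinuous.

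\emph{The inequality ``$\le$'', when $|\nu|\ll\mu$.} I first show that the supremum over all bounded Borel $X^*$-valued maps taking finitely many values in $\operatorname{Span}D$ already gives $\mathcal A_\Psi$. For every $y$,
\[
\Psi(y,v)=\sup_{z\in \operatorname{Span}_{\mathbb Q}D}\big\{\langle z,v\rangle-\Psi^*(y,z)\big\},
\]
by biconjugation (using \ref{hyp:Fy1}) together with the continuity of $\Psi^*(y,\cdot)$ and the density of $D$. I enumerate this countable family as $w_j$ and truncate: $\boldsymbol{\eta}_N(y)$ selects, among $w_1,\dots,w_N$, the index maximizing $\langle w_j,v(y)\rangle-\Psi^*(y,w_j)$. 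The integrand increases to $\Psi(y,v(y))$ and is bounded below by its value at $w_1=0$, namely $-\Psi^*(y,0)\ge-\sup_y\Psi^*(y,0)$, after including $0$ in the family. Monotone convergence then gives the claim. The next task is to approximate these Borel simple maps $\boldsymbol\eta=\sum_{j\le N}\mathds 1_{A_j}w_j$ by elements $\sum_j\zeta_j w_j$ of $\mathfrak{Z}_b$. By Lusin's theorem I choose $\zeta_j\in C_b(Y)$, taking values in $[0,1]$, with $\sum_j\zeta_j\le1$ and $\zeta_j\to\mathds 1_{A_j}$ in $\mu$-measure; a partition of unity subordinate to neighbourhoods of disjoint compacts $K_j\subset A_j$ does this. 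The linear term $\int\langle\boldsymbol\zeta,v\rangle\,d\mu$ converges by dominated convergence, since $v\in L^1_\mu$. For the $\Psi^*$ term I need an upper bound, and this is the \textbf{main obstacle}. Where $\boldsymbol\zeta(y)$ is a convex combination $\sum_j\zeta_j(y)w_j+(1-\sum_j\zeta_j(y))\cdot 0$, convexity of $\Psi^*(y,\cdot)$ gives $\Psi^*(y,\boldsymbol\zeta(y))\le\sum_j\zeta_j\Psi^*(y,w_j)+(1-\sum_j\zeta_j)\Psi^*(y,0)$. This is uniformly bounded and converges $\mu$-a.e.\ along a subsequence to $\Psi^*(y,\boldsymbol\eta(y))$, so a $\limsup$ estimate follows. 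Hence ``$\le$'' holds whenever $|\nu|\ll\mu$. Convexity in $z$ is what lets me avoid any continuity of $\Psi^*$ in $y$ at this step.

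\emph{The inequality ``$\le$'', when $|\nu|\not\ll\mu$.} Here I must show the supremum is $+\infty$. Take a Borel set $B$ with $\mu(B)=0$ and $\nu(B)\neq0$, and pick $z\in D$-span with $\langle z,\nu(B)\rangle=1$. Approximate $\lambda\mathds 1_B z$ by $\lambda\zeta z$, with $\zeta\in C_b$, $0\le\zeta\le1$, and $\zeta\to\mathds 1_B$ in $L^1(\mu+|\nu|)$. The linear part tends to $\lambda$. The $\Psi^*$ part is at most $\int\max\{\Psi^*(y,\lambda z),\Psi^*(y,0)\}\zeta\,d\mu+C\mu(Y)$ by convexity, and the first summand tends to $0$ because $\zeta\to 0$ $\mu$-a.e. and $\Psi^*(\cdot,\lambda z)\le G^*(\lambda z)$ is bounded. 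Letting $\lambda\to\infty$ makes the supremum $+\infty$.

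\emph{Consequences.} For each fixed $\boldsymbol\zeta$, the map $(\mu,\nu)\mapsto\int\langle\boldsymbol\zeta,d\nu\rangle-\int\Psi^*(\cdot,\boldsymbol\zeta)\,d\mu$ is affine. It is also narrowly lower semicontinuous. The first term is continuous by Definition~\ref{def: narrow topology}. For the second, $-\Psi^*(y,\boldsymbol\zeta(y))$ is lower semicontinuous and bounded, by \ref{hyp:Fy4} and the continuity of $\boldsymbol\zeta$, and integrals of bounded lower semicontinuous functions are narrowly lower semicontinuous on $\mathcal M_+(Y)$. Since $\mathcal A_\Psi$ is a supremum of such maps by \eqref{repr formula}, it is convex and lower semicontinuous.
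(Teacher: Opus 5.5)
Your proposal is correct and follows essentially the same route as the paper. The shared steps are: Fenchel's inequality for ``$\ge$''; test functions $\lambda\zeta_n z$ with $\zeta_n\to\mathds 1_B$ in $L^1_{\mu+|\nu|}$ to rule out $|\nu|\not\ll\mu$; and, for $|\nu|\ll\mu$, a countable dense family in $X^*$ with monotone convergence, followed by continuous cut-offs built from compact/open regularity to approximate the finite Borel partition.

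The only variation is how you control the $\Psi^*$-term. You use convexity of $\Psi^*(y,\cdot)$, with disjoint neighbourhoods so that $\sum_j\zeta_j\le 1$. The paper instead bounds the error on the small sets $O_j\setminus K_j$ directly, using that $\Psi^*$ is bounded on bounded sets uniformly in $y$. Both arguments work, and neither needs continuity of $\Psi^*$ in $y$.
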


\begin{proof}
The proof follows the line of \cite[Lemma 9.4.4]{ambrosio2005gradient}.  First we prove the $\geq$ inequality. Assume without loss of generality that $\mathcal A_\Psi(\mu,\nu)<+\infty$, so that $\nu = \frac{d\nu}{d\mu} \mu$. Thus, for all $\boldsymbol{\zeta} \in \mathfrak{Z}_b(Y;X^*)$ we have
    \begin{align*}
        &\int_Y  \langle\boldsymbol{\zeta}, d\nu\rangle - \int_Y \Psi^*(y,\boldsymbol{\zeta}) \ d\mu = \int_Y \left(\left\langle \boldsymbol{\zeta}(y),\frac{d\nu}{d\mu}(y)\right\rangle - \Psi^*(y,\boldsymbol{\zeta}(y))\right) d\mu(y) 
         \\\leq& \int_Y \Psi\left(y,\frac{d\nu}{d\mu}(y)\right) d\mu(y) = \mathcal A_\Psi(\mu,\nu),
    \end{align*}
    where we exploited Fenchel's inequality \eqref{Fenchelprop}. 
    
    Regarding the other inequality, let us define ${\widetilde{\mathcal{A}}_\Psi}(\mu,\nu)$ as the right hand side in \eqref{repr formula}. Without loss of generality, assume that ${\widetilde{\mathcal{A}}_\Psi}(\mu,\nu)<+\infty$. We first prove that necessarily $|\nu|\ll\mu$. To this aim, let the Borel set $B\in \mathcal{B}(Y)$ such that $\mu(B) = 0$. Thanks to the inner and outer regularity of finite Borel measures, for any  Borel set $B'\subset B$ there exist compact and open sets $K_n$ and $A_n$ such that $K_n \subseteq B' \subseteq A_n$ and 
    \[\lim\limits_{n\to +\infty}\big(\mu + |\nu|\big)(A_n \setminus K_n)= 0.\]
    Then, for any $n\in \N$ consider a cut-off function $\zeta_n \in C_b(Y)$ such that $0\leq \zeta_n \leq 1$, $\zeta_n \equiv 1$ on $K_n$ and $\zeta_n \equiv 0$ on $A_n^c$. Notice that $\zeta_n \to 0$ in $L^1_\mu(Y)$ and $\zeta_n \to \mathds{1}_{B'}$ in $L^1_{|\nu|}(Y)$. Then, for any $z\in X^*$ and $\lambda>0$, using the test functions $\boldsymbol{\zeta}_n(y) := \lambda\zeta_n(y) z \in \mathfrak{Z}_b(Y;X^*)$, there holds
    \begin{align*}
        {\widetilde{\mathcal{A}}_\Psi}(\mu,\nu)&\geq \lim\limits_{n\to +\infty}\left(\lambda\left \langle z,\int_Y \zeta_n(y) d\nu(y) \right\rangle - \int_Y \Psi^*(y,\lambda\zeta_n(y)z) d\mu(y)\right) \\
        &= \lambda\langle z, \nu(B')\rangle - \int_Y \Psi^*(y,0) d\mu(y).
    \end{align*}
    The passage to the limit in the second integral is allowed by the Dominated Convergence Theorem since $\Psi^*(y,\cdot)$ is continuous and dominated by $G^*$, that is locally bounded. Exploiting again that $\Psi^*(y,z)\le G^*(z)$, for any $z\in X^*$ and any $\lambda >0$ we thus deduce
    \begin{equation*}
        \langle z, \nu(B')\rangle\le \frac{\widetilde{\mathcal{A}}_{\Psi}(\mu,\nu) + G^*(0)\mu(Y)}{\lambda},
    \end{equation*}
    which implies that $\langle z,  \nu(B') \rangle = 0$ for any $z\in X^*$ and so $\nu(B')=0$. By arbitrariness of $B'\subset B$, we finally conclude that $|\nu| \ll \mu$.

    We can now write 
    \begin{align*}
        {\widetilde{\mathcal{A}}_\Psi}(\mu,\nu) = \sup\left\{\int_Y \left(\left\langle \boldsymbol{\zeta}(y) ,\frac{d\nu}{d\mu}(y) \right\rangle - \Psi^*(y,\boldsymbol{\zeta}(y))\right) d\mu(y) \ : \ \boldsymbol{\zeta} \in \mathfrak{Z}_b(Y;X^*)\right\}.
    \end{align*}
    On the other hand, considering a countable and dense subset $\{z_n\}_{n\in\N}$ of $X^*$, by continuity of $\Psi^*(y,\cdot)$ we have 
    \begin{align*}
        \mathcal{A}_\Psi(\mu,\nu) = & \int_Y \sup_{j\in \N} \left(\left\langle z_j, \frac{d\nu}{d\mu}(y) \right\rangle - \Psi^*(y,z_j)\right) d\mu(y)
        \\
        = & \lim_{k\to +\infty} \int_Y \sup_{j\leq k} \left(\left\langle  z_j,\frac{d\nu}{d\mu}(y) \right\rangle - \Psi^*(y,z_j)\right) d\mu(y).
    \end{align*}
    Then, in order to conclude it suffices to prove that
    \begin{equation}\label{eq:thesis}
        {\widetilde{\mathcal{A}}_\Psi}(\mu,\nu) \geq \int_Y \sup_{j\leq k} \left(\left\langle z_j, \frac{d\nu}{d\mu}(y)  \right\rangle - \Psi^*(y,z_j)\right) d\mu(y),\quad \text{for all $k\in \N$.} 
    \end{equation}
    To this aim, we fix $k\in \N$ and set 
    \[A_1:= A_1', \quad A_j:= A_j' \setminus \bigg(\bigcup_{i\leq j-1} A_i'\bigg) \ \text{ for }j=2,\dots,k,\]
    where
    \[A_j' := \left\{y \in Y \ : \ \left\langle  z_j, \frac{d\nu}{d\mu}(y) \right\rangle - \Psi^*(y,z_j) \geq \left\langle z_i \frac{d\nu}{d\mu}(y)\right\rangle - \Psi^*(y,z_i) \ \text{ for all } i\leq k\right\}.\]
    Now, for all $\varepsilon>0$ and $j\le k$, consider $K_j\subseteq A_j \subseteq O_j$ such that $K_j$ is compact, $O_j$ is open, $O_j \cap K_i = \emptyset$ for all $i\neq j$ and 
    \begin{equation}\label{111}
        \sum_{j=1}^k \mu(O_j \setminus K_j) + |\nu|(O_j \setminus K_j) \leq \varepsilon.
    \end{equation}
    This can be done by exploiting inner and outer regularity of $\mu$ and $|\nu|$ in order to first have $K_j \subset A_j \subset O_j'$ such that $K_j$ is compact, $O_j$ is open and \eqref{111} is satisfied with $O_j'$ replacing $O_j$. Then one simply defines 
    \[O_j := O_j' \cap \bigcup_{i\neq j} K_i^c.\]
    Now, for any $j \leq k$ consider a cut-off function $\zeta_j \in C_b(Y)$ such that $0 \leq \zeta_j \leq 1$, $\zeta_j \equiv 1$ on $K_j$ and $\zeta_j \equiv 0 $ on $O_j^c$, and define 
    \[\boldsymbol{\zeta}(y):= \sum_{j=1}^k \zeta_j(y) z_j \in \mathfrak{Z}_b(Y;X^*).\]
    Setting $M:= \sup_{j\leq k} \|z_j\|_*$ and $\alpha\ge 0$ so that $\sup\limits_{y\in Y}|\Psi^*(y,z)| \leq \alpha$ for $\|z\|_*\leq M$, we obtain 
    \begin{align*}
       & \int_Y  \sup_{n\leq k} \left(\left\langle z_n, \frac{d\nu}{d\mu}(y) \right\rangle - \Psi^*(y,z_n)\right) d\mu(y) = \sum_{j=1}^k \int_{A_j} \left(\left\langle z_j, \frac{d\nu}{d\mu}(y) \right\rangle - \Psi^*(y,z_j) \right)d\mu(y)
        \\
        \leq  &
        \sum_{j=1}^k \left[\int_{K_j} \left(\left\langle z_j, \frac{d\nu}{d\mu}(y) \right\rangle - \Psi^*(y,z_j)\right) d\mu(y) + \int_{O_j \setminus K_j} \left(\|z_j\|_* \left\|\frac{d\nu}{d\mu}(y) \right\| + |\Psi^*(y,z_j)| \right)d\mu(y)\right]
        \\
        \leq &
        \sum_{j=1}^k \left[\int_{K_j} \left(\left\langle \zeta_j(y)z_j, \frac{d\nu}{d\mu}(y)\right\rangle - \Psi^*(y,\zeta_j(y)z_j)\right) d\mu(y)\right] + \varepsilon(M+\alpha)
        \\
        = &
        \int_Y\!\left(\!\left\langle \boldsymbol{\zeta}(y),  \frac{d\nu}{d\mu}(y) \right\rangle {-} \Psi^*(y,\boldsymbol{\zeta}(y))\right)\! d\mu(y) - \int_{Y\setminus\bigcup_{j=1}^k K_j} \!\!\!\left(\!\left\langle \boldsymbol{\zeta}(y), \frac{d\nu}{d\mu}(y) \right\rangle {-} \Psi^*(y,\boldsymbol{\zeta}(y))\right)\! d\mu(y) \\&+ \varepsilon(M+\alpha).
    \end{align*}
    Also, observe that
    \begin{align*}
        &\bigg|\int_{Y\setminus\bigcup_{j=1}^k K_j}  \left(\left\langle  \boldsymbol{\zeta}(y), \frac{d\nu}{d\mu}(y) \right\rangle - \Psi^*(y,\boldsymbol{\zeta}(y))\right) d\mu(y)\bigg| \\
        \leq & \sum_{j=1}^k \int_{O_j\setminus K_j} \left|  
        \left\langle \boldsymbol{\zeta}(y), \frac{d\nu}{d\mu}(y) \right\rangle - \Psi^*(y,\boldsymbol{\zeta}(y)) \right| d\mu(y) \\
        \leq &  \sum_{j=1}^k M|\nu|(O_j\setminus K_j) + \alpha \mu(O_j\setminus K_j) \leq \varepsilon(M+\alpha).
    \end{align*}
    Putting all together we infer
    \begin{align*}
        &\int_Y  \sup_{n\leq k} \left(\left\langle z_n, \frac{d\nu}{d\mu}(y) \right\rangle - \Psi^*(y,z_n)\right) d\mu(y)\\
        \le & \int_Y\left(\left\langle \boldsymbol{\zeta}(y),  \frac{d\nu}{d\mu}(y) \right\rangle - \Psi^*(y,\boldsymbol{\zeta}(y))\right) d\mu(y)+2\varepsilon (M+\alpha)\\
        \le & \widetilde{\mathcal A}_\Psi(\mu,\nu)+2\varepsilon (M+\alpha),
    \end{align*}
    and letting $\varepsilon \to 0$ we obtain \eqref{eq:thesis} and we conclude.
\end{proof}

We now prove a sort of dual formula to \eqref{repr formula}, which we will exploit in Proposition \ref{prop:dualproblem}

\begin{lemma}\label{lemma: dual repr for F^*}
    Assume \ref{hyp:Fy1}-\ref{hyp:Fy4}. For all $\boldsymbol{\zeta} \in C_b(Y;X^*)$ and $\mu \in \mathcal{M}_+(Y)$, it holds
    \begin{equation*}
        \int_Y \Psi^*(y,\boldsymbol{\zeta}(y)) d\mu(y) = \sup \left\{ \int_Y \left(\langle \boldsymbol{\zeta}(y), v(y) \rangle - \Psi(y,v(y))\right) d\mu(y) \ : \ v\in L^1_\mu(Y;X) \right\}.
    \end{equation*}
\end{lemma}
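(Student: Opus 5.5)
The inequality ``$\ge$'' is immediate from Fenchel's inequality \eqref{Fenchelprop} applied pointwise. For every $v\in L^1_\mu(Y;X)$ and $y\in Y$ we have $\langle\boldsymbol{\zeta}(y),v(y)\rangle-\Psi(y,v(y))\le\Psi^*(y,\boldsymbol{\zeta}(y))$, and integrating gives the bound. Two integrability facts are needed. The map $y\mapsto\Psi^*(y,\boldsymbol{\zeta}(y))$ is bounded, since $\boldsymbol{\zeta}$ is bounded, $\Psi^*(y,\cdot)\le G^*$ with $G^*$ locally bounded, and $\Psi^*(y,z)\ge-\sup_y\Psi(y,0)$ by \ref{hyp:Fy3}. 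It is Borel measurable because it is upper semicontinuous by \ref{hyp:Fy4} and continuity of $\boldsymbol{\zeta}$. On the right-hand side, the integrand is bounded above by an integrable function, so the integral is well defined in $[-\infty,+\infty)$. Here I would assume that $\Psi$ is jointly Borel, as is implicit in the definition of the action.

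For ``$\le$'' I would construct near-optimal selections. Fix $\varepsilon>0$ and set
\[
\Gamma_\varepsilon(y):=\{v\in X:\ \langle\boldsymbol{\zeta}(y),v\rangle-\Psi(y,v)\ge\Psi^*(y,\boldsymbol{\zeta}(y))-\varepsilon\},
\]
which is nonempty for every $y$ because $\Psi^*$ is finite. The first step is to show that $\Gamma_\varepsilon(y)$ lies in a fixed ball $B_{R}$ with $R$ independent of $y$. Since $\|\boldsymbol{\zeta}\|_\infty\le M$, we get $\langle\boldsymbol{\zeta}(y),v\rangle-G(v)\ge-\sup_y\Psi(y,0)-\varepsilon$ for $v\in\Gamma_\varepsilon(y)$. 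Superlinearity of $G$ then forces $\|v\|\le R$, exactly as in the proof of \eqref{claimF*}. The second step is to pick a $\mu$-measurable selection $v_\varepsilon(y)\in\Gamma_\varepsilon(y)$. The graph $\{(y,v):v\in\Gamma_\varepsilon(y)\}$ is Borel in $Y\times X$, being a sublevel-type set of Borel functions. Because $Y$ and $X$ are Polish, the Aumann--von Neumann measurable selection theorem (e.g.\ \cite[Theorem 6.9.1]{Bogachev07}, as used in Proposition \ref{prop:DeGiorgi}) yields a $\mu$-measurable selection. Modifying it on a $\mu$-null set produces a Borel map. The selection is bounded by $R$, hence lies in $L^1_\mu(Y;X)$ since $\mu$ is finite. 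Integrating the defining inequality gives a value at least $\int\Psi^*(y,\boldsymbol{\zeta})\,d\mu-\varepsilon\mu(Y)$, and letting $\varepsilon\to0$ concludes.

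I expect the main obstacle to be the measurability step, namely ensuring that the graph of $\Gamma_\varepsilon$ is Borel (or at least analytic), which requires joint measurability of $\Psi$ and of $y\mapsto\Psi^*(y,\boldsymbol{\zeta}(y))$. If joint Borel measurability of $\Psi$ is unavailable, I would instead avoid general selections. I would use a countable dense set $\{v_n\}$ in $\operatorname{dom}\Psi(y,\cdot)$ — this needs care because the domain may depend on $y$, and a lower semicontinuous convex function is approximated along segments from points of its domain. The step-function selection $v(y):=v_{n(y)}$ would then take $n(y)$ to be the first index that is $\varepsilon$-optimal. Measurability of each set $\{y:\ \langle\boldsymbol{\zeta}(y),v_n\rangle-\Psi(y,v_n)\ge\Psi^*(y,\boldsymbol{\zeta}(y))-\varepsilon\}$ then follows from measurability of $y\mapsto\Psi(y,v_n)$ and upper semicontinuity of $\Psi^*$. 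The uniform bound from the first step keeps $v$ in $L^1_\mu$.
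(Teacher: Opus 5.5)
Your main argument is correct, and it follows a genuinely different route from the paper's.

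\textbf{How the paper argues.} For ``$\le$'' the paper avoids any abstract selection theorem. It encodes $\Psi(y,\cdot)$ through the indicator of its epigraph and uses that $y\mapsto \mathrm{I}_{\operatorname{epi}\Psi(y,\cdot)}(v,r)$ is lower semicontinuous. It fixes one countable dense family $\{(v_j,r_j)\}\subset X\times\R$ and builds explicit finitely-valued competitors $\sum_{j\le k}\mathds{1}_{B_j}v_j$, choosing the first maximizing index among the first $k$. It concludes by letting $k\to\infty$. This is completely elementary: the measurability of the sets $B_j$ is immediate, and the competitors are trivially in $L^1_\mu$.

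\textbf{What each route buys.} The paper's construction relies on that single countable family recovering $\Psi^*(y,\cdot)$ at every $y$. This means $\{(v_j,r_j)\}\cap\operatorname{epi}\Psi(y,\cdot)$ must be dense in each epigraph. That is harmless when the epigraphs have nonempty interior or are essentially $y$-independent, as for $\Psi=e^{-at}\Diss$. It is not automatic for $y$-dependent domains with empty interior. For instance, with $X=\R^2$ and $\Psi(y,v)=|v|^2+\mathrm{I}_{L_y}(v)$, where $L_y$ is a line through the origin rotating with $y$, a fixed countable family meets $L_y\times\R$ away from $v=0$ only for countably many $y$. Your $\varepsilon$-optimal multifunction, combined with the uniform a priori bound $\Gamma_\varepsilon(y)\subset B_R$ and the Aumann--von Neumann theorem, does not need any such structure. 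The price is a heavier selection theorem and joint measurability of $\Psi$. The same obstruction is why your fallback, a countable dense subset of $\operatorname{dom}\Psi(y,\cdot)$, would not work in general. Fortunately you do not need it.

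\textbf{The measurability you assume is a consequence of the hypotheses.} By Fenchel--Moreau, which uses \ref{hyp:Fy1}, and the continuity of $\Psi^*(y,\cdot)$, for any countable dense $\{z_k\}\subset X^*$ you have
\[
\Psi(y,v)=\sup_{k\in\N}\big\{\langle z_k,v\rangle-\Psi^*(y,z_k)\big\}.
\]
Each term is lower semicontinuous in $(y,v)$ by \ref{hyp:Fy4}, so $\Psi$ is jointly lower semicontinuous on $Y\times X$. This is also what underlies the paper's claim that $y\mapsto\mathrm{I}_{\operatorname{epi}\Psi(y,\cdot)}(v,r)$ is lower semicontinuous. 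Together with the upper semicontinuity of $y\mapsto\Psi^*(y,\boldsymbol{\zeta}(y))$, it makes the graph of $\Gamma_\varepsilon$ Borel, and your proof is complete without extra assumptions.
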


\begin{proof}
    The $\geq$ inequality follows from the pointwise inequality $\Psi^*(y,\boldsymbol{\zeta}(y)) \geq \langle \boldsymbol{\zeta}(y),v(y) \rangle - \Psi(y,v(y))$. For the other inequality we instead argue similarly to the proof of Proposition \ref{repr formula proposition}. For all $y\in Y$, consider the epigraph of $\Psi(y,\cdot)$, i.e.
    \[\operatorname{epi}\Psi(y,\cdot):= \left\{ (v,r) \in X\times \R \ : \ \Psi(y,v) \leq r \right\},\]
    which is a closed and convex subset of $X \times \R$, since $\Psi(y,\cdot)$ is lower semicontinuous and convex. It is also straightforward to check that the map $y\mapsto \mathrm{I}_{\operatorname{epi}\Psi(y,\cdot)}(v,r)$ is lower semicontinuous in $Y$ for any fixed $(v,r)$. By definition of $\Psi^*$, it immediately follows that 
    \[\Psi^*(y,z) = \sup_{(v,r)\in\operatorname{epi}\Psi(y,\cdot)} \{\langle z , v \rangle - r\}.\]
    Let now $\{ v_j, r_j \}_{j\in \N} $ be a dense and countable subset of $X\times \mathbb R$. Then
    \begin{equation}\label{eq:final}
    \begin{aligned}
        \int_Y \Psi^*(y,\boldsymbol{\zeta}(y)) d\mu(y) &= \int_Y \sup_{j\in \N} (\langle \boldsymbol{\zeta}(y),v_j \rangle - r_j-\mathrm{I}_{\operatorname{epi}\Psi(y,\cdot)}(v_j,r_j)) \ d\mu(y)
    \\
    &=
    \lim_{k\to +\infty} \int_Y \sup_{j\leq k} (\langle \boldsymbol{\zeta}(y),v_j \rangle - r_j -\mathrm{I}_{\operatorname{epi}\Psi(y,\cdot)}(v_j,r_j))d\mu(y).
    \end{aligned}       
    \end{equation}
    Then, let 
    \[B_1:= B_1', \quad B_j:= B_j' \setminus \bigg(\bigcup_{i\leq j-1} B_i'\bigg) \ \text{ for }j=2,\dots,k,\]
    where
    \begin{align*}
        B_j' := \{y \in Y \ : \ \langle  \boldsymbol{\zeta}(y),v_j \rangle - r_j-&\mathrm{I}_{\operatorname{epi}\Psi(y,\cdot)}(v_j,r_j) \geq\\& \langle  \boldsymbol{\zeta}(y),v_i \rangle - r_i-\mathrm{I}_{\operatorname{epi}\Psi(y,\cdot)}(v_i,r_i) \ \text{ for all } i\leq k\},
    \end{align*}
    and define $v_k(y) := \sum_{j=1}^k \mathds{1}_{B_j}(y) v_j \in L^1_\mu(Y;X)$. Notice that in $B_j'$ it necessarily holds $(v_j,r_j)\in \operatorname{epi}\Psi(y,\cdot)$, namely $\Psi(y,v_j)\le r_j$. Now, we compute
    \begin{align*}
        \int_Y & \left(\langle \boldsymbol{\zeta}(y),v_k(y) \rangle - \Psi(y,v_k(y)\right) d\mu(y) = \sum_{j=1}^k \int_{B_j} \left(\langle \boldsymbol{\zeta}(y),v_j \rangle - \Psi(y,v_j)\right) d\mu(y)
        \\
        =&\sum_{j=1}^k \int_{B_j} \left(\langle \boldsymbol{\zeta}(y),v_j \rangle - \Psi(y,v_j) -\mathrm{I}_{\operatorname{epi}\Psi(y,\cdot)}(v_j,r_j)\right) d\mu(y)\\
        \geq & 
        \sum_{j=1}^k \int_{B_j} \left(\langle \boldsymbol{\zeta}(y),v_j \rangle - r_j -\mathrm{I}_{\operatorname{epi}\Psi(y,\cdot)}(v_j,r_j)\right) d\mu(y) \\=& 
        \sum_{j=1}^k \int_{B_j} \sup_{i\leq k} \left(\langle  \boldsymbol{\zeta}(y),v_i \rangle - r_i-\mathrm{I}_{\operatorname{epi}\Psi(y,\cdot)}(v_i,r_i)\right) d\mu(y) 
        \\
        = & \int_Y \sup_{i\leq k}\left(\langle  \boldsymbol{\zeta}(y), v_i\rangle - r_i-\mathrm{I}_{\operatorname{epi}\Psi(y,\cdot)}(v_i,r_i)\right) d\mu(y),
    \end{align*}
    and we conclude by \eqref{eq:final}.
\end{proof}

\section{Convexification in spaces of measures}\label{sec: convexification}

This section contains the first step in the proof of Theorem \ref{thm:main}, namely the relaxation procedure of the functional $\ucalJ^a(T,\cdot)$ to spaces of measures, in order to make the problem convex.

We recall that, as we showed in Section \ref{sec:reduction}, without loss of generality we can assume that 
the energy $\varphi$ has \emph{compact} sublevels
and it is nonnegative (see \eqref{eq:nonneg1}), beyond of course \ref{hyp:phi1}. Moreover, besides \ref{hyp:F1}-\ref{hyp:F3}, we can also assume that the dissipation potential $\Diss$ is nonnegative (see \eqref{eq:nonneg2}). 
 Finally, 
when $a=0$, 
we also assume 
that $\rS$ has compact sublevels. 
Here and henceforth, we tacitly assume these stronger assumptions.

In order to state the main result of this section, for the sake of clarity, it is useful to introduce the following notations. We define
\begin{equation}\label{eq:Btilde}
    \widetilde B:=\mathcal M_+(X)\times \mathcal M_+(X_T)\times\mathcal M(X_T;X),
\end{equation}
 and we consider the functional $\mathcal E^a\colon \widetilde B\to (-\infty,+\infty]$ defined as 
\begin{equation}\label{eq:Ea}
    \mathcal E^a(m,\mu,\nu) := e^{-aT}\int_X \varphi(x) dm(x) +\mathcal A^a_\Diss(\mu,\nu)+ \int_{X_T}e^{-at} \big(\rS (x)+a\varphi(x)\big) d\mu(t,x)-\varphi(x_0),
\end{equation}
where, with a slight abuse of notation, we denote by $\mathcal{A}^a_\Diss$ the action functional introduced in \eqref{eq: action} with $Y=X_T$ and $\Psi(t,x,v) = e^{-at}\Diss(v)$, which fulfils all the required properties \ref{hyp:Fy1}-\ref{hyp:Fy4}. Notice that we have kept the notations of Definition \ref{def:CEsol}, indeed the continuity equation \eqref{eq:conteq} will appear as a constraint in the relaxed problem.

\begin{teorema}\label{thm:convexification}
    For all $a\geq 0$, the optimization problem
    \begin{equation}\label{eq:probcurv}
        \inf\left\{\ucalJ^a(T,\xx):\, \xx\in AC([0,T];X),\, \xx(0)=x_0\right\}
    \end{equation}
    is equivalent to the relaxed one
    \begin{equation}\tag{$\mathcal M$}\label{relaxation CE}
    \inf \left\{ \mathcal{E}^a(m,\mu,\nu) \ : \ (m,\mu,\nu) \in \widetilde B,\,(\mu,\nu)\in\CE(\delta_{x_0},m) \right\},
\end{equation}
    where we recall that we write $(\mu,\nu)\in\CE(\delta_{x_0},m)$ if the continuity equation is satisfied in the sense of \eqref{eq:CE}.
\end{teorema}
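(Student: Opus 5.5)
We show that the two infima coincide by proving the two inequalities separately.

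\textbf{Embedding of curves (relaxed $\le$ original).} Given $\xx\in AC([0,T];X)$ with $\xx(0)=x_0$ and $\ucalJ^a(T,\xx)<+\infty$, we define
\[
m:=\delta_{\xx(T)},\qquad \mu:=\mathcal L^1|_{[0,T]}\otimes\delta_{\xx(t)}=(t,\xx(t))_\#\mathcal L^1,\qquad \nu:=(t,\xx(t))_\#(\dot\xx\,\mathcal L^1).
\]
Since $\int_0^T\|\dot\xx\|\,dt<\infty$, the measure $\nu$ lies in $\mathcal M(X_T;X)$, and $|\nu|\ll\mu$ with $\frac{d\nu}{d\mu}(t,\xx(t))=\dot\xx(t)$. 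This density is well defined because the map $t\mapsto(t,\xx(t))$ is injective.

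To check $(\mu,\nu)\in\CE(\delta_{x_0},m)$, fix $\xi\in\operatorname{Cyl}_b(X_T)$. The function $t\mapsto\xi(t,\xx(t))=\zeta(t,\langle\boldsymbol z,\xx(t)\rangle_k)$ is absolutely continuous, and by the classical chain rule its derivative is $\partial_t\xi+\langle\mathrm D_x\xi,\dot\xx\rangle$. Integrating over $[0,T]$ gives \eqref{eq:CE}. Changing variables through the pushforward then shows $\mathcal E^a(m,\mu,\nu)=\ucalJ^a(T,\xx)$. Taking the infimum over curves yields $\eqref{relaxation CE}\le\eqref{eq:probcurv}$.

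\textbf{Superposition (original $\le$ relaxed).} Take an admissible triple with $\mathcal E^a(m,\mu,\nu)<+\infty$.
\begin{enumerate}
\item Finiteness forces $|\nu|\ll\mu$, so we may set $v:=\frac{d\nu}{d\mu}\in L^1_\mu(X_T;X)$. Since $\Diss$, $\varphi$, $\rS$ are bounded below under the reductions of Section~\ref{sec:reduction}, each term of $\mathcal E^a$ is finite.
\item Testing \eqref{eq:CE} with cylinder functions depending only on $t$ shows that the time marginal of $\mu$ is $\mathcal L^1|_{[0,T]}$, because $\delta_{x_0}$ and $m$ are probability measures (testing with $\xi\equiv1$ gives $m(X)=1$). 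Disintegrating, $\mu=\int\mu_t\,dt$ with $\mu_t\in\PP(X)$, and $t\mapsto\mu_t$ is a weakly continuous solution of $\partial_t\mu_t+\operatorname{div}(v_t\mu_t)=0$ with $\mu_0=\delta_{x_0}$, $\mu_T=m$ (Appendix~\ref{app: CE}).
\item The integrability needed for the superposition principle in Banach spaces \cite[Theorem~5.2]{ambrosio2021spatially} is $\int\!\!\int\|v_t\|\,d\mu_t\,dt<\infty$, which holds since $v\in L^1_\mu$. The principle provides $\lambda\in\PP(C([0,T];X))$ concentrated on curves $\xx\in AC([0,T];X)$ with $\dot\xx(t)=v(t,\xx(t))$ for a.e.\ $t$, and with $(e_t)_\#\lambda=\mu_t$. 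In particular $\xx(0)=x_0$ for $\lambda$-a.e.\ curve.
\item Fubini and the marginal property give
\[
\int \ucalJ^a(T,\xx)\,d\lambda(\xx)=e^{-aT}\!\int\varphi\,dm+\int_{X_T}\! e^{-at}\big(\Diss(v)+\rS+a\varphi\big)\,d\mu-\varphi(x_0)=\mathcal E^a(m,\mu,\nu).
\]
The integrands are bounded below, so Tonelli's theorem applies. Joint measurability of $(t,\xx)\mapsto\Diss(v(t,\xx(t)))$ holds because $v$ is Borel, after modifying it on a $\mu$-null set.
\item Since $\lambda$ is a probability measure, some curve satisfies $\ucalJ^a(T,\xx)\le\mathcal E^a(m,\mu,\nu)$. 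Hence $\eqref{eq:probcurv}\le\eqref{relaxation CE}$.
\end{enumerate}

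\textbf{Main obstacle.} The delicate step is the infinite-dimensional superposition principle with cylinder test functions. One has to verify that the weak formulation \eqref{eq:CE} in $\operatorname{Cyl}_b$, with endpoint measures, matches the hypotheses of \cite{ambrosio2021spatially}, including the reconstruction of the continuous curve $t\mapsto\mu_t$ with the right endpoints. One also has to handle the fact that $v$ is defined only $\mu$-a.e., so that evaluating $v$ along $\lambda$-a.e.\ curve is legitimate. The plan is to rely on the identification results in Appendix~\ref{app: CE} for both points.
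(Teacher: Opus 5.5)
Your proposal is correct and follows essentially the paper's argument. The relaxed infimum is bounded above by embedding a curve as a Dirac-type triple, and the reverse bound comes from Lemma~\ref{lemma CE} and the superposition principle of Theorem~\ref{thm: superposition Banach}, followed by picking, among the curves on which $\lambda$ is concentrated, one with $\ucalJ^a(T,\xx)\le\mathcal E^a(m,\mu,\nu)$. The paper routes this through the intermediate problem \eqref{eq: third problem} on $\PP(C([0,T];X))$, where it needs disintegration and Jensen's inequality for a general $\lambda$, which you rightly bypass since you only embed Dirac masses; your two constructions also yield the correspondence of minimizers, which is part of what the paper means by ``equivalent''.
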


In order to prove the above result it is first convenient to rewrite the problem on curves \eqref{eq:probcurv} as
\begin{equation}\tag{$\mathcal C$}\label{main problem}
    \inf\left\{ E^a(\xx) \ : \ \xx  \in C([0,T];X), \ \xx(0) = x_0 \right\},
\end{equation}
where we set
\begin{equation*}
    E^a(\xx) := \begin{cases}\displaystyle
    \ucalJ^a(T,\xx), \quad & \text{ if }\xx \in AC([0,T];X),
    \\
    +\infty, & \text{ otherwise}.
    \end{cases}
\end{equation*}

We then introduce a third auxiliary problem in which this time one minimizes over probability measures on curves, namely: 
\begin{equation}\label{eq: third problem}\tag{$\mathcal{P}\!\mathcal{C}$}
    \inf\left\{ \ \mathfrak{E}^a(\lambda) \ : \ \lambda \in \PP(C([0,T];X)), \ (e_0)_\# \lambda = \delta_{x_0}\right\}, 
\end{equation}
where 
\begin{equation}\label{eq: mathfrak E}
   \mathfrak{E}^a(\lambda) := \int_{C([0,T],X)} E^a(\xx ) d\lambda(\xx).
\end{equation}
We recall that for any $t\in [0,T]$ the evaluation map $e_t:C([0,T];X) \to X$ is defined as 
$e_t(\xx) = \xx(t)$, and the symbol $f_\#\mu$ denotes the pushforward of the measure $\mu$ through the function $f$.

Theorem \ref{thm:convexification} is a byproduct of the following result.

\begin{prop}
    The optimization problems \eqref{main problem}, \eqref{eq: third problem} and \eqref{relaxation CE} are equivalent. Namely, the optimal value of the three problems coincide and if the minimum is attained in one of them, then it is in the others as well.
\end{prop}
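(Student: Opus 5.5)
I will establish a cycle of inequalities
\[
\inf\eqref{main problem}\;\ge\;\inf\eqref{eq: third problem}\;\ge\;\inf\eqref{relaxation CE}\;\ge\;\inf\eqref{main problem},
\]
keeping track of minimizers at each stage so that attainment in one problem transfers to the others.

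The first inequality is immediate. For any admissible curve $\xx$, the Dirac mass $\lambda=\delta_{\xx}$ satisfies $(e_0)_\#\lambda=\delta_{x_0}$ and $\mathfrak E^a(\lambda)=E^a(\xx)$.

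The reverse link from \eqref{eq: third problem} back to \eqref{main problem} is also easy. Since $E^a\ge \inf\eqref{main problem}$ pointwise on curves starting at $x_0$, and $\lambda$ is concentrated on such curves, integrating gives $\mathfrak E^a(\lambda)\ge\inf\eqref{main problem}$. If a minimizer $\lambda$ exists, then $\lambda$-a.e. curve must be a minimizer of \eqref{main problem}. Measurability of $E^a$ on $C([0,T];X)$ follows from the lower semicontinuity of $\varphi$, $\rS$ and of the action along curves.

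For the step \eqref{eq: third problem}$\ge$\eqref{relaxation CE}, take $\lambda$ with $\mathfrak E^a(\lambda)<\infty$. Then $\lambda$ is concentrated on $AC$ curves with $\int_0^T\Diss(\dot\xx)\,dt<\infty$. I define
\[
m=(e_T)_\#\lambda,\qquad \int\zeta\,d\mu=\iint_0^T\zeta(t,\xx(t))\,dt\,d\lambda,\qquad \int\langle\boldsymbol\zeta,d\nu\rangle=\iint_0^T\langle\boldsymbol\zeta(t,\xx(t)),\dot\xx(t)\rangle\,dt\,d\lambda.
\]
The continuity equation \eqref{eq:CE} follows by integrating $\frac{d}{dt}\xi(t,\xx(t))$ for cylinder $\xi$. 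The measure $\nu$ is well defined with finite variation thanks to the superlinearity of $\Diss$ together with $\Diss\ge0$, and $|\nu|\ll\mu$. The terms in $\varphi$ and $\rS$ transfer exactly. For the action, Jensen's inequality applied to the disintegration gives $\mathcal A^a_\Diss(\mu,\nu)\le\iint e^{-at}\Diss(\dot\xx)\,dt\,d\lambda$, since $\frac{d\nu}{d\mu}(t,x)$ is the conditional average of $\dot\xx(t)$ given $\xx(t)=x$. Alternatively, I can use the dual formula of Proposition \ref{repr formula proposition} together with the pointwise Fenchel inequality. Either way, $\mathcal E^a(m,\mu,\nu)\le\mathfrak E^a(\lambda)$.

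The main step is \eqref{relaxation CE}$\ge$\eqref{eq: third problem}, and this is where the superposition principle in Banach spaces enters. Given $(m,\mu,\nu)$ with finite energy, $|\nu|\ll\mu$ and $v=\frac{d\nu}{d\mu}$, I first need the disintegration $\mu=\mathcal L^1\otimes\mu_t$ with $\mu_t\in\PP(X)$ narrowly continuous, $\mu_0=\delta_{x_0}$ and $\mu_T=m$. To obtain it I test \eqref{eq:CE} with functions of $t$ only, which shows that the time marginal is Lebesgue measure, and then use the equivalence with the standard formulation proved in Appendix \ref{app: CE}. I also need integrability of $v$, namely $\int\|v\|\,d\mu<\infty$, which comes from superlinearity of $\Diss$. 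The superposition principle in Banach spaces, \cite[Theorem~5.2]{ambrosio2021spatially} as recalled in the appendix, then yields $\lambda\in\PP(C([0,T];X))$ with $(e_t)_\#\lambda=\mu_t$, concentrated on $AC$ curves solving $\dot\xx(t)=v(t,\xx(t))$ for a.e. $t$. Substituting, each term matches exactly:
\begin{itemize}
\item $\int\varphi\,dm=\int\varphi(\xx(T))\,d\lambda$;
\item $\int e^{-at}(\rS+a\varphi)\,d\mu=\iint e^{-at}(\rS+a\varphi)(\xx(t))\,dt\,d\lambda$;
\item $\int e^{-at}\Diss(v)\,d\mu=\iint e^{-at}\Diss(\dot\xx(t))\,dt\,d\lambda$ by Fubini.
\end{itemize}
Hence $\mathfrak E^a(\lambda)=\mathcal E^a(m,\mu,\nu)$.

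I expect the main obstacle to be verifying the hypotheses of the Banach-space superposition principle. In particular, I must pass from the cylinder-test formulation \eqref{eq:CE} to the curve formulation, and handle the measurability of $v$ along curves so that the chain of equalities is rigorous. The nonnegativity reductions of Section \ref{sec:reduction} make all the integrals well defined in $[0,+\infty]$.

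Attainment propagates along the same cycle. A minimizer of \eqref{relaxation CE} produces, through the superposition step, a $\lambda$ with equal value, hence a minimizer of \eqref{eq: third problem}, and $\lambda$-a.e. curve is then a minimizer of \eqref{main problem}. Conversely, a minimizing curve yields a minimizer of \eqref{relaxation CE} by the explicit construction in the third paragraph.
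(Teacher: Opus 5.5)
Your proposal is correct and follows the paper's proof essentially step by step. Dirac masses and pointwise integration of $E^a$ link \eqref{main problem} and \eqref{eq: third problem}. The construction $\mu=\mathrm e_\#(\mathcal L^1\otimes\lambda)$, $m=(e_T)_\#\lambda$, with $\frac{d\nu}{d\mu}$ the conditional average of $\dot\xx(t)$, together with Jensen's inequality, gives $\mathcal E^a\le\mathfrak E^a$. The Banach-space superposition principle, applied after Lemma \ref{lemma CE}, gives the reverse direction with equality.

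The only slip is minor: after the reductions, $\rS$ is merely bounded below by $-\Diss(0)$ and is not nonnegative. So the integrals are well defined in $(-\infty,+\infty]$ rather than $[0,+\infty]$, which is all the argument needs.
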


In turn, the above proposition follows by the following two lemmas, which assess, respectively, the equivalence between \eqref{main problem} and \eqref{eq: third problem}, and between \eqref{eq: third problem} and \eqref{relaxation CE}. In particular, in Lemma \ref{lemma:equiv} we will employ a suitable version of the so-called \emph{superposition principle}, valid in general Banach spaces, in order to lift solutions to the continuity equation to probability measures on curves. We refer the Reader to Appendix \ref{app: CE}, specifically to Theorem \ref{thm: superposition Banach}, for more details.

\begin{lemma}
    Let $x_0 \in X$. If $\xx\in C([0,T];X)$ is a competitor for \eqref{main problem}, then $\delta_{\xx}$ is a competitor for \eqref{eq: third problem} and $\mathfrak{E}^a(\delta_\xx) = E^a(\xx)$. Conversely, if $\lambda\in \PP(C([0,T];X))$ is a competitor for \eqref{eq: third problem}, then it is supported over curves that are competitors for \eqref{main problem} and there exists a curve $\xx \in \operatorname{supp}\lambda$ such that $E^a(\xx)\leq \mathfrak{E}^a(\lambda)$. In particular, the optimal values of the two problems coincide.
    \\
    Moreover, if $\xx_{\operatorname{\min}}$ is a minimum for \eqref{main problem}, then $\delta_{\xx_{\operatorname{min}}}$ is a minimum for $\eqref{eq: third problem}$, and if $\lambda_{\operatorname{min}}$ is a minimum for \eqref{eq: third problem}, then it is concentrated over curves realizing the minimum of \eqref{main problem}.
\end{lemma}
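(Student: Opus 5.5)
The first claim is a direct verification. If $\xx\in C([0,T];X)$ with $\xx(0)=x_0$, then $\delta_\xx$ is a probability on $C([0,T];X)$ with $(e_0)_\#\delta_\xx=\delta_{\xx(0)}=\delta_{x_0}$, and by definition \eqref{eq: mathfrak E} we get $\mathfrak E^a(\delta_\xx)=\int E^a\,d\delta_\xx=E^a(\xx)$. The integral is well defined because $E^a$ is bounded below: recall $\varphi\ge0$, $\Diss\ge0$ and $\rS\ge-\Diss(0)$ from Section~\ref{sec:reduction}.

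Before the converse, I will check that $E^a$ is Borel on $C([0,T];X)$, so that $\mathfrak E^a$ makes sense. I plan to use lower semicontinuity. Along uniformly converging curves with bounded $\int\Diss(\dot\xx)$, superlinearity gives equi-integrability of $\|\dot\xx\|$, hence $\dot\xx_n\rightharpoonup\dot\xx$ weakly in $L^1$, and the convex term is weakly lower semicontinuous. The terms $\varphi(\xx(T))$, $\rS$ and $a\varphi$ are handled by lower semicontinuity of $\varphi$ and $\rS$ together with Fatou, using the lower bounds. Non-AC limits are excluded by the same equi-integrability, since a limit of curves with bounded action stays absolutely continuous. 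Thus $E^a$ is l.s.c., hence Borel.

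For the converse, let $\lambda$ be a competitor. Since $(e_0)_\#\lambda=\delta_{x_0}$, $\lambda$-a.e.\ curve satisfies $\xx(0)=x_0$, so $\lambda$ is concentrated on competitors of \eqref{main problem}. If $\mathfrak E^a(\lambda)=+\infty$, any such curve in the support works; the support is nonempty and, being the closure of the full-measure set, contains a competitor. Otherwise, suppose every $\xx\in\operatorname{supp}\lambda$ with $\xx(0)=x_0$ had $E^a(\xx)>\mathfrak E^a(\lambda)$. Integrating against $\lambda$, which is concentrated on that set (the support has full measure in a Polish space), would give $\mathfrak E^a(\lambda)>\mathfrak E^a(\lambda)$, a contradiction. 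Strictly, this yields $\lambda(\{E^a\le\mathfrak E^a(\lambda)\})>0$, and I pick a point there inside $\operatorname{supp}\lambda$ with $\xx(0)=x_0$. This is always possible, because the intersection of three full- or positive-measure sets has positive measure. The two directions together show that the infima coincide.

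For the minimizer statements, let $\xx_{\min}$ be optimal. For any competitor $\lambda$ the previous step provides $\xx$ with $E^a(\xx_{\min})\le E^a(\xx)\le\mathfrak E^a(\lambda)$, so $\delta_{\xx_{\min}}$ is optimal. Conversely, let $\lambda_{\min}$ be optimal with common value $\iota$. Then $\int(E^a-\iota)\,d\lambda_{\min}=0$ while $E^a-\iota\ge0$ $\lambda_{\min}$-a.e., since a.e.\ curve is a competitor. Hence $E^a=\iota$ $\lambda_{\min}$-a.e., i.e.\ $\lambda_{\min}$ is concentrated on minimizers. The case $\iota=-\infty$ cannot occur, because $E^a$ is bounded below.

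The only genuinely delicate step is the measurability or lower semicontinuity of $E^a$ on $C([0,T];X)$, and I expect it to be the main obstacle. If the full l.s.c.\ argument gets heavy, I would settle for Borel measurability: write $\int\Diss(\dot\xx)$ as a supremum over difference quotients, which is a countable supremum of continuous functionals by convexity and Jensen, and treat the remaining terms with lower semicontinuity of $\varphi$ and $\rS$.
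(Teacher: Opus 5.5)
Your proof is correct and follows the paper's own argument: the Dirac lift $\delta_\xx$ for one direction, the observation that $\{E^a\le\mathfrak E^a(\lambda)\}$ has positive $\lambda$-measure (so it meets $\operatorname{supp}\lambda$ on a competitor) for the other, and $E^a=\iota$ $\lambda_{\min}$-a.e.\ for optimal measures. The one addition is your check that $E^a$ is lower semicontinuous, hence Borel, on $C([0,T];X)$, which the paper takes for granted when defining $\mathfrak E^a$; your argument via equi-integrability, weak $L^1$ compactness and Fatou correctly supplies it.
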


\begin{proof}
    If $\xx \in C([0,T];X)$ is a competitor for \eqref{main problem}, then $(e_0)_\#\delta_\xx = \delta_{\xx(0)} = \delta_{x_0}$, and it is trivial to verify that $\mathfrak{E}^a(\delta_\xx) = E^a(\xx)$. If $\lambda \in \PP(C([0,T];X))$ satisfies $(e_0)_\#\lambda = \delta_{x_0}$, then its is supported over curves $\xx \in C([0,T];X)$ with $\xx(0) = x_0$, and in particular on competitors for \eqref{main problem}. By the very definition \eqref{eq: mathfrak E}, it must hold $\lambda\big( \left\{\xx \ : \ E^a(\xx)> \mathfrak{E}^a(\lambda)\right\} \big)<1$, which implies that there exists a curve in the support of $\lambda$ such that $E^{a}(\xx)\leq \mathfrak{E}^a(\lambda)$. 

    To conclude, if \eqref{main problem} admits a minimum $\xx_{\operatorname{min}}$, then $\lambda = \delta_{\xx_{\operatorname{min}}}$ is a minimum for \eqref{eq: third problem} as well, arguing exactly as before. Finally, if $\lambda_{\operatorname{min}}$ is a minimum for \eqref{eq: third problem}, then $\lambda\big( \left\{\xx \ : \ E^a(\xx)> \mathfrak{E}^a(\lambda)\right\} \big) = 0$, otherwise we could find a curve in its support for which the value is strictly lower then the minimum. This concludes the proof.
\end{proof}

\begin{lemma}\label{lemma:equiv}
    Let $x_0 \in X$. Then:
    \begin{enumerate}
        \item for any $(m,\mu,\nu) \in \operatorname{dom} \mathcal{E}^a$ such that $(\mu,\nu)\in\CE(\delta_{x_0},m)$ there exists $\lambda \in \PP(C([0,T];X))$ satisfying $(e_0)_\#\lambda = \delta_{x_0}$ and $\mathfrak{E}^a(\lambda) = \mathcal{E}^a(m,\mu,\nu)$;
        \item for any $\lambda \in \PP(C([0,T];X))$ such that $(e_0)_\#\lambda = \delta_{x_0}$ and $\int E^a(\xx) d\lambda(\xx) <+\infty$, there exists a triplet $(m,\mu,\nu) \in \widetilde B$ such that $(\mu,\nu)\in\CE(\delta_{x_0},m)$ and $\mathcal{E}^a(m,\mu,\nu) \leq \mathfrak{E}^a(\lambda)$.
    \end{enumerate}
    In particular, the optimal values of the two problems \eqref{eq: third problem} and \eqref{relaxation CE} coincide, and \eqref{eq: third problem} attains its minimum if and only if \eqref{relaxation CE} does. 
\end{lemma}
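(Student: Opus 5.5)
For part (1) I would take $(m,\mu,\nu)\in\operatorname{dom}\mathcal E^a$ with $(\mu,\nu)\in\CE(\delta_{x_0},m)$. Finiteness of $\mathcal A^a_\Diss$ gives $|\nu|\ll\mu$, so $\nu=v\mu$ with $v\in L^1_\mu(X_T;X)$.

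I would first show that the time marginal of $\mu$ is $\mathcal L^1|_{[0,T]}$. For this, test \eqref{eq:CE} with $\xi(t,x)=\eta(t)$, where $\eta\in C^1([0,T])$. This gives $\int\eta'\,d(p^t_\#\mu)=\eta(T)m(X)-\eta(0)$. Taking $\eta\equiv1$ yields $m(X)=1$. Taking general $\eta$ then identifies $p^t_\#\mu=\mathcal L^1$. Disintegrating, $\mu=\int\mu_t\,dt$ with $\mu_t\in\PP(X)$.

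Next I would invoke the Banach superposition principle (Theorem \ref{thm: superposition Banach} in the Appendix). The integrability it needs is $\int\|v\|\,d\mu<\infty$, which follows from the superlinearity of $\Diss$ together with $\mathcal A^a_\Diss<\infty$, since $e^{-at}\ge e^{-aT}$. The principle yields $\lambda\in\PP(C([0,T];X))$ with $(e_t)_\#\lambda=\mu_t$ for all $t$, $(e_0)_\#\lambda=\delta_{x_0}$, $(e_T)_\#\lambda=m$, and $\lambda$ concentrated on AC curves with $\dot\xx(t)=v(t,\xx(t))$ for a.e. $t$. The endpoint identification $(e_T)_\#\lambda=m$ follows from the weak continuity of $t\mapsto\mu_t$ and the terminal term in \eqref{eq:CE}.

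The equality $\mathfrak E^a(\lambda)=\mathcal E^a(m,\mu,\nu)$ is then a term-by-term Fubini computation. First, $\int e^{-aT}\varphi(\xx(T))\,d\lambda=e^{-aT}\int\varphi\,dm$. Second, $\iint_0^Te^{-at}(\rS+a\varphi)(\xx(t))\,dt\,d\lambda=\int e^{-at}(\rS+a\varphi)\,d\mu$. Third, $\iint e^{-at}\Diss(\dot\xx(t))\,dt\,d\lambda=\int e^{-at}\Diss(v)\,d\mu$, using $\dot\xx(t)=v(t,\xx(t))$ for $\lambda\otimes\mathcal L^1$-a.e. $(\xx,t)$. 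All integrands are nonnegative or bounded below, since we have reduced to $\varphi,\Diss\ge0$ and $\rS\ge-\Diss(0)$, so Tonelli applies. The measurability of $(\xx,t)\mapsto\dot\xx(t)$ needs a short argument via difference quotients.

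For part (2), given $\lambda$ with finite energy, $\lambda$-a.e. curve is AC. I would set $m=(e_T)_\#\lambda$ and $\mu=\int_0^T\delta_t\otimes(e_t)_\#\lambda\,dt$, that is, the pushforward of $\mathcal L^1\otimes\lambda$ under $(t,\xx)\mapsto(t,\xx(t))$. I would define $\nu$ as the pushforward of the vector measure $\dot\xx(t)\,\mathcal L^1\otimes\lambda$ under the same map. This $\nu$ has finite total variation because $\int\!\!\int\|\dot\xx\|\,dt\,d\lambda<\infty$ by superlinearity and finiteness of $\mathfrak E^a$.

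The continuity equation \eqref{eq:CE} for cylinder $\xi$ follows by integrating $\frac{d}{dt}\xi(t,\xx(t))=\partial_t\xi+\langle \mathrm D_x\xi,\dot\xx\rangle$ against $\lambda$. This chain rule holds because $\xi$ is $C^1$ and $\xx$ is AC in a Radon–Nikod\'ym space. We have $|\nu|\ll\mu$, and the density is the conditional expectation $v(t,x)=\mathbb E[\dot\xx(t)\mid \xx(t)=x]$. Jensen's inequality for the convex lsc $\Diss$, applied through the disintegration of $\mathcal L^1\otimes\lambda$ with respect to $(t,\xx(t))$ and using Bochner conditional expectations, then gives $\mathcal A^a_\Diss(\mu,\nu)\le\iint e^{-at}\Diss(\dot\xx)\,dt\,d\lambda$. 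The remaining terms match exactly, so $\mathcal E^a\le\mathfrak E^a$.

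The consequences then follow. Part (1) gives $\inf(\mathcal{P}\!\mathcal{C})\le\inf(\mathcal M)$, and part (2) gives the reverse. For attainment: a minimizer of $(\mathcal M)$ yields, via (1), a $\lambda$ achieving the same value, hence a minimizer of $(\mathcal{P}\!\mathcal{C})$. Conversely, a minimizer $\lambda$ of $(\mathcal{P}\!\mathcal{C})$ yields via (2) a triple with value $\le\mathfrak E^a(\lambda)=\inf$, hence optimal.

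I expect the main obstacle to be the rigorous application of the superposition principle in the Banach setting together with the Jensen/disintegration step, specifically the measurability of the conditional Bochner expectation. For the Jensen step I would first try the dual representation \eqref{repr formula} of $\mathcal A_\Psi$: for each $\boldsymbol\zeta\in\mathfrak Z_b$, $\int\langle\boldsymbol\zeta,d\nu\rangle-\int\Psi^*(\cdot,\boldsymbol\zeta)\,d\mu$ equals $\iint(\langle\boldsymbol\zeta(t,\xx(t)),\dot\xx\rangle-\Psi^*)\,dt\,d\lambda\le\iint e^{-at}\Diss(\dot\xx)\,dt\,d\lambda$ by Fenchel's inequality. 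Taking the supremum over $\boldsymbol\zeta$ gives the bound directly and bypasses conditional expectations entirely; the same formula also shows $|\nu|\ll\mu$.
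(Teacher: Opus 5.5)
Your proof is correct. Its skeleton is the paper's: part (1) goes through the superposition principle (Theorem \ref{thm: superposition Banach}) and a Fubini computation, and part (2) takes $\mu=\mathrm e_\#(\mathcal L^1\otimes\lambda)$, $m=(e_T)_\#\lambda$ and integrates the chain rule for cylinder functions along curves.

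The genuine difference is in how you obtain the key inequality $\mathcal A^a_\Diss(\mu,\nu)\le\iint e^{-at}\Diss(\dot\xx(t))\,dt\,d\lambda$.
\begin{itemize}
\item The paper disintegrates $\mathcal L^1\otimes\lambda$ with respect to $\mathrm e$. It defines the averaged velocity $v(t,x)=\int\dot\xx(t)\,d\lambda_{t,x}$ as a Bochner integral, whose Borel measurability is Proposition \ref{lemma: bochner measurability}. It then sets $\nu=v\mu$ and applies a Jensen inequality for Bochner integrals fibrewise.
\item You define $\nu$ directly as the push-forward of the vector measure $\dot\xx(t)\,\mathcal L^1\otimes\lambda$. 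You then combine Fenchel's inequality with the representation formula \eqref{repr formula} of Proposition \ref{repr formula proposition}, whose hypotheses do hold for $\Psi(t,x,v)=e^{-at}\Diss(v)$.
\end{itemize}

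Your route is valid. It avoids the disintegration, the measurability of the conditional Bochner expectation and the infinite-dimensional Jensen step. It does so by leaning on the heavier duality formula, and it does not exhibit $d\nu/d\mu$ explicitly as the mean velocity, which is the more informative description. Note also that $|\nu|\ll\mu$ is immediate in your construction, since $\mu(A)=0$ forces $(\mathcal L^1\otimes\lambda)(\mathrm e^{-1}(A))=0$, so you do not need the duality formula for that.

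Both routes still need the Borel measurability of $(t,\xx)\mapsto\dot\xx(t)$, which you rightly flag. In part (1) two of your steps are redundant:
\begin{itemize}
\item The separate derivation of the Lebesgue time marginal is already built into Lemma \ref{lemma CE}, on which the superposition theorem rests.
\item The superlinearity bound for $\int\|v\|\,d\mu$ is unnecessary, since that integral is just $|\nu|(X_T)<\infty$.
\end{itemize}
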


\begin{proof}
    $(1)$ Since $\mathcal{A}^a_\Diss(\mu,\nu)<+\infty$, in particular it holds $|\nu|\ll\mu$. Then, we can apply the superposition principle Theorem \ref{thm: superposition Banach} to obtain a measure $\lambda \in \PP(C([0,T];X))$ satisfying 
    \begin{equation*}
        (e_0)_\#\lambda = \delta_{x_0},\quad (e_T)_\#\lambda = m,\quad\text{and}\quad\mathrm{e}_\#(\mathcal{L}^1|_{[0,T]}\otimes\lambda )= \mu,
    \end{equation*}
    where $\mathrm{e}:[0,T]\times C([0,T];X)  \to X_T$ is defined as
    \begin{equation*}
        \mathrm{e}(t,\xx):= (t,\xx(t)).
    \end{equation*}
    Moreover, $\lambda$ is concentrated over absolutely continuous curves solving the equation 
    \begin{equation*}
        \dot{\xx}(t) = \frac{d\nu}{d\mu}(t,\xx(t)),\quad \text{ for a.e. } t\in[0,T].
    \end{equation*}
    Then, simple computations show that 
    \begin{align*}
        &\mathfrak{E}^a(\lambda)\\ 
        =& \int \!\left(e^{-aT}\varphi(\xx(T))+\int_0^T\!\!\!\! e^{-a t}\left(\Diss\left(\frac{d\nu}{d\mu}(t,\xx(t))\right) + \rS (\xx(t))+a\varphi(\xx(t))\right) dt \right)\! d\lambda(\xx)-\varphi(x_0) 
        \\
        = &
        \int_X \!\!e^{-aT}\varphi(x) dm(x) + \mathcal{A}_\Diss^a(\mu,\nu) + \int_{X_T} \!\!e^{-at}\big(\rS (x)+a\varphi(x)\big) d\mu(t,x)-\varphi(x_0)
        =  \mathcal{E}(\mu,\nu,m).
    \end{align*}
    (2) Let us first set 
    \begin{equation}\label{eq:mmu}
        \mu := \mathrm{e}_\# \lambda\quad\text{and}\quad m=\mu(T).
    \end{equation}
    In order to define a proper vector-valued measure $\nu$, we also define a Borel measurable vector field $v \in L^1_\mu(X_T,X)$ such that the continuity equation $\partial_t\mu + \operatorname{div}(v \mu) = 0$ is satisfied (in the sense of Definition \ref{continuity equation over X def}). Let $\{\lambda_{t,x}\}\subset \PP(C([0,T];X))$ be the disintegration of $\mathcal{L}^1_{[0,T]}\otimes\lambda$ with respect to the map $\mathrm{e}$, i.e. each $\lambda_{t,x}$ is concentrated over curves that at time $t$ pass through $x$ and 
    \[\mathcal{L}^1_{[0,T]}\otimes\lambda = \int \lambda_{t,x}\, d\mu(t,x).\]
    Using Bochner integration, define the vector field 
    \begin{equation}\label{eq:v}
        v(t,x) := \int \dot{\xx}(t) d\lambda_{t,x}(\xx),
    \end{equation}
    which is Borel measurable (see Proposition \ref{lemma: bochner measurability}) and well-defined for $\mu$-a.e. $(t,x)$ since 
    \[\int \int|\dot{\xx}(t)| d\lambda_{t,x}(\xx)d\mu(t,x) = \int \int_0^T \|\dot{\xx}(t)\| dtd\lambda(\xx)  \leq C\left( \int \int_0^T \Diss(\dot{\xx}(t)) dt d\lambda(\xx) + 1\right)<+\infty,\]
    where we exploited the superlinearity of $\Diss$.
    
    Then, the continuity equation $\partial_t\mu + \operatorname{div}(v\mu) = 0$ is automatically satisfied by \eqref{eq:mmu} and \eqref{eq:v}, indeed for any $\xi \in \operatorname{Cyl}_b(X_T)$ it holds
    \begin{align*}
        \int_{X_T} & \partial_t \xi d\mu + \int_{X_T} \langle \mathrm D_x\xi, v\rangle d\mu - \int_X \xi(T,x) dm(x) + \int_X \xi(0,x) d\delta_{x_0}(x)
        \\
        = &
       \int  \left[\int_0^T \partial_t \xi(t,\xx(t)) + \langle \mathrm D_x \xi (t,\xx(t)), \dot{\xx}(t) \rangle dt - \xi(T,\xx(T)) + \xi (0,\xx(0)) \right] d\lambda(\xx)
       \\
       = &
       \int \left[ \int_0^T \frac{d}{dt} \xi(t,\xx(t)) dt - \xi(T,\xx(T)) + \xi (0,\xx(0)) \right] d\lambda(\xx) = 0.
    \end{align*}
   Setting $\nu:=v\mu$, we are left to show that $\mathcal{E}(\mu,\nu,m) \leq \mathfrak{E}^a(\lambda)$. This follows by simple computations: 
    \begin{align*}
        &\mathcal{E}^a(\mu,\nu,m) \\
        =&  \int_X e^{-aT}\varphi(x) dm(x) + \int_{X_T} e^{-at}\left(\Diss\left(v(t,x)\right)+\rS (x)+a\varphi(x)\right) d\mu(t,x)-\varphi(x_0)
        \\
        = & 
        \int \left[ e^{-aT}\varphi(\xx(T)) + \int_0^T e^{-at}\left(\rS (\xx(t)) + a\varphi(\xx(t))\right)
        \right]d\lambda(\xx) 
        \\
        & + \int e^{-at}\Diss \left( \int \dot{\xx}(t)d\lambda_{t,x}(\xx) \right) d\mu(t,x)-\varphi(x_0)
        \\
        \le & 
        \int \left[ e^{-aT}\varphi(\xx(T)) + \int_0^T e^{-at}\left(\rS (\xx(t)) + a\varphi(\xx(t))\right)
        \right]d\lambda(\xx) 
        \\
        & + \int \int e^{-at}\Diss (\dot{\xx}(t))d\lambda_{t,x}(\xx)  d\mu(t,x)-\varphi(x_0)
        \\
        = & 
        \int \left[ e^{-aT}\varphi(\xx(T)) + \int_0^T e^{-at}\left(\rS (\xx(t)) + a\varphi(\xx(t)) + \Diss(\dot{\xx}(t))\right)
        \right]d\lambda(\xx)-\varphi(x_0)
        =  \mathfrak{E}^a(\lambda),
    \end{align*}
    where we used Jensen's inequality, the fact that $\lambda_{t,x}$ is the disintegration of $\mathcal{L}^1_{[0,T]}\otimes \lambda$ with respect to $\mathrm{e}$ and since $\mathrm{e}_\#(\mathcal{L}^1_{[0,T]}\otimes\lambda) = \mu$.
\end{proof}

\section{Minimax theorem and dual problem}\label{sec: proof}

In this section we conclude the proof of Theorem \ref{thm:main}, namely we show, accordingly to Theorem \ref{thm:convexification}, that problem \eqref{relaxation CE} admits a minimizer and that the optimal value is nonpositive. Actually, we slightly extend the result by replacing the initial measure $\delta_{x_0}$ with a general $\mu_0\in \mathcal M_+(X)$ such that $\varphi\in L^1_{\mu_0}(X)$. Consistently, we replace the last term in the definition \eqref{eq:Ea} of $\mathcal E^a$ by $\int_X\varphi d\mu_0$. Again, we recall that we are assuming that $\varphi$ has compact sublevels and that both $\varphi$ and $\Diss$ are nonnegative.
Finally, when $a=0$,
we will also assume that $\rS$
has compact sublevels.

This simple proposition first states that the minimum in problem \eqref{relaxation CE} exists.
\begin{prop}\label{prop:minE}
    Given $\mu_0\in \mathcal M_+(X)$, if there exists 
    $(\bar\mu,\bar \nu,\bar m)\in \mathrm{dom}\mathcal E^a$
    with $(\bar\mu,\bar\nu)\in \CE(\mu_0,m)$,
    then the minimum 
    \begin{equation*}
        \min\left\{\mathcal E^a(m,\mu,\nu):\, (m,\mu,\nu)\in \widetilde B,\, (\mu,\nu)\in \CE(\mu_0,m)\right\}.
    \end{equation*}
    is attained.
\end{prop}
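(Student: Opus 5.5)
The plan is the direct method in $\widetilde B$, with the product narrow topology, in three steps: take a minimizing sequence, extract a narrowly convergent subsequence with Theorem \ref{thm: compactness for nu} and Prokhorov's theorem, and pass to the limit in the constraint and, by lower semicontinuity, in $\mathcal E^a$.

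\textbf{Step 1: uniform bounds.} Take a minimizing sequence $(m_n,\mu_n,\nu_n)$ with $(\mu_n,\nu_n)\in\CE(\mu_0,m_n)$. Since $\bar\mu$ lies in the domain, we may assume $\mathcal E^a(m_n,\mu_n,\nu_n)\le C$. Testing \eqref{eq:CE} with cylinder functions depending only on time (for instance $\xi\equiv 1$, and $\xi(t,x)=t$) gives $m_n(X)=\mu_0(X)$ and $\mu_n(X_T)=T\mu_0(X)$. Hence all masses are equibounded. Because $\varphi,\Diss\ge0$ and $\rS\ge-\Diss(0)$, each term of $\mathcal E^a$ is bounded from below by a constant depending only on these masses. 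Consequently each term is bounded from above uniformly in $n$, which gives:
\begin{itemize}
\item[(i)] $\int\varphi\,dm_n\le C$;
\item[(ii)] $\int e^{-at}(\rS+a\varphi)\,d\mu_n\le C$;
\item[(iii)] $\mathcal A^a_\Diss(\mu_n,\nu_n)\le C$, and in particular $|\nu_n|\ll\mu_n$.
\end{itemize}

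\textbf{Step 2: tightness and compactness.} Since $\varphi$ has compact sublevels, (i) gives tightness of $(m_n)$ by Markov's inequality. If $a>0$, (ii) controls $\int\varphi\,d\mu_n$ (the factor $e^{-at}$ is bounded below by $e^{-aT}$), so $(\mu_n)$ is tight on $[0,T]\times X$ with compact sets $[0,T]\times\{\varphi\le k\}$. If $a=0$, we use instead the assumed compact sublevels of $\rS$. Theorem \ref{thm: compactness for nu} then applies with $G=e^{-aT}\Diss$, which is superlinear and bounded below. By Proposition \ref{prop:localmetrnarrow} we extract a subsequence with $m_n\to m$, $\mu_n\to\mu$ and $\nu_n\to\nu$ narrowly.

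\textbf{Step 3: passing to the limit.} For the constraint, fix $\xi\in\operatorname{Cyl}_b(X_T)$ and note the regularity of each term of \eqref{eq:CE}:
\begin{itemize}
\item[(a)] $\partial_t\xi$ and $\xi(T,\cdot)$ are bounded and continuous;
\item[(b)] $\mathrm D_x\xi=\sum_i \mathrm D_i\zeta(t,\langle\boldsymbol z,x\rangle_k)\,z_i$ belongs to $\mathfrak Z_b(X_T;X^*)$.
\end{itemize}
By (a), (b) and the definition of narrow convergence, every term of \eqref{eq:CE} passes to the limit, so $(\mu,\nu)\in\CE(\mu_0,m)$.

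For the functional, $\int\varphi\,dm$ and $\int e^{-at}(\rS+a\varphi)\,d\mu$ are lower semicontinuous under narrow convergence. This holds because the integrands are lower semicontinuous and bounded below: $\rS$ is lower semicontinuous by construction and $\varphi$ by \ref{hyp:phi1}. Finally, the action $\mathcal A^a_\Diss$ is lower semicontinuous by Proposition \ref{repr formula proposition}. Hence the limit triple attains the infimum.

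\textbf{Main obstacle.} The delicate point is the compactness of $\nu_n$ in the vector-valued narrow topology. This is exactly what Theorem \ref{thm: compactness for nu} supplies, but it needs the tightness of $\mu_n$, which is the reason for distinguishing the cases $a>0$ and $a=0$ in Step 2.
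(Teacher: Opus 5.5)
Your proposal is correct and follows the paper's proof essentially step by step. Like the paper, you use the direct method with mass identities from time-only cylinder test functions, term-by-term upper bounds giving tightness of $m$ and $\mu$ (through the compact sublevels of $\varphi$, or of $\rS$ when $a=0$), Theorem~\ref{thm: compactness for nu} for the $\nu$-component, closedness of the constraint because $\partial_t\xi\in C_b$ and $\mathrm D_x\xi\in\mathfrak Z_b$, and lower semicontinuity via Proposition~\ref{repr formula proposition}; the only cosmetic difference is that you argue along a minimizing sequence, whereas the paper proves directly that the constrained sublevel sets $\Sigma_C$ are narrowly compact.
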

\begin{proof}
    It follows by the direct method of Calculus of Variations. The lower semicontinuity of $\mathcal E^a$ with respect to the product narrow topology in $\widetilde B$ is granted by the lower semicontinuity and boundedness from below of $\varphi$ and $\rS  $ and by Proposition~\ref{repr formula proposition}. Moreover, the constraint $(\mu,\nu)\in \CE(\mu_0,m)$ is closed in the product narrow topology. Indeed, in \eqref{eq:CE} one tests against cylinder functions $\xi\in \operatorname{Cyl}_b(X_T)$, which satisfy $\partial_t \xi \in C_b(X_T)$ and $\mathrm D_x\xi \in \mathfrak{Z}_b(X_T;X^*)$, using the notation of Definition \ref{def: narrow topology}.

    Let now consider the sublevel $\Sigma_C:=\left\{(m,\mu,\nu)\in \widetilde B:\,\mathcal E^a(m,\mu,\nu)\le C,\, (\mu,\nu)\in \CE(\mu_0,m)\right\}$. Then, for any triplet $(m,\mu,\nu)\in \Sigma_C$ we infer
    \begin{itemize}
        \item $\mu(X_T)+m(X)=(T+1)\mu_0(X)$, by using $\xi(t,x)=t-T-1$ in \eqref{eq:CE};
        \item $\int_X \varphi d m\le e^{aT}\left(C+\int_X\varphi\, d\mu_0+\Diss(0)\mu(X_T)\right)$, whence we deduce tightness in the $m$ component since $\varphi$ has compact sublevels;
        \item $\int_{X_T} e^{-at}(a\varphi+\rS  ) d \mu\le C+\int_X\varphi\, d\mu_0$, whence we deduce tightness in the $\mu$ component since the integrand has compact sublevels;
        \item $\mathcal A_\Diss(\mu,\nu)\le e^{aT}\mathcal A^a_\Diss(\mu,\nu)\le e^{aT}\left(C+\int_X\varphi\, d\mu_0+\Diss(0)\mu(X_T)\right)$.
    \end{itemize}
    By means of Theorem~\ref{thm: compactness for nu} we finally obtain that $\Sigma_C$ is compact with respect to the product narrow topology, and we conclude.
\end{proof}

We now want to rewrite problem \eqref{relaxation CE} as a saddle-point problem including the constraint into the functional, with the aim of applying a minimax theorem and then pass to its dual problem. For simplicity, we thus set
\begin{equation*}
    A:=\operatorname{Cyl}_b(X_T),
\end{equation*}
and we introduce the Lagrangian $\mathcal L^a\colon A\times \widetilde B \to(-\infty,+\infty]$ defined as
\begin{equation}\label{eq:lagrangian}
    \mathcal L^a(\xi, m,\mu,\nu):=\mathcal E^a( m,\mu,\nu)+\mathcal C(\xi, m,\mu,\nu),
\end{equation}
where, as in \eqref{eq:constraintintro}, we consider 
\begin{equation*}
    \mathcal C(\xi, m,\mu,\nu):= \int_{X_T} \partial_t \xi d\mu + \int_{X_T} \langle \mathrm D_x\xi, d\nu\rangle  - \int_X \xi(T) dm + \int_X \xi(0) d\mu_0.
\end{equation*}

Notice that $(\mu,\nu)\in \CE(\mu_0,m)$ if and only if $\mathcal C(\xi, m,\mu,\nu)=0$ for all $\xi\in A$. Since the map $\xi\mapsto \mathcal C(\xi, m,\mu,\nu)$ is linear, this yields that for all $(m,\mu,\nu)\in \widetilde B$ there holds
\begin{equation*}
    \sup\limits_{\xi\in A}\mathcal L^a(\xi, m,\mu,\nu)=\begin{cases}
        \mathcal E^a(m,\mu,\nu),&\text{if }(\mu,\nu)\in \CE(\mu_0,m),\\
        +\infty,&\text{otherwise,}
    \end{cases} 
\end{equation*}
whence
\begin{equation}\label{eq:minimax}
    \min\left\{\mathcal E^a(m,\mu,\nu):\, (m,\mu,\nu)\in \widetilde B,\, (\mu,\nu)\in \CE(\mu_0,m)\right\}=\min\limits_{(m,\mu,\nu)\in B}\sup\limits_{\xi\in A}\mathcal L^a(\xi, m,\mu,\nu),
\end{equation}
where we set
\begin{equation}\label{eq:B}
    B:=\operatorname{dom}\mathcal E^a=\left\{(m,\mu,\nu)\in \widetilde B:\, \varphi\in L^1_m(X),\, \rS  +a\varphi\in L^1_\mu(X_T),\, (\mu,\nu)\in \operatorname{dom} \mathcal A_\Diss\right\}.
\end{equation}

To the right hand-side of \eqref{eq:minimax} we will now apply Von Neumann minimax theorem \eqref{thm:VonNeumann}.

\begin{prop}\label{prop: min sup = sup inf}
   If $\sup\limits_{\xi\in A}\inf\limits_{(m,\mu,\nu)\in B}\mathcal L^a(\xi, m,\mu,\nu)< +\infty$, then one has
    \begin{equation*}
        \min\limits_{(m,\mu,\nu)\in B}\sup\limits_{\xi\in A}\mathcal L^a(\xi, m,\mu,\nu)=\sup\limits_{\xi\in A}\inf\limits_{(m,\mu,\nu)\in B}\mathcal L^a(\xi, m,\mu,\nu).
    \end{equation*}
\end{prop}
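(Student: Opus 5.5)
We apply Theorem \ref{thm:VonNeumann} with the roles of the abstract sets taken by $A=\operatorname{Cyl}_b(X_T)$ (the concave variable) and $B=\operatorname{dom}\mathcal E^a$ from \eqref{eq:B} (the convex variable). We endow $B$ with the product narrow topology inherited from $\widetilde B$: the narrow topology on the two nonnegative components and the one of Definition \ref{def: narrow topology} on the vector-valued component. This topology is Hausdorff, since $C_b$ and $\mathfrak Z_b$ separate measures. Moreover, $A$ is a vector space, and $B$ is convex because $\mathcal E^a$ is convex; this last fact follows from Proposition \ref{repr formula proposition} for the action and from the linearity of the remaining terms. For $(m,\mu,\nu)\in B$ and $\xi\in A$ the Lagrangian $\mathcal L^a$ is finite, since $\partial_t\xi$, $\xi(T)$, $\xi(0)$ are bounded and $\mathrm D_x\xi\in\mathfrak Z_b(X_T;X^*)$ is integrable against $\nu$. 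So $\mathcal L^a\colon A\times B\to\R$, as required.

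Next I check the structural hypotheses. For fixed $(m,\mu,\nu)$, the map $\xi\mapsto\mathcal L^a(\xi,m,\mu,\nu)$ is affine, because $\mathcal C$ is linear in $\xi$; in particular it is concave. For fixed $\xi$, the map $\mathcal C(\xi,\cdot)$ is linear and narrowly continuous, because $\partial_t\xi\in C_b(X_T)$, $\xi(T)\in C_b(X)$ and $\mathrm D_x\xi\in\mathfrak Z_b(X_T;X^*)$. The functional $\mathcal E^a$ is convex and narrowly lower semicontinuous. This is the same argument as in Proposition \ref{prop:minE}: lower semicontinuity and nonnegativity of $\varphi$ and $\rS+a\varphi$ give lower semicontinuity of the integral terms against $m$ and $\mu$, and Proposition \ref{repr formula proposition} handles the action with $\Psi(t,x,v)=e^{-at}\Diss(v)$. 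Hence $\mathcal L^a(\xi,\cdot)$ is convex and lower semicontinuous on $B$.

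The key step is the compactness condition. I choose $\overline\xi(t,x):=t-T-1$, which is a legitimate element of $\operatorname{Cyl}_b(X_T)$ after a harmless smooth bounded modification outside $[0,T]$ (take $k=1$, $z_1=0$). For this choice,
\[
\mathcal L^a(\overline\xi,m,\mu,\nu)=\mathcal E^a(m,\mu,\nu)+\mu(X_T)+m(X)-(T+1)\mu_0(X).
\]
By hypothesis $\sup_A\inf_B\mathcal L^a<+\infty$, so we may fix a finite $\overline C$ strictly larger than it, and also larger than $\mathcal L^a(\overline\xi,\bar b)$ for some $\bar b\in B$. Such a $\bar b$ exists: if $\varphi\in L^1_{\mu_0}$, take the trivial triplet $m=\mu_0$, $\mu=\mathcal L^1\otimes\mu_0$, $\nu=0$, possibly modified so that $\rS$ is integrable; alternatively, enlarge $\overline C$ using $\inf_B\mathcal L^a(\overline\xi,\cdot)\le\sup_A\inf_B$. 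This makes the sublevel $\Sigma:=\{b\in B:\mathcal L^a(\overline\xi,b)\le\overline C\}$ nonempty.

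On $\Sigma$, since $\varphi,\Diss\ge0$ and $\rS\ge-\Diss(0)$, we get uniform bounds on the following quantities:
\begin{itemize}
\item the masses $\mu(X_T)+m(X)$;
\item $\int\varphi\,dm$;
\item $\int e^{-at}(\rS+a\varphi)\,d\mu$;
\item $\mathcal A_\Diss(\mu,\nu)$.
\end{itemize}
The term $-\Diss(0)\mu(X_T)$ is absorbed, because the mass term appears with a positive sign. Tightness of $m$ then follows from compactness of the sublevels of $\varphi$, and tightness of $\mu$ from compactness of the sublevels of $a\varphi+\rS$ (for $a=0$ this uses the assumed compact sublevels of $\rS$). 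Theorem \ref{thm: compactness for nu} with $G=\Diss$ then gives relative compactness of $\Sigma$ in $\widetilde B$. Finally, $\Sigma$ is closed by the lower semicontinuity just established. Its limit points lie in $B$ automatically, since they have finite $\mathcal E^a$; this needs the bound from below on $\mathcal E^a$, which holds by the mass bound.

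Von Neumann's theorem then yields the min–sup = sup–inf identity. The main obstacle I expect is the bookkeeping in this coercivity step: the negative contribution $\Diss(0)\mu(X_T)$ must be controlled by the positive mass term coming from $\overline\xi$, and nonemptiness of $\Sigma$ must be ensured. If nonemptiness is delicate, I would simply use that $\inf_B\mathcal L^a(\overline\xi,\cdot)\le\sup_A\inf_B\mathcal L^a<\overline C$, which gives a point of $B$ in $\Sigma$.
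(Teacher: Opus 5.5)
There is a genuine gap in the coercivity step, exactly where you flagged the risk. With $\overline\xi=t-T-1$ the masses enter $\mathcal L^a(\overline\xi,\cdot)$ with coefficient $1$. The general lower bound is $\mathcal E^a\ge-\Diss(0)\mu(X_T)-\int_X\varphi\,d\mu_0$, coming from $\rS\ge-\Diss(0)$, $\Diss\ge0$ and $\varphi\ge0$. On $\Sigma$ you therefore only get $(1-\Diss(0))\mu(X_T)+m(X)\le\overline C+\int_X\varphi\,d\mu_0+(T+1)\mu_0(X)$, which gives no control of $\mu(X_T)$ when $\Diss(0)\ge1$. The normalization $\Diss\ge0$ cannot make $\Diss(0)$ small, because $\Diss(0)-\inf\Diss$ is invariant under vertical shifts.

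The failure is real, not an artifact of the estimate. Take $X=\R$, $a=0$, $\mu_0=\delta_{x_0}$, $\varphi(x)=\frac12x^2$ and $\Diss(v)=\frac12(v-v_0)^2$ with $v_0^2>2$. Then $\Diss^*(z)=\frac12z^2+zv_0$, so $\rS(x)=\frac12x^2-xv_0$ has compact sublevels, and $\rS(v_0)=-\frac12v_0^2=-\Diss(0)$. The triplets $b_M:=(0,M\delta_{(\bar t,v_0)},Mv_0\delta_{(\bar t,v_0)})$ belong to $B$, have zero action, and satisfy
\[
\mathcal L^0(\overline\xi,b_M)=M\bigl(1-\Diss(0)\bigr)-\tfrac12x_0^2-(T+1)\longrightarrow-\infty\quad\text{as }M\to\infty .
\]
Hence every sublevel of $\mathcal L^0(\overline\xi,\cdot)$ contains measures of arbitrarily large mass, and none of them is compact.

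The fix is to scale the test function: take $\overline\xi:=\kappa(t-T-1)$ with $\kappa>\Diss(0)$, for instance $\kappa=\Diss(0)+1$, which is the paper's choice. Then $\mathcal L^a(\overline\xi,\cdot)=\mathcal E^a+\kappa\bigl(\mu(X_T)+m(X)-(T+1)\mu_0(X)\bigr)$. On the sublevel this bounds $(\kappa-\Diss(0))\mu(X_T)+\kappa\,m(X)$, and hence also $\mathcal E^a\le\overline C+\kappa(T+1)\mu_0(X)$. From there your tightness argument and the appeal to Theorem \ref{thm: compactness for nu} go through unchanged.

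The rest of your verification is fine, with one small correction: $\rS+a\varphi$ is only bounded below by $-\Diss(0)$, not nonnegative. This still gives narrow lower semicontinuity, because the total mass is narrowly continuous. Your nonemptiness argument via $\inf_B\mathcal L^a(\overline\xi,\cdot)\le\sup_A\inf_B\mathcal L^a<\overline C$ is correct. It also makes the paper's explicit competitor, built at a minimum point of $\varphi$, unnecessary.
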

\begin{proof}
    We need to check the hypotheses of Theorem~\ref{thm:VonNeumann}. 
    
    The set $A=\operatorname{Cyl}_b(X_T)$ is linear, hence convex. The set $B$ in \eqref{eq:B} is convex since the action $\mathcal A_\Diss$ is convex by Proposition~\ref{repr formula proposition} and the constraints are linear in $m$ and $\mu$. On $B$ we consider the product narrow topology.

    The map $\xi\mapsto \mathcal L^a(\xi,m,\mu,\nu)$ is linear, hence concave, while $(m,\mu,\nu)\mapsto \mathcal L^a(\xi,m,\mu,\nu)$ is convex since $\mathcal A^a_\Diss$ is convex and the other terms are affine. Moreover, it is lower semicontinuous with respect to the chosen topology since $\mathcal E^a$ is (see Proposition~\ref{prop:minE}) and $\mathcal C(\xi,\cdot,\cdot,\cdot)$ is even continuous, again because $\partial_t \xi \in C_b(X_T)$, $\xi(T)\in C_b(X)$ and $\mathrm D_x\xi \in \mathfrak{Z}_b(X_T;X^*)$.

    Let now $\overline x\in X$ be a minimum point of $\varphi$, so that $0\in \partial\varphi(\bar x)$ whence $\rS  (\bar x)\le \Diss^*(0)=-\inf \Diss\le0$. Without loss of generality we may also assume that $\varphi(\bar x)=0$. Pick any constant $\overline C> \sup\limits_{\xi\in A}\inf\limits_{(m,\mu,\nu)\in B}\mathcal L(\xi, m,\mu,\nu)$ such that $\overline C\ge T\mu_0(X)\Diss(0)$, and set $\overline \xi(t,x):=(\Diss(0)+1)(t-T-1)$. For an arbitrary $\bar t\in [0,T]$ it is then immediate to compute
    \begin{align*}
        &\mathcal L^a(\overline \xi, \mu_0(X)\delta_{\overline x},T\mu_0(X)\delta_{(\bar t,\overline x)},0)= \mathcal E^a( \mu_0(X)\delta_{\overline x},T\mu_0(X)\delta_{(\bar t,\overline x)},0)\\
        =&e^{-aT}\mu_0(X)\varphi(\bar x)+T\mu_0(X)e^{-a\bar t}(\Diss(0)+\rS  (\bar x)+a\varphi(\bar x))-\int_X\varphi \, d \mu_0\le T\mu_0(X)\Diss(0)\le\overline C,
    \end{align*}
    so that we are just let to prove that the set
    \begin{equation*}
        \{\mathcal L^a(\overline\xi,\cdot,\cdot,\cdot)\le \overline C\}\qquad\text{is compact.}
    \end{equation*}
    This follows by arguing exactly as in the proof of Proposition~\ref{prop:minE}. Indeed, we observe that if $\overline C\ge \mathcal L^a(\overline\xi,m,\mu,\nu)= \mathcal E^a(m,\mu,\nu)+(\Diss(0)+1)(\mu(X_T)+m(X)-(T+1)\mu_0(X)) $, recalling that $\mathcal E^a(m,\mu,\nu)\ge -\Diss(0)\mu(X_T)-\int_X\varphi \, d \mu_0$ then there holds
    \begin{itemize}
        \item $\mu(X_T)+m(X)\le (\Diss(0)+1)(T+1)\mu_0(X) +\overline C+\int_X\varphi \, d \mu_0$;
        \item $\mathcal E^a(m,\mu,\nu)\le (\Diss(0)+1)(T+1)\mu_0(X) +\overline C+\int_X\varphi \, d \mu_0$.
    \end{itemize}
\end{proof}

In the language of optimization, the use of the minimax theorem allowed to ensure that \emph{strong duality} holds for problem \eqref{relaxation CE}, namely the the optimal values of the primal and the dual problem coincide. We stress that for the moment we do not know whether $\sup \inf \mathcal L^a$ is finite yet; however, this will actually be a byproduct of the following results. Next proposition shows that the dual problem turns out to be an extremely simple affine problem constrained to solutions of an Hamilton-Jacobi inequality.

\begin{prop}\label{prop:dualproblem}
    The following equality holds true:
    \begin{equation}\label{eq:dual}
        \sup\limits_{\xi\in A}\inf\limits_{(m,\mu,\nu)\in B}\mathcal L^a(\xi, m,\mu,\nu)=\sup\left\{\int_X(\xi(0)-\varphi)d\mu_0:\, \xi\in \HJ^a(X_T)\right\},
    \end{equation}
    where $\xi\in \HJ^a(X_T)$ if and only if it belongs to $A=\operatorname{Cyl}_b(X_T)$ and satisfies the following Hamilton-Jacobi type inequality for all $(t,x)\in X_T$:
    \begin{equation}\label{eq:HJ}
        \begin{cases}
            -\partial_t\xi(t,x)+e^{-at}\Diss^*(-e^{at}\mathrm D_x\xi(t,x))\le e^{-at}(\rS  ( x)+a\varphi(x)),\\
            \xi(T,x)\le e^{-aT}\varphi(x).
        \end{cases}
    \end{equation}
\end{prop}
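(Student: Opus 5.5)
We fix $\xi\in A=\operatorname{Cyl}_b(X_T)$ and compute $\inf_{(m,\mu,\nu)\in B}\mathcal L^a(\xi,m,\mu,\nu)$ explicitly. The infimum splits, since the three variables enter $\mathcal L^a$ in separate terms:
\begin{align*}
\mathcal L^a(\xi,m,\mu,\nu)={}&\int_X\big(e^{-aT}\varphi-\xi(T)\big)\,dm+\int_X(\xi(0)-\varphi)\,d\mu_0\\
&+\int_{X_T}\Big(\partial_t\xi+e^{-at}(\rS+a\varphi)\Big)d\mu+\Big[\mathcal A^a_\Diss(\mu,\nu)+\int_{X_T}\langle \mathrm D_x\xi,d\nu\rangle\Big].
\end{align*}
We first minimize in $\nu$ with $\mu$ fixed, restricting to $\nu=v\mu$ with $v\in L^1_\mu(X_T;X)$. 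The bracket then becomes $\int e^{-at}\big(\Diss(v)+\langle e^{at}\mathrm D_x\xi,v\rangle\big)d\mu$. We apply Lemma \ref{lemma: dual repr for F^*} with $Y=X_T$, $\Psi(t,x,v)=e^{-at}\Diss(v)$ and $\boldsymbol\zeta=-\mathrm D_x\xi$, which lies in $C_b(X_T;X^*)$ because $\xi$ is a cylinder function. The lemma gives
\[
\inf_{v}\,[\ldots]=-\int_{X_T}\Psi^*(t,x,-\mathrm D_x\xi)\,d\mu=-\int_{X_T}e^{-at}\Diss^*(-e^{at}\mathrm D_x\xi)\,d\mu .
\]
The conjugate identity $(e^{-at}\Diss)^*(z)=e^{-at}\Diss^*(e^{at}z)$ is used here. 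We also check that the infimizing $v$ may be taken so that $(m,\mu,v\mu)\in B$. The approximating simple fields built in that lemma take finitely many values $v_j$ with $\Diss(v_j)<\infty$, so their action is finite.

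The remaining infimum is over $m\in\mathcal M_+(X)$ with $\varphi\in L^1_m$, and over $\mu\in\mathcal M_+(X_T)$ with $\rS+a\varphi\in L^1_\mu$. Both integrands are affine in the measure. Setting
\[
f:=\partial_t\xi+e^{-at}(\rS+a\varphi)-e^{-at}\Diss^*(-e^{at}\mathrm D_x\xi),
\]
we use the general fact \eqref{eq:generalintro}. If $f\ge0$ everywhere and $e^{-aT}\varphi-\xi(T)\ge0$, the infimum is $0$, attained at $m=0$, $\mu=0$. If instead $f(\bar t,\bar x)<0$ at some point, we test with $\mu=k\,\delta_{(\bar t,\bar x)}$ and let $k\to\infty$. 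This point lies in $B$, because $f(\bar t,\bar x)<0$ forces $\rS(\bar x)<\infty$, and $\varphi(\bar x)<\infty$ since $\rS=+\infty$ off $\overline{\operatorname{dom}\varphi}$ (if needed, pick $\bar x$ in $\operatorname{dom}\varphi$ by lower semicontinuity). The infimum is then $-\infty$, and the same argument applies to $m$ with $\delta_{\bar x}$.

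Hence $\inf_B\mathcal L^a(\xi,\cdot)=\int_X(\xi(0)-\varphi)\,d\mu_0$ when $\xi\in\HJ^a(X_T)$, and $-\infty$ otherwise. Taking the supremum over $\xi$ gives \eqref{eq:dual}; the $\xi$ with value $-\infty$ do not contribute, as long as some competitor exists (e.g.\ a very negative $\xi$ with small slope).

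The main obstacle is the $\nu$-minimization. One must justify exchanging the infimum over $\nu$ with the integral, which is exactly Lemma \ref{lemma: dual repr for F^*}. That lemma needs hypotheses \ref{hyp:Fy1}--\ref{hyp:Fy4} for $\Psi$ and the continuity and boundedness of $\mathrm D_x\xi$. One must also ensure that these competitors stay inside $B=\operatorname{dom}\mathcal E^a$. A subtler point is that $\Diss^*\ge-\Diss(0)$ keeps every term bounded below, so no $\infty-\infty$ ambiguity arises in the computation.
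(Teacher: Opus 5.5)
Your proposal is correct and follows essentially the same route as the paper: you split off the $m$-term, perform the $\nu$-minimization at fixed $\mu$ via Lemma \ref{lemma: dual repr for F^*} together with $(e^{-at}\Diss)^*(z)=e^{-at}\Diss^*(e^{at}z)$, and conclude with the elementary fact \eqref{eq:generalintro} on infima of linear functionals over nonnegative measures. The only difference is that the paper skips your membership checks for $B$ by writing $\inf_B\mathcal L^a=\inf_{\widetilde B}\mathcal L^a$, which holds because $\mathcal L^a\equiv+\infty$ outside $B=\operatorname{dom}\mathcal E^a$; this also makes your somewhat loose remark about $\varphi(\bar x)<\infty$ unnecessary (only $\rS(\bar x)+a\varphi(\bar x)<\infty$ is needed, and it follows directly from $f(\bar t,\bar x)<0$).
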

\begin{proof}
    By the very definition of $\mathcal L^a$, for all $\xi\in A$ one deduces
    \begin{align*}
        &\inf\limits_{(m,\mu,\nu)\in B}\mathcal L^a(\xi, m,\mu,\nu)=\inf\limits_{(m,\mu,\nu)\in \widetilde B}\mathcal L^a(\xi, m,\mu,\nu)\\
        =&\int_X(\xi(0)-\varphi)d\mu_0+\inf\limits_{m\in \mathcal M_+(X)}\int_X (e^{-aT}\varphi-\xi(T))dm\\
        &+\inf\limits_{\mu\in \mathcal M_+(X_T)}\Bigg( \int_{X_T}(\partial_t\xi+e^{-at}(\rS  +a\varphi)) d\mu\\
        &\qquad\qquad\qquad\qquad\qquad\qquad+ \inf\limits_{\substack{{\nu\in \mathcal M(X_T;X)}\\{|\nu|\ll\mu}}}\int_{X_T}\left( e^{-at}\Diss\left(\frac{d\nu}{d\mu}\right)+\left\langle \mathrm D_x\xi,\frac{d\nu}{d\mu}\right\rangle\right) d\mu\Bigg).
    \end{align*}
    Notice that by means of Lemma \ref{lemma: dual repr for F^*}, and recalling that $(\lambda \Diss)^*(z)=\lambda \Diss^*(z/\lambda)$ for $\lambda>0$, we know that
    \begin{equation*}
        \inf\limits_{\substack{{\nu\in \mathcal M(X_T;X)}\\{|\nu|\ll \mu}}}\int_{X_T}\left( e^{-at}\Diss\left(\frac{d\nu}{d\mu}\right)+\left\langle \mathrm D_x\xi,\frac{d\nu}{d\mu}\right\rangle\right) d\mu= -\int_{X_T} e^{-at}\Diss^*(- e^{at}\mathrm D_x\xi)d\mu,
    \end{equation*}
    whence we obtain
    \begin{align*}
        \inf\limits_{(m,\mu,\nu)\in B}\mathcal L^a(\xi, m,\mu,\nu)=&\int_X(\xi(0)-\varphi)d\mu_0+\inf\limits_{m\in \mathcal M_+(X)}\int_X (e^{-aT}\varphi-\xi(T))dm\\
        &+ \inf\limits_{\mu\in \mathcal M_+(X_T)}\int_{X_T}(\partial_t\xi+e^{-at}(\rS  +a\varphi-\Diss^*(-e^{at}\mathrm D_x\xi))) d\mu.
    \end{align*}
    Exploiting \eqref{eq:generalintro}, we finally infer
    \begin{align*}
        \inf\limits_{(m,\mu,\nu)\in B}\mathcal L^a(\xi, m,\mu,\nu)=\begin{cases}\displaystyle
            \int_X(\xi(0)-\varphi)d\mu_0,&\text{if }\xi\in \HJ^a(X_T),\\
            -\infty,&\text{otherwise,}
        \end{cases}
    \end{align*}
    and we conclude.
\end{proof}

The final step of our argument consists in proving that solutions to \eqref{eq:HJ} preserve the final bound $e^{aT}\xi(T,x)\le\varphi(x)$ at all previous times (we call this property \lq\lq backward boundedness\rq\rq). Indeed, from this fact one directly infers that the right-hand side of \eqref{eq:dual} is nonpositive, and so Theorem \ref{thm:main} follows.

\begin{lemma}[\textbf{Backward boundedness}]\label{lemma: proof conclusion}
    Any $\xi\in \HJ^a(X_T)$ satisfies $\xi(t,x)\le e^{-at}\varphi (x)$ for all $(t,x)\in X_T$.
\end{lemma}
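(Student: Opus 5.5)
The plan is to run the ``first touching time'' argument of the Introduction, now with nonsmooth $\varphi$. Only two properties of $\varphi$ are needed: compactness of its sublevels, which is in force by the reduction of Section~\ref{sec:reduction}, and the definition of the Fréchet subdifferential through $C^1$ test functions.

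\textbf{Step 1: rescaling.} Set $\eta(t,x):=e^{at}\xi(t,x)$. This is still a bounded $C^1$ cylinder function, and $\mathrm D_x\eta=e^{at}\mathrm D_x\xi$. Since $\partial_t\xi=e^{-at}(\partial_t\eta-a\eta)$, multiplying \eqref{eq:HJ} by $e^{at}$ turns it into
\begin{equation*}
-\partial_t\eta+a\eta+\Diss^*(-\mathrm D_x\eta)\le \rS(x)+a\varphi(x),\qquad \eta(T,x)\le\varphi(x).
\end{equation*}
The claim becomes $\eta\le\varphi$ on $X_T$. It suffices to prove this at points with $\rS(x)<+\infty$ or, more simply, everywhere by contradiction.

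\textbf{Step 2: strict inequalities.} For $\varepsilon>0$ set $\eta_\varepsilon(t,x):=\eta(t,x)-\varepsilon(1+T-t)$. Then $\mathrm D_x\eta_\varepsilon=\mathrm D_x\eta$ and $-\partial_t\eta_\varepsilon=-\partial_t\eta-\varepsilon$. Also $a\eta_\varepsilon\le a\eta$. Hence
\begin{equation*}
-\partial_t\eta_\varepsilon+a\eta_\varepsilon+\Diss^*(-\mathrm D_x\eta_\varepsilon)\le \rS+a\varphi-\varepsilon,
\end{equation*}
and $\eta_\varepsilon(T,\cdot)\le\varphi-\varepsilon$. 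Proving $\eta_\varepsilon\le\varphi$ for every $\varepsilon$ and letting $\varepsilon\downarrow0$ gives the lemma.

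\textbf{Step 3: a first touching point.} Suppose, for contradiction, that $g:=\eta_\varepsilon-\varphi$ satisfies $g\ge0$ somewhere. Since $\eta_\varepsilon$ is bounded by some $M$, the set $\{g\ge0\}$ lies in $[0,T]\times K$ with $K:=\{\varphi\le M\}$ compact. Moreover $g$ is upper semicontinuous ($\eta_\varepsilon$ continuous, $\varphi$ lsc), so $\{g\ge0\}$ is compact and nonempty. Let $\bar t$ be the largest time in its projection onto $[0,T]$; by the terminal inequality, $\bar t<T$. Choose $\bar x\in K$ maximizing the usc function $g(\bar t,\cdot)$ on $K$; its maximum value $c$ is $\ge0$. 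For $t>\bar t$ we have $g(t,\bar x)<0$, and $t\mapsto\eta_\varepsilon(t,\bar x)$ is continuous, so letting $t\downarrow\bar t$ gives $c\le0$. Hence $c=0$, i.e.
\begin{equation*}
\eta_\varepsilon(\bar t,\bar x)=\varphi(\bar x),\qquad \varphi\ge\eta_\varepsilon(\bar t,\cdot)\ \text{on } X.
\end{equation*}
The inequality holds on $K$ by maximality, and outside $K$ because there $\varphi>M\ge\eta_\varepsilon$.

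\textbf{Step 4: the contradiction.} By Step 3, $\varphi-\eta_\varepsilon(\bar t,\cdot)$ has a (global) minimum at $\bar x$, and $\eta_\varepsilon(\bar t,\cdot)$ is $C^1$. By the definition of the Fréchet subdifferential, $\mathrm D_x\eta_\varepsilon(\bar t,\bar x)\in\partial\varphi(\bar x)$. Then \eqref{eq:obvious-relaxation} gives $\rS(\bar x)\le\Diss^*(-\mathrm D_x\eta_\varepsilon(\bar t,\bar x))<+\infty$. Next, for $t>\bar t$ we have $\eta_\varepsilon(t,\bar x)<\varphi(\bar x)=\eta_\varepsilon(\bar t,\bar x)$, so $\partial_t\eta_\varepsilon(\bar t,\bar x)\le0$. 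Finally $a\eta_\varepsilon(\bar t,\bar x)=a\varphi(\bar x)$. Plugging these three facts into the strict inequality of Step 2 at $(\bar t,\bar x)$ yields
\begin{equation*}
\rS(\bar x)+a\varphi(\bar x)\le -\partial_t\eta_\varepsilon+a\eta_\varepsilon+\Diss^*(-\mathrm D_x\eta_\varepsilon)\le \rS(\bar x)+a\varphi(\bar x)-\varepsilon,
\end{equation*}
which is absurd. So $\{g\ge0\}$ is empty, i.e. $\eta_\varepsilon<\varphi$ on $X_T$.

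The main obstacle is Step 3: in infinite dimensions with only lower semicontinuous $\varphi$, one must guarantee that the touching point exists. This rests on the compactness of the sublevels of $\varphi$ together with the boundedness of cylinder functions. Once such a point exists, the subdifferential inclusion is immediate from the $C^1$-test-function definition of $\partial\varphi$, and the relaxed slope enters only through \eqref{eq:obvious-relaxation}.
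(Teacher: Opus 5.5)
Your proof is correct and follows the paper's argument. It uses the same perturbation $\varepsilon(t-T-1)$, the same last touching time $\bar t$ obtained from the compactness of the sublevels of $\varphi$, and the same contradiction from $\partial_t\le 0$ together with $\mathrm D_x\in\partial\varphi(\bar x)$ via \eqref{eq:obvious-relaxation}; the rescaling $\eta=e^{at}\xi$ is only cosmetic, and your Step~3 spells out the existence of the touching point, which the paper states briefly.
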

\begin{proof}
    For $\varepsilon>0$ we define $\xi_\varepsilon(t,x):=\xi(t,x)+\varepsilon(t-T-1)$, which thus satisfies
    \begin{equation}\label{eq:epsineq}
        \begin{cases}
            -\partial_t\xi_\varepsilon(t,x)+e^{-at}\Diss^*(-e^{at}\mathrm D_x\xi_\varepsilon(t,x))\le e^{-at}(\rS  ( x)+a\varphi(x))-\varepsilon,\\
            \xi_\varepsilon(T,x)\le e^{-aT}\varphi(x)-\varepsilon.
        \end{cases}
    \end{equation}
    for all $(t,x)\in X_T$. We now claim that
    \begin{equation}\label{eq:claim}
        \xi_\varepsilon(t,x) < e^{-at}\varphi (x), \quad\text{ for all $(t,x)\in X_T$,}
    \end{equation}
    whence the thesis follows by sending $\varepsilon\to 0$. 
    
    In order to prove the claim, by contradiction let $(\widetilde t, \widetilde x)\in X_T$ be such that $\xi_\varepsilon(\widetilde t, \widetilde x)\ge e^{-at}\varphi(\widetilde x)$. We can then define
    \begin{equation*}
        \overline{t}:=\sup\{t\in [0,T]:\, \text{there exists } x\in X \text{ for which }\xi_\varepsilon(t,x) \ge e^{-at}\varphi (x)\}.
    \end{equation*}
    By exploiting \ref{hyp:phi1} and the coercivity of $\varphi$ it is easy to check that $\overline{t}$ is actually a maximum. Furthermore, it fulfils the following properties:
    \begin{itemize}
        \item [(1)] $\overline t<T$;
        \item[(2)] for all $t\in(\overline t,T]$ one has $\xi_\varepsilon(t,x) < e^{-at}\varphi (x)$ for all $x\in X$;
        \item[(3)] there exists $\overline x\in X$ for which $\xi_\varepsilon(\overline t, \overline x) = e^{-a\bar t}\varphi (\overline x)$;
        \item[(4)] $\partial_t\xi_\varepsilon(\overline t, \overline x)\le -ae^{-a\bar t}\varphi(\bar x)$;
        \item[(5)]$\Diss^*(-e^{a\bar t}\mathrm D_x\xi_\varepsilon(\overline t, \overline x) )\ge \rS(\bar x) $. 
    \end{itemize}
    Property (1) directly follows from the second inequality in \eqref{eq:epsineq}, (2) and (3) from the definition of $\overline{t}$, while (4) is a byproduct of (2) and (3). In order to show (5), by \eqref{eq:obvious-relaxation} it is enough to check that $e^{a\bar t}\mathrm D_x\xi_\varepsilon(\overline t, \overline x)\in \partial\varphi(\overline x)$: by means of (2) and (3) we have
    \begin{equation*}
        \liminf\limits_{x\to \overline x} \frac{\varphi(x){-}\varphi(\overline x){-}\langle e^{a\bar t}\mathrm D_x\xi_\varepsilon(\overline t, \overline x),x{-}\overline x\rangle}{\|x-\overline x\|}\ge e^{a\bar t}\liminf\limits_{x\to \overline x} \frac{\xi_\varepsilon(\overline t,x){-}\xi_\varepsilon(\overline t,\overline x){-}\langle \mathrm D_x\xi_\varepsilon(\overline t, \overline x),x{-}\overline x\rangle}{\|x-\overline x\|}=0,
    \end{equation*}
    where the last equality holds since $\xi_\varepsilon\in C^1(X_T)$. Thus (5) is proved.

    By plugging $(\overline t,\overline x)$ into the first inequality in \eqref{eq:epsineq}, and exploiting properties (4) and (5) we now obtain
    \begin{equation*}
        \varepsilon\le\partial_t\xi_\varepsilon(\overline t,\overline x)+e^{-a\bar t}(-\Diss^*(-e^{a\bar t}\mathrm D_x\xi_\varepsilon(\overline t,\overline x))+\rS  ( \bar x)+a\varphi(\bar x))        
        \le 0,
    \end{equation*}
    and we reach a contradiction. So \eqref{eq:claim} is proved and we conclude.
\end{proof}

\section{Non autonomous and state-dependent dissipation potential}\label{sec:nonautonomous}

In this last section we extend the analysis to time-dependent energies $\varphi\colon X_T\to (-\infty,+\infty]$ and time- and state-dependent dissipation potentials $\Diss\colon X_T\times X\to (-\infty,+\infty]$, namely we consider the doubly nonlinear equation
\begin{equation}\label{eq:DNEt}
    \begin{cases}
        \partial_v\Diss(t,\xx(t),\dot {\xx}
        (t))+\partial_\ell \varphi(t,\xx(t))\ni 0,\quad \text{in }X^*,\quad\text{for a.e. }t\in (0,T),\\
        \xx(0)=x_0,
    \end{cases}
\end{equation}
where the limiting subdifferential $\partial_{\ell}\varphi$ is now defined as
 \begin{equation*}
       \partial_{\ell}\varphi(t,x):= \left\{ z\in X^* \, : \, \text{there exist } (t_n,x_n) \to (t,x)\text{ and } z_n \in \partial_x\varphi(t_n,x_n) \text{ such that } \ z_n \rightharpoonup z \right\}.
   \end{equation*}
 This framework reflects the fact that the underlying geometry may change at any point $x \in X$, possibly varying in time (see \cite{RosMieSav2008metricapproach} for a detailed discussion).

We require that the domain of $\varphi$ does not depend on time, namely it has the form $[0,T]\times D$ for a certain subset $D\subseteq X$, and we assume:

\begin{enumerate}[label=\textup{($\widetilde{\varphi\arabic*}$)}]
		\item \label{hyp:phit1} $\varphi(t,\cdot)$ is proper and lower semicontinuous in $X$ for all $t\in [0,T]$;
  \item  sublevels of $\varphi(0,\cdot)$ are boundedly compact in $X$;
	\item \label{hyp:phit3} for all $x\in D$ the map $t\mapsto \varphi(t,x)$ is differentiable in $[0,T]$ and there exist $b>0$ and $c\ge 0$ such that
    \begin{equation}\label{eq:powerbound}
        |\partial_t\varphi(t,x)|\le b(\varphi(t,x)+c(1+\|x\|)),\quad\text{for all }(t,x)\in X_T.
    \end{equation}
    Moreover, there exists $\bar a\ge 0$ such that $\bar a\varphi-\partial_t\varphi$ is lower semicontinuous in $X_T$.
\end{enumerate}

Observe that, by means of Gr\"onwall's Lemma, condition \eqref{eq:powerbound} yields that
\begin{equation*}
    \varphi(s,x)+c(1+\|x\|)\le e^{b|t-s|}(\varphi(t,x)+c(1+\|x\|)),\qquad\text{for all }s,t\in [0,T]\text{ and }x\in D.
\end{equation*}
From this inequality, one can easily deduce that actually 
\begin{gather*}
    \text{sublevels of $\varphi$ are boundedly compact in $X_T$,}\\
    \text{so that $\varphi$ is lower semicontinuous in $X_T$.}
\end{gather*}

As regards the dissipation potential $\Diss$, we assume:
\begin{enumerate}[label=\textup{($\widetilde{\Diss\arabic*}$)}]
    \item \label{hyp:Ft1} $\Diss(t,x,\cdot)$ is convex and lower semicontinuous for all $(t,x)\in X_T$;
  \item \label{hyp:Ft2} $ \Diss(t,x,v)\ge G(v)$ for all $(t,x,v)\in X_T\times X$, for some function $G\colon X\to(-\infty,+\infty]$ bounded below with superlinear growth at infinity;
  \item \label{hyp:Ft3} $\sup\limits_{(t,x)\in X_T}\Diss(t,x,0)<+\infty$;
  \item \label{hyp:Ft4} the map $(t,x,z)\mapsto  \Diss^*(t,x,z)$ is upper semicontinuous in $X_T\times X^*$.  
\end{enumerate}

Observe that previous assumptions imply that $\Diss$ satisfies \ref{hyp:Fy1}-\ref{hyp:Fy4} of Section \ref{subsec:action}, with $Y=X_T$. As a consequence, we recall that both $\Diss$ and $\Diss^*$ are bounded below, that $\Diss^*(t,x,\cdot)$ is locally bounded in $X^*$, uniformly with respect to $(t,x)\in X_T$, and so $\Diss^*(t,x,\cdot)$ is continuous in $X^*$ for all $(t,x)\in X_T$. 

\begin{oss}
    Our assumptions cover the simple example presented in Remark \ref{rmk:nonautonomous}. In particular, consider $$\Diss(t,x,v):= a(t,x) \Diss_0(v),$$ for some $a\colon X_T \to (0,+\infty)$ and a nonnegative $\Diss_0 \colon X \to [0,+\infty]$. The requirements \ref{hyp:Ft1}-\ref{hyp:Ft4} are met assuming the following: 
    \begin{enumerate}
        \item[(i)] $\Diss_0$ satisfies \ref{hyp:F1}, \ref{hyp:F2} and \ref{hyp:F3}; 
        \item[(ii)] $a$ is continuous and bounded away from $0$, that is $ a(t,x)\geq \delta>0$ for all $(t,x) \in X_T$;
        \item[(iii)] if $\Diss_0(0)>0$, then $a$ is bounded. 
    \end{enumerate}
    Indeed, clearly \ref{hyp:Ft1} is satisfied, while \ref{hyp:Ft3} follows from (iii). Regarding \ref{hyp:Ft2}, it suffices to set $G(v) := \delta \Diss_0(v)$. As for \ref{hyp:Ft4}, we simply compute
    \begin{equation*}
        \Diss^*(t,x,z)=a(t,x)\Diss_0^*\left(\frac{z}{a(t,x)}\right),
    \end{equation*}
    which is even continuous in $X_T\times X^*$.

    If we further require that 
    \begin{enumerate}
        \item[(iv)] $\Diss_0$ is continuous at $0$ and $a$ is bounded,
    \end{enumerate}
    then also \ref{hyp:Ft5} below (see Proposition \ref{prop:F*nablat}) is fulfilled, while \ref{hyp:Ft6} directly follows from (ii).
\end{oss}

\begin{oss}
    A more interesting case from the PDE perspective is given by the following concrete example in $X=L^p(\Omega)$, where $p>1$ and $\Omega\subseteq \R^n$ is a Lipschitz domain:
    \begin{equation*}
        \Diss(t,u,v):=\int_\Omega \frac{A(t,u(x))}{p}|v(x)|^p\, dx.
    \end{equation*}
    Above, $A\colon [0,T]\times \R\to (0,+\infty)$ is a continuous function such that $0< \alpha\le A(t,u)\le \beta$ for all $(t,u)\in [0,T]\times \R$. This choice, together with the energy
    \begin{equation*}
        \varphi(u):=\begin{cases}\displaystyle
            \int_\Omega \frac 12|\nabla u(x)|^2+W(u(x)) \, dx,&\text{if }u\in H^1_0(\Omega),\\
            +\infty,&\text{otherwise},
        \end{cases}
    \end{equation*}
    with $W\colon \R\to [0,+\infty)$ smooth, corresponds to the equation
    \begin{equation*}
        A(t,u(t,x))|\dot u(t,x)|^{p-2}\dot u(t,x)-\Delta u(t,x)+W'(u(t,x))=0,\quad\text{in }[0,T]\times \Omega,
    \end{equation*}
    with homogeneous Dirichlet boundary conditions.

    Properties \ref{hyp:Ft1}, \ref{hyp:Ft2},\ref{hyp:Ft3} and \ref{hyp:Ft5} are immediate. Computing
    \begin{equation*}
        \Diss^*(t,u,z)= \int_\Omega \frac{|z(x)|^q}{qA(t,u(x))^{q-1}}\, dx,
    \end{equation*}
    where $q>1$ is the conjugate of $p$, we deduce that $\Diss^*$ is even continuous in $[0,T]\times L^p(\Omega)\times L^q(\Omega)$, whence \ref{hyp:Ft4} and \ref{hyp:Ft6} hold.
\end{oss}

In this nonautonomous setting, the energy functional in the De Giorgi's principle must be modified as follows. For $a\ge b$, with $b$ appearing in \eqref{eq:powerbound}, let $\mathcal J^a\colon [0,T]\times AC([0,T];X)\to (-\infty,+\infty]$ be defined as
  \begin{align*}
      \mathcal{J}^a(t,\xx):=&e^{-at}\varphi(t,\xx(t))-\varphi(0,x_0)\\&+\int_0^t e^{-a\tau}\Big(\Diss(\tau,\xx(\tau),\dot{\xx}(\tau)) +\rS(\tau,\xx(\tau))+a\varphi(\tau,\xx(\tau))-\partial_t\varphi(\tau,\xx(\tau))\Big)\, d\tau.
  \end{align*}
  Here, the time-dependent relaxed-slope $\rS\colon X_T\to (-\infty,+\infty]$ takes the obvious form
\begin{equation*}
      \rS(t,x):=\inf\Big \{
        \liminf_{n\to\infty}
        \Diss^*(t_n,x_n,-z_n):\, z_n\in\partial_x\varphi(t_n,x_n),
        \
        (t_n,x_n)\to (t,x)
        \Big\}
        \ \text{if }(t,x)\in 
        \overline{\operatorname{dom}\varphi},
  \end{equation*}
   and, similarly to \eqref{eq:nablaF}, we denote by $\partial^*_x\varphi(t,x)$ the set of elements $z\in\partial_\ell\varphi(t,x)$ satisfying the inequality $\Diss^*(t,x,-z)\le\rS(t,x)$.

   De Giorgi's principle can be thus stated as follows (with the very same proof of the autonomous case).
\begin{prop}\label{prop:DeGiorgit}
    Assume the validity of the following two properties:
    \begin{enumerate}[label=\textup{($\widetilde S$)}]
    \item\label{Mt} $\partial^*_x\varphi(t,x)$ is nonempty for every $(t,x)\in \operatorname{dom}\rS$;
    \end{enumerate}
    \begin{enumerate}[label=\textup{($\widetilde{CR}$)}]
    \item \label{CRt} the following \emph{chain-rule} holds: for all $x\in AC([0,T];X)$ such that $$\int_0^T \Diss(\tau,\xx(\tau),\dot{\xx}(\tau)) + \rS(\tau,\xx(\tau))\, d\tau<+\infty,$$ the map $t\mapsto \varphi(t,\xx(t))$ is absolutely continuous and 
    \begin{equation*}
        \frac{d}{dt}\varphi(t,\xx(t))=\partial_t\varphi(t,\xx(t))+\langle \mathrm z(t), \dot {\xx}(t)\rangle,\quad\text{for a.e. }t\in(0,T),
    \end{equation*}
    for any measurable selection $\mathrm z\colon [0,T]\to X^*$ satisfying $\mathrm z(t)\in \partial^*_x\varphi(t,\xx(t))$ a.e. in $[0,T]$.
     \end{enumerate}

    Then, for every initial datum $x_0\in D$ and trajectory $\bar {\xx}\in AC([0,T];X)$ with $\bar {\xx}(0)=x_0$, we have
    \begin{equation*}
        \mathcal J^a(t,\bar\xx)\ge 0,\quad\text{ for all $t\in [0,T]$ and $a\ge b$},
    \end{equation*}    
    and the following conditions are equivalent:
    \begin{itemize}
        \item [(a)] $\bar {\xx}$ solves the doubly nonlinear equation \eqref{eq:DNEt}, in the sense that there exists a measurable map $\bar {\mathrm z}\colon [0,T]\to X^*$ such that
        \begin{equation}\label{eq:solrel}
            -\bar {\mathrm{z}}(t)\in \partial_v \Diss(t,\bar {\xx}(t),\dot{\bar {\xx}}(t)),\quad \bar{\mathrm{z}}(t)\in \partial_\ell\varphi(t,\bar{\xx}(t))\quad\text{ for a.e. }t\in (0,T),
        \end{equation}
        and
        \begin{equation}\label{eq:sol2t}
         \displaystyle 
        \psi^*(t,{\bar \xx}(t),-\bar{\mathrm z}(t))=S^-(t,\bar\xx(t))
        \quad\text{
        a.e.~in }(0,T),
        \ \int_0^T \Diss(\tau,\bar\xx(\tau),\dot{\bar {\xx}}(\tau)) + \rS(\tau,\bar {\xx}(\tau))\, d\tau<+\infty;
        \end{equation}
        \item[(b)] $\mathcal{J}^a(t,\bar {\xx})\le 0$ for all $t\in [0,T]$ and for some $a\ge b$;
        \item[(c)] $\mathcal{J}^a(t,\bar {\xx})= 0$ for all $t\in [0,T]$ and for some $a\ge b$;
        \item[(d)] $\mathcal{J}^a(T,\bar {\xx})\le 0$ for some $a\ge b$.
    \end{itemize}
    Moreover, if one of the above is in force, then $(b)$, $(c)$ and $(d)$ hold true for all $a\ge b$, and $\bar{\xx}$ attains the minimum of $\mathcal{J}^a(T,\cdot)$, that is $\mathcal{J}^a(T,\bar {\xx})=\min\left\{\mathcal{J}^a(T, \xx):\, \xx\in AC([0,T];X),\, \xx(0)=x_0 \right\}$.
\end{prop}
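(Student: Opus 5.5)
The plan is to mimic the proof of Proposition~\ref{prop:DeGiorgi}, replacing the autonomous chain rule by \ref{CRt}. The central identity is obtained as follows. Fix $\bar\xx$ with $\bar\xx(0)=x_0$ and $\mathcal J^a(t,\bar\xx)<+\infty$. First I check that the integral $\int_0^t \big(\Diss+\rS\big)\,d\tau$ is finite. The term $a\varphi-\partial_t\varphi$ is bounded below along the curve, since $a\ge b$ and \eqref{eq:powerbound} give
\[
a\varphi-\partial_t\varphi\ge (a-b)\varphi-bc(1+\|x\|)\ge -C(1+\|x\|),
\]
using that $\varphi$ is bounded below by the Gr\"onwall consequence of \eqref{eq:powerbound}. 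Moreover $\|\bar\xx\|$ is bounded on $[0,T]$, and $\Diss$ and $\rS$ are bounded below by \ref{hyp:Ft2}--\ref{hyp:Ft3}. Finiteness of $\mathcal J^a$ therefore forces $\int_0^t(\Diss+\rS)\,d\tau<\infty$. (To cover all of $[0,T]$ in \ref{CRt} I would, if needed, apply it on $[0,t]$, or note that the chain rule is local.)

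Next I take a measurable selection $\mathrm z(\tau)\in\partial^*_x\varphi(\tau,\bar\xx(\tau))$. Measurability comes as in the autonomous case from Proposition~\ref{prop: meas lim}, adapted to the time-dependent setting, together with \ref{Mt}. Fenchel's inequality gives
\[
\Diss(\tau,\bar\xx,\dot{\bar\xx})+\rS(\tau,\bar\xx)\ge \Diss(\tau,\bar\xx,\dot{\bar\xx})+\Diss^*(\tau,\bar\xx,-\mathrm z)\ge-\langle \mathrm z,\dot{\bar\xx}\rangle .
\]
By \ref{CRt}, $-\langle\mathrm z,\dot{\bar\xx}\rangle=\partial_t\varphi-\frac{d}{d\tau}\varphi(\tau,\bar\xx(\tau))$, so the integrand of $\mathcal J^a$ dominates
\[
e^{-a\tau}\Big(a\varphi-\tfrac{d}{d\tau}\varphi(\tau,\bar\xx)\Big)=-\tfrac{d}{d\tau}\big(e^{-a\tau}\varphi(\tau,\bar\xx(\tau))\big).
\]
The $\partial_t\varphi$ terms cancel exactly. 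Integrating from $0$ to $t$ yields $\mathcal J^a(t,\bar\xx)\ge0$, which proves the first claim.

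For the equivalences, $(c)\Rightarrow(b)\Rightarrow(d)$ is trivial. For $(d)\Rightarrow(a)$, I use $\mathcal J^a(T,\bar\xx)\le0$ together with the chain of inequalities above on $[0,T]$. This forces equality a.e.\ in both steps: $\Diss^*(\tau,\bar\xx,-\mathrm z)=\rS(\tau,\bar\xx)$, which gives \eqref{eq:sol2t} since $\mathrm z\in\partial_x^*\varphi\subseteq\partial_\ell\varphi$, and equality in Fenchel, i.e.\ $-\mathrm z\in\partial_v\Diss(\tau,\bar\xx,\dot{\bar\xx})$, which is \eqref{eq:solrel}. Here the weight $e^{-a\tau}>0$ is what allows passing from the integral to pointwise a.e.\ statements. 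For $(a)\Rightarrow(c)$ with arbitrary $a\ge b$, the selection $\bar{\mathrm z}$ satisfies $\Diss^*(-\bar{\mathrm z})=\rS$, so $\bar{\mathrm z}\in\partial^*_x\varphi$ and \ref{CRt} applies to it. Fenchel then holds with equality, the same computation gives $\mathcal J^a(t,\bar\xx)=0$ for every $t$, and finiteness of the integral is part of \eqref{eq:sol2t}. Minimality follows from nonnegativity on all competitors.

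The only genuinely new point compared with Proposition~\ref{prop:DeGiorgi} is the bookkeeping for $\partial_t\varphi$. One must justify that $\tau\mapsto e^{-a\tau}(a\varphi-\partial_t\varphi)(\tau,\bar\xx(\tau))$ is integrable, or at least bounded below, so that splitting integrals is legitimate. This is where $a\ge b$ and \eqref{eq:powerbound} enter, and I expect it to be the main, though mild, obstacle. I would handle it by using the lower bound $-C(1+\|\bar\xx\|)$ and the absolute continuity of $\tau\mapsto\varphi(\tau,\bar\xx(\tau))$ supplied by \ref{CRt}, which makes $\varphi$ bounded along the curve, hence $\partial_t\varphi$ bounded by \eqref{eq:powerbound}.
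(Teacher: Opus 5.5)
Your proposal is correct and follows the route the paper intends, since the paper states the result ``with the very same proof of the autonomous case''. Fenchel's inequality combined with \ref{CRt}, where the $\partial_t\varphi$ terms cancel, gives $\mathcal J^a\ge0$, and saturation of the two inequalities gives (d)$\Rightarrow$(a). Your additional bookkeeping fills in details the paper leaves implicit: the lower bound on $a\varphi-\partial_t\varphi$ for $a\ge b$, the boundedness of $\partial_t\varphi$ along the curve, and the check that $\bar{\mathrm z}\in\partial^*_x\varphi$ in (a)$\Rightarrow$(c). One small correction: that lower bound comes directly from \eqref{eq:powerbound} (since $|\partial_t\varphi|\ge0$ forces $\varphi(t,x)\ge -c(1+\|x\|)$), not from Gr\"onwall or from a global lower bound on $\varphi$.
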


It is also useful to consider the analogue of Proposition \ref{prop:F*nabla} in the current nonautonomous setting.

\begin{prop}\label{prop:F*nablat}
      In addition to \ref{hyp:phit1}-\ref{hyp:phit3} and \ref{hyp:Ft1}-\ref{hyp:Ft4}, assume that      
      \begin{enumerate}[label=\textup{($\widetilde{\Diss 5}$)}]
      \item\label{hyp:Ft5} $\Diss(t,x,\cdot)$ is continuous at $0$, uniformly with respect to $(t,x)\in X_T$;
      \end{enumerate}
       \begin{enumerate}[label=\textup{($\widetilde{\Diss 6}$)}]
      \item\label{hyp:Ft6} The map $(t,x)\mapsto \Diss(t,x,v)$ is upper semicontinuous in $X_T$ for all $v\in X$.
      \end{enumerate}
      Then for all $(t,x)\in \operatorname{dom}\rS$, the set $\partial^*_x\varphi(t,x)$ is nonempty, so that in particular \ref{Mt} holds.

      If moreover $\partial_x\varphi$ is strongly-weakly sequentially closed, namely
  \begin{equation*}
      (t_n,x_n)\to (t,x)\text{ in }X_T,\quad z_n\xrightharpoonup[]{} z\text{ weakly in }X^*,\quad z_n\in \partial_x\varphi(t_n,x_n) \implies z\in \partial_x\varphi(t,x),
      \end{equation*}     
      then we have
      \begin{equation*}
          \rS(t,x)=\min\left\{\Diss^*(t,x,-z):\, z\in \partial_x\varphi(t,x)\right\},
      \end{equation*}
      
  \end{prop}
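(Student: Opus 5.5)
The argument follows the autonomous case (Proposition \ref{prop:F*nabla}). The new ingredient is that the perturbation now happens in the joint variable $(t,x)$, so we need semicontinuity of $\Diss^*$ in $(t,x)$ together with compactness of the dual variables. Fix $(t,x)\in\operatorname{dom}\rS$. By definition of $\rS(t,x)$ and a diagonal argument, we pick $(t_n,x_n)\to(t,x)$ and $z_n\in\partial_x\varphi(t_n,x_n)$ with $\Diss^*(t_n,x_n,-z_n)\to\rS(t,x)<+\infty$. The plan is: first bound $(z_n)$ in $X^*$, then extract a weakly convergent subsequence $z_n\rightharpoonup z$ (reflexivity), and finally show $\Diss^*(t,x,-z)\le\rS(t,x)$. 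By the definition of the time-dependent limiting subdifferential we already have $z\in\partial_\ell\varphi(t,x)$, so this gives $z\in\partial^*_x\varphi(t,x)$.

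\textbf{Boundedness.} This is the uniform analogue of Lemma \ref{lemma:propFenchel}. By \ref{hyp:Ft5} there are $r>0$ and $M$ such that $\Diss(s,y,w)\le M$ for all $\|w\|\le r$ and all $(s,y)\in X_T$. Since $\Diss$ is bounded below, the Fenchel inequality gives
\[
r\|z_n\|_*\le \Diss^*(t_n,x_n,-z_n)+\sup_{\|w\|\le r}\Diss(t_n,x_n,w)\le \Diss^*(t_n,x_n,-z_n)+M ,
\]
which is bounded in $n$.

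\textbf{Lower semicontinuity — the main obstacle.} \ref{hyp:Ft4} only gives \emph{upper} semicontinuity of $\Diss^*$, so we need a separate argument in the other direction, and this is where \ref{hyp:Ft6} enters. For each fixed $v\in X$ we write
\[
\langle -z_n,v\rangle-\Diss(t_n,x_n,v)\le \Diss^*(t_n,x_n,-z_n).
\]
Now pass to the $\liminf$: weak convergence gives $\langle -z_n,v\rangle\to\langle -z,v\rangle$, and upper semicontinuity of $(s,y)\mapsto\Diss(s,y,v)$ gives $\liminf_n\big(-\Diss(t_n,x_n,v)\big)\ge-\Diss(t,x,v)$. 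Hence
\[
\langle -z,v\rangle-\Diss(t,x,v)\le \rS(t,x).
\]
Taking the supremum over $v$ yields $\Diss^*(t,x,-z)\le\rS(t,x)$, i.e. $z\in\partial^*_x\varphi(t,x)$, which proves \ref{Mt}.

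\textbf{Representation formula.} Assume in addition that $\partial_x\varphi$ is strongly-weakly sequentially closed. Then the $z$ constructed above lies in $\partial_x\varphi(t,x)$, so $\rS(t,x)\ge\Diss^*(t,x,-z)$. For the reverse inequality, the constant sequences in the definition of $\rS$ give $\rS(t,x)\le\Diss^*(t,x,-w)$ for every $w\in\partial_x\varphi(t,x)$. Combining the two, the infimum over $\partial_x\varphi(t,x)$ equals $\rS(t,x)$ and is attained at $z$. If $\rS(t,x)=+\infty$ the identity holds trivially, with the usual convention on the empty set.
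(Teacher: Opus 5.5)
Your proposal is correct and follows essentially the paper's argument: the paper likewise reruns the proof of Proposition~\ref{prop:F*nabla}, noting that $(\widetilde{\Diss 6})$ makes $\Diss^*$ lower semicontinuous on $X_T\times X^*$ for the strong-weak topology (your supremum-over-$v$ step) and that $(\widetilde{\Diss 5})$ makes $\{z:\Diss^*(t,x,z)\le C \text{ for some }(t,x)\}$ bounded (your Fenchel estimate). One detail to state explicitly: your constant $M$ also needs $\sup_{(t,x)}\Diss(t,x,0)<+\infty$ from $(\widetilde{\Diss 3})$, not just $(\widetilde{\Diss 5})$.
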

  \begin{proof}
      The thesis follows by arguing exactly as in Proposition \ref{prop:F*nabla} once we observe that $\Diss^*$ is lower semicontinuous on $X_T\times X^*$ with respect to the strong-weak topology due to \ref{hyp:Ft6}, and that \ref{hyp:Ft5} implies that for all $C\in \R$ the set $\{z\in X^*:\, \Diss^*(t,x,z)\le C\text{ for some }(t,x)\in X_T\}$ is bounded (see the proof of Lemma \ref{lemma:propFenchel}).
  \end{proof}

In the current framework, our result can be stated as follows.

\begin{teorema}\label{thm:time}
Let us assume \ref{hyp:phit1}--\ref{hyp:phit3} and \ref{hyp:Ft1}--\ref{hyp:Ft4}, and let $a\ge \max\{\bar a,b\}$, $T>0$, and $x_0\in D$ be given.
      If $a>b$, or if $a=b$ and $\rS$ has boundedly compact sublevels, then the functional ${\mathcal{J}}^a(T,\cdot)$ attains the minimum in the class of absolutely continuous curves starting from $x_0$ and
      \begin{equation}\label{eq:probt}
          \min\left\{{\mathcal{J}}^a(T,\xx):\, \xx\in AC([0,T];X),\, \xx(0)=x_0\right\}\le 0.
      \end{equation}

      If in addition conditions \ref{Mt} and \ref{CRt} of Proposition \ref{prop:DeGiorgit} are in force (again, see Proposition \ref{prop:F*nablat} for sufficient conditions ensuring \ref{Mt}), then the set of minimizers of ${\mathcal{J}}^a(T,\cdot)$ does not depend on $a$, and its elements satisfy \eqref{eq:solrel} and \eqref{eq:sol2t}. In particular, if $\partial_x\varphi$ is strongly-weakly sequentially closed, the minimizers are solutions of the more standard doubly nonlinear equation \eqref{eq:DNEt} with $\partial_x\varphi$ in place of $\partial_\ell\varphi$.
  \end{teorema}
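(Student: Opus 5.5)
The plan is to replay the three-step scheme of Sections \ref{sec: convexification}--\ref{sec: proof} with the running time-dependence in $\varphi$ and the $(t,x)$-dependence in $\Diss$. The second part (independence of $a$, and the solutions given by minimizers) will then follow from \eqref{eq:probt} and Proposition~\ref{prop:DeGiorgit}, exactly as in the Corollary to Theorem~\ref{thm:main}.

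\textbf{Reduction.} I would first perform the reduction of Section~\ref{sec:reduction}. Replace $\varphi$ by $\varphi+\mathrm I_R$. The a priori bound on $\max_t\|\xx(t)\|$ for curves with $\mathcal J^a\le0$ uses \eqref{eq:powerbound}: since $a\ge b$, the term $a\varphi-\partial_t\varphi$ is at least $(a-b)\varphi-bc(1+\|x\|)$. Together with the Gr\"onwall comparison $\varphi(t,x)\ge e^{-bT}(\varphi(0,x)+c(1+\|x\|))-c(1+\|x\|)$ and the linear lower bound, this gives control of $\int\|\dot\xx\|$ via the superlinearity of $G$. After this reduction, $\varphi$ has compact sublevels in $X_T$. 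Adding a constant and shifting $\Diss$ as before, we may assume $\varphi\ge0$, $\Diss\ge0$, and also $a\varphi-\partial_t\varphi\ge0$, possibly at the cost of an additive constant absorbed into the bounds.

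\textbf{Relaxation and minimax.} Next comes the relaxation of Theorem~\ref{thm:convexification}, with
\[
\mathcal E^a(m,\mu,\nu)=e^{-aT}\!\int\varphi(T,\cdot)\,dm+\mathcal A_\Psi(\mu,\nu)+\int_{X_T}e^{-at}\bigl(\rS+a\varphi-\partial_t\varphi\bigr)\,d\mu-\varphi(0,x_0),
\]
where $\Psi(t,x,v)=e^{-at}\Diss(t,x,v)$. Assumptions \ref{hyp:Ft1}--\ref{hyp:Ft4} give \ref{hyp:Fy1}--\ref{hyp:Fy4}. The superposition and Jensen argument of Lemma~\ref{lemma:equiv} carries over verbatim, because $\Diss(t,x,\cdot)$ is convex and its $(t,x)$ is frozen along $\lambda_{t,x}$.

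Existence of a minimizer and the compactness required by Theorem~\ref{thm:VonNeumann} follow as in Propositions~\ref{prop:minE} and~\ref{prop: min sup = sup inf}. Lower semicontinuity of the $\mu$-integrand comes from $a\ge\bar a$: the function $a\varphi-\partial_t\varphi=(a-\bar a)\varphi+(\bar a\varphi-\partial_t\varphi)$ is lsc, and it is coercive when $a>b$. When $a=b$, coercivity is instead supplied by $\rS$. Compactness of $\nu$ comes from Theorem~\ref{thm: compactness for nu} with $G$. Computing the dual as in Proposition~\ref{prop:dualproblem} with Lemma~\ref{lemma: dual repr for F^*} yields the constraint set $\HJ^a$:
\[
-\partial_t\xi+e^{-at}\Diss^*\bigl(t,x,-e^{at}\mathrm D_x\xi\bigr)\le e^{-at}\bigl(\rS(t,x)+a\varphi(t,x)-\partial_t\varphi(t,x)\bigr),\qquad \xi(T,\cdot)\le e^{-aT}\varphi(T,\cdot).
\]

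\textbf{Backward boundedness (main obstacle).} It then remains to show that $\xi\in\HJ^a$ implies $\xi(t,x)\le e^{-at}\varphi(t,x)$. I would argue as in Lemma~\ref{lemma: proof conclusion} with $\xi_\varepsilon$, taking $\bar t$ as the last time at which $\xi_\varepsilon(t,\cdot)\ge e^{-at}\varphi(t,\cdot)$ somewhere. The maximum is attained thanks to the lower semicontinuity and compact sublevels of $\varphi$ on $X_T$.

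The delicate point, property (4), is that $\varphi$ is now only differentiable in $t$. At a touching point $(\bar t,\bar x)$ we have $\xi_\varepsilon(t,\bar x)<e^{-at}\varphi(t,\bar x)$ for $t>\bar t$, with equality at $\bar t$. Since $\bar x\in D$ and $t\mapsto\varphi(t,\bar x)$ is differentiable, the right difference quotient gives
\[
\partial_t\xi_\varepsilon(\bar t,\bar x)\le\partial_t\bigl(e^{-at}\varphi\bigr)(\bar t,\bar x)=e^{-a\bar t}\bigl(\partial_t\varphi-a\varphi\bigr)(\bar t,\bar x).
\]
Property (5) follows from $e^{a\bar t}\mathrm D_x\xi_\varepsilon(\bar t,\bar x)\in\partial_x\varphi(\bar t,\bar x)$, obtained by spatial touching at fixed $\bar t$, together with the definition of $\rS(\bar t,\bar x)\le\Diss^*(\bar t,\bar x,-z)$. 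Plugging (4) and (5) into the strict inequality gives $\varepsilon\le0$, a contradiction. Therefore the dual value is at most $\int(\xi(0)-\varphi(0))\,d\delta_{x_0}\le0$, which is \eqref{eq:probt}.
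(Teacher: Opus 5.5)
Your proposal is correct and follows the paper's proof step for step. It uses the same reduction to compact sublevels with $\varphi,\Diss\ge 0$, the same relaxed functional and minimax/duality argument leading to \eqref{eq:HJt}, and the same backward-boundedness argument through properties (1)--(5), including the right difference quotient at the touching point for (4). One small remark: the linear lower bound you invoke in the reduction is not an extra assumption, since the nonnegative left-hand side of \eqref{eq:powerbound} forces $\varphi(t,x)\ge -c(1+\|x\|)$.
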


  \subsection{Proof of Theorem \ref{thm:time}}  

The proof of previous theorem follows the lines of the autonomous case. Here we present the main differences which arise due to time- and state-dependence of $\varphi$ and $\Diss$.

First, we observe that \eqref{eq:powerbound} yields the bound
\begin{equation*}
    a\varphi(t,x)-\partial_t\varphi(t,x)\ge (a-b)\varphi(t,x)-c(1+\|x\|)\ge -C(1+a-b)(1+\|x\|).
\end{equation*}
So, the very same argument of Section \ref{sec:reduction} shows that we can assume without loss of generality that 
\begin{itemize}
    \item $\varphi$ has compact sublevels, and $\rS$ has bounded sublevels;
    \item $|\partial_t\varphi (t,x)|\le b(\varphi(t,x)+c)$ for all $(t,x)\in [0,T]\times D$.
\end{itemize}
In particular, $\varphi$ admits minimum on $X_T$ and so we may require in addition that
\begin{itemize}
    \item $\varphi(t,x)\ge 0$ for all $(t,x)\in X_T$;
    \item $\Diss(t,x,v)\ge 0$ for all $(t,x,v)\in X_T\times X$.
\end{itemize}
Here and henceforth, we thus tacitly assume such stronger assumptions.

\subsubsection{Relaxed problem}
The same convexification procedure of Section \ref{sec: convexification} illustrates that the equivalent relaxed problem to \eqref{eq:probt} is given by (we already replaced $\delta_{x_0}$ with a more general $\mu_0$ as in Section \ref{sec: proof})
\begin{equation}\label{eq:inf}
    \inf \left\{ \mathcal{E}^a(m,\mu,\nu) \ : \ (m,\mu,\nu) \in \widetilde B,\,(\mu,\nu)\in\CE(\mu_0,m) \right\},
\end{equation}
where $\widetilde B$ has been introduced in \eqref{eq:Btilde} and now $\mathcal E^a\colon \widetilde B\to (-\infty,+\infty]$ is given by 
\begin{equation}\label{eq:Eat}
    \begin{aligned}
    \mathcal E^a(m,\mu,\nu) :=& e^{-aT}\int_X \varphi(T,x) dm(x)-\int_X\varphi(0,x)\,d\mu_0(x) +\mathcal A^a_\Diss(\mu,\nu)
    \\
    & + \int_{X_T}e^{-at} \big(\rS(t,x)+a\varphi(x)-\partial_t\varphi(t,x)\big) d\mu(t,x).
\end{aligned}
\end{equation}
We point out that the action functional is now obtained from \eqref{eq: action} choosing $Y=X_T$ and $\Psi(t,x,v)=e^{-at}\Diss(t,x,v)$, which still fulfils properties \ref{hyp:Fy1}-\ref{hyp:Fy4}.

Let us now check that \eqref{eq:inf} attains the minimum via the direct method of Calculus of Variations, as in Proposition \ref{prop:minE}. The first line in \eqref{eq:Eat} is lower semicontinuous with respect to the product narrow topology (recall Proposition \ref{repr formula proposition}), and the same holds for the second line since the integrand is lower semicontinuous by \ref{hyp:phit3}:
\begin{equation*}
    a\varphi-\partial_t\varphi=\underbrace{(a-\bar a)}_{\ge 0}\varphi+\bar a\varphi- \partial_t\varphi,
\end{equation*}
and bounded below:
\begin{equation*}
    a\varphi-\partial_t\varphi\ge (a-b)\varphi-bc\ge -bc. 
\end{equation*}

Moreover, coercivity of $\mathcal E^a$ follows by the same estimates of Proposition \ref{prop:minE}. Indeed, notice that the integrand in the second line of \eqref{eq:Eat} has compact sublevels: if $a>b$, recalling that $\rS$ is bounded below, it holds
\begin{equation*}
    \rS(t,x)+a\varphi(x)-\partial_t\varphi(t,x)\ge (a{-}b)\varphi(x)-M-bc\ge \underbrace{(a{-}b)}_{>0}\varphi(x)-M-bc.
\end{equation*}
If $a=b$ instead, since in this case we assumed that $\rS$ is coercive, the same conclusion follows from the inequality
\begin{equation*}
    \rS(t,x)+a\varphi(x)-\partial_t\varphi(t,x)\ge \rS(t,x)-ac.
\end{equation*}

\subsubsection{Duality and Hamilton-Jacobi inequality}

We now consider the Lagrangian $\mathcal L^a$ defined as in \eqref{eq:lagrangian} with $\mathcal E^a$ given by \eqref{eq:Eat}, so that
\begin{equation*}
    \min \left\{ \mathcal{E}^a(m,\mu,\nu) \ : \ (m,\mu,\nu) \in \widetilde B,\,(\mu,\nu)\in\CE(\mu_0,m) \right\}=\min\limits_{(m,\mu,\nu)\in \widetilde B}\sup\limits_{\xi\in A}\mathcal L^a(\xi, m,\mu,\nu).
\end{equation*}
Exploiting the previous estimates and arguing as in Proposition \ref{prop: min sup = sup inf}, it is immediate to check that also in the nonautonomous setting we can apply Von Neumann Theorem \ref{thm:VonNeumann}, so that we can interchange $\sup$ and $\inf$ in the above saddle-point problem. 

The very same proof of Proposition \ref{prop:dualproblem} then shows that the dual problem takes the following form.
\begin{prop}
    There holds:
    \begin{equation*}
        \sup\limits_{\xi\in A}\inf\limits_{(m,\mu,\nu)\in \widetilde B}\mathcal L^a(\xi, m,\mu,\nu)=\sup\left\{\int_X(\xi(0)-\varphi(0,\cdot))d\mu_0:\, \xi\in \HJ^a(X_T)\right\},
    \end{equation*}
    where the cylinder function $\xi\in A$ belongs to $\HJ^a(X_T)$ if and only if for all $(t,x)\in X_T$ it satisfies:
    \begin{equation}\label{eq:HJt}
        \begin{cases}
            -\partial_t\xi(t,x)+e^{-at}\Diss^*(t,x,-e^{at}\mathrm D_x\xi(t,x))\le e^{-at}(\rS(t,x)+a\varphi(t,x)-\partial_t\varphi(t,x)),\\
            \xi(T,x)\le e^{-aT}\varphi(T,x).
        \end{cases}
    \end{equation}
\end{prop}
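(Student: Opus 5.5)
The plan is to repeat the computation of Proposition~\ref{prop:dualproblem} verbatim, with the nonautonomous integrand in place of the autonomous one. Fix $\xi\in A=\operatorname{Cyl}_b(X_T)$. The first step is to observe that $\mathcal L^a(\xi,\cdot)$ splits into a constant part and three decoupled infima. Expanding $\mathcal E^a$ from \eqref{eq:Eat} and the linear constraint $\mathcal C$ gives
\begin{align*}
\inf_{(m,\mu,\nu)\in\widetilde B}\mathcal L^a=&\int_X(\xi(0)-\varphi(0,\cdot))\,d\mu_0+\inf_{m\in\mathcal M_+(X)}\int_X\big(e^{-aT}\varphi(T,\cdot)-\xi(T)\big)\,dm\\
&+\inf_{\mu\in\mathcal M_+(X_T)}\Big(\int_{X_T}\big(\partial_t\xi+e^{-at}(\rS+a\varphi-\partial_t\varphi)\big)\,d\mu\\
&\qquad+\inf_{|\nu|\ll\mu}\int_{X_T}\Big(e^{-at}\Diss\Big(t,x,\tfrac{d\nu}{d\mu}\Big)+\Big\langle \mathrm D_x\xi,\tfrac{d\nu}{d\mu}\Big\rangle\Big)d\mu\Big).
\end{align*}
Here the extra term $-\partial_t\varphi$ is simply part of the affine $\mu$-integrand. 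Since the integrand is bounded below by the reductions made above, no $\infty-\infty$ ambiguity arises. If $\mathcal E^a=+\infty$, the corresponding triplet does not affect the infimum.

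The second step is the inner infimum in $\nu$. I will use the Radon--Nikod\'ym correspondence $\nu=v\mu$ with $v\in L^1_\mu(X_T;X)$ to reduce it to an infimum over $v$. Then I apply Lemma~\ref{lemma: dual repr for F^*} with $Y=X_T$, $\Psi(t,x,v)=e^{-at}\Diss(t,x,v)$ (which satisfies \ref{hyp:Fy1}--\ref{hyp:Fy4}) and $\boldsymbol\zeta=-\mathrm D_x\xi\in C_b(X_T;X^*)$. Using $(e^{-at}\Diss)^*(t,x,z)=e^{-at}\Diss^*(t,x,e^{at}z)$, the infimum equals $-\int_{X_T}e^{-at}\Diss^*(t,x,-e^{at}\mathrm D_x\xi)\,d\mu$.

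The third step applies \eqref{eq:generalintro} to the remaining infima over $m$ and over $\mu$. The $m$-infimum is $0$ if $\xi(T,\cdot)\le e^{-aT}\varphi(T,\cdot)$ and $-\infty$ otherwise. The $\mu$-infimum is $0$ exactly when the first inequality of \eqref{eq:HJt} holds at every point and $-\infty$ otherwise; testing with Dirac masses $\delta_{(t,x)}$ gives the $-\infty$ case. Hence $\inf\mathcal L^a(\xi,\cdot)$ equals $\int_X(\xi(0)-\varphi(0,\cdot))d\mu_0$ if $\xi\in\HJ^a(X_T)$ and $-\infty$ otherwise. Taking the supremum over $\xi$ gives the claim.

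The main obstacle is the pointwise-versus-a.e.\ issue in the $\mu$-infimum. Testing with Dirac masses gives the everywhere inequality, but only at points where the right-hand side is finite. Where $\rS=+\infty$ or $\varphi=+\infty$ the inequality holds trivially, because the left-hand side is finite: $\xi$ is $C^1_b$ and $\Diss^*(t,x,\cdot)$ is locally bounded uniformly in $(t,x)$. One also has to make sure that Lemma~\ref{lemma: dual repr for F^*} applies with the $(t,x)$-dependent $\Psi$. The paper notes that the rescaled potential still satisfies \ref{hyp:Fy1}--\ref{hyp:Fy4}, and I will take this as given.
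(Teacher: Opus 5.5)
Your proposal is correct and matches the paper's proof, which simply says that the argument of Proposition~\ref{prop:dualproblem} carries over verbatim: decouple the infima over $m$ and over $(\mu,\nu)$, compute the $\nu$-infimum via Lemma~\ref{lemma: dual repr for F^*} with $\Psi(t,x,v)=e^{-at}\Diss(t,x,v)$, and conclude with \eqref{eq:generalintro}, with the extra term $-\partial_t\varphi$ absorbed into the affine $\mu$-integrand. Your remarks on the lower bounds of the integrands and on the points where $\rS$ or $\varphi$ is infinite are correct details that the paper leaves implicit.
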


Finally, we conclude by proving that also system \eqref{eq:HJt} possesses the backward boundedness property.

\begin{lemma}
    Any $\xi\in \HJ^a(X_T)$ satisfies $\xi(t,x)\le e^{-at}\varphi (t,x)$ for all $(t,x)\in X_T$.
\end{lemma}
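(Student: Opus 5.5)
We follow the proof of Lemma~\ref{lemma: proof conclusion}, with the time dependence of $\varphi$ producing the extra term $\partial_t\varphi$, which is exactly compensated by the right-hand side of \eqref{eq:HJt}. For $\varepsilon>0$ we set $\xi_\varepsilon(t,x):=\xi(t,x)+\varepsilon(t-T-1)$. Then $\xi_\varepsilon$ satisfies \eqref{eq:HJt} with both inequalities strict, with a margin $\varepsilon$. It suffices to show $\xi_\varepsilon(t,x)<e^{-at}\varphi(t,x)$ on $X_T$ and then let $\varepsilon\to0$.

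We argue by contradiction. Let
\[
\bar t:=\sup\{t\in[0,T]:\ \exists x\ \text{with}\ \xi_\varepsilon(t,x)\ge e^{-at}\varphi(t,x)\}.
\]
To see that the supremum is a maximum, take $t_n\uparrow\bar t$ and points $x_n$ with $\xi_\varepsilon(t_n,x_n)\ge e^{-at_n}\varphi(t_n,x_n)$. Since $\xi_\varepsilon$ is bounded, the values $\varphi(t_n,x_n)$ are bounded. In the reduced setting $\varphi$ has compact sublevels in $X_T$, so $x_n\to\bar x$ up to a subsequence. Lower semicontinuity of $\varphi$ on $X_T$ and continuity of $\xi_\varepsilon$ then give $\xi_\varepsilon(\bar t,\bar x)\ge e^{-a\bar t}\varphi(\bar t,\bar x)$. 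Consequently, as before:
\begin{itemize}
\item[(1)] $\bar t<T$, by the terminal inequality;
\item[(2)] $\xi_\varepsilon(t,\cdot)<e^{-at}\varphi(t,\cdot)$ for every $t>\bar t$;
\item[(3)] equality holds at $(\bar t,\bar x)$, because the function $t\mapsto \xi_\varepsilon(t,\bar x)-e^{-at}\varphi(t,\bar x)$ is continuous (by differentiability of $\varphi(\cdot,\bar x)$, since $\bar x\in D$) and negative for $t>\bar t$.
\end{itemize}

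Next we derive the two first-order conditions at $(\bar t,\bar x)$. In time, since $t\mapsto\varphi(t,\bar x)$ is differentiable by \ref{hyp:phit3}, facts (2) and (3) give one-sided differentiability from the right:
\[
\partial_t\xi_\varepsilon(\bar t,\bar x)\le \frac{d}{dt}\Big(e^{-at}\varphi(t,\bar x)\Big)\Big|_{t=\bar t}=e^{-a\bar t}\big(\partial_t\varphi(\bar t,\bar x)-a\varphi(\bar t,\bar x)\big).
\]
In space, at the fixed time $\bar t$ the function $x\mapsto e^{-a\bar t}\varphi(\bar t,x)-\xi_\varepsilon(\bar t,x)$ attains its minimum $0$ at $\bar x$. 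This is the point needing care: (2) only concerns $t>\bar t$, but by the definition of $\bar t$ as a maximum we also have $\xi_\varepsilon(\bar t,x)\le e^{-a\bar t}\varphi(\bar t,x)$ for all $x$. Otherwise, by the same compactness argument the supremum would still be $\bar t$, so the inequality at time $\bar t$ holds, possibly with equality. Hence $e^{a\bar t}\mathrm D_x\xi_\varepsilon(\bar t,\bar x)\in\partial_x\varphi(\bar t,\bar x)$, since Fréchet subdifferentiability is witnessed by the $C^1$ function $e^{a\bar t}\xi_\varepsilon(\bar t,\cdot)$. By the definition of the time-dependent relaxed slope, taking the constant sequence $(t_n,x_n)=(\bar t,\bar x)$, this yields
\[
\Diss^*\big(\bar t,\bar x,-e^{a\bar t}\mathrm D_x\xi_\varepsilon(\bar t,\bar x)\big)\ge \rS(\bar t,\bar x).
\]

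We now insert both conditions into the strict form of the first inequality of \eqref{eq:HJt} at $(\bar t,\bar x)$:
\[
\varepsilon\le \partial_t\xi_\varepsilon+e^{-a\bar t}\big(\rS+a\varphi-\partial_t\varphi-\Diss^*(\cdot,-e^{a\bar t}\mathrm D_x\xi_\varepsilon)\big)\le e^{-a\bar t}(\partial_t\varphi-a\varphi)+e^{-a\bar t}(a\varphi-\partial_t\varphi)=0.
\]
This is a contradiction, so the claim holds for every $\varepsilon>0$, and letting $\varepsilon\to0$ proves the lemma.

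We expect the main obstacle to be the maximality and attainment step. It requires compactness of sublevels of $\varphi$ jointly in $(t,x)$, which follows from the Grönwall consequence of \eqref{eq:powerbound} after the reduction of Section~\ref{sec:reduction}. It also requires the point $\bar x$ to lie in $D$, which is automatic since $\varphi(\bar t,\bar x)<\infty$, so that the time derivative of $\varphi(\cdot,\bar x)$ exists.
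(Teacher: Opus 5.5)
Your proof is correct and follows the same route as the paper, which simply refers back to the autonomous Lemma~\ref{lemma: proof conclusion}: perturb by $\varepsilon(t-T-1)$, take the last violation time $\bar t$ (a maximum by joint compactness and lower semicontinuity of $\varphi$), and combine the one-sided time-derivative bound with the subdifferential inclusion $e^{a\bar t}\mathrm D_x\xi_\varepsilon(\bar t,\bar x)\in\partial_x\varphi(\bar t,\bar x)$ to contradict the strict Hamilton--Jacobi inequality. One correction concerns the inequality $\xi_\varepsilon(\bar t,x)\le e^{-a\bar t}\varphi(\bar t,x)$ for all $x$: it does not follow from maximality of $\bar t$, and your ``Otherwise\dots'' sentence does not prove it; it follows instead from the reasoning you already used for (3), since for each $x\in D$ the map $t\mapsto \xi_\varepsilon(t,x)-e^{-at}\varphi(t,x)$ is continuous and negative on $(\bar t,T]$, hence nonpositive at $\bar t$, while for $x\notin D$ the inequality is trivial.
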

\begin{proof}
    As in Lemma \ref{lemma: proof conclusion}, we conclude if we prove that $\xi_\varepsilon (t,x):=\xi(t,x)+\varepsilon(t-T-1)$ fulfils 
    \begin{equation}\label{eq:claimt}
        \xi_\varepsilon(t,x)<e^{-at}\varphi(t,x),\quad\text{for all }(t,x)\in X_T.
    \end{equation}
    Assuming by contradiction that the above inequality fails at some point, it is well-defined the time
    \begin{equation*}
        \overline{t}:=\max\{t\in [0,T]:\, \text{there exists } x\in X \text{ for which }\xi_\varepsilon(t,x) \ge e^{-at}\varphi (t,x)\},
    \end{equation*}
   which fulfils the following properties:
    \begin{itemize}
        \item [(1)] $\overline t<T$;
        \item[(2)] for all $t\in(\overline t,T]$ one has $\xi_\varepsilon(t,x) < e^{-at}\varphi (t,x)$ for all $x\in X$;
        \item[(3)] there exists $\overline x\in X$ for which $\xi_\varepsilon(\overline t, \overline x) = e^{-a\bar t}\varphi (\bar t,\overline x)$;
        \item[(4)] $\partial_t\xi_\varepsilon(\overline t, \overline x)\le -e^{-a\bar t}(a\varphi(\bar t,\bar x)-\partial_t\varphi(\bar t,\bar x))$;
        \item[(5)]$\Diss^*(\bar t,\bar x,-e^{a\bar t}\mathrm D_x\xi_\varepsilon(\overline t, \overline x) )\ge \rS(\bar t,\bar x) $. 
    \end{itemize}
    They can be easily checked arguing as in the proof of Lemma \ref{lemma: proof conclusion}. Observing that $\xi_\varepsilon$ satisfies
    \begin{equation*}
        \begin{cases}
            -\partial_t\xi_\varepsilon(t,x)+e^{-at}\Diss^*(t,x,-e^{at}\mathrm D_x\xi_\varepsilon(t,x))\le e^{-at}(\rS(t,x)+a\varphi(t,x)-\partial_t\varphi(t,x))-\varepsilon,\\
            \xi_\varepsilon(T,x)\le e^{-aT}\varphi(T,x)-\varepsilon,
        \end{cases}
    \end{equation*}
    we reach a contradiction since the first inequality cannot be true at $(\bar t,\bar x)$ by (4) and (5), whence \eqref{eq:claimt} must hold.
\end{proof}

\appendix

\section{Local metrizability of narrow topology for Banach-valued measures}\label{sect app: narrow}

Let $(Y,\tau)$ and $(X,\|\cdot\|)$ be, respectively, a Polish space and a reflexive and separable Banach space. Here we show that the narrow topology in the space of measures $\mathcal{M}(Y;X)$, introduced in Definition \ref{def: narrow topology}, which we denote by $\tau_{\mathtt{n}}$, is metrizable if restricted to bounded sets (in total variation), namely we prove Proposition \ref{prop:localmetrnarrow}. We exploit a similar strategy to the standard one which shows local metrizability of the weak topology on separable Banach spaces. We begin with the following preliminary lemma.

\begin{lemma}\label{lemma: D-cont functions}
    Let $D$ be a countable and dense subset of the unit ball of $X^*$. Then the narrow topology on $\mathcal{M}(Y;X)$ is the smallest topology that makes continuous the functionals
    \begin{equation}\label{eq app: D-cont functions}
        \mathcal{M}(Y;X)\ni\nu \mapsto \sum_{i=1}^N \langle z_i, \int_Y \zeta_i d\nu \rangle, \quad  \text{for all } N\in \N,\ z_i \in D, \  \zeta_i \in C_b(Y). 
    \end{equation}
\end{lemma}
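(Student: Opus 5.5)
The plan is to compare the two weak topologies generated by two families of linear functionals. Call $\tau_D$ the smallest topology making the functionals \eqref{eq app: D-cont functions} continuous. Recall that $\tau_{\mathtt n}$ is generated by all functionals $\nu\mapsto\langle z,\int_Y\zeta\,d\nu\rangle$ with $z\in X^*$ and $\zeta\in C_b(Y)$; finite sums are then automatically continuous.

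The inclusion $\tau_D\subseteq\tau_{\mathtt n}$ is immediate, since every functional in \eqref{eq app: D-cont functions} is one of the functionals of Definition~\ref{def: narrow topology}.

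For the converse I first enlarge the family for free. Since $\zeta_i$ ranges over all of $C_b(Y)$, any $\lambda\in\R$ can be absorbed into $\zeta_i$. Since $N$ is arbitrary, sums are allowed as well. Hence the functionals $\nu\mapsto\langle z,\int\zeta\,d\nu\rangle$ are $\tau_D$-continuous for every $z\in\operatorname{Span}D$ and every $\zeta\in C_b(Y)$.

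The remaining step is to show that $L_{z,\zeta}(\nu):=\langle z,\int_Y\zeta\,d\nu\rangle$ is $\tau_D$-continuous for an arbitrary $z\in X^*$. Fix $\nu_0$ and $\varepsilon>0$, and pick $z'\in\operatorname{Span}D$ with $\|z-z'\|_*\le\delta$. This is possible because $D$ is dense in the unit ball, so $\operatorname{Span}D$ is dense in $X^*$. Then
\begin{equation*}
|L_{z,\zeta}(\nu)-L_{z,\zeta}(\nu_0)|\le|L_{z',\zeta}(\nu)-L_{z',\zeta}(\nu_0)|+\delta\,\|\zeta\|_\infty\big(|\nu|(Y)+|\nu_0|(Y)\big).
\end{equation*}
The first term is made small on a $\tau_D$-neighbourhood of $\nu_0$.

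The main obstacle is the second term. It is controlled only if $|\nu|(Y)$ stays bounded on that neighbourhood, and $\tau_D$-neighbourhoods do not control the total variation. An evaluation on Dirac-type measures $\nu=\delta_y\,x$ even suggests that $L_{z,\zeta}$ with $z\notin\operatorname{Span}D$ is not a finite combination of the generators. So I expect the argument to close through the estimate above on each set $\{|\nu|(Y)\le M\}$, choosing $\delta=\varepsilon/(4M\|\zeta\|_\infty)$. This is exactly what Proposition~\ref{prop:localmetrnarrow} needs: there the metric is built from the countable family obtained from $D$ and a countable family of $\zeta$'s.

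For the statement on the whole space as worded, I would first try to exploit the freedom in $\zeta_i$ to absorb the error. If that fails, I would check whether the statement is only meant on total-variation-bounded sets, which is how it is used afterwards, and I would flag this point explicitly rather than claim the global equality.
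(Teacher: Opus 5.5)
Your estimate on $\mathcal M_M(Y;X):=\{\nu:|\nu|(Y)\le M\}$ is correct. It is the paper's own argument made precise. The paper also approximates $z\in X^*$ by elements of $\operatorname{Span}D$, but it then asserts that $L_{z,\zeta}$ is a \emph{uniform} limit of $\tau_D$-continuous functionals on all of $\mathcal M(Y;X)$, and concludes $\tau_{\mathtt n}\subseteq\tau_D$ globally. Your bound $\|z-z'\|_*\|\zeta\|_\infty|\nu|(Y)$ shows the approximation is uniform only on total-variation bounded sets. So the paper's proof establishes only $\tau_{\mathtt n}|_{\mathcal M_M(Y;X)}=\tau_D|_{\mathcal M_M(Y;X)}$. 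You were right not to claim more, and you can drop the idea of absorbing the error into the $\zeta_i$. When $\dim X=\infty$ the lemma as stated is false, and your Dirac heuristic becomes a proof once two standard facts are added.

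First, scalars and finite sums are already built into \eqref{eq app: D-cont functions}, so these functionals form a vector space $F_D$ and $\tau_D=\sigma(\mathcal M(Y;X),F_D)$. Let $\Lambda$ be a $\tau_D$-continuous linear functional. Then $|\Lambda|<1$ on some set $\{|f_1|<\varepsilon,\dots,|f_n|<\varepsilon\}$ with $f_j\in F_D$. By scaling, $\Lambda$ vanishes on $\bigcap_j\ker f_j$, hence $\Lambda\in\operatorname{span}\{f_1,\dots,f_n\}\subseteq F_D$.

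Second, take $\zeta\equiv1$ and suppose $L_{z,1}=\sum_i L_{z_i,\zeta_i}$ with $z_i\in D$. Testing on $\nu=x\,\delta_y$ for all $x\in X$ and a fixed $y\in Y$ gives $z=\sum_i\zeta_i(y)z_i\in\operatorname{Span}D$. But $\operatorname{Span}D$ is a countable union of finite-dimensional subspaces of the Banach space $X^*$. Each is closed and, since $\dim X^*=\infty$, nowhere dense, so $\operatorname{Span}D\neq X^*$ by Baire's theorem.

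Hence, for $z\notin\operatorname{Span}D$, the functional $L_{z,1}$ is $\tau_{\mathtt n}$-continuous by definition but not $\tau_D$-continuous, and $\tau_D\subsetneq\tau_{\mathtt n}$. (In finite dimension $\operatorname{Span}D=X^*$ and the lemma is trivial.)

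So the statement should be the restricted one, and your argument proves it. The same restriction must be carried into Lemma~\ref{lemma: properties of pi_z}(2)--(3). These fail globally for the same reason, since the initial topology of $\{\pi_z\}_{z\in D}$ is precisely $\tau_D$. The restricted form is all that Step~2 of the proof of Proposition~\ref{prop: bounded narrow metric} needs from this lemma.

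One caveat on your side remark. That proposition also presupposes that $\sigma_{\mathtt n}$ is metrizable on total-variation bounded sets of \emph{signed} measures, and no countable family of $\zeta$'s gives this in general. For $Y=\N$ one gets the weak topology of $\ell^1$ on its unit ball, which is not metrizable because $\ell^\infty$ is not separable.
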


\begin{proof}
    Let $\tau_D$ be the smallest topology that makes continuous the functionals in \eqref{eq app: D-cont functions}. Then the inclusion $\tau_D \subset \tau_{\mathtt{n}}$ is trivial. On the other hand, any functional of the form \eqref{eq: narrow cont functions} can be uniformly approximated by a sequence of functionals of the form 
    \[\nu \mapsto \sum_{i=1}^N \langle z_i, \int_Y \zeta_i d\nu \rangle, \quad  \text{for some }N\in \N, \ z_i \in \operatorname{Span}(D), \  \zeta_i \in C_b(Y),\]
    which are $\tau_D$-continuous. In particular, any functional in \eqref{eq: narrow cont functions} is $\tau_D$-continuous. This provides the opposite inclusion  $\tau_{\mathtt{n}}\subset \tau_D$.
\end{proof}

We now recall that the narrow topology over signed measures $\mathcal{M}(Y)$, which we here indicate with the symbol $\sigma_\mathtt{n}$, is metrizable on bounded subsets. Thus, there exists a distance $d_{\mathcal{M}(Y)}$ that metrizes $\sigma_\mathtt{n}$ if restricted to bounded subsets. We can also assume that such distance is bounded by $1$, up to replace it with the equivalent $d_{\mathcal{M}(Y)}\wedge 1$. In order to transport such distance to vector valued measures $\mathcal M(Y;X)$ we will employ the following map.

\begin{df}
    Let $z\in X^*$. Define the functional $\pi_z:\mathcal{M}(Y;X) \to \mathcal{M}(Y)$ as
    \begin{equation*}
        (\pi_z \nu)(A) := \langle z, \nu(A) \rangle,
    \end{equation*}
    for any Borel subset $A\subset Y$. In particular, for any $\zeta \in C_b(Y)$, it holds the following integration formula
    \begin{equation*}
        \int_Y \zeta(y) d(\pi_z\nu)(y) = \left\langle  z, \int_Y\zeta(y)  d\nu(y)\right\rangle.
    \end{equation*}
\end{df}

Next lemma lists some properties of the map $\pi_z$.

\begin{lemma}\label{lemma: properties of pi_z}
 Let $D$ be as in Lemma \ref{lemma: D-cont functions}. Then the following properties hold true:
\begin{itemize}
    \item[(1)] For all $z\in X^*$, the map $\pi_z: \mathcal{M}(Y;X) \to \mathcal{M}(Y)$ is $(\tau_\mathtt{n},\sigma_\mathtt{n})$-continuous.
    \item[(2)] Let $\tau$ be any topology on $\mathcal{M}(Y;X)$ that makes continuous the maps $\pi_z$ for any $z\in D$. Then $\tau_\mathtt{n} \subset \tau$.
    \item[(3)] The narrow topology $\tau_\mathtt{n}$ is the smallest topology that makes continuous $\pi_z$ for any $z\in D$.
    \item[(4)] For all $z\in X^*$, it holds $\|\pi_z\nu\|_{\operatorname{TV}} \leq \|z\|_* \|\nu\|_{\operatorname{TV}}$.
\end{itemize}
\end{lemma}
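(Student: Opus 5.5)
We verify the four items in order, deriving (2) from the description of $\tau_\mathtt{n}$ given in Lemma \ref{lemma: D-cont functions} and deducing (3) from (1) and (2).

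For (1), fix $z\in X^*$. The topology $\sigma_\mathtt{n}$ on $\mathcal M(Y)$ is the initial topology generated by the maps $\rho\mapsto\int_Y\zeta\,d\rho$ with $\zeta\in C_b(Y)$. Hence it suffices to check that each composition
\[\nu\mapsto\int_Y\zeta\,d(\pi_z\nu)=\left\langle z,\int_Y\zeta\,d\nu\right\rangle\]
is $\tau_\mathtt{n}$-continuous. By the integration formula, this functional is exactly of the form \eqref{eq: narrow cont functions} with $N=1$, $z_1=z$, $\zeta_1=\zeta$. It is therefore continuous by definition of $\tau_\mathtt{n}$, and the universal property of initial topologies gives (1). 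Before using the integration formula we should check that $\pi_z\nu$ is a genuine signed measure. Countable additivity follows from continuity of $z$ and countable additivity of $\nu$ in norm, and finiteness follows from the bound in (4).

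For (2), let $\tau$ make every $\pi_z$ with $z\in D$ continuous. By Lemma \ref{lemma: D-cont functions}, $\tau_\mathtt{n}$ is generated by finite sums $\sum_{i=1}^N\langle z_i,\int\zeta_i\,d\nu\rangle$ with $z_i\in D$. Each summand equals $\int\zeta_i\,d(\pi_{z_i}\nu)$, which is the composition of the $\tau$-continuous map $\pi_{z_i}$ with the $\sigma_\mathtt{n}$-continuous functional $\rho\mapsto\int\zeta_i\,d\rho$. A finite sum of such functionals is $\tau$-continuous, so every generator of $\tau_\mathtt{n}$ is $\tau$-continuous and $\tau_\mathtt{n}\subset\tau$.

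Item (3) follows at once. By (1), $\tau_\mathtt{n}$ itself makes all the $\pi_z$ continuous, and in particular those with $z\in D$. By (2), any other topology with this property contains $\tau_\mathtt{n}$, so $\tau_\mathtt{n}$ is the smallest one.

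For (4), take any Borel partition $A=\bigcup_n E_n$. Then
\[\sum_n|\pi_z\nu(E_n)|=\sum_n|\langle z,\nu(E_n)\rangle|\le\|z\|_*\sum_n\|\nu(E_n)\|\le\|z\|_*|\nu|(A).\]
Taking the supremum over partitions gives $|\pi_z\nu|(A)\le\|z\|_*|\nu|(A)$, and choosing $A=Y$ gives the stated bound.

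The only mildly delicate point is the well-definedness of $\pi_z\nu$ and the integration formula. For indicator functions the formula is the definition of $\pi_z$. It extends to simple functions by linearity, and then to $\zeta\in C_b(Y)$ by uniform approximation, using the total variation bound from (4) and the continuity of the Bochner-type integral $\int\zeta\,d\nu$ under uniform convergence of $\zeta$.
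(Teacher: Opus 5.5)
Your proposal is correct and follows the paper's proof essentially step by step. Item (1) uses the initial-topology description of $\sigma_\mathtt{n}$, item (2) reduces to the generators of Lemma~\ref{lemma: D-cont functions}, item (3) follows from (1) and (2), and item (4) is the same partition estimate; your added check that $\pi_z\nu$ is a genuine signed measure satisfying the integration formula supplies a detail the paper leaves implicit.
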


\begin{proof}
    $(1)$ Since $\sigma_\mathtt{n}$ is the smallest topology that makes continuous the functions $\mathcal{M}(Y)\ni \mu \mapsto \int_Y \zeta d\mu$ for all $\zeta \in C_b(Y)$, the functional $\pi_z$ is $(\tau_\mathtt{n},\sigma_\mathtt{n})$-continuous if and only if for all $\zeta\in C_b(Y)$ the map
    $\displaystyle \nu\mapsto \int_Y \zeta d(\pi_z\nu) = \left\langle z, \int_Y \zeta d\nu \right\rangle$ is continuous.
    \\
    $(2)$ Any functional as in \eqref{eq app: D-cont functions} is the sum of $\tau$-continuous functions. Then we conclude thanks to Lemma \ref{lemma: D-cont functions}.
    \\
    $(3)$ It is a consequence of $(1)$ and $(2)$.
    \\
    $(4)$ For any partition $(A_i)_{i=1}^{+\infty}$ of $Y$, it holds
    \[\sum_{i=1}^{+\infty} |(\pi_z \nu)(A_i)|\ = \sum_{i=1}^{+\infty} |\langle z, \nu(A_i) \rangle| \leq \|z\|_* \sum_{i=1}^{+\infty}\|\nu(A_i)\| \leq \|z\|_* \|\nu\|_{\operatorname{TV}},\]
    and we conclude passing to the supremum on the left-hand side. 
\end{proof}
In this proposition we explicitely build a distance on ${\mathcal{M}(Y;X)}$ which metrizes the narrow topology on bounded sets, exploiting the maps $\pi_z$.
\begin{prop}\label{prop: bounded narrow metric}
    Let $D= \{z_n\}_{n\in \N}$ be as in Lemma \ref{lemma: D-cont functions}. Then, the expression
    \begin{equation*}
        d_{\mathcal{M}(Y;X)}(\nu_1,\nu_2) := \sum_{n\in \N} 2^{-n} d_{\mathcal{M}(Y)}(\pi_{z_n}\nu_1,\pi_{z_n}\nu_2) , \quad \text{for all } \nu_1,\nu_2 \in \mathcal{M}(Y;X),
    \end{equation*}
    defines a distance over $\mathcal{M}(Y;X)$ and induces the narrow topology on bounded sets.
\end{prop}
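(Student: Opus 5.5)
We must show three things about $d:=d_{\mathcal{M}(Y;X)}$: it is a distance on $\mathcal{M}(Y;X)$; on every total-variation ball $\mathcal B_R:=\{\nu:\|\nu\|_{\operatorname{TV}}\le R\}$ its topology is contained in the restriction of $\tau_{\mathtt n}$; and conversely $\tau_{\mathtt n}|_{\mathcal B_R}$ is contained in the $d$-topology. Throughout we use that $d_{\mathcal M(Y)}\le 1$, so the series converges uniformly. We also use Lemma \ref{lemma: properties of pi_z}(4): $\pi_{z_n}$ maps $\mathcal B_R$ into the signed-measure ball of radius $R$, because $\|z_n\|_*\le 1$.

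\textbf{Distance.} Symmetry and the triangle inequality follow termwise from those of $d_{\mathcal M(Y)}$. For positivity, suppose $d(\nu_1,\nu_2)=0$. Then $\pi_{z_n}\nu_1=\pi_{z_n}\nu_2$ for every $n$, since $d_{\mathcal M(Y)}$ is a genuine distance on all of $\mathcal M(Y)$, not only on bounded sets. Hence $\langle z_n,\nu_1(A)-\nu_2(A)\rangle=0$ for every Borel set $A$ and every $n$. Since $\operatorname{Span} D$ is dense in $X^*$, it separates points of $X$, so $\nu_1(A)=\nu_2(A)$ for all $A$.

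\textbf{$d$-topology $\subset\tau_{\mathtt n}$ on $\mathcal B_R$.} It suffices to show that $\nu\mapsto d(\nu,\nu_0)$ is $\tau_{\mathtt n}$-continuous on $\mathcal B_R$. By Lemma \ref{lemma: properties of pi_z}(1) each $\pi_{z_n}$ is $(\tau_{\mathtt n},\sigma_{\mathtt n})$-continuous. On the bounded set $\pi_{z_n}(\mathcal B_R)$ the topology $\sigma_{\mathtt n}$ is induced by $d_{\mathcal M(Y)}$, so each summand $\nu\mapsto d_{\mathcal M(Y)}(\pi_{z_n}\nu,\pi_{z_n}\nu_0)$ is $\tau_{\mathtt n}$-continuous on $\mathcal B_R$. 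Uniform convergence of the series with weights $2^{-n}$ then gives continuity of the sum. The only care needed is that $\nu_0$ must also lie in a bounded set; enlarging $R$ handles this.

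\textbf{$\tau_{\mathtt n}\subset$ $d$-topology on $\mathcal B_R$.} This is the main step. By Lemma \ref{lemma: properties of pi_z}(3), $\tau_{\mathtt n}$ is the initial topology of the maps $\pi_{z_n}$, $n\in\N$. Restricted to $\mathcal B_R$, it is therefore the initial topology of the maps $\pi_{z_n}|_{\mathcal B_R}$ into the metrizable space $(\pi_{z_n}(\mathcal B_R),\sigma_{\mathtt n})$. So it is enough to check that each $\pi_{z_n}|_{\mathcal B_R}$ is continuous for $d$ into $d_{\mathcal M(Y)}$. This is immediate from
\[
d_{\mathcal M(Y)}(\pi_{z_n}\nu,\pi_{z_n}\nu_0)\le 2^n d(\nu,\nu_0),
\]
and on $\pi_{z_n}(\mathcal B_R)$ the metric $d_{\mathcal M(Y)}$ induces $\sigma_{\mathtt n}$. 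Here one has to be careful about subspace topologies: the initial topology of the restricted maps coincides with the restriction of the initial topology, which is a general fact. Combining the two inclusions, $d$ induces $\tau_{\mathtt n}$ on bounded sets.
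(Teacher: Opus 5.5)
Your proof is correct and follows essentially the same route as the paper's. Positivity comes from separation of points by $D$. The inclusion $\tau_{\mathtt n}|_{\mathcal B_R}\subset\tau_d|_{\mathcal B_R}$ follows from the bound $d_{\mathcal M(Y)}(\pi_{z_n}\nu,\pi_{z_n}\nu_0)\le 2^n d(\nu,\nu_0)$ together with the initial-topology characterization in Lemma~\ref{lemma: properties of pi_z}. For the reverse inclusion, your continuity of $\nu\mapsto d(\nu,\nu_0)$ via uniform convergence of the series is a repackaging of the paper's truncation at a finite index $M$. You are also more careful than the paper: you first restrict to TV-balls, via Lemma~\ref{lemma: properties of pi_z}(4), before using that $d_{\mathcal M(Y)}$ induces $\sigma_{\mathtt n}$, whereas the paper asserts the $(\tau_d,\sigma_{\mathtt n})$-continuity of $\pi_{z_n}$ globally.

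Be aware that both arguments only reduce the claim to the recalled metrizability of $\sigma_{\mathtt n}$ on TV-bounded subsets of $\mathcal M(Y)$, and that input can fail for non-compact $Y$. For $Y=\mathbb N$ one has $\sigma_{\mathtt n}=\sigma(\ell^1,\ell^\infty)$, which is not metrizable on the unit ball of $\ell^1$ because $\ell^\infty$ is not separable.
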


\begin{proof}
    \textbf{Step 1}: $d_{\mathcal{M}(Y;X)}$ is a distance. The positivity, the symmetry and the triangular inequality are trivially inherited from $d_{\mathcal{M}(Y)}$. We are left to show that $d_{\mathcal{M}(Y;X)}(\nu_1,\nu_2) = 0$ implies $\nu_1 = \nu_2$. In this case, we have that $\pi_{z_n}\nu_1 = \pi_{z_n}\nu_2$ for all $n\in \N$, and by density of $\operatorname{Span}(D)$, we have also $\pi_z \nu_1 = \pi_z\nu_2$ for all $z\in X^*$. Then, for all $A\subset Y$ Borel subset it holds $\nu_1(A) = \nu_2(A)$ by definition of $\pi_z$, namely $\nu_1=\nu_2$. 
    
    \textbf{Step 2}: thanks to point $(2)$ of Lemma \ref{lemma: properties of pi_z}, the topology $\tau_d$ induced by $d_{\mathcal{M}(Y;X)}$ over $\mathcal{M}(Y;X)$ is larger than $\tau_\mathtt{n}$, indeed the maps $\pi_{z_n}$ are clearly $(\tau_d,\sigma_\mathtt{n})$-continuous. In particular $\tau_\mathtt{n}|_{\mathcal{M}_R(Y;X)} \subset \tau_{d}|_{\mathcal{M}_R(Y;X)}$, where for $R>0$ we denote $\mathcal{M}_R(Y;X) := \{\nu \in \mathcal{M}(Y;X) \ : \ \|\nu\|_{\operatorname{TV}}\leq R\}$.

    \textbf{Step 3}: we are left to show that $\tau_d|_{\mathcal{M}_R(Y;X)} \subset \tau_{\mathtt{n}}|_{\mathcal{M}_R(Y;X)}$. Since the topology induced by a distance is generated by balls, it suffices to show that for any $\tau_d|_{\mathcal{M}_R(Y;X)}$-open set of the form 
    \[B_\varepsilon(\bar{\nu}):= \{\nu\in \mathcal{M}_R(Y;X) \ : \ d_{\mathcal{M}(Y;X)} (\nu,\bar{\nu}) < \varepsilon\}, \quad\text{for } \bar{\nu} \in \mathcal{M}_R(Y;X), \ \varepsilon>0,\]
    we can find a $\tau_\mathtt{n}|_{\mathcal{M}_R(Y;X)}$-open set $O(\bar{\nu})$ such that $\bar{\nu}\in O(\bar{\nu})$ and $O(\bar{\nu})\subset B(\bar{\nu})$. Then, consider $\delta = \varepsilon/2$ and $M\in \N$ such that $\sum_{n\geq M+1} 2^{-n}\leq \varepsilon/2 $. To this aim, consider the set 
    \[O(\bar{\nu}) := \left\{ \nu \in \mathcal{M}_R(Y;X) \ : \ d_{\mathcal{M}(Y)}(\pi_{z_i}\nu, \pi_{z_i}\bar{\nu}) < \delta \ \text{for all }i = 1,\dots, M \right\}.\]
    Thanks to point  $(1)$ of Lemma \ref{lemma: properties of pi_z}, we deduce that $O(\bar{\nu})\in\tau_\mathtt{n}|_{\mathcal{M}_R(Y;X)}$. Clearly $\bar{\nu} \in O(\bar{\nu})$ and thanks to the choices of $\delta$ and $M$, it is not hard to verify that $O(\bar{\nu})\subset B(\bar{\nu})$. 
\end{proof}

\section{Continuity equation and superposition principle}\label{app: CE}
\noindent Let $X$ be a reflexive Banach space. In this appendix we discuss on the continuity equation \eqref{eq:conteq} and on its relation with the key result known as superposition principle.

We start by comparing our Definitions \ref{continuity equation over X def} and \ref{def:CEsol} with more common (but equivalent) definitions present in literature (see e.g. \cite{ambrosio2005gradient}). Indeed, from our point of view a solution to the continuity equation is given by a measure on the space-time $\mu\in \mathcal{M}_+(X_T)$ and, either, a vector-field $v\in L^1_\mu(X_T;X)$ or a vector-valued measure still defined on the space-time $\nu\in \mathcal{M}(X_T;X)$. Instead, in the literature solutions to \eqref{eq:CE} usually involve a narrowly continuous curve of positive measures $(\mu_t)_{t\in[0,T]}\subset \mathcal{M}_+(X)$, as follows.
\begin{df}
    Let $\mu_0 \in \mathcal{M}_+(X_T)$ and $v\in L^1_\mu(X_T;X)$. We say that a narrowly continuous curve of measures $(\mu_t)_{t\in[0,T]}\subset \mathcal{M}_+(X)$ is a solution to the continuity equation \eqref{eq:conteq} with starting measure $\mu_0$  if for all $\xi\in \operatorname{Cyl}_b(X_T)$ it holds
    \begin{equation*}
     \int_0^T \int_X  \partial_t \xi(t,x)+  \langle D_x\xi(t,x), v(t,x)\rangle d\mu_t(x)dt = \int_X \xi(T,x) d\mu_T(x) - \int_X \xi(0,x) d\mu_0(x).
\end{equation*}
\end{df}
Observe that, given $v$ and $(\mu_t)_{t\in[0,T]}$ as in the above definition, it is not difficult to define measures $\mu,\nu,m_0,m$ such that $(\mu,\nu)\in \operatorname{CE}(m_0,m)$ in the sense of Definition \ref{def:CEsol}. Indeed, let us consider:
\begin{equation}\label{eq: decomposition}
    \mu = \int_0^T \delta_t \otimes \mu_t dt, \quad \nu = v\mu, \quad m_0 = \mu_0, \quad m = \mu_T.
\end{equation}
Next lemma  shows that the opposite can be done as well, giving a one-to-one correspondence between the two approaches.

\begin{lemma}\label{lemma CE}
Let $\mu\in \mathcal{M}_+(X_T)$, $m_0,m\in \mathcal{M}_+(X)$ and $v\in L^1_\mu(X_T;X)$. Assume that the continuity equation is satisfied by $(\mu,v)$ in the sense of Definition \ref{continuity equation over X def}, with starting and ending measure $m_0$ and $m$. 

Then, there exists a unique narrowly continuous curve of positive measures $(\mu_t)_{t\in[0,T]}\subset \mathcal{M}_+(X)$, with constant mass $\mu_t(X) = \mu(X_T)/T=m_0(X)$ for all $t\in [0,T]$, such that \eqref{eq: decomposition} holds.
\end{lemma}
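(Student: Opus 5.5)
The natural route is to disintegrate $\mu$ with respect to the time projection and then show that the resulting family of slices admits a narrowly continuous representative. The first step is to test \eqref{eq: CE test} with functions depending only on time, $\xi(t,x)=\eta(t)$ with $\eta\in C^1([0,T])$. These are cylinder functions in $\operatorname{Cyl}_b(X_T)$ (take $k=0$, or $k=1$ with $z=0$), and the previous lemma lets us use them. We obtain
\[
\int_{X_T}\eta'(t)\,d\mu=\eta(T)m(X)-\eta(0)m_0(X)\qquad\text{for all }\eta\in C^1([0,T]).
\]
Writing $\sigma:=(p^t)_\#\mu\in\mathcal M_+([0,T])$, this says that the distributional derivative of $\sigma$ on $(0,T)$ vanishes, so $\sigma=c\,\mathcal L^1|_{[0,T]}$. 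Choosing $\eta\equiv1$ gives $m(X)=m_0(X)$, and choosing $\eta(t)=t$ gives $c=m(X)$. Hence $c=\mu(X_T)/T=m_0(X)$. Disintegrating $\mu$ with respect to $p^t$ then yields a Borel family $(\tilde\mu_t)_{t\in[0,T]}$ of measures of constant mass $c$ with $\mu=\int_0^T\delta_t\otimes\tilde\mu_t\,dt$. This family is determined uniquely for a.e.\ $t$, so once a narrowly continuous representative is found it is automatically unique.

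The second step is to identify the time derivative of the slices. Fix $\zeta\in C^1_b(\R^k)$ and $\boldsymbol z\in(X^*)^k$, and test \eqref{eq: CE test} with $\xi(t,x)=\eta(t)\zeta(\langle\boldsymbol z,x\rangle_k)$, $\eta\in C^1_c((0,T))$. This shows that the function
\[
f_\zeta(t):=\int_X\zeta(\langle\boldsymbol z,x\rangle_k)\,d\tilde\mu_t
\]
has distributional derivative $g_\zeta(t)=\int_X\langle \mathrm D_x\zeta,v(t,x)\rangle\,d\tilde\mu_t$. Since $v\in L^1_\mu$, this derivative satisfies $|g_\zeta|\le \mathrm{Lip}(\zeta)\max_i\|z_i\|_*\,h(t)$ with $h(t):=\int_X\|v(t,\cdot)\|\,d\tilde\mu_t\in L^1(0,T)$. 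Consequently $f_\zeta$ agrees a.e.\ with an absolutely continuous function. Taking $\eta$ not vanishing at the endpoints, the boundary terms in \eqref{eq: CE test} show that this continuous representative has values $\int\zeta\,dm_0$ at $t=0$ and $\int\zeta\,dm$ at $t=T$.

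The main obstacle is to extract a single exceptional null set and to pass from this countable family of test functions to narrow continuity. My plan is as follows. Fix a countable family $\mathcal Z$ of functions $\zeta(\langle\boldsymbol z,\cdot\rangle_k)$, with $\boldsymbol z$ ranging over finite tuples from the set $D$ of Lemma \ref{lemma: D-cont functions} and $\zeta$ ranging over a countable dense subset of $C^1_b$. Since $D$ separates points, $\mathcal Z$ is measure-determining. Let $N$ be a null set outside which every $f_\zeta$, $\zeta\in\mathcal Z$, coincides with its continuous version, and in addition $\int_X\|x\|_{\overline\omega}$-type tightness bounds hold. For tightness, I would use an auxiliary coercive cylinder-type functional, or better, the fact that $\mu$ is a finite Radon measure on $X_T$: the family $(\tilde\mu_t)_{t\notin N}$ is tight in the metric space $X_{\overline\omega}$ of Lemma \ref{lemma: properties of X_overline omega} once one uses the $L^1$ control of $v$ together with the weak compactness of bounded sets. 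If this is too delicate, I would fall back on Prokhorov's theorem on the Polish space $X$ by choosing compacts $K$ with $\mu([0,T]\times K^c)$ small and arguing via the uniform modulus $|f_\zeta(t)-f_\zeta(s)|\le C_\zeta\int_s^th$.

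For $t\in N$, we then define $\mu_t$ as any narrow limit of $\tilde\mu_{s_n}$ along $s_n\to t$ with $s_n\notin N$. The equicontinuity of the $f_\zeta$ ensures that the limit is unique, and the same equicontinuity yields narrow continuity of the whole curve. Finally, the identities $\mu_0=m_0$ and $\mu_T=m$ follow from the endpoint values computed in the second step, together with the fact that $\mathcal Z$ is measure-determining. This establishes \eqref{eq: decomposition}.
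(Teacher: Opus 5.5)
Your Steps 1--2 agree with the paper. You get the time marginal $m_0(X)\mathcal L^1|_{[0,T]}$, the disintegration, and $W^{1,1}$ regularity of $t\mapsto\int\xi\,d\tilde\mu_t$ for cylinder $\xi$, with derivative bounded by $\sup\|\mathrm D_x\xi\|_*\,h(t)$. You are also more explicit than the paper about the endpoint values. One minor point: vanishing of the distributional derivative on $(0,T)$ alone does not exclude atoms of $\sigma$ at $t=0,T$. Your identity for all $\eta\in C^1([0,T])$ does exclude them, since $\{\eta':\eta\in C^1([0,T])\}=C([0,T])$.

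\textbf{The gap is in the final step.} Every cylinder function is continuous for the \emph{weak} topology of $X$. Your ``equicontinuity'' only tells you that each $f_\zeta$, $\zeta\in\mathcal Z$, is continuous, with constants $C_\zeta$ that are not uniform. That yields continuity of $t\mapsto\mu_t$ only in the cylindrical topology, which in infinite dimensions is strictly weaker than the narrow topology of $\mathcal M_+(X)$. For instance, in $X=\ell^2$ one has $\int\xi\,d\delta_{e_n}\to\int\xi\,d\delta_0$ for every cylinder function $\xi$, but $\delta_{e_n}\not\to\delta_0$ narrowly (test with $\min\{\|x\|,1\}$), even though $\{\delta_{e_n}\}$ is tight in $X_{\overline\omega}$.

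So your first option, tightness in $X_{\overline\omega}$ (which you do not actually prove), can at most give narrow continuity in $\PP(X_{\overline\omega})$. Upgrading through a measure-determining class requires uniform tightness of the slices in the \emph{norm} topology, and your fallback does not supply it. The bound $\mu([0,T]\times K^c)<\varepsilon$ controls only $\int_0^T\tilde\mu_t(K^c)\,dt$. The moduli $C_\zeta\int_s^t h$ say nothing about $\tilde\mu_t(K^c)$ or about neighbourhoods of $K$, because $\mathds{1}_{K^c}$ and $\operatorname{dist}(\cdot,K)$ are not cylinder functions.

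\textbf{The missing idea} combines two facts. The constant in your derivative bound is the \emph{norm}-Lipschitz constant of the cylinder function. And norm distances are suprema of cylindrical ones: $\|x-y\|=\sup_n|\langle z_n,x-y\rangle|$. The paper exploits exactly this. Through the isometry $\iota\colon(X,\|\cdot\|\wedge1)\to(\R^\infty,D_\infty)$ it produces a countable family $\mathcal D\subset\operatorname{Cyl}_c(X)$ with $\|\mathrm D_x\xi\|_*\le1$ such that $\sup_{\xi\in\mathcal D}\int\xi\,d(\mu-\sigma)=W_{1,\|\cdot\|\wedge1}(\mu,\sigma)$. This is a \emph{complete} metric inducing the narrow topology on $\PP(X)$. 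On the intersection of the countably many full-measure sets where the $W^{1,1}$ identities hold, one gets $D(\tilde\mu_s,\tilde\mu_t)\le\int_s^t h$. Hence the slices are uniformly continuous into a complete metric space and extend to a continuous curve on $[0,T]$. Completeness replaces any tightness or Prokhorov argument; tightness comes out a posteriori.

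Your route could be repaired in the same spirit. For example, you could prove norm-tightness using the (smoothed) $1$-Lipschitz cylinder functions $\inf_{y\in K}\max_{n\le N}|\langle z_n,x-y\rangle|\uparrow\operatorname{dist}(x,K)$, together with a Chebyshev bound on $\{t:\tilde\mu_t(K^c)>\sqrt\varepsilon\}$. But this uniform-Lipschitz mechanism is precisely what your proposal lacks.
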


\begin{proof}
Uniqueness follows by uniqueness of disintegration of measures, so we focus on the existence. Up to divide $\mu$, $m_0$ and $m$ by a constant, we may assume that $\mu(X_T) = T$. 
\\
\textbf{Step 1}: we show that the time-marginal of $\mu$, denoted by $L\in \mathcal{M}_+([0,T])$, coincide with the restricted Lebesgue measure $\mathcal{L}^1|_{[0,T]}$. First of all, notice that $m_0(X) = m(X) = 1$: indeed, we can use $\xi_1(t,x)= t$ and $\xi_2(t,x) \equiv 1$ as test functions in the continuity equation, obtaining:
\[\mu(X_T) = T m(X) \quad \text{ and } \quad 0 = m(X) - m_0(X).\]
Then, we test the equation by an arbitrary $\xi \in C_b^1([0,T])$ independent of $x\in X$, deducing
\[\int_0^T \xi'(t)dL(t) = \int_{X_T} \xi'(t) d\mu(t,x) = \xi(T) - \xi(0) = \int_0^T \xi'(t) dt.\]
In particular, for all $g\in C_b(X)$, choosing $\xi(t):= \int_0^t g(s)ds$, we have $\int_0^T g(t) dL(t) = \int_0^T g(t)dt$, which implies that $L = \mathcal{L}^1|_{[0,T]}$, since bounded and continuous functions approximate pointwise any Borel and bounded function from $[0,T]$ to $\R$.
\\
\textbf{Step 2}: using the disintegration theorem, we find a curve of probability measures $(\tilde{\mu}_t)_{t\in[0,T]} \subset \PP(X)$ satisfying $\mu = \int_0^T \delta_t \otimes \delta_{\tilde{\mu}_t} dt$, and in particular
\begin{equation}\label{eq: CE tilde mu}
    \int_0^T \int_X \partial_t \xi(t,x) + \langle D_x\xi (t,x), v(t,x)\rangle d\tilde{\mu}_t(x) dt = \int_0^T \xi(T,x) dm(x) - \int_0^T \xi(0,x) dm_0(x),
\end{equation}
for all $\xi \in \operatorname{Cyl}_b(X_T)$. The rest of the proof is devoted to show that there exists a continuous representative $(\mu_t)_{t\in[0,T]}$ satisfying $\mu_t = \tilde{\mu}_t$ for a.e. $t\in [0,T]$. Before proceeding, we need the next intermediate step.
\\
\textbf{Step 3}: there exists a countable set $\mathcal{D}\subset \operatorname{Cyl}_c(X)$, i.e. cylinder functions independent of the variable $t\in [0,T]$, such that $\|D_x\xi\|$ is uniformly bounded by $1$ and
\begin{equation*}
    D(\mu,\sigma) := \sup_{\xi\in \mathcal{D}} \int_X \xi d(\mu-\sigma), \quad \text{for all } \mu,\sigma \in \PP(X),
\end{equation*}
is a complete distance on $\PP(X)$ that induces the narrow topology. To this aim, we build a correspondence between $X$ and $\R^\infty$: let $(z_n)_{n\in \N}\subset X^*$ be a dense subset of the unit ball in $X^*$ and define the map 
\begin{equation*}
    \iota: X \to \R^\infty, \quad \iota(x):= \big(\langle z_1,x\rangle, \langle z_2,x\rangle, \dots\big).
\end{equation*}
Let $D_\infty(\underline{x},\underline{y}) := \sup_{n}| x_n - y_n|\wedge 1$ be a complete distance over $\R^\infty$. It is not hard to show that $\iota$ is an isometry between the Polish spaces $(X,\|\cdot\|\wedge 1)$ and $(\iota(X),D_\infty)$. In particular, thanks to \cite[Lemma C.4 \& Remark C.6]{pinzisavare2025}, for all $n\in \N$ we can find a countable set $\mathcal{D}_n$ of functions $\zeta\in C_c^1(\R^n)$ satisfying, for all $\mu,\sigma \in \PP(X)$,
\begin{align*}
    D(\mu,\sigma):= & W_{1,\|\cdot\|\wedge 1}(\mu,\sigma) = W_{1,D_{\infty}}(\iota_\#\mu,\iota_\#\sigma) = \sup_n\sup_{\zeta \in \mathcal{D}_n} \int_{\R^\infty} \zeta(x_1,\dots,x_n) d(\iota_\#\mu-\iota_\#\sigma)(\underline{x})
    \\
    = & 
    \sup_n\sup_{\zeta \in \mathcal{D}_n} \int_{X} \zeta(\langle z_1,x\rangle,\dots,\langle z_n,x\rangle) d(\mu-\sigma)(x)
    = 
    \sup_{\xi \in \mathcal{D}} \int_{X} \xi(x) d(\mu-\sigma)(x),
\end{align*}
where $W_{1,d}$ denotes the $1$-Wasserstein distance with underlying distance $d$, and $\mathcal{D}:= \{\xi\in \operatorname{Cyl}_c(X)  :  \text{there exist } n\in\N,\zeta\in \mathcal{D}_n \text{ such that } \xi(x) = \zeta(\langle z_1,x\rangle,\dots,\langle z_n,x\rangle)\}$. Notice that $W_{1,\|\cdot\|\wedge 1}$ is a complete distance that metrizes the narrow convergence over $\PP(X)$. Moreover, all the functions $\zeta \in \mathcal{D}_n$ satisfy $\sum_{i=1}^n |\partial_i \zeta| \leq 1$, which, together with the fact that $\|z_i\|\leq 1$ for all $i\in \N$, yields that $\|D_x \xi\| \leq 1$ for all $\xi \in \mathcal{D}$.
\\
\textbf{Step 4}: we show that $t\mapsto \tilde{\mu}_t$ is absolutely continuous with respect to the distance $D$. Indeed, thanks to \eqref{eq: CE tilde mu}, for all cylinder functions $\xi \in \mathcal{D}$, the map 
$t\mapsto \int_X \xi d\tilde{\mu}_t$ is in $W^{1,1}(0,T)$, so that there exists a full Lebesgue measure set $A_{\xi} \subset(0,T)$ such that 
\[\int_X \xi d(\tilde{\mu}_t - \tilde{\mu}_s) = \int_s^t \int_X \langle D \xi(x),v(\tau,x)\rangle d\tilde{\mu}_\tau(x) d\tau,\]
for all $s,t \in A_{\xi}$, with $s<t$. Consider the full Lebesgue measure set $A := \bigcup_{\xi \in \mathcal{D}} A_\xi$, to deduce that for all $s,t\in A$ with $s<t$ it holds
\begin{align*}
    D(\tilde{\mu}_s,\tilde{\mu}_t) = \sup_{\xi \in \mathcal{D}} \int \xi d(\tilde{\mu}_t - \tilde{\mu}_s) = \sup_{x\in\mathcal{D}} \int_s^t \int_X \langle D \xi(x),v(\tau,x)\rangle d\tilde{\mu}_\tau(x) d\tau \leq \int_s^t \|v(\tau,x)\| d\tilde{\mu}_\tau(x) d\tau. 
\end{align*}
Since $\int_0^T \int_X \|v(\tau,x)\| d\tilde{\mu}_\tau(x) d\tau = \int_{X_T} \|v\|d\mu <+\infty$, this shows that the map $t\mapsto \tilde{\mu}_t$ has a $D$-absolutely continuous representative, which concludes the proof.
\end{proof}

We finally state the version of the superposition principle valid in general Banach space, which we used in the paper. It can be found in \cite[Theorem 5.2]{ambrosio2021spatially}.

\begin{teorema}[\textbf{Superposition principle on Banach spaces}]\label{thm: superposition Banach}
    Let $\mu\in \mathcal{M}_+(X_T)$, $m_0,m\in \mathcal{M}_+(X)$ and $v\in L^1_\mu(X_T;X)$. Assume that the continuity equation is satisfied by the pair $(\mu,v)$ in the sense of Definition \ref{continuity equation over X def}, with starting and ending measure $m_0$ and $m$. 
    
    Then, there exists a measure $\lambda \in \mathcal{M}_+(C([0,T];X))$ concentrated over absolutely continuous curves that solve the ordinary differential equation
    \begin{equation}\label{eq:odeapp}
        \dot\xx(t) = v(t,\xx(t)), \quad \text{for a.e. } t\in(0,T),
    \end{equation}
    satisfying $(e_0)_\#\lambda = m_0$, $(e_T)_\#\lambda = m$ and $\mathrm{e}_\#(\mathcal{L}^1|_{[0,T]}\otimes\lambda) = \mu$, where 
    \(\mathrm{e}(t,\xx):= (t,\xx(t)).\) In particular, if $m_0\in \PP(X)$, then $\lambda \in \PP(C([0,T],X))$.
\end{teorema}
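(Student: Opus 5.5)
The plan is to follow the strategy used in the proof of Lemma~\ref{lemma: proof conclusion}. The superposition theorem is quoted from \cite[Theorem 5.2]{ambrosio2021spatially}, so what needs to be laid out is the reduction to that reference, or equivalently a self-contained proof sketch. I would organise it as follows.

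\emph{Reduction to finite dimensions.} First normalise so that $m_0\in\PP(X)$; by Lemma~\ref{lemma CE} this gives $\mu=\int_0^T\delta_t\otimes\mu_t\,dt$ with $(\mu_t)$ narrowly continuous and $\mu_0=m_0$, $\mu_T=m$. Next use the embedding $\iota\colon X\to\R^\infty$, $\iota(x)=(\langle z_n,x\rangle)_n$, from Step 3 of that lemma. For cylinder test functions depending only on the first $k$ coordinates, the equation projects to a continuity equation in $\R^k$ for $\pi^k_\#\mu_t$. The velocity is the conditional expectation $v^k=\mathbb E[\langle z_{1..k},v\rangle\mid\pi^k]$, which is still integrable. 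The classical superposition principle \cite[Theorem 8.2.1]{ambrosio2005gradient} then gives, on $\R^k$, plans $\lambda^k$ on $C([0,T];\R^k)$. Through the gluing/projective-limit argument (as in \cite{ambrosio2021spatially}) this yields a plan $\boldsymbol\lambda$ on $C([0,T];\R^\infty)$ whose curves $y$ solve $\dot y_n(t)=\langle z_n,v(t,\iota^{-1}y(t))\rangle$ for a.e.\ $t$ and every $n$. Carrying this out requires the $L^1$ bound $\int\!\!\int_0^T\|v\|\,d\mu$ to control tightness uniformly in $k$, since the metric $D_\infty$ is governed by $\sup_n|\langle z_n,\cdot\rangle|\le\|\cdot\|$.

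\emph{Lifting back to $X$.} The marginals $(e_t)_\#\boldsymbol\lambda=\iota_\#\mu_t$ are concentrated on $\iota(X)$. Since $\int\!\!\int_0^T\|v(t,\xx(t))\|\,dt\,d\lambda<\infty$, $\boldsymbol\lambda$-a.e.\ curve satisfies $y(t)=\iota(\xx(t))$ for a.e.\ $t$ with $\int_0^T\|v(t,\xx(t))\|dt<\infty$. Integrating, $\langle z_n,\xx(t)-\xx(s)\rangle=\langle z_n,\int_s^t v(\tau,\xx(\tau))d\tau\rangle$ for all $n$, where the Bochner integral exists because $X$ has the Radon--Nikod\'ym property. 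Density of $\{z_n\}$ then shows that $\xx$ is absolutely continuous and solves \eqref{eq:odeapp}. The pushforward $\lambda:=(\iota^{-1})_\#\boldsymbol\lambda$ therefore has the required marginals and $\mathrm e_\#(\mathcal L^1\otimes\lambda)=\mu$.

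\emph{Main obstacle.} The delicate step is the finite-to-infinite-dimensional passage: one must check that the conditional velocity fields $v^k$ are consistent, and that in the limit the ODE is satisfied with the full $v$ rather than with conditional expectations. I would first try the standard convexity (Jensen) trick: the energy $\int\!\!\int\|\dot y\|$ of the plans is bounded by $\int\|v\|d\mu$, which forces the limiting plan's velocity to coincide with $v$ along curves, exactly as in \cite[Theorem 8.2.1]{ambrosio2005gradient}.
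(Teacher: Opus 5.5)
Your reduction (normalising the mass and using Lemma \ref{lemma CE} to write $\mu=\int_0^T\delta_t\otimes\mu_t\,dt$ with $(\mu_t)$ narrowly continuous, $\mu_0=m_0$, $\mu_T=m$) is exactly the paper's proof. The paper then applies \cite[Theorem 5.2]{ambrosio2021spatially} to $\mu_t/\alpha$, with $\alpha=\mu(X_T)/T$, and sets $\lambda=\alpha\tilde\lambda$, so stopping there would have given a complete proof. (Your first sentence points to Lemma \ref{lemma: proof conclusion}, the backward-boundedness lemma; you mean Lemma \ref{lemma CE}.) Your lifting from $\R^\infty$ back to $X$ is also sound; the Bochner primitive needs only separability and integrability, not the Radon--Nikod\'ym property.

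The genuine gap is in your self-contained $\R^\infty$ step, exactly at what you call the main obstacle. An energy bound $\int\!\!\int_0^T\|\dot y\|\,dt\,d\boldsymbol\lambda\le\int\|v\|\,d\mu$ together with correct time marginals cannot force $\dot y=v(t,y)$. Take $\mu_t\equiv\sigma$, the uniform probability on the unit circle of $\R^2$, and $v(x)=x^\perp$. This pair solves the continuity equation, and the plan of constant curves $\int\delta_{\gamma\equiv x}\,d\sigma(x)$ has marginals $\sigma$ and zero energy, yet none of its curves solves the ODE. Energy bounds give tightness only.

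The velocity is identified by testing against continuous approximants, and this is where Jensen actually enters (also in \cite[Theorem 8.2.1]{ambrosio2005gradient}). Fix a coordinate $i$, a bounded continuous cylindrical $c(t,y)$ depending on the first $M$ coordinates, and $k\ge M$. Then $\int\bigl(|y_i(t)-y_i(0)-\int_0^t c(s,y(s))\,ds|\wedge 1\bigr)\,d\lambda^k\le\int|v^k_i-c|\,d\mu\le\int|\langle z_i,v\rangle-c|\,d\mu$, where the last step is Jensen, valid because $c$ is measurable with respect to the conditioning $\sigma$-algebra. The left-hand side is a bounded continuous functional of $y$, so it passes to the narrow limit $\boldsymbol\lambda$. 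One then lets $c\to\langle z_i,v\rangle$ in $L^1_\mu$, using that $\boldsymbol\lambda$ has time marginals $\iota_\#\mu_t$.

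The surrounding construction also needs fixing.
\begin{itemize}
\item The $\lambda^k$ come from independent applications of a non-unique superposition principle. They are therefore not projectively consistent and cannot be glued; you need tightness of the zero-padded $\lambda^k$ and a narrow limit point.
\item That tightness must be in $C([0,T];\R^\infty)$ with the product topology, not with $D_\infty$. Since $\{z_n\}_{n>k}$ is still dense in the unit ball of $X^*$, the truncation $(\langle z_1,x\rangle,\dots,\langle z_k,x\rangle,0,\dots)$ stays at $D_\infty$-distance $\|x\|\wedge1$ from $\iota(x)$, so the projected plans cannot converge in that metric.
\item The bare $L^1$ bound gives no equicontinuity. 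You need a superlinear convex $\theta$ with $\int\theta(|\langle z_i,v\rangle|)\,d\mu<\infty$ (de la Vall\'ee-Poussin), which Jensen transfers to $v^k_i$ uniformly in $k$.
\end{itemize}
The last point is also why \cite[Theorem 8.2.1]{ambrosio2005gradient} is formulated for $L^p$ velocities with $p>1$. For $v\in L^1_\mu$ you need its $L^1$ extension in each $\R^k$.
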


\begin{proof}
    Thanks to Lemma \ref{lemma CE}, there exists a curve of narrowly continuous measures $t\mapsto \mu_t$ with constant total mass, that is $\mu_t(X) = \mu(X_T)/T=:\alpha$ for all $t\in[0,T]$, and such that $\mu = \int_0^T \delta_t\otimes \mu_t dt$, $\mu_0 = m_0$ and $\mu_T = m$. Then, applying \cite[Theorem 5.2]{ambrosio2021spatially} to the curve $t\mapsto \mu_t/\alpha$, we find a probability measure $\Tilde{\lambda} \in \PP(C([0,T],X))$ satisfying $(e_t)_\#\tilde\lambda = \mu_t/\alpha$ and that is concentrated over curves $\xx$ that are absolutely continuous and solve \eqref{eq:odeapp}. Now, the measure $\lambda:= \alpha \tilde\lambda$ satisfies all the requirements.
\end{proof}

\section{On the measurability of Bochner integration and limiting subdifferentials}

This final appendix contains some technical results regarding mesurability properties of Bochner integral and of the limiting subdifferential. From now on, let $X$ be a Banach space with separable dual $X^*$. We start with two general lemmas.

\begin{lemma}\label{lemma: SW Borel}
    The Borel $\sigma$-algebras $\mathcal{B}(X^*,\tau_s^*)$ and $\mathcal{B}(X^*,\tau_w^*)$ coincide, where $\tau_s^*$ and $\tau_w^*$ denote, respectively, the strong and the $\text{weak}^*$ topology of $X^*$.
\end{lemma}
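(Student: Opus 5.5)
The plan is to compare both $\sigma$-algebras with the $\sigma$-algebra generated by the evaluation maps $z\mapsto\langle z,x\rangle$, $x\in X$. Since $X^*$ is separable, so is $X$; I fix a countable dense subset $\{x_n\}_{n\in\N}$ of $X$. Each map $z\mapsto\langle z,x\rangle$ is weak$^*$-continuous, hence strongly continuous as well.

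The inclusion $\mathcal B(X^*,\tau_w^*)\subseteq\mathcal B(X^*,\tau_s^*)$ is immediate, because $\tau_w^*\subseteq\tau_s^*$.

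For the reverse inclusion, I will use that $(X^*,\tau_s^*)$ is separable and metric. Hence every strongly open set is a countable union of closed balls $\bar B(z_0,r)$, with $z_0$ in a countable dense subset of $X^*$ and $r$ rational. It therefore suffices to show that each closed ball $\bar B(z_0,r)$ is weak$^*$-Borel. By density of $\{x_n\}$, I would write
\[
\bar B(z_0,r)=\bigcap_{n\in\N,\ x_n\neq 0}\Big\{z\in X^*:\ |\langle z-z_0,x_n\rangle|\le r\|x_n\|\Big\}.
\]
The inclusion $\subseteq$ is trivial. For $\supseteq$, the condition on the right gives $|\langle z-z_0,x\rangle|\le r\|x\|$ for all $x$ in a dense set, and this extends to all $x\in X$ by continuity, so $\|z-z_0\|_*\le r$. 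Each set in the intersection is weak$^*$-closed, so the countable intersection is weak$^*$-closed and in particular weak$^*$-Borel.

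Consequently every strongly open set is a countable union of weak$^*$-Borel sets, which yields $\mathcal B(X^*,\tau_s^*)\subseteq\mathcal B(X^*,\tau_w^*)$.

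The only delicate point is the reduction from strongly open sets to countable unions of balls, and this is exactly where separability of $X^*$ is used. Without it, open sets need not be countable unions of balls, and the lemma can fail.
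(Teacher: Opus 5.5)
Your proof is correct, but it takes a more elementary route than the paper.

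The paper argues through descriptive set theory. The space $(X^*,\tau_s^*)$ is Polish. The space $(X^*,\tau_w^*)$ is Lusin, being the countable union of the balls $B_n^*$, each weak$^*$ compact and metrizable (Banach--Alaoglu plus separability of $X$), hence Polish. The paper then invokes a Souslin--Lusin type result from Schwartz: two comparable Lusin topologies generate the same Borel sets.

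You bypass all of this. Second countability of the norm topology reduces the problem to closed norm balls, and these are weak$^*$-closed because the dual norm is a supremum of weak$^*$-continuous functionals.

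Your route is self-contained and needs only weak$^*$ lower semicontinuity of the dual norm. It even gives a slightly sharper conclusion: every norm-open set is a countable union of weak$^*$-closed sets, i.e.\ weak$^*$-$F_\sigma$. The paper's route is more abstract, but it applies to any pair of comparable Lusin topologies. It also records the Lusin structure of $(X^*,\tau_w^*)$, which fits the measure-theoretic framework of that appendix.

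One small simplification: you do not need the countable dense subset of $X$. The set $\bar B(z_0,r)=\bigcap_{\|x\|\le 1}\{z\in X^*:\ |\langle z-z_0,x\rangle|\le r\}$ is already weak$^*$-closed as an arbitrary intersection of weak$^*$-closed sets.

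Your closing remark is also right. Without separability of $X^*$ the lemma fails: for $X=\ell^1$, the norm-Borel $\sigma$-algebra of $\ell^\infty$ is strictly larger than the weak$^*$-Borel one.
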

\begin{proof}
    It follows from the fact that $\tau_w^* \subset \tau_s$ and that both the topological spaces $(X^*,\tau_s^*)$ and $(X^*,\tau_w^*)$ are Lusin spaces. Indeed, the first is Polish by assumption, while the second can be written as 
    \[X^* = \bigcup_{n\geq 1} B_n^*,\]
    and thanks to \cite[Theorems 3.26 and 3.28]{brezis2011functional}, the subsets $B_n^*$ endowed with the $\text{weak}^*$-subspace topology, are Polish and union of Polish subsets is Lusin. Then we conclude thanks to \cite[Corollary 2, pp. 101]{schwariz1973radon}.
\end{proof}

\begin{lemma}\label{lemma: meas}
    Let $(S,\mathcal{S})$ be a measurable space and $G:S \to X^*$. The map $G$ is Borel measurable if and only if for all $x\in X$ the map $G_x:S \to \R$, defined as $G_x(s):= \langle G(s), x\rangle$, is measurable.
\end{lemma}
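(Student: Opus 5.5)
The claim is that a map $G\colon S\to X^*$ is Borel measurable if and only if every scalar map $G_x=\langle G(\cdot),x\rangle$ is measurable. One direction is immediate, and the other rests on identifying the Borel $\sigma$-algebra of $X^*$ with the one generated by the evaluations $z\mapsto\langle z,x\rangle$, which Lemma~\ref{lemma: SW Borel} supplies.

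\emph{Necessity.} Fix $x\in X$. The evaluation $z\mapsto \langle z,x\rangle$ is continuous on $(X^*,\tau_s^*)$, hence Borel. Therefore $G_x$, being the composition of a Borel map with a measurable one, is measurable.

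\emph{Sufficiency.} Assume each $G_x$ is measurable, and let $\Sigma$ be the $\sigma$-algebra on $X^*$ generated by the evaluations $\{z\mapsto\langle z,x\rangle : x\in X\}$. The hypothesis says exactly that $G^{-1}(E)\in\mathcal S$ for every generator $E$ of $\Sigma$. The family $\{E\subseteq X^* : G^{-1}(E)\in\mathcal S\}$ is a $\sigma$-algebra, so it contains all of $\Sigma$. It then suffices to prove $\Sigma=\mathcal{B}(X^*,\tau_s^*)$.

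\emph{Identifying $\Sigma$.} The inclusion $\Sigma\subseteq\mathcal B(X^*,\tau_w^*)$ holds because the evaluations are weak$^*$ continuous. For the reverse inclusion, recall the standing assumption that $X^*$ is separable; then so is $X$. On each closed ball $B_n^*$ the weak$^*$ topology is metrizable, with metric
\[
d(z,w)=\sum_k 2^{-k}\,|\langle z-w,x_k\rangle|,
\]
where $\{x_k\}$ is a dense sequence in the unit ball of $X$.

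\begin{itemize}
\item The ball $B_n^*$ itself lies in $\Sigma$, since $B_n^*=\bigcap_k\{z : |\langle z,x_k\rangle|\le n\}$.
\item Each weak$^*$ open set $U$ satisfies $U=\bigcup_n (U\cap B_n^*)$. Each piece $U\cap B_n^*$ is relatively open in a separable metrizable space, so it is a countable union of $d$-balls intersected with $B_n^*$.
\item Each $d$-ball around $w$ lies in $\Sigma$, because $z\mapsto d(z,w)$ is a countable sum of $\Sigma$-measurable functions.
\end{itemize}

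Hence every weak$^*$ open set lies in $\Sigma$, giving $\mathcal B(X^*,\tau_w^*)\subseteq\Sigma$. Combining with Lemma~\ref{lemma: SW Borel},
\[
\Sigma=\mathcal B(X^*,\tau_w^*)=\mathcal B(X^*,\tau_s^*),
\]
which concludes the argument.

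The main obstacle is this last identification, namely that weak$^*$ open sets are generated by countably many evaluations. It is exactly where separability enters, through the weak$^*$ metrizability of balls, and where Lemma~\ref{lemma: SW Borel} is needed to pass from weak$^*$ Borel sets to strong Borel sets.
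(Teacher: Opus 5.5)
Your proof is correct. It shares the paper's skeleton: necessity is immediate, and sufficiency reduces to showing that the $\sigma$-algebra $\Sigma$ generated by the evaluations contains the Borel sets of $X^*$.

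The difference lies in how that reduction is justified. The paper notes that the basic weak$^*$ open sets $\{z:\langle z,x_i\rangle\in(a_i,b_i),\ i\le n\}$ have measurable preimages. It then appeals to separability of $(X^*,\tau_w^*)$ to pass to a countable family with $x_i$ in a dense set and rational endpoints. The step from weak$^*$ to norm Borel sets rests there tacitly on Lemma~\ref{lemma: SW Borel}.

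Separability is not really the property that makes this countable reduction work. What is needed is that every weak$^*$ open set be a countable union of basic sets, which follows from $(X^*,\tau_w^*)$ being hereditarily Lindel\"of (it is a continuous image of the norm-separable $X^*$). Your exhaustion $X^*=\bigcup_n B_n^*$, with $B_n^*\in\Sigma$ and weak$^*$ metrizability on each ball, is a clean and explicit way to make that step rigorous. You also invoke Lemma~\ref{lemma: SW Borel} openly where the paper only uses it implicitly.

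You could shorten things further. The identity behind your first bullet, $\|z-w\|_*=\sup_k|\langle z-w,x_k\rangle|$, already shows that every norm ball belongs to $\Sigma$. Since every norm open subset of the separable space $X^*$ is a countable union of balls with centres in a countable dense set and rational radii, you get $\mathcal B(X^*,\tau_s^*)\subseteq\Sigma$ directly. This needs neither weak$^*$ metrizability nor Lemma~\ref{lemma: SW Borel}. Combined with $\Sigma\subseteq\mathcal B(X^*,\tau_w^*)\subseteq\mathcal B(X^*,\tau_s^*)$, it even reproves that lemma.
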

\begin{proof}
    The necessity is trivial. On the other hand, a basis for the weak topology is given by sets of the form 
    \begin{equation}\label{eq: gen top}
    \{z\in X^* \ : \ \langle z, x_i \rangle \in (a_i,b_i) \quad \text{for all } i = 1,\dots,n\},
    \end{equation}
    for some $n\in \N$, $x_i \in X$, and $-\infty \leq a_i < b_i \leq +\infty$. By assumption, the preimage of these sets through $G$ is measurable. Since $(X^*,\tau_w^*)$ is separable, it is enough to check that $G^{-1}(A) \in \mathcal{S}$ for all $A$ in a countable generator for the topology, given by taking sets of the form \eqref{eq: gen top} with $x_i$ in a dense set for $X$, $a_i,b_i \in \mathbb{Q}$.
\end{proof}

Given a Borel measurable function $f:Y\to X^*$, we set $$\mathcal{M}_{+,f}(Y) := \{\mu\in \mathcal{M}_+(Y) : \int_Y \|f(y)\|_{X^*}d\mu(y) <+\infty\}.$$
Thanks to \cite[Lemma D.1]{pinzisavare2025}, the set $\mathcal{M}_{+,f}(Y)$ is Borel as a subset of $\mathcal{M}_+(Y)$, endowed with the narrow topology.

\begin{prop}\label{lemma: bochner measurability}
    The Bochner integral map
    \begin{equation*}
         \mathcal{M}_{+,f}(Y) \ni \mu \mapsto \int_Y f(y) d\mu(y)\in X^*,  
    \end{equation*}
    is Borel measurable, where the integral is intended in the Bochner sense.
\end{prop}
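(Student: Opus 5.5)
We must show that $\mu\mapsto \int_Y f\,d\mu$ is Borel from $\mathcal M_{+,f}(Y)$ (narrow topology) into $X^*$. The plan is to use Lemma \ref{lemma: meas} to reduce the question to scalar maps. By Lemma \ref{lemma: SW Borel}, the strong and weak$^*$ Borel $\sigma$-algebras on $X^*$ coincide. By Lemma \ref{lemma: meas}, with $S=\mathcal M_{+,f}(Y)$ carrying its Borel $\sigma$-algebra, it therefore suffices to show that for every fixed $x\in X$ the real map
\[
\mu\mapsto \Big\langle \int_Y f\,d\mu, x\Big\rangle=\int_Y f_x\,d\mu,\qquad f_x(y):=\langle f(y),x\rangle,
\]
is Borel. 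The identity is the standard commutation of Bochner integrals with bounded linear functionals. Moreover $f_x$ is a real Borel function with $|f_x|\le \|x\|\,\|f\|_{X^*}$, so $f_x\in L^1_\mu$ for every $\mu\in\mathcal M_{+,f}(Y)$.

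It remains to prove that $\mu\mapsto\int_Y g\,d\mu$ is Borel on $\mathcal M_{+,f}(Y)$ for a real Borel $g$ with $|g|\le C\|f\|$. I proceed by a monotone class argument in three steps.

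\textbf{Bounded continuous $g$.} For $g\in C_b(Y)$ the map is narrowly continuous, hence Borel on $\mathcal M_+(Y)$.

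\textbf{Bounded Borel $g$.} Let $\mathcal H$ be the class of bounded Borel $g$ for which $\mu\mapsto\int g\,d\mu$ is Borel on $\mathcal M_+(Y)$. This class is a vector space, contains constants, and is closed under bounded monotone pointwise limits, by monotone convergence and the fact that pointwise limits of Borel maps are Borel. Since $C_b(Y)$ is closed under multiplication and generates the Borel $\sigma$-algebra of the metric space $Y$, the functional monotone class theorem gives that $\mathcal H$ contains all bounded Borel functions.

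\textbf{General $g$.} Split $g=g^+-g^-$. Each part is the increasing limit of the bounded truncations $g^\pm\wedge n$. By monotone convergence, $\int g^\pm\,d\mu=\lim_n\int (g^\pm\wedge n)\,d\mu$, and this limit is finite on $\mathcal M_{+,f}(Y)$. Hence each $\mu\mapsto\int g^\pm\,d\mu$ is Borel there, since $\mathcal M_{+,f}(Y)$ is a Borel subset by \cite[Lemma D.1]{pinzisavare2025}. The difference of the two is then Borel as well.

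The only delicate point is the first reduction. Lemma \ref{lemma: meas} is stated for the weak$^*$ topology, and it must be combined with Lemma \ref{lemma: SW Borel} so that the conclusion holds for the strong Borel structure of $X^*$, which is the one relevant for the Bochner integral. This requires $X^*$ to be separable, which is assumed. The same argument goes through verbatim for $X$-valued integrands by exchanging the roles of $X$ and $X^*$, using reflexivity; this is the case needed in \eqref{eq:v} and in the proof of Theorem \ref{thm: compactness for nu}.
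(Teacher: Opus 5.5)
Your proof is correct and follows the paper's route: you reduce via Lemma \ref{lemma: meas} to the scalar maps $\mu\mapsto\int_Y\langle f,x\rangle\,d\mu$, then split $\langle f,x\rangle$ into its positive and negative parts. The only difference is that you prove the measurability of $\mu\mapsto\int_Y g\,d\mu$ with an explicit monotone class and truncation argument, whereas the paper cites \cite[Lemma D.1]{pinzisavare2025} for this step; your argument applied to $g=\|f\|_{X^*}$ would also show directly that $\mathcal M_{+,f}(Y)$ is Borel.
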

\begin{proof}
    Due to Lemma \ref{lemma: meas}, it is enough to check that $\mu \mapsto \int \langle f,x \rangle d\mu$ is measurable for any $x\in X$. We decompose it as $\int\langle f,x \rangle ^+d\mu - \int \langle f,x \rangle^- d\mu$, and both terms are measurable thanks to \cite[Lemma D.1]{pinzisavare2025}.
\end{proof}

\begin{prop}\label{prop: meas lim}
    Let $\Phi:X \to (-\infty,+\infty]$. The graph of its limiting subdifferential $\partial_\ell \Phi$ (see Definition \ref{def:limiting-subdifferential}), defined as
    \[\partial_\ell \Phi :=\left\{(x,z) \in X\times X^* \, : \, z\in \partial_\ell \Phi(x)\right\},\]
    is a Borel subset of $X\times X^*$, endowed with the product topology. 
\end{prop}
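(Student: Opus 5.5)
We write the graph of $\partial_\ell\Phi$ as an explicit countable combination of Borel sets, starting from the graph of the Fréchet subdifferential $\partial\Phi$ and then encoding the limiting procedure through its closure in a suitable metrizable topology. Throughout, $X$ is reflexive and separable, so $X^*$ is separable. By Lemma \ref{lemma: SW Borel}, Borel sets for the strong and weak$^*$ topologies of $X^*$ coincide. On each ball $B^*_k$ the weak$^*$ topology is metrizable by a distance $d_w$ built from a countable dense set $\{x_j\}\subset X$.

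\textbf{Step 1: Borel measurability of $\operatorname{graph}\partial\Phi$.} We use the difference-quotient characterization. A pair $(x,z)$ belongs to $\partial\Phi$ if and only if $\Phi(x)<\infty$ and for every $\varepsilon\in\mathbb Q_+$ there is $\delta\in\mathbb Q_+$ such that
\[
\Phi(y)-\Phi(x)-\langle z,y-x\rangle\ge-\varepsilon\|y-x\| \quad\text{for all } \|y-x\|<\delta .
\]
The quantifier over $y$ is uncountable, so we want to restrict it to a countable dense set $\{y_i\}$. Lower semicontinuity of $\Phi$ is not assumed, so we first record the standard relaxation: if we replace $\Phi(y)$ by its value along sequences, the condition fails for non-lsc $\Phi$.

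We therefore proceed differently. For each fixed $y$, the set
\[
\{(x,z): \Phi(y)-\Phi(x)-\langle z,y-x\rangle+\varepsilon\|y-x\|\ge0\}\cap\{\|y-x\|<\delta\}^{\mathrm c\,\cup}
\]
is Borel provided $\Phi$ is Borel. We then express the intersection over $y$ via a projection argument: the complement is the projection of a Borel subset of $X\times X^*\times X$. This yields that $\operatorname{graph}\partial\Phi$ is coanalytic, hence universally measurable.

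If genuine Borel measurability is required, I would instead assume (as in the applications, where $\Phi=\varphi$ has compact sublevels and is lsc) that $\Phi$ is lower semicontinuous. I would then use the equivalent $C^1$-test-function definition restricted to a countable family of concave $C^1$ cylinder-type functions, obtaining a countable intersection and union. I expect this step to be the main obstacle: the supremum over uncountably many $y$ must be reduced to a countable one without continuity of $\Phi$.

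\textbf{Step 2: the limiting closure.} Define $G_k:=\operatorname{graph}\partial\Phi\cap(X\times B_k^*)$. Since a weak$^*$ convergent sequence is bounded, we have
\[
\operatorname{graph}\partial_\ell\Phi=\bigcup_k \overline{G_k}^{\,s\times d_w}\cap\bigl(\overline{\operatorname{dom}\partial\Phi}\times X^*\bigr).
\]
Here the closure is sequential in the metrizable space $(X\times B^*_k,\|\cdot\|\times d_w)$, so sequential closure equals topological closure. Closed sets in this metrizable space are Borel. The ball $B_k^*$ is weak$^*$ closed, hence Borel in $X^*$ by Lemma \ref{lemma: SW Borel}. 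Consequently each piece is Borel in $X\times X^*$, and so is the countable union.

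Notably, Step 2 does not require Step 1 to produce a Borel set: the closure of an arbitrary set is closed. Indeed, Step 2 alone gives the result, since $\overline{G_k}$ is closed regardless of whether $G_k$ is measurable. This makes the argument clean. The only remaining point to check carefully is that the limiting subdifferential exactly equals this union of closures, namely that the definition uses sequences which are automatically bounded in $X^*$.
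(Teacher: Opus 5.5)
Your argument is correct, with Step 2 alone as the proof, and it takes a slightly but genuinely different route from the paper.

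\textbf{How the two decompositions differ.} The paper asserts that the graph of $\partial_\ell\Phi$ is itself strongly--weakly$^*$ sequentially closed ``by the very definition''. It then writes the graph as $\bigcup_R\bigl(\partial_\ell\Phi\cap(X\times B_R^*)\bigr)$, each piece being closed because the weak$^*$ topology is metrizable on $B_R^*$. You instead write it as $\bigcup_k\overline{G_k}$, where $G_k$ is the truncated \emph{Fréchet} graph, so your pieces are closed by construction. Your only inputs are:
\begin{itemize}
\item weak$^*$-convergent sequences are bounded, by Banach--Steinhaus since $X$ is complete; this settles the point you left to check;
\item $B_k^*$ is weak$^*$ closed and weak$^*$ metrizable.
\end{itemize}

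\textbf{Why this matters.} The paper's first sentence amounts to idempotence of the strong--weak$^*$ sequential closure of $\operatorname{graph}\partial\Phi$. That needs a diagonal argument with uniformly bounded approximating subgradients, and it can fail for general $\Phi$ (the statement allows arbitrary $\Phi$). Here is an example, with $\Phi$ not lower semicontinuous. In $X=\ell^2$, let $\Phi(y)=\langle e_m+me_n,y\rangle$ on small disjoint open balls $U_{m,n}$, $n>m$, and $\Phi=+\infty$ elsewhere. Centre the balls at points $p_{m,n}\to p_m$ as $n\to\infty$, with $p_m\to 0$ and $\operatorname{dist}(0,U_{m,n})\ge 2^{-m-1}$. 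Then:
\begin{itemize}
\item $e_m\in\partial_\ell\Phi(p_m)$, and $(p_m,e_m)\to(0,0)$ strongly--weakly;
\item $\partial_\ell\Phi(0)=\emptyset$, because points of $\operatorname{dom}\partial\Phi$ approaching $0$ force $m\to\infty$, hence $\|e_m+me_n\|\to\infty$.
\end{itemize}
So the paper's pieces $\partial_\ell\Phi\cap(X\times B_R^*)$ need not be closed, while your decomposition is unaffected. The paper's claim does hold under extra structure, e.g.\ when $\partial\Phi$ is locally bounded.

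\textbf{What to fix in the write-up.} Delete Step 1 entirely. It is not used, and as written it is not sound: the displayed set is garbled, and the coanalyticity claim presupposes that $\Phi$ is Borel, which is not assumed.

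One detail in Step 2 deserves a line. $\overline{G_k}$ is closed in the separable metrizable space $X\times(B_k^*,w^*)$, so it belongs to $\mathcal B(X)\otimes\mathcal B(B_k^*,w^*)$. By Lemma \ref{lemma: SW Borel}, traced on $B_k^*$, this is contained in $\mathcal B(X)\otimes\mathcal B(X^*)=\mathcal B(X\times X^*)$; the last equality holds because $X$ and $X^*$ are separable. The paper is equally terse on this point.
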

\begin{proof}
    By the very definition of limiting subdifferential, its graph is $\text{strongly-weakly}^*$ sequentially closed. Then, recalling that $B_R^*\subset X^*$ is $\text{weakly}^*$ closed and that the $\text{weak}^*$ topology is metrizable therein, the set 
    \(\partial_\ell \Phi \cap (X\times B_R^*)\)
    is $\text{strongly-weakly}^*$ closed. In particular, thanks to Lemma \ref{lemma: SW Borel}, the Borel sets of $X \times X^*$ generated by the strong-strong topology coincide with the ones generated by the $\text{strong-weak}^*$ topology. Since $\partial_\ell \Phi$ can be written as countable union of $\text{strongly-weakly}^*$ closed sets, it is Borel. 
\end{proof}

\bigskip
	
	\noindent\textbf{Acknowledgements.}
    The authors are members of GNAMPA (INdAM). F. R.\ has been partially supported by the INdAM-GNAMPA project 2025 \lq\lq DISCOVERIES\rq\rq (CUP E5324001950001). G.S.~and F.R.\ have been partially supported by the MIUR-PRIN 202244A7YL project {\em Gradient Flows and Non-Smooth Geometric Structures with Applications to Optimization and Machine Learning}. F. R. also acknowledges the support of BIDSA of Bocconi University.
    G.S. has been partially supported by the INDAM project E53C23001740001, and by funding from the European Research Council (ERC) under the European Union’s Horizon Europe research and innovation programme (grant agreement No. 101200514, project acronym OPTiMiSE).
 Views and opinions expressed are however those of the author(s) only and do not necessarily reflect those of the European Union or the European Research Council Executive Agency. Neither the European Union nor the granting authority can be held responsible for them. 
	\bigskip

\bibliographystyle{plaine}
\bibliography{bibliography}

	{\small
		
		\vspace{15pt} (Alessandro Pinzi) Bocconi University, Department of Decision Sciences, \par
		\textsc{via Roentgen 1, 20136 Milano, Italy}
		\par
		\textit{e-mail address}: \textsf{alessandro.pinzi@phd.unibocconi.it}
		\par
		\textit{Orcid}: \textsf{https://orcid.org/0009-0007-9146-5434}	
        
		\vspace{5pt} (Filippo Riva) Charles University, Faculty of Mathematics and Physics, Department of Mathematical Analysis, 
		\textsc{Sokolovsk\'a 49/83, 186 75 Prague 8, Czechia}
		\par
		\textit{e-mail address}: \textsf{filippo.riva@matfyz.cuni.cz}
		\par
		\textit{Orcid}: \textsf{https://orcid.org/0000-0002-7855-1262}
	
        \vspace{5pt} (Giuseppe Savar\'e) Bocconi University, Department of Decision Sciences and BIDSA \par
		\textsc{via Roentgen 1, 20136 Milano, Italy}
		\par
		\textit{e-mail address}: \textsf{giuseppe.savare@unibocconi.it}
		\par
		\textit{Orcid}: \textsf{https://orcid.org/0000-0002-0104-4158}
		\par
		
	}

\end{document}